\documentclass[11pt,a4paper]{article}

\usepackage[T1]{fontenc}
\usepackage[utf8]{inputenc}
\IfFileExists{lmodern.sty}
  {\usepackage{lmodern}\usepackage{microtype}}
  {\usepackage[expansion=false]{microtype}}

\usepackage[a4paper,width=155mm,top=28mm,bottom=30mm]{geometry}
\usepackage{enumitem}
\setlist[enumerate]{itemsep=2pt,topsep=4pt}
\setlist[itemize]{itemsep=2pt,topsep=4pt}
\usepackage{booktabs}
\usepackage{tocloft}
\usepackage{amsmath}
\usepackage{amssymb}
\usepackage{amsthm}
\usepackage{mathtools}
\usepackage{bm}
\allowdisplaybreaks[1]

\usepackage{xcolor}
\definecolor{linkcol}{RGB}{0,60,130}
\definecolor{citecol}{RGB}{0,100,70}
\usepackage[colorlinks=true,linkcolor=linkcol,citecolor=citecol,
            urlcolor=linkcol,breaklinks=true]{hyperref}
\usepackage[nameinlink,capitalise]{cleveref}
\Crefname{equation}{Eq.}{Eqs.}

\usepackage{tikz}
\usetikzlibrary{arrows.meta,positioning,fit}

\usepackage[round,authoryear]{natbib}
\newcommand{\doi}[1]{\textsc{doi}:~\href{https://doi.org/#1}{\nolinkurl{#1}}}
\newcommand{\biburl}[1]{\href{#1}{\nolinkurl{#1}}}
\usepackage{aliascnt}
\newcommand{\sharedtheorem}[4]{
  \newaliascnt{#1}{theorem}
  \newtheorem{#1}[#1]{#2}
  \aliascntresetthe{#1}
  \crefname{#1}{#3}{#4}
  \Crefname{#1}{#3}{#4}
  \AddToHook{env/#1/begin}{\crefalias{theorem}{#1}}
}

\theoremstyle{plain}
\newtheorem{theorem}{Theorem}[section]
\sharedtheorem{lemma}{Lemma}{Lemma}{Lemmas}
\sharedtheorem{proposition}{Proposition}{Proposition}{Propositions}
\sharedtheorem{corollary}{Corollary}{Corollary}{Corollaries}

\theoremstyle{definition}
\sharedtheorem{definition}{Definition}{Definition}{Definitions}
\sharedtheorem{assumption}{Assumption}{Assumption}{Assumptions}
\sharedtheorem{example}{Example}{Example}{Examples}
\sharedtheorem{algo}{Algorithm}{Algorithm}{Algorithms}

\theoremstyle{remark}
\sharedtheorem{remark}{Remark}{Remark}{Remarks}
\sharedtheorem{notation}{Notation}{Notation}{Notations}

\newcommand{\R}{\mathbb{R}}
\newcommand{\C}{\mathbb{C}}
\newcommand{\N}{\mathbb{N}}
\newcommand{\Ints}{\mathbb{Z}}
\newcommand{\Rnn}{\R_{\ge 0}}
\newcommand{\Rpos}{\R_{>0}}

\newcommand{\Prb}{\mathbb{P}}
\newcommand{\Ex}{\mathbb{E}}
\DeclareMathOperator{\Var}{Var}
\DeclareMathOperator{\Cov}{Cov}
\DeclareMathOperator{\kurt}{kurt}
\DeclareMathOperator{\sech}{sech}
\newcommand{\Fisher}{\mathcal{I}}
\newcommand{\tsc}{\eta^{\star}}
\DeclareMathOperator{\Log}{Log}
\DeclareMathOperator{\Real}{Re}
\DeclareMathOperator{\Imag}{Im}
\newcommand{\Normal}{\mathcal{N}}
\newcommand{\Law}{\mathcal{L}}
\newcommand{\Dirac}{\delta}

\newcommand{\eqd}{\stackrel{d}{=}}

\newcommand{\cf}[1]{\varphi_{#1}}

\newcommand{\indep}{\mathrel{\perp\mkern-9mu\perp}}

\newcommand{\T}{^{\mkern-1.5mu\mathsf{T}}}
\newcommand{\pinv}{^{\dagger}}
\newcommand{\invT}{^{-\mkern-1.5mu\mathsf{T}}}
\DeclareMathOperator{\rank}{rank}
\DeclareMathOperator{\im}{im}
\DeclareMathOperator{\spn}{span}
\DeclareMathOperator{\diag}{diag}
\DeclareMathOperator{\tr}{tr}
\DeclareMathOperator{\supp}{supp}
\DeclareMathOperator{\Deg}{deg}
\newcommand{\Id}[1]{I_{#1}}
\newcommand{\Orth}[1]{O(#1)}
\newcommand{\bmat}[1]{\begin{bmatrix}#1\end{bmatrix}}

\DeclarePairedDelimiter{\abs}{\lvert}{\rvert}
\DeclarePairedDelimiter{\norm}{\lVert}{\rVert}

\newcommand{\bigabs}[1]{\abs[\big]{#1}}
\newcommand{\Bigabs}[1]{\abs[\Big]{#1}}
\newcommand{\given}{\,\vert\,}

\newcommand{\up}[1]{^{(#1)}}
\newcommand{\uT}[1]{^{(#1)\mkern-1.5mu\mathsf{T}}}
\newcommand{\upinv}[1]{^{(#1)\dagger}}

\title{Foundations of Independent Component Analysis}
\author{Patrick Forr\'e\\[+10pt]
    \small{AI4Science Lab}\\[0pt]
 \small{Korteweg-de Vries Institute for Mathematics}\\[0pt]
    \small{University of Amsterdam}\\[0pt] \small{\texttt{p.d.forre@uva.nl}}}
\date{}

\hypersetup{
  pdftitle={Foundations of Independent Component Analysis},
  pdfauthor={Patrick Forr\'e},
  pdfsubject={Independent component analysis: identifiability and estimation},
  pdfkeywords={independent component analysis, ICA, identifiability,
    characteristic functions, Darmois--Skitovich, LiNGAM}}

\begin{document}

\maketitle

\begin{abstract}
\noindent
We present the mathematical foundations of linear independent component
analysis (ICA) models based on standard literature in a self-contained note.
It is aimed at readers with a background in measure-theoretic probability
theory.  We first develop the theory of the characteristic functions of
probability measures on $\R^d$, including their analyticity and the way in
which they determine and characterise the distributions.
We then focus on several identifiability results of ICA models with
successively strengthened assumptions on the sources: from merely
non-constant, to non-Gaussian, to Gaussian-free independent sources.
Under the strictest assumptions, we show that the independent sources are
identifiable up to translation, permutation, scales and signs, and this even
in the presence of additive Gaussian noise.
Furthermore, we present the online equivariant gradient descent ICA algorithm
for recovering the independent sources from data, in the standard complete
noiseless non-Gaussian ICA setting.
\end{abstract}

\tableofcontents

\section{Introduction}
\label{sec:intro}

The linear independent component analysis (ICA) model postulates that an
observed $p$-dimensional random vector $X$ is an affine image of a
$k$-dimensional random vector $Z$ with \emph{mutually independent} components:
\begin{align}
  \label{eq:ica-intro}
  X &= A Z + \mu, & A &\in \R^{p \times k}, & \mu &\in \R^p .
\end{align}
The components $Z_1,\dots,Z_k$ are called the \emph{sources} and $A$ is called
the \emph{mixing matrix}; both are unobserved.
The \emph{identifiability question} asks how much of the pair $(A,\Law(Z))$ is
determined by the law of $X$ alone
\citep{comon1994,hyvarinen2000,eriksson2004}.

Some ambiguity is unavoidable and built into the model: we may rescale a
column of $A$ and inversely rescale the corresponding source, we may permute
the columns of $A$ together with the sources, and we may shift a source into
the offset $\mu$. Written out, for any permutation matrix $P$, any invertible
diagonal matrix $\Lambda$ and any $c \in \R^k$,
\begin{align}
  A Z + \mu &= (A P \Lambda)\bigl(\Lambda^{-1}P^{-1}(Z - c)\bigr) + (Ac + \mu),
\end{align}
and the right hand side is again a representation of the form
\cref{eq:ica-intro} with independent sources.
The content of the classical identifiability theory is that, under suitable
assumptions, this is \emph{the only} ambiguity -- with one important exception:
Gaussian sources.
That exception is genuine and not an artefact of the proofs. If $Z \sim
\Normal(0,\Id{k})$ then $QZ \sim \Normal(0,\Id{k})$ for every orthogonal $Q$,
so a purely Gaussian source vector can be rotated arbitrarily without changing
the law of $X$.

The results collected here go back to the characterisation theory of the
Gaussian law that grew out of the theorems of \citet{cramer1936},
\citet{marcinkiewicz1939}, \citet{darmois1953} and \citet{skitovich1954}, and
that is developed systematically in the monographs of
\citet{kagan1973characterization} and \citet{linnik1977}.
The ICA community rediscovered and popularised these results, notably through
\citet{comon1994} and \citet{eriksson2004}, and they underpin causal discovery
methods such as LiNGAM \citep{shimizu2006}.

\paragraph{Prerequisites.}
These notes are written for readers who have taken a measure-theoretic
probability course.  We assume laws and weak convergence, product measures and
Fubini's theorem, and basic complex analysis up to Morera's theorem and the
identity theorem.  Everything else -- in particular the entire theory of
characteristic functions that the arguments rest on -- is developed in
\cref{sec:cf}, with precise references for the standard results and full
proofs for everything specific to our setting.

\paragraph{Outline and contributions.}
\Cref{sec:prelim} fixes notation.
\Cref{sec:cf} develops characteristic functions of probability measures on
$\R^d$: their elementary properties, how they determine and characterise
distributions (uniqueness, inversion, the Cram\'er--Wold device, L\'evy's
continuity theorem), why local agreement of two characteristic functions is
\emph{not} enough, moments and cumulants via the distinguished logarithm,
analyticity in a strip and the resulting rigidity, and finally the
characterisation of the Gaussian law together with the sub- and
super-Gaussian (platykurtic and leptokurtic) distributions that ICA is built
on.  It closes with the precise sense in which kurtosis acts as a contrast
function, and with the observation that the identifiability theory needs
strictly less than that: non-normality, not non-zero kurtosis.
\Cref{sec:non-constant} treats sources that are merely assumed to be
non-constant and states the fundamental result of
\citet{kagan1973characterization} in the form we need; we take the opportunity
to reinstate a proportionality constant that is lost in the statement given
there (see \cref{fn:missing-constant}).
\Cref{sec:non-gaussian} specialises to non-Gaussian sources. The main technical
device is \cref{lem:noise-trade}, which shows how a Gaussian noise
vector can be traded against extra columns of the mixing matrix, and back;
this is what allows us to reduce the non-Gaussian case to the non-constant case.
\Cref{sec:gaussian-free} introduces the stronger requirement that the sources
be \emph{Gaussian-free}, i.e.\ that no independent Gaussian noise can be split
off them at all.  We show in \cref{thm:gauss-split} that \emph{every}
real-valued random variable decomposes, essentially uniquely, into a
Gaussian-free part and independent Gaussian noise, and we then prove the
strongest statement of these notes,
\cref{thm:gf-identifiability}: for full column rank mixing matrices the sources
are then identifiable up to permutation, scale and translation even in the
presence of additive Gaussian noise with an arbitrary, possibly non-diagonal
and degenerate covariance matrix.
\Cref{sec:algorithm} turns to estimation in the case that dominates
applications -- a square invertible mixing matrix and no noise.  We derive the
classical identifiability statement as \cref{cor:complete-ident}, set up the
maximum likelihood objective, show that the customary ``preconditioner'' in
the gradient step is not an approximate inverse Hessian but the exact gradient
for a natural right-invariant metric, and prove in \cref{thm:stability} exactly when a
separating solution is a locally asymptotically stable equilibrium of the mean
dynamics underlying the online algorithm -- which, for correctly specified
sources, is precisely when \cref{cor:complete-ident} says the model is
identifiable.  The same section then shows in
\cref{cor:lingam} how a causal ordering removes the last remaining
ambiguities, giving the linear non-Gaussian acyclic model LiNGAM, and closes
with a short survey of how the model has been generalised.
\Cref{app:cf-proofs} collects the proofs of the four classical results about
characteristic functions that \cref{sec:cf} quotes: the uniqueness and
inversion theorem, L\'evy's continuity theorem, Marcinkiewicz' theorem and
Cram\'er's decomposition theorem.
\Cref{app:kagan-proof} proves \cref{thm:kagan}, the identifiability theorem of
\citet{kagan1973characterization} on which \cref{sec:non-constant} onwards
rest.  The proof is a self-contained finite-difference argument: it needs
only the distinguished logarithm, Marcinkiewicz' theorem, and Fr\'echet's
functional equation, which we also prove.
With these appendices in place, every result in these notes is proved here,
with one exception: Bochner's \cref{thm:bochner}, which we quote and which is
used only once, in \cref{rem:local-not-enough}, where it could be replaced by
a direct computation.  Standard background is of course used throughout and is
listed at the start of \cref{app:cf-proofs}; \cref{sec:algorithm} additionally
uses standard matrix calculus and, in \cref{thm:stability}, the
unstable-manifold theorem for maps.

\paragraph{How to read this note.}
The logical skeleton is short.  Everything downstream of
\cref{sec:non-constant} rests on the single result \cref{thm:kagan}, proved
in \cref{app:kagan-proof}, and branches from there into the two lines of
development that the rest of the note pursues:
\begin{center}
\begin{tikzpicture}[
    >={Latex[length=5pt]},
    box/.style={draw=linkcol!55, rounded corners=2pt, fill=linkcol!5,
                inner sep=4pt, font=\small, align=center, text width=36mm},
    root/.style={box, draw=linkcol, fill=linkcol!12}]
  \node[root] (kagan) at (0,0)
    {\cref{thm:kagan}\\[-2pt]\scriptsize non-constant sources};
  \node[box]  (nong)  at (5.4, 1.5)
    {\cref{thm:non-gaussian}\\[-2pt]\scriptsize non-Gaussian sources};
  \node[box]  (comon) at (5.4,-1.5)
    {\cref{cor:complete-ident}\\[-2pt]\scriptsize complete noiseless model};
  \node[box]  (gf)    at (10.8, 1.5)
    {\cref{thm:gf-identifiability}\\[-2pt]\scriptsize Gaussian-free sources};
  \node[box]  (stab)  at (10.8,-0.6)
    {\cref{thm:stability}\\[-2pt]\scriptsize algorithmic stability};
  \node[box]  (lingam) at (10.8,-2.4)
    {\cref{cor:lingam}\\[-2pt]\scriptsize LiNGAM};
  \draw[->] (kagan) -- (nong);
  \draw[->] (kagan) -- (comon);
  \draw[->] (nong)  -- (gf);
  \draw[->] (comon) -- (stab);
  \draw[->] (comon) -- (lingam);
\end{tikzpicture}
\end{center}
The upper branch strengthens the hypotheses on the sources and asks how much
of the model this buys back; the lower branch fixes the hypotheses at their
most convenient and asks how to estimate.  A reader interested only in the
algorithm may go directly from \cref{sec:cf} to \cref{sec:algorithm}, taking
\cref{cor:complete-ident} on faith.

\section{Notation and Conventions}
\label{sec:prelim}

\begin{notation}[Random objects versus deterministic objects]
\label{not:conventions}
All random objects are defined on one underlying probability space
$(\Omega,\mathcal{F},\Prb)$ and are denoted by \emph{capital} Latin letters:
$X$ for the observed random vector, $Z$ for the source vector, $E$, $V$, $G$,
$H$, $N$, $U$ for the various (mostly Gaussian) noise and remainder terms.
Deterministic matrices are capital Latin or Greek letters ($A$, $B$, $P$,
$Q$, $R$, $W$, $\Theta$, $\Lambda$, $\Sigma$; in particular $W$ always denotes
the deterministic unmixing matrix of \cref{sec:algorithm}, never a random
vector), deterministic vectors and scalars are lower case
($a_j$, $b_l$, $\mu$, $\nu$, $\xi$, $\lambda_j$, $t$).
Components of a random vector carry a subscript, $Z = [Z_1,\dots,Z_k]\T$, and
$e_j$ denotes the $j$-th standard basis vector.
Superscripts in brackets, $A\up{1}$, $Z\up{2}$, index the two competing
representations of the same observed vector and are never exponents.
\end{notation}

\begin{notation}[Multi-indices and weak convergence]
\label{not:multiindex}
For a multi-index $\alpha = (\alpha_1,\dots,\alpha_d)\in\N_0^d$ we write
$\abs{\alpha} := \alpha_1+\dots+\alpha_d$ for its order, and, for
$x\in\R^d$ and a sufficiently differentiable $f\colon\R^d\to\C$,
\begin{align}
  x^\alpha &:= \prod_{j=1}^{d} x_j^{\alpha_j},
  & \partial^\alpha f
    &:= \frac{\partial^{\abs{\alpha}} f}
             {\partial x_1^{\alpha_1}\cdots\partial x_d^{\alpha_d}} ,
\end{align}
with the convention $x^0 := 1$ and $\partial^0 f := f$.
The symbol $\Rightarrow$ denotes weak convergence of laws: for random vectors
$X_n, X$ in $\R^d$ we write $X_n \Rightarrow X$ if
$\Ex[f(X_n)]\to\Ex[f(X)]$ for every bounded continuous
$f\colon\R^d\to\R$.
\end{notation}

\begin{notation}[Distributional relations]
\label{not:dist}
For a random vector $X$ in $\R^p$ we write $\Law(X)$ for its law and
\begin{align}
  \cf{X}(t) &:= \Ex\bigl[\exp(i\, t\T X)\bigr], & t &\in \R^p,
\end{align}
for its characteristic function.
We reserve
\begin{itemize}
  \item $X \sim \Normal(\mu,\Sigma)$ for ``$X$ \emph{is distributed according
        to} the law $\Normal(\mu,\Sigma)$'', and
  \item $X \eqd Y$ for ``$X$ and $Y$ \emph{have the same law}'',
        i.e.\ $\Law(X) = \Law(Y)$, equivalently $\cf{X} = \cf{Y}$.
\end{itemize}
Stochastic independence is written $X \indep Y$.
Mutual independence of a family $\{X_1,\dots,X_k\}$ always means independence
of the whole family, not merely pairwise independence.
Statements such as ``$Z_j$ is non-constant'' are always understood
$\Prb$-almost surely, i.e.\ $\Law(Z_j)$ is not a Dirac measure $\Dirac_c$.
\end{notation}

\begin{notation}[Degenerate Gaussian distributions]
\label{not:degenerate}
We call $E$ a (possibly degenerate) \emph{Gaussian} random
vector, $E \sim \Normal(\mu,\Sigma)$ with $\mu \in \R^p$ and a
positive semi-definite $\Sigma \in \R^{p\times p}$, if
\begin{align}
  \cf{E}(t) &= \exp\Bigl(i\,t\T\mu - \tfrac{1}{2} t\T \Sigma t\Bigr),
  & t &\in \R^p .
\end{align}
This includes the degenerate cases: $\Sigma$ is allowed to be singular, and
for $p=1$, $\sigma^2 = 0$, we have $\Normal(\mu,0) = \Dirac_\mu$, so that
constants count as Gaussian.
Whenever we want to exclude this case we say \emph{non-degenerate}.
\end{notation}

\section{Characteristic Functions of Probability Measures on \texorpdfstring{$\R^d$}{R\^{}d}}
\label{sec:cf}

Every argument in these notes is carried out at the level of characteristic
functions.  This section collects the facts we need, in the generality in
which we need them.  We assume familiarity with measure-theoretic probability
-- laws, weak convergence, product measures, Fubini's theorem -- and we cite
the standard results rather than reproving them; everything that is specific
to the independent component analysis (ICA) setting is proved in full.
Standard references are \citet[Chapter~15]{klenke2020} and
\citet[Chapters~5--6]{kallenberg2021} for the general theory,
\citet{lukacs1970} for characteristic functions specifically, and
\citet[Chapter~1]{kagan1973characterization} for the characterisation-theoretic
material.

\subsection{Definition and elementary properties}
\label{ssec:cf-basics}

\begin{definition}[Characteristic function]
\label{def:cf}
Let $\mu$ be a probability measure on $\bigl(\R^d,\mathcal{B}(\R^d)\bigr)$.
Its \emph{characteristic function} (c.f.) is the map
\begin{align}
  \cf{\mu} \colon \R^d \to \C,
  \qquad
  \cf{\mu}(t) := \int_{\R^d} \exp\bigl(i\, t\T x\bigr)\, \mu(dx).
\end{align}
For a random vector $X$ in $\R^d$ with law $\Law(X) = \mu$ we write
$\cf{X} := \cf{\mu}$, so that
$\cf{X}(t) = \Ex\bigl[\exp(i\,t\T X)\bigr]$.
The integral is well defined because $\abs{\exp(i\,t\T x)} = 1$, so the
integrand is bounded and measurable.
\end{definition}

\begin{proposition}[Elementary properties]
\label{prop:cf-elementary}
Let $X$ be a random vector in $\R^d$ with characteristic function $\cf{X}$.
Then:
\begin{enumerate}[label=(\roman*)]
  \item $\cf{X}(0) = 1$ and $\abs{\cf{X}(t)}\le 1$ for all $t \in \R^d$;
  \item $\cf{X}$ is uniformly continuous on $\R^d$;
  \item $\cf{X}(-t) = \overline{\cf{X}(t)}$, and $\cf{X}$ is real valued if
        and only if $X \eqd -X$;
  \item $\cf{X}$ is positive semi-definite: for all $n \in \N$, all
        $t_1,\dots,t_n \in \R^d$ and all $z_1,\dots,z_n \in \C$,
        \begin{align}
          \sum_{j,l=1}^{n} z_j\overline{z_l}\,\cf{X}(t_j-t_l) &\ge 0;
        \end{align}
  \item \emph{(affine images)} for $A \in \R^{m\times d}$ and $b \in \R^m$,
        \begin{align}
          \label{eq:cf-affine}
          \cf{AX+b}(t) &= \exp\bigl(i\,t\T b\bigr)\cdot\cf{X}\bigl(A\T t\bigr),
          & t &\in \R^m;
        \end{align}
  \item \emph{(independent sums)} if $X \indep Y$ are random vectors in
        $\R^d$, then
        \begin{align}
          \label{eq:cf-product}
          \cf{X+Y}(t) &= \cf{X}(t)\cdot\cf{Y}(t), & t&\in\R^d;
        \end{align}
  \item \emph{(one-dimensional projections)} for $u \in \R^d$ and
        $s \in \R$,
        \begin{align}
          \label{eq:cf-projection}
          \cf{u\T X}(s) &= \cf{X}(su).
        \end{align}
\end{enumerate}
\end{proposition}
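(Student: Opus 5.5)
All seven items follow directly from \cref{def:cf}, using only that $\abs{\exp(i\,t\T x)}=1$, linearity and monotonicity of the integral, dominated convergence, and the factorisation of expectations of independent random variables. I will prove them one at a time in the order stated. The only items needing a genuine argument are uniform continuity in (ii) and the converse direction of (iii).

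\textbf{(i), (v), (vii).} For (i), $\cf{X}(0)=\Ex[1]=1$, and $\abs{\Ex[\exp(i\,t\T X)]}\le\Ex\bigl[\abs{\exp(i\,t\T X)}\bigr]=1$. For (v), I will write $t\T(AX+b)=(A\T t)\T X+t\T b$ and pull the deterministic factor $\exp(i\,t\T b)$ out of the expectation. Item (vii) is the special case $A=u\T\in\R^{1\times d}$, $b=0$, $t=s$ of (v), since $A\T s=su$.

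\textbf{(vi).} For independent $X\indep Y$, the random variables $\exp(i\,t\T X)$ and $\exp(i\,t\T Y)$ are measurable functions of independent vectors, hence independent. They are also bounded and complex valued, so I can apply the product rule for expectations of independent integrable variables to their real and imaginary parts. Alternatively, Fubini on the product law $\Law(X)\otimes\Law(Y)$ gives the same factorisation directly.

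\textbf{(ii).} For uniform continuity I will use the bound
\begin{align}
  \abs{\cf{X}(t+h)-\cf{X}(t)}\le\Ex\bigl[\abs{\exp(i\,h\T X)-1}\bigr],
\end{align}
whose right-hand side does not depend on $t$. As $h\to0$, the integrand tends to $0$ pointwise and is bounded by $2$, so dominated convergence sends it to $0$. This gives uniform continuity.

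\textbf{(iii).} The identity $\cf{X}(-t)=\Ex[\overline{\exp(i\,t\T X)}]=\overline{\cf{X}(t)}$ is immediate. If $X\eqd -X$, then $\cf{X}(t)=\cf{-X}(t)=\cf{X}(-t)=\overline{\cf{X}(t)}$, so $\cf{X}$ is real. For the converse, if $\cf{X}$ is real then $\cf{-X}(t)=\cf{X}(-t)=\overline{\cf{X}(t)}=\cf{X}(t)$. Concluding $X\eqd -X$ from this requires the uniqueness theorem: a characteristic function determines its law. That theorem is stated in the next subsection and proved in \cref{app:cf-proofs}, so this is the one step that relies on a later result. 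I will cite it forward, noting that its proof does not use (iii), so there is no circularity.

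\textbf{(iv).} For positive semi-definiteness I will expand the square:
\begin{align}
  \sum_{j,l=1}^{n} z_j\overline{z_l}\,\cf{X}(t_j-t_l)
  =\Ex\Bigl[\Bigl|\sum_{j=1}^{n} z_j\exp(i\,t_j\T X)\Bigr|^2\Bigr]\ge0,
\end{align}
using $\exp(i(t_j-t_l)\T X)=\exp(i\,t_j\T X)\,\overline{\exp(i\,t_l\T X)}$ and linearity of expectation.
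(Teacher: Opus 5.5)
Your proposal is correct and follows the paper's proof essentially item by item. The matching steps are:
\begin{itemize}
  \item the same $t$-independent bound $\Ex\bigl[\abs{\exp(i\,h\T X)-1}\bigr]$ with dominated convergence for (ii);
  \item the same expansion of $\Ex\bigl[\abs{\sum_j z_j\exp(i\,t_j\T X)}^2\bigr]$ for (iv);
  \item (vii) as the special case $A=u\T$, $b=0$ of (v);
  \item the same forward appeal to \cref{thm:uniqueness}\,(i) for the converse in (iii), with the same observation that its proof does not depend on this proposition, so there is no circularity.
\end{itemize}
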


\begin{proof}
(i) is immediate.
For (ii) note that
$\abs{\cf{X}(t+h)-\cf{X}(t)} \le \Ex\bigl[\abs{\exp(i\,h\T X)-1}\bigr]$,
whose right hand side does not depend on $t$ and tends to $0$ as $h\to 0$ by
dominated convergence.
(iii): the identity $\overline{\exp(i\,t\T X)} = \exp(-i\,t\T X)$ gives
$\cf{X}(-t)=\overline{\cf{X}(t)}$, and hence $\cf{X}$ is real as soon as
$X\eqd -X$, since then $\cf{X}(-t)=\cf{-X}(t)=\cf{X}(t)$.
Conversely, if $\cf{X}$ is real then
$\cf{-X}(t)=\cf{X}(-t)=\overline{\cf{X}(t)}=\cf{X}(t)$ for every $t$, so
$X\eqd-X$ by \cref{thm:uniqueness}\,(i) below, whose proof does not use this
proposition.
For (iv) we argue
$\sum_{j,l} z_j\overline{z_l}\exp\bigl(i(t_j-t_l)\T X\bigr)
 = \bigabs{\sum_j z_j \exp(i\,t_j\T X)}^2 \ge 0$
after taking expectations.
For (v),
$\Ex[\exp(i\,t\T(AX+b))] = \exp(i\,t\T b)\,\Ex[\exp(i\,(A\T t)\T X)]$.
(vi) is the factorisation of the expectation of a product of independent
bounded random variables, and (vii) is the special case $A = u\T$, $b=0$ of
(v).
\end{proof}

Properties (\ref{eq:cf-affine}), (\ref{eq:cf-product}) and
(\ref{eq:cf-projection}) are the three
identities that carry all the ICA arguments: a linear ICA model is by
definition an independent sum of affine images, so its characteristic function
factorises into the characteristic functions of the sources evaluated along
the columns of the mixing matrix.

\begin{theorem}[Bochner's theorem; see
{\citealp[Chapter~15]{klenke2020}}; {\citealp[Section~4.2]{lukacs1970}}]
\label{thm:bochner}
A function $\phi\colon\R^d\to\C$ is the characteristic function of a
probability measure on $\R^d$ if and only if $\phi$ is continuous,
$\phi(0)=1$, and $\phi$ is positive semi-definite in the sense of
\cref{prop:cf-elementary}\,(iv).
\end{theorem}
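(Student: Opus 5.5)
The necessity direction is already contained in \cref{prop:cf-elementary}: a characteristic function is (uniformly) continuous by (ii), satisfies $\phi(0)=1$ by (i), and is positive semi-definite by (iv). All the work is in sufficiency. The plan is to build from $\phi$ a family of genuine probability densities whose characteristic functions converge pointwise to $\phi$, and then to apply L\'evy's continuity theorem. That theorem is among the results \cref{sec:cf} quotes and is proved in \cref{app:cf-proofs} without Bochner's theorem, so there is no circularity.

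\textbf{Step 1 (elementary consequences of positive semi-definiteness).} Taking $n=1$ gives $\phi(0)\ge 0$. Taking $n=2$ with $t_1=t$, $t_2=0$ gives a Hermitian $2\times 2$ matrix that must be positive semi-definite. Hence $\phi(-t)=\overline{\phi(t)}$ and $\abs{\phi(t)}\le\phi(0)=1$, so $\phi$ is bounded.

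\textbf{Step 2 (integral form of positivity).} Replace the sums by Riemann sums; this is legitimate because $\phi$ is continuous and bounded. For every continuous, compactly supported (or Schwartz) $g\colon\R^d\to\C$ this yields
\begin{align}
  \iint \phi(s-t)\,g(s)\overline{g(t)}\,ds\,dt \ge 0 .
\end{align}

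\textbf{Step 3 (candidate densities).} Fix $\sigma>0$ and set
\begin{align}
  p_\sigma(x) := (2\pi)^{-d}\int_{\R^d}\phi(t)\,e^{-\sigma\norm{t}^2/2}\,e^{-i\,t\T x}\,dt .
\end{align}
Apply Step 2 with the Gaussian-modulated exponentials $g(s)=e^{-\sigma\norm{s}^2}e^{-i\,s\T x}$ and change variables to $u=s-t$, $v=s+t$. The $v$-integral is a Gaussian integral. What remains is a positive constant times $\int\phi(u)e^{-\sigma\norm{u}^2/2}e^{-i\,u\T x}\,du$, so $p_\sigma(x)\ge 0$. (The factor on $\sigma$ in the exponent gets adjusted here; that is harmless.)

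\textbf{Step 4 (normalisation).} The function $\phi_\sigma(t):=\phi(t)e^{-\sigma\norm{t}^2/2}$ is integrable, and $p_\sigma\ge 0$ is its inverse Fourier transform. To show $p_\sigma\in L^1$ with total mass $\phi_\sigma(0)=1$, multiply $p_\sigma$ by the Gaussian damping $e^{-\epsilon\norm{x}^2/2}$ and integrate. By Fubini this gives a Gaussian average of $\phi_\sigma$ around $0$, which tends to $\phi_\sigma(0)=1$ as $\epsilon\to 0$ by continuity. Monotone convergence then gives $\int p_\sigma=1$. Once $p_\sigma$ is known to be integrable, the Fourier inversion theorem from \cref{thm:uniqueness} gives $\cf{p_\sigma}=\phi_\sigma$.

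\textbf{Step 5 (passage to the limit).} As $\sigma\to 0$ we have $\phi_\sigma\to\phi$ pointwise, and the limit $\phi$ is continuous at $0$. L\'evy's continuity theorem therefore yields a probability measure whose characteristic function is $\phi$.

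I expect Step 4 to be the main obstacle. Non-negativity and integrability of $p_\sigma$ must be obtained without assuming $\phi\in L^1$, so the Gaussian-damping and Fubini interchange has to be arranged carefully. Step 3 is the other delicate computation.
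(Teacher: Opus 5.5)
The paper does not prove this statement; it is the one result the notes quote. Your proposal would therefore fill a hole rather than reproduce an argument in the paper.

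The necessity direction is exactly \cref{prop:cf-elementary}\,(i), (ii), (iv). For sufficiency you take the standard route: Gaussian regularisation, then L\'evy's continuity theorem. That route rests only on \cref{lem:gauss-fourier} and \cref{thm:levy}, both proved in \cref{app:cf-proofs} without Bochner, so it is not circular. Steps~1--3, the mass computation in Step~4, and Step~5 are sound.

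Two small remarks on these steps:
\begin{itemize}
  \item In Step~3 no adjustment of $\sigma$ is needed. Since $\norm{s}^2+\norm{t}^2=\tfrac12(\norm{u}^2+\norm{v}^2)$, the choice $g(s)=e^{-\sigma\norm{s}^2}e^{-i\,s\T x}$ produces exactly $e^{-\sigma\norm{u}^2/2}$.
  \item That $g$ is not compactly supported, and the Riemann-sum argument of Step~2 only covers compactly supported $g$. You need to extend Step~2 by truncating with a continuous cutoff and passing to the limit by dominated convergence, using $\abs{\phi}\le1$ and $g\in L^1$.
\end{itemize}

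The genuine gap is the last sentence of Step~4. \Cref{thm:uniqueness}\,(ii) runs in the wrong direction for you. Its hypothesis is that the integrable function \emph{already is} the characteristic function of a probability measure; its conclusion is a density formula for that measure.
\begin{itemize}
  \item Applied to $\phi_\sigma$, it presupposes exactly what Step~4 is meant to establish.
  \item Applied to $\mu_\sigma(dx):=p_\sigma(x)\,dx$, it needs $\cf{\mu_\sigma}\in L^1$, which you do not know. Even then it would only return $p_\sigma$ as the inverse transform of $\cf{\mu_\sigma}$. Concluding $\cf{\mu_\sigma}=\phi_\sigma$ would require injectivity of the Fourier transform on $L^1$, which the paper does not supply.
\end{itemize}

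The repair is your own damping computation, run at an arbitrary point $s$ instead of at $0$. By Fubini and \cref{lem:gauss-fourier}, with $\gamma_{\sqrt{\epsilon}}$ the $\Normal(0,\epsilon\Id{d})$ density defined there,
\begin{align}
  \int_{\R^d} e^{i\,s\T x}\,p_\sigma(x)\,e^{-\epsilon\norm{x}^2/2}\,dx
    &= \int_{\R^d}\phi_\sigma(t)\,\gamma_{\sqrt{\epsilon}}(t-s)\,dt .
\end{align}
Now let $\epsilon\downarrow0$:
\begin{itemize}
  \item The left side tends to $\cf{p_\sigma}(s)$ by dominated convergence. This is legitimate only \emph{after} $p_\sigma\in L^1$ has been obtained from the case $s=0$ by monotone convergence, which you have already done.
  \item The right side tends to $\phi_\sigma(s)$, because $\phi_\sigma$ is bounded and continuous at $s$.
\end{itemize}
This gives $\phi_\sigma=\cf{p_\sigma}$, and your Step~5 then closes the argument.
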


Beyond the standard background listed at the start of \cref{app:cf-proofs},
\cref{thm:bochner} is the only result from the theory of characteristic
functions that any proof in these notes uses without our having proved it: the
difficult direction is a genuinely deep theorem, and reproducing it here would
take us too far afield.  (Two further classical facts are quoted in remarks --
P\'olya's criterion in \cref{rem:local-not-enough} and the Lukacs
characterisation in \cref{rem:ridge} -- but no proof depends on either.)
It is what makes statements of the form ``$\phi$ \emph{is} a characteristic
function'' verifiable without exhibiting the underlying law.
We shall repeatedly need such statements -- for instance in
\cref{thm:gauss-split}, where a supremum over admissible Gaussian factors is
taken -- although in practice we usually certify them through the more
convenient \cref{thm:levy} rather than by verifying positive
semi-definiteness.  The one place where we do verify positive
semi-definiteness directly is \cref{rem:local-not-enough}.

\subsection{How characteristic functions determine distributions}
\label{ssec:cf-uniqueness}

\begin{theorem}[Uniqueness and inversion]
\label{thm:uniqueness}
Let $\mu,\nu$ be probability measures on $\R^d$.
\begin{enumerate}[label=(\roman*)]
  \item If $\cf{\mu} = \cf{\nu}$ on all of $\R^d$, then $\mu = \nu$.
        Equivalently, for random vectors $X,Y$ in $\R^d$,
        \begin{align}
          X \eqd Y &\iff \cf{X} = \cf{Y}.
        \end{align}
  \item If in addition $\cf{\mu}\in L^1(\R^d)$, then $\mu$ has a bounded
        continuous Lebesgue density given by the inversion formula
        \begin{align}
          f(x) &= (2\pi)^{-d}\int_{\R^d}
                  \exp\bigl(-i\,t\T x\bigr)\,\cf{\mu}(t)\,dt .
        \end{align}
\end{enumerate}
\end{theorem}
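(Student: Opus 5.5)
I would prove (ii) first and derive (i) from it by Gaussian smoothing, which avoids Bochner's theorem and any appeal to \cref{prop:cf-elementary}\,(iii) (the paper notes this proof must not depend on that item). The basic tool is Fubini together with the Gaussian integral
\begin{align}
  \int_{\R^d}\exp\bigl(i\,t\T y - \tfrac{\sigma^2}{2}\norm{t}^2\bigr)\,dt
  &= (2\pi/\sigma^2)^{d/2}\exp\bigl(-\norm{y}^2/(2\sigma^2)\bigr),
\end{align}
which is the characteristic function of $\Normal(0,\sigma^{-2}\Id{d})$ up to normalisation.

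\textbf{Step 1 (smoothed inversion).} Fix $\sigma>0$ and let $g_\sigma$ be the density of $\Normal(0,\sigma^2\Id{d})$. I multiply $\cf{\mu}(t)$ by the integrable factor $\exp(-i\,t\T x-\tfrac{\sigma^2}{2}\norm{t}^2)$, integrate over $t$, and swap the order of integration by Fubini; this is legitimate because the integrand has modulus at most $\exp(-\tfrac{\sigma^2}{2}\norm{t}^2)$. The Gaussian integral above then gives
\begin{align}
  (\mu * g_\sigma)(x) &= (2\pi)^{-d}\int_{\R^d}\exp\bigl(-i\,t\T x\bigr)\,
  \cf{\mu}(t)\,e^{-\sigma^2\norm{t}^2/2}\,dt .
\end{align}
Thus the density of $\mu*\Normal(0,\sigma^2\Id{d})$ is a functional of $\cf{\mu}$ alone.

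\textbf{Step 2 (uniqueness).} If $\cf{\mu}=\cf{\nu}$, Step 1 gives $\mu*\Normal(0,\sigma^2\Id{d})=\nu*\Normal(0,\sigma^2\Id{d})$ for every $\sigma>0$. For bounded continuous $f$, let $X\sim\mu$ and $N\sim\Normal(0,\Id{d})$ be independent. Then $\Ex f(X+\sigma N)\to\Ex f(X)$ as $\sigma\to0$ by dominated convergence, and likewise for $\nu$. Hence $\int f\,d\mu=\int f\,d\nu$ for all such $f$. This class determines Borel probability measures on $\R^d$, for instance by approximating indicators of closed sets by $f_n(x)=\max(0,1-n\,\mathrm{dist}(x,F))$ and then applying the $\pi$--$\lambda$ theorem. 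So $\mu=\nu$. The equivalence for random vectors is just a restatement of this.

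\textbf{Step 3 (density when $\cf{\mu}\in L^1$).} In Step 1 I let $\sigma\to0$. The right-hand side converges uniformly in $x$ to $f(x)$ by dominated convergence, with dominating function $\abs{\cf{\mu}}$. The resulting $f$ is bounded by $(2\pi)^{-d}\norm{\cf{\mu}}_{L^1}$ and is continuous, again by dominated convergence. On the left-hand side, for any continuous $h$ with compact support, $\int h\,(\mu*g_\sigma)\,dx=\Ex h(X+\sigma N)\to\int h\,d\mu$. Combined with the uniform convergence on the support of $h$, this gives $\int h\,d\mu=\int h f\,dx$ for all such $h$. It follows that $f\ge0$, that $f\,dx$ is a locally finite measure agreeing with $\mu$ on compactly supported continuous functions, and hence that $\mu=f\,dx$ (by Riesz uniqueness, or the same approximation argument as in Step 2).

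\textbf{Main obstacle.} The delicate point is the final identification in Step 3. A priori $f\,dx$ is not known to be a probability measure, so the argument has to be phrased through compactly supported test functions rather than through bounded ones. Once it holds for those, monotone convergence gives $\int f=1$, and the two measures coincide.
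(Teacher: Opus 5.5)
Your proof is correct and follows the paper's argument essentially step for step. You smooth by an independent Gaussian and use Fubini with the Gaussian Fourier identity to write the density of $X+\sigma N$ as a functional of $\cf{\mu}$ alone. You then let $\sigma\to0$ against bounded continuous test functions for (i), and for (ii) combine uniform convergence of the smoothed densities with agreement on $C_c(\R^d)$ and monotone convergence to get $\int f=1$.

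The only differences are cosmetic. Despite your opening sentence, you actually prove (i) before and independently of (ii), exactly as the paper does. The paper also proves the Gaussian integral you quote, as a separate lemma, rather than assuming it.
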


The proof is in \cref{ssec:proof-uniqueness}; see also
{\citealp[Chapter~15]{klenke2020}} and {\citealp[Chapter~5]{kallenberg2021}}.

\begin{theorem}[Cram\'er--Wold device; see
{\citealp[Chapter~15]{klenke2020}}]
\label{thm:cramer-wold}
Let $X,Y$ be random vectors in $\R^d$.  Then $X \eqd Y$ if and only if
$u\T X \eqd u\T Y$ for every $u \in \R^d$.
Likewise, $X_n \Rightarrow X$ weakly if and only if
$u\T X_n \Rightarrow u\T X$ for every $u \in \R^d$.
\end{theorem}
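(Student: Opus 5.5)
The plan is to reduce both statements to properties of characteristic functions, using \cref{prop:cf-elementary}\,(vii) together with \cref{thm:uniqueness}\,(i) for the first claim, and L\'evy's continuity theorem (\cref{thm:levy}, proved in the appendix) for the second.

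\emph{Equality in law.} The direction ``$\Rightarrow$'' is immediate: $u\T X$ and $u\T Y$ are the same measurable function of $X$ and $Y$ respectively, so they have the same law. For ``$\Leftarrow$'' I would fix $t\in\R^d$ and apply \cref{eq:cf-projection} with $u=t$ and $s=1$. This gives
\begin{align}
  \cf{X}(t) = \cf{t\T X}(1) = \cf{t\T Y}(1) = \cf{Y}(t),
\end{align}
where the middle equality uses $t\T X \eqd t\T Y$. Since $t$ was arbitrary, $\cf{X}=\cf{Y}$ on all of $\R^d$, and \cref{thm:uniqueness}\,(i) yields $X\eqd Y$.

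\emph{Weak convergence.} The direction ``$\Rightarrow$'' follows from the definition of $\Rightarrow$: for bounded continuous $f\colon\R\to\R$, the map $x\mapsto f(u\T x)$ is bounded and continuous on $\R^d$. For ``$\Leftarrow$'' I would use the same identity as above, now with weak convergence. If $u\T X_n\Rightarrow u\T X$, then testing against the bounded continuous functions $\cos(s\,\cdot)$ and $\sin(s\,\cdot)$ gives $\cf{u\T X_n}(s)\to\cf{u\T X}(s)$. Taking $u=t$ and $s=1$, this becomes $\cf{X_n}(t)\to\cf{X}(t)$ for every $t\in\R^d$. The limit is already known to be the characteristic function of the law of $X$, so L\'evy's continuity theorem in its easy form, ``pointwise convergence of c.f.s to a c.f.\ implies weak convergence'', gives $X_n\Rightarrow X$.

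\emph{Main obstacle.} All the genuine difficulty is concentrated in the quoted theorems: uniqueness for the first statement, and, for the second, the tightness argument inside L\'evy's continuity theorem. The Cram\'er--Wold reduction itself is just the observation that the multivariate characteristic function at $t$ equals the univariate one of $t\T X$ at $1$. The one point to check is that the appendix proof of \cref{thm:levy} does not itself rely on \cref{thm:cramer-wold}, so that the argument is not circular. If it did, I would prove tightness of $(X_n)$ directly: tightness of each coordinate sequence $e_j\T X_n$ follows from its weak convergence, and coordinatewise tightness implies joint tightness. Every subsequential limit then has c.f.\ $\cf{X}$, hence equals $\Law(X)$ by \cref{thm:uniqueness}, which gives $X_n\Rightarrow X$.
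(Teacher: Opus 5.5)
Your proposal is correct and follows the paper's proof. The first claim uses the identity $\cf{X}(t)=\cf{t\T X}(1)$ together with \cref{thm:uniqueness}\,(i). For the converse of the second claim, pointwise convergence $\cf{X_n}\to\cf{X}$ is fed into \cref{thm:levy}\,(ii), whose appendix proof indeed does not use \cref{thm:cramer-wold}, and \cref{thm:uniqueness}\,(i) then identifies the limit law as $\Law(X)$; you leave this last step implicit, while the paper states it.

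There are two differences from the paper. For ``$X_n\Rightarrow X$ implies $u\T X_n\Rightarrow u\T X$'' you compose with the continuous map $x\mapsto u\T x$, which is more direct than the paper's round trip through \cref{thm:levy}\,(i) in $\R^d$ and \cref{thm:levy}\,(ii) in $\R$. Also, the direction you call the ``easy form'' of L\'evy's theorem is actually part~(ii), the part that needs tightness, as your final paragraph itself acknowledges.
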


\begin{proof}
By \cref{eq:cf-projection} we have $\cf{u\T X}(1) = \cf{X}(u)$, so knowing the
laws of all one-dimensional projections is the same as knowing $\cf{X}$
pointwise, and \cref{thm:uniqueness}\,(i) applies.

For the second statement, suppose first $X_n\Rightarrow X$.  By
\cref{thm:levy}\,(i) in $\R^d$, $\cf{X_n}\to\cf{X}$ pointwise, so
$\cf{u\T X_n}(s) = \cf{X_n}(su)\to\cf{X}(su) = \cf{u\T X}(s)$ for every
$s$; the limit is continuous at $s=0$, so \cref{thm:levy}\,(ii) in $d=1$
gives $u\T X_n\Rightarrow u\T X$.
Conversely, suppose $u\T X_n\Rightarrow u\T X$ for every $u$.  By
\cref{thm:levy}\,(i) in $d=1$ and \cref{eq:cf-projection} at $s=1$ we get
$\cf{X_n}(u)\to\cf{X}(u)$ for every $u\in\R^d$, and $\cf{X}$ is continuous
at the origin, so \cref{thm:levy}\,(ii) yields a random vector $X'$ with
$\cf{X'}=\cf{X}$ and $X_n\Rightarrow X'$; by \cref{thm:uniqueness}\,(i),
$X'\eqd X$ and hence $X_n\Rightarrow X$.
\end{proof}

\begin{remark}[Two different theorems of Cram\'er]
\label{rem:two-cramers}
\Cref{thm:cramer-wold} is the \emph{Cram\'er--Wold device}, a statement about
reducing $\R^d$ to $\R$.  It should not be confused with \emph{Cram\'er's
decomposition theorem}, \cref{thm:cramer} below, which says that a Gaussian law
has only Gaussian factors.  Both are used in these notes, for entirely different
purposes.
\end{remark}

\begin{theorem}[L\'evy's continuity theorem]
\label{thm:levy}
Let $(X_n)_{n\in\N}$ be random vectors in $\R^d$.
\begin{enumerate}[label=(\roman*)]
  \item If $X_n \Rightarrow X$ weakly, then $\cf{X_n}\to\cf{X}$ pointwise.
  \item Conversely, if $\cf{X_n}(t) \to \phi(t)$ for every $t\in\R^d$ and
        $\phi$ is continuous at $t=0$, then $\phi$ is the characteristic
        function of a random vector $X$ and $X_n \Rightarrow X$ weakly.
\end{enumerate}
\end{theorem}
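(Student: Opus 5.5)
Part (i) follows from the definition of weak convergence. For fixed $t\in\R^d$ the maps $x\mapsto\cos(t\T x)$ and $x\mapsto\sin(t\T x)$ are bounded and continuous. Hence $\Ex[\cos(t\T X_n)]\to\Ex[\cos(t\T X)]$ and likewise for $\sin$. Since $\cf{X_n}(t)=\Ex[\cos(t\T X_n)]+i\,\Ex[\sin(t\T X_n)]$, we obtain $\cf{X_n}(t)\to\cf{X}(t)$.

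For part (ii) I would follow the classical route in three steps: tightness, then Prokhorov (Helly selection), then identification of the limit by \cref{thm:uniqueness}\,(i).

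\textbf{Step 1 (tightness).} This is the main obstacle, and it is where continuity of $\phi$ at $0$ is used. I would first prove the one-dimensional truncation inequality: for a real random variable $Y$ and $\delta>0$,
\begin{align}
  \Prb\bigl(\abs{Y}\ge 2/\delta\bigr)
  \le \frac{1}{\delta}\int_{-\delta}^{\delta}\bigl(1-\cf{Y}(s)\bigr)\,ds .
\end{align}
The proof is Fubini together with the computation $\frac{1}{2\delta}\int_{-\delta}^{\delta}(1-e^{isy})\,ds = 1-\frac{\sin(\delta y)}{\delta y}$. This quantity is nonnegative and at least $1/2$ when $\abs{\delta y}\ge 2$.

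Apply this to $Y=e_j\T X_n$ for $j=1,\dots,d$, using $\cf{e_j\T X_n}(s)=\cf{X_n}(se_j)$ from \cref{eq:cf-projection}. Since $\phi(0)=\lim\cf{X_n}(0)=1$ and $\phi$ is continuous at $0$, we can choose $\delta$ with $\frac{1}{\delta}\int_{-\delta}^{\delta}\abs{1-\phi(se_j)}\,ds<\varepsilon$. The integrands are bounded by $2$, so dominated convergence gives $\frac1\delta\int_{-\delta}^{\delta}(1-\cf{X_n}(se_j))\,ds\to\frac1\delta\int_{-\delta}^{\delta}(1-\phi(se_j))\,ds$. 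Hence $\limsup_n\Prb(\abs{e_j\T X_n}\ge 2/\delta)\le\varepsilon$. The finitely many remaining $n$ are handled individually, since each single law is tight. A union bound over the $d$ coordinates then makes $(\Law(X_n))_n$ tight.

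\textbf{Step 2 (subsequential limits).} By Prokhorov's theorem, which I take as standard background, every subsequence of $(X_n)$ has a further subsequence converging weakly to some law $\mu$. By part (i), $\cf{\mu}$ is the pointwise limit along that subsequence, so $\cf{\mu}=\phi$. This shows that $\phi$ is a characteristic function. Let $X$ be a random vector with law $\mu$.

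\textbf{Step 3 (identification and full convergence).} Every subsequential weak limit has characteristic function $\phi$. By \cref{thm:uniqueness}\,(i), all such limits equal $\Law(X)$. Now suppose $X_n\not\Rightarrow X$. Then there exist a bounded continuous $f$, an $\varepsilon>0$ and a subsequence along which $\abs{\Ex f(X_n)-\Ex f(X)}\ge\varepsilon$. By Step 2 this subsequence has a further subsequence converging weakly to $X$, which is a contradiction. Hence $X_n\Rightarrow X$.

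The only delicate analytic point is Step 1. Everything else is soft compactness plus uniqueness.
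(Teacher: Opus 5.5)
Your proposal is correct and follows the paper's proof essentially step for step. Part (i) uses the bounded continuous functions $\cos(t\T x)$ and $\sin(t\T x)$, and part (ii) goes through the truncation inequality of \cref{lem:tightness-estimate} on the coordinates $e_j\T X_n$, dominated convergence, Prokhorov's theorem, identification of the limit via part (i) and \cref{thm:uniqueness}\,(i), and the sub-subsequence argument. The only cosmetic difference is that you integrate $1-\cf{Y}$ rather than $1-\Real\cf{Y}$; this is harmless because $\cf{Y}(-s)=\overline{\cf{Y}(s)}$ makes the integral over the symmetric interval real.
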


The proof is in \cref{ssec:proof-levy}; see also
{\citealp[Section~15.3]{klenke2020}}.

The continuity requirement in (ii) is not a technicality: for
$X_n \sim \Normal(0,n)$ we have $\cf{X_n}(t) = \exp(-nt^2/2) \to
\mathbf{1}_{\{t=0\}}$ pointwise, and the limit -- discontinuous at the origin
-- is not a characteristic function; the mass has escaped to infinity.

\begin{remark}[Local equality of characteristic functions is \emph{not}
enough]
\label{rem:local-not-enough}
\Cref{thm:uniqueness} requires $\cf{\mu}=\cf{\nu}$ on \emph{all} of $\R^d$.
Agreement on a neighbourhood of the origin does not suffice.
The classical counterexample goes back to \citet{polya1949}; in the form given
by \citet[Chapter~XV]{feller1971} and \citet[p.~85]{lukacs1970} it
reads as follows.
Let
\begin{align}
  \phi_1(t) &:= \bigl(1-\abs{t}\bigr)_+ ,
\end{align}
the ``tent function''.  It \emph{is} a characteristic function.  Indeed, with
$h := \mathbf{1}_{[-1/2,1/2]}$ one has, for every $t\in\R$,
\begin{align}
  \label{eq:tent-selfconv}
  (h*h)(t) &= \int_{\R} h(u)\,h(t-u)\,du = \bigl(1-\abs{t}\bigr)_+ = \phi_1(t),
\end{align}
and since $h$ is real-valued and even, $\phi_1(t_j-t_l) = \int_\R
h(t_j-u)h(t_l-u)\,du$, whence for all $n\in\N$, $t_1,\dots,t_n\in\R$ and
$z_1,\dots,z_n\in\C$
\begin{align}
  \sum_{j,l=1}^{n} z_j\overline{z_l}\,\phi_1(t_j-t_l)
    &= \int_{\R}\Bigabs{\sum_{j=1}^{n} z_j\,h(t_j-u)}^{2}du \;\ge\; 0 .
\end{align}
So $\phi_1$ is continuous, $\phi_1(0)=1$ and positive semi-definite, and
\cref{thm:bochner} certifies that $\phi_1 = \cf{\mu_1}$ for a probability
measure $\mu_1$ on $\R$.  (Alternatively one may invoke P\'olya's criterion,
\citet[Section~XV.2]{feller1971}: every continuous even
$\phi\colon\R\to\R$ that is convex on $[0,\infty)$ and satisfies $\phi(0)=1$
and $\phi(t)\to0$ as $t\to\infty$ is a characteristic function; the tent
function is the standard example, being the maximum of two affine functions on
$[0,\infty)$.)
\emph{Now} \cref{thm:uniqueness}\,(ii) may be applied, since
$\phi_1\in L^1(\R)$: the law $\mu_1$ is absolutely continuous with density
\begin{align}
  f(x) &= \frac{1}{2\pi}\int_{-1}^{1}\bigl(1-\abs{t}\bigr)e^{-itx}\,dt
        = \frac{1}{\pi}\int_0^1 (1-t)\cos(tx)\,dt
        = \frac{1-\cos x}{\pi x^{2}} .
\end{align}
Let $\phi_2$ be the $2$-periodic extension of $1-\abs{t}$ from $[-1,1]$ to
all of $\R$.  Expanding the resulting triangular wave into its Fourier series
gives
\begin{align}
  \phi_2(t) &= \frac{1}{2}
    + \sum_{n \text{ odd}} \frac{4}{\pi^2n^2}\cos(n\pi t),
\end{align}
which is the characteristic function of the \emph{discrete} law with mass
$\tfrac{1}{2}$ at $0$ and mass $2/(\pi^2n^2)$ at each of $\pm n\pi$ for odd
$n\ge 1$; the masses do sum to
$\tfrac12 + \tfrac{4}{\pi^2}\sum_{n\text{ odd}}n^{-2}
 = \tfrac12+\tfrac{4}{\pi^2}\cdot\tfrac{\pi^2}{8} = 1$.
Then $\phi_1 = \phi_2$ on $[-1,1]$, while the two laws are as different as
they could be: one is absolutely continuous, the other purely atomic.

This is why every local statement in these notes is stated with the explicit
qualifier ``in a neighbourhood of the origin'', and why such statements are
never silently upgraded to statements about laws.  The upgrade is legitimate
under an analyticity hypothesis; see \cref{cor:rigidity}.
\end{remark}

\subsection{Moments, the distinguished logarithm, and cumulants}
\label{ssec:cf-cumulants}

\begin{proposition}[Moments and derivatives]
\label{prop:moments}
Let $X$ be a random vector in $\R^d$ and $n\in\N$ with
$\Ex\bigl[\norm{X}^n\bigr]<\infty$.
Then $\cf{X}\in C^n(\R^d;\C)$ and, for every multi-index $\alpha$ with
$\abs{\alpha}\le n$ and \emph{every} $t\in\R^d$,
\begin{align}
  \label{eq:cf-derivative}
  \partial^\alpha \cf{X}(t)
    &= i^{\abs{\alpha}}\,\Ex\bigl[X^\alpha e^{i\,t\T X}\bigr];
  & \text{in particular} \quad
  \partial^\alpha \cf{X}(0) &= i^{\abs{\alpha}}\,\Ex\bigl[X^\alpha\bigr].
\end{align}
In the scalar case $d=1$ this gives the Taylor expansion
\begin{align}
  \label{eq:cf-taylor}
  \cf{X}(t) &= \sum_{m=0}^{n}\frac{\Ex[X^m]}{m!}(it)^m + o\bigl(\abs{t}^n\bigr),
  \qquad t\to 0 .
\end{align}
\end{proposition}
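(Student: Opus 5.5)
The plan is to prove \cref{eq:cf-derivative} by induction on $\abs{\alpha}$, differentiating under the expectation sign each time, and then read off the Taylor expansion \cref{eq:cf-taylor} from Taylor's theorem together with uniform continuity of the top derivative. The base case $\alpha=0$ is the definition of $\cf{X}$. For the inductive step, suppose $\abs{\alpha}<n$ and that $\partial^\alpha\cf{X}(t) = i^{\abs{\alpha}}\Ex[X^\alpha e^{i\,t\T X}]$ holds for all $t$. Fix a coordinate direction $e_j$ and consider the difference quotient
\begin{align}
  \frac{\partial^\alpha\cf{X}(t+he_j)-\partial^\alpha\cf{X}(t)}{h}
  &= i^{\abs{\alpha}}\,\Ex\Bigl[X^\alpha e^{i\,t\T X}\,\frac{e^{ihX_j}-1}{h}\Bigr].
\end{align}
The elementary inequality $\abs{e^{iu}-1}\le\abs{u}$ bounds the integrand by $\abs{X^\alpha}\,\abs{X_j}\le\norm{X}^{\abs{\alpha}+1}$. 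This bound is integrable because $\Ex[\norm{X}^m]\le 1+\Ex[\norm{X}^n]<\infty$ for every $m\le n$.

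Pointwise, $(e^{ihX_j}-1)/h\to iX_j$ as $h\to 0$. Dominated convergence therefore gives
\begin{align}
  \partial_j\partial^\alpha\cf{X}(t) &= i^{\abs{\alpha}+1}\,\Ex\bigl[X^{\alpha+e_j}e^{i\,t\T X}\bigr],
\end{align}
which is the formula for the multi-index $\alpha+e_j$, valid at every $t$. Continuity of each $\partial^\alpha\cf{X}$ for $\abs{\alpha}\le n$ follows in the same way as \cref{prop:cf-elementary}\,(ii). We have
\begin{align}
  \abs{\partial^\alpha\cf{X}(t+h)-\partial^\alpha\cf{X}(t)} &\le \Ex\bigl[\abs{X^\alpha}\,\abs{e^{i\,h\T X}-1}\bigr],
\end{align}
and the right-hand side tends to $0$ as $h\to0$ by dominated convergence with dominator $2\norm{X}^{\abs{\alpha}}$. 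So all partial derivatives up to order $n$ exist, are given by the stated formula, and are (uniformly) continuous, i.e.\ $\cf{X}\in C^n$. Evaluating at $t=0$ gives the moment formula.

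For \cref{eq:cf-taylor} with $d=1$, I apply Taylor's theorem with the Lagrange/mean-value form of the remainder, separately to the real and imaginary parts of $\cf{X}$, each of which is $C^n$. This yields
\begin{align}
  \cf{X}(t) &= \sum_{m=0}^{n-1}\frac{\cf{X}^{(m)}(0)}{m!}t^m + \frac{t^n}{n!}\bigl(\Real\cf{X}^{(n)}(\theta_1 t)+i\,\Imag\cf{X}^{(n)}(\theta_2 t)\bigr)
\end{align}
for some $\theta_1,\theta_2\in(0,1)$. Since $\cf{X}^{(n)}$ is continuous at $0$, the last term equals $\frac{t^n}{n!}\cf{X}^{(n)}(0)+o(\abs{t}^n)$. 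Substituting $\cf{X}^{(m)}(0)=i^m\Ex[X^m]$ gives the expansion.

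The main obstacle is the domination step in the induction. One needs the bound $\abs{e^{iu}-1}\le\abs{u}$ to produce exactly one extra power of $\norm{X}$, and the moment assumption to control all intermediate orders, including the observation that finite $n$-th moments imply finite lower moments. The Taylor step is routine once one remembers that the mean-value remainder must be applied to the real and imaginary parts separately.
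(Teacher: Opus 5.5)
Your proposal is correct and follows essentially the same argument as the paper: induction on $\abs{\alpha}$, a difference quotient dominated via $\abs{e^{iu}-1}\le\abs{u}$ by $\norm{X}^{\abs{\alpha}+1}$, and dominated convergence for both the derivative formula and continuity. The only deviation is that you apply the Lagrange remainder to the real and imaginary parts separately and then use continuity of $\cf{X}^{(n)}$ at $0$, where the paper applies Taylor's theorem with Peano remainder directly. Both work equally well.
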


\begin{proof}
We prove \cref{eq:cf-derivative} by induction on $\abs{\alpha}$, the case
$\alpha=0$ being the definition.  Suppose it holds for $\alpha$ with
$\abs{\alpha}<n$ and let $j\in\{1,\dots,d\}$.  For $h\neq0$,
\begin{align}
  \frac{\partial^\alpha\cf{X}(t+he_j)-\partial^\alpha\cf{X}(t)}{h}
    &= i^{\abs{\alpha}}\,
       \Ex\left[X^\alpha e^{i\,t\T X}\,
       \frac{e^{ihX_j}-1}{h}\right],
\end{align}
and the integrand is dominated in modulus by
$\abs{X^\alpha}\abs{X_j}\le\norm{X}^{\abs{\alpha}+1}$, which is integrable
because $\abs{\alpha}+1\le n$; here we used
$\abs{e^{ihx}-1}\le\abs{hx}$.  Since
$(e^{ihX_j}-1)/h\to iX_j$ pointwise as $h\to0$, dominated convergence gives
$\partial_j\partial^\alpha\cf{X}(t) = i^{\abs{\alpha}+1}
 \Ex[X^\alpha X_j e^{i\,t\T X}]$, which is the claim for the multi-index
$\alpha+e_j$.
The same domination shows that each such derivative is continuous in $t$, so
$\cf{X}\in C^n$.
\Cref{eq:cf-taylor} is Taylor's theorem with Peano remainder applied to the
$C^n$ function $\cf{X}$, together with \cref{eq:cf-derivative} at $t=0$.
\end{proof}

A partial converse holds, and it needs no moment hypothesis at all: for a
real-valued random variable $Y$ and $m\in\N$, if $\cf{Y}$ is $2m$ times
differentiable at the single point $0$, then $\Ex[Y^{2m}]<\infty$; the odd
moments up to order $2m$ are then finite by Lyapunov's inequality.  This is
\cref{lem:moments-from-derivatives}, proved in \cref{app:cf-proofs}.

All our arguments manipulate \emph{logarithms} of characteristic functions.
Since a characteristic function is complex valued and may vanish, this
requires a small amount of care, which the following lemma settles once and
for all.

\begin{lemma}[Distinguished logarithm]
\label{lem:distinguished-log}
Let $X$ be a random vector in $\R^d$.
\emph{(Uniqueness.)}  If $U\ni0$ is a connected open subset of $\R^d$ and
$\psi,\tilde\psi\colon U\to\C$ are continuous with
$\psi(0)=\tilde\psi(0)=0$ and $\exp\psi = \exp\tilde\psi = \cf{X}$ on
$U$, then $\psi=\tilde\psi$; so the germ at the origin is well defined and
we may compare distinguished logarithms on any such $U$.
\emph{(Existence.)}  There exist $\delta>0$ and a continuous function
$\psi_X\colon B_\delta(0)\to\C$ with $\psi_X(0)=0$ and
\begin{align}
  \cf{X}(t) &= \exp\bigl(\psi_X(t)\bigr), & t &\in B_\delta(0),
\end{align}
where $B_\delta(0) := \{t\in\R^d : \norm{t}<\delta\}$.
We call $\psi_X$ the \emph{cumulant generating function} of $X$ (near the
origin).
If $\Ex\bigl[\norm{X}^n\bigr]<\infty$, then $\psi_X \in C^n$ on
$B_\delta(0)$.
\end{lemma}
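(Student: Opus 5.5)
The lemma has three parts — uniqueness, existence, and $C^n$ regularity — and I will prove them in that order.

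\emph{Uniqueness.} Set $h := (\psi-\tilde\psi)/(2\pi i)$ on $U$. From $\exp\psi = \exp\tilde\psi$ we get $\exp(\psi-\tilde\psi)=1$, so $h$ takes values in $\Ints$. It is also continuous, since $\psi$ and $\tilde\psi$ are. On the connected set $U$, a continuous integer-valued function is constant: the level set $\{h=h(0)\}$ is both open and closed, because $\Ints$ is discrete. As $h(0)=0$, we get $h\equiv 0$. Nothing about $\cf{X}$ is used here beyond $\exp\psi=\exp\tilde\psi$. In particular, two distinguished logarithms defined on balls $B_\delta(0)$ and $B_{\delta'}(0)$ agree on the smaller ball, so the germ is well defined.

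\emph{Existence.} By \cref{prop:cf-elementary}\,(i)--(ii), $\cf{X}(0)=1$ and $\cf{X}$ is continuous. Choose $\delta>0$ with $\abs{\cf{X}(t)-1}<1$ on $B_\delta(0)$. On the disc $\{w\in\C:\abs{w-1}<1\}$ the principal branch $\Log$ is holomorphic, and it is given by the series $\Log w = \sum_{m\ge1}(-1)^{m+1}(w-1)^m/m$. Define $\psi_X := \Log\circ\cf{X}$ on $B_\delta(0)$. This function is continuous as a composition of continuous maps. It satisfies $\psi_X(0)=\Log 1 = 0$ and $\exp\psi_X = \cf{X}$. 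The only choice to get right is the radius $\delta$, so that $\cf{X}$ stays inside the disc of holomorphy of $\Log$ and away from the branch cut. Once that is done, existence is immediate.

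\emph{Regularity.} If $\Ex\norm{X}^n<\infty$, then \cref{prop:moments} gives $\cf{X}\in C^n(\R^d;\C)$. Since $\Log$ is holomorphic, it is in particular real-$C^\infty$ as a map $\R^2\to\R^2$. The composite $\psi_X=\Log\circ\cf{X}$ is therefore $C^n$ by the chain rule. One can also write the derivatives explicitly: for instance $\partial_j\psi_X = \partial_j\cf{X}/\cf{X}$, and higher derivatives follow by the Faà di Bruno formula, with $\cf{X}\neq 0$ on $B_\delta(0)$.

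I expect no genuine obstacle. The most delicate point is justifying that the composition of the complex-valued $C^n$ map $\cf{X}$ with the holomorphic $\Log$ is $C^n$ in the real sense. I would settle this by viewing $\Log$ as a smooth map of $\R^2$, so that the multivariable real chain rule applies.
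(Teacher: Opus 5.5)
Your proof is correct and follows the paper's argument essentially step by step. Existence uses $\Log\circ\cf{X}$ on a ball where $\abs{\cf{X}-1}<1$. Uniqueness uses that the continuous integer-valued function $(\psi-\tilde\psi)/(2\pi i)$ on a connected $U$ vanishes identically. Regularity combines \cref{prop:moments} with the holomorphy of $\Log$ through the chain rule.
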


\begin{proof}
By \cref{prop:cf-elementary}\,(i)--(ii) the function $\cf{X}$ is continuous
with $\cf{X}(0)=1$, so there is a $\delta>0$ with
$\abs{\cf{X}(t)-1}<1$ for all $t \in B_\delta(0)$.
Hence $\cf{X}$ maps $B_\delta(0)$ into the open disc of radius $1$ around $1$,
which is contained in the slit plane $\C\setminus(-\infty,0]$ on which the
principal branch $\Log$ of the complex logarithm is defined and continuous.
Put $\psi_X := \Log\circ\,\cf{X}$; then $\psi_X$ is continuous,
$\psi_X(0)=\Log 1 = 0$ and $\exp\circ\,\psi_X = \cf{X}$.

For uniqueness, let $U\ni0$ be any connected open subset of $\R^d$ and let
$\psi,\tilde\psi\colon U\to\C$ be continuous with
$\psi(0)=\tilde\psi(0)=0$ and $\exp\psi = \exp\tilde\psi = \cf{X}$ on $U$.
Then $\exp(\tilde\psi-\psi)\equiv1$ on $U$, so $(\tilde\psi-\psi)/(2\pi i)$
is a continuous integer-valued function on the connected set $U$ vanishing at
the origin, hence identically $0$.  This covers in particular the case
$U=B_\delta(0)$, $\psi=\psi_X$.
The regularity statement follows from \cref{prop:moments} because $\Log$ is
holomorphic on the slit plane.
\end{proof}

\begin{definition}[Cumulants]
\label{def:cumulants}
Let $X$ be a real-valued random variable with $\Ex\bigl[\abs{X}^n\bigr] <
\infty$ and let $\psi_X$ be as in \cref{lem:distinguished-log}.
The \emph{cumulants} $\kappa_1(X),\dots,\kappa_n(X)$ of $X$ are the
coefficients in the expansion
\begin{align}
  \label{eq:cumulant-expansion}
  \psi_X(t) &= \sum_{m=1}^{n}\frac{\kappa_m(X)}{m!}(it)^m + o(t^n),
  \qquad t\to 0,
\end{align}
equivalently $\kappa_m(X) = i^{-m}\,\psi_X^{(m)}(0)$.
The first four cumulants are
\begin{align}
  \kappa_1(X) &= \Ex[X], &
  \kappa_2(X) &= \Var(X), \\
  \kappa_3(X) &= \Ex\bigl[(X-\Ex X)^3\bigr], &
  \kappa_4(X) &= \Ex\bigl[(X-\Ex X)^4\bigr] - 3\Var(X)^2 .
\end{align}
\end{definition}

\begin{proposition}[Cumulant calculus]
\label{prop:cumulant-calculus}
Let $X,Y$ be real-valued random variables with finite $n$-th absolute moments
and let $a,b\in\R$.  Then, for $m \le n$,
\begin{enumerate}[label=(\roman*)]
  \item \emph{(additivity)} if $X \indep Y$ then
        $\kappa_m(X+Y) = \kappa_m(X)+\kappa_m(Y)$;
  \item \emph{(homogeneity)} $\kappa_m(aX) = a^m\kappa_m(X)$;
  \item \emph{(translation invariance)} $\kappa_m(X+b) = \kappa_m(X)$ for
        $m\ge 2$, and $\kappa_1(X+b) = \kappa_1(X)+b$.
\end{enumerate}
\end{proposition}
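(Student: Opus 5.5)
The plan is to work entirely at the level of the distinguished logarithm from \cref{lem:distinguished-log}: translate each operation on random variables into an identity between cumulant generating functions near the origin, then read off Taylor coefficients via \cref{def:cumulants}. The coefficients in \cref{eq:cumulant-expansion} are unique: two expansions of the same function with $o(t^n)$ remainder have equal coefficients. Hence any identity between the $\psi$'s on a neighbourhood of $0$ transfers directly to the cumulants. Throughout, $\psi_X$ is $C^n$ near $0$ because the $n$-th absolute moments are finite, so the expansion exists.

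For (i), I would use \cref{eq:cf-product}, which gives $\cf{X+Y}=\cf{X}\cf{Y}$. I choose $\delta>0$ small enough that $\psi_X$ and $\psi_Y$ are both defined on $B_\delta(0)$, and also $\psi_{X+Y}$. Then $\psi_X+\psi_Y$ is continuous on this connected set, vanishes at $0$, and exponentiates to $\cf{X+Y}$. By the uniqueness part of \cref{lem:distinguished-log}, $\psi_{X+Y}=\psi_X+\psi_Y$ on $B_\delta(0)$. Note that $X+Y$ has a finite $n$-th absolute moment by Minkowski's inequality. Adding the two expansions \cref{eq:cumulant-expansion} and comparing coefficients gives additivity.

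For (ii) and (iii), \cref{eq:cf-affine} gives $\cf{aX+b}(t)=e^{itb}\cf{X}(at)$. The candidate logarithm is $t\mapsto itb+\psi_X(at)$. It is defined and continuous for $\abs{t}<\delta/\abs{a}$ (any $t$ if $a=0$), vanishes at $0$, and exponentiates correctly. Uniqueness in \cref{lem:distinguished-log} again identifies it with $\psi_{aX+b}$ near the origin. Substituting the expansion yields
\begin{align}
  \psi_{aX+b}(t) = itb + \sum_{m=1}^n \frac{a^m\kappa_m(X)}{m!}(it)^m + o(t^n).
\end{align}
Here $o((at)^n)=o(t^n)$. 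Taking $b=0$ gives (ii). Taking $a=1$ gives (iii): the shift contributes only to the linear coefficient, adding $b$ to $\kappa_1$.

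The only point needing care, and the main obstacle such as it is, is justifying that the constructed functions really are \emph{the} distinguished logarithm. This requires checking continuity, the value $0$ at the origin, and a common connected domain, which is exactly what the uniqueness clause of \cref{lem:distinguished-log} is for. The remaining coefficient comparison is routine.
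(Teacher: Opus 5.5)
Your proposal is correct and follows the paper's proof. Both arguments identify $\psi_{X+Y}=\psi_X+\psi_Y$ and $\psi_{aX+b}(t)=itb+\psi_X(at)$ near the origin through the uniqueness clause of \cref{lem:distinguished-log}, and then compare the Taylor coefficients in \cref{eq:cumulant-expansion}; handling (ii) and (iii) in one affine step and noting that $X+Y$ has a finite $n$-th absolute moment are only minor refinements of that same argument.
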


\begin{proof}
By \cref{eq:cf-product} and \cref{lem:distinguished-log} we have
$\psi_{X+Y} = \psi_X+\psi_Y$ on a neighbourhood of the origin, since both
sides are continuous, vanish at $0$ and exponentiate to $\cf{X+Y}$;
comparing Taylor coefficients gives (i).
For (ii), $\cf{aX}(t) = \cf{X}(at)$ by \cref{eq:cf-affine}, so
$\psi_{aX}(t) = \psi_X(at)$ and the $m$-th coefficient picks up $a^m$.
For (iii), $\psi_{X+b}(t) = \psi_X(t) + ibt$, which changes only the
coefficient of $(it)^1$.
\end{proof}

Additivity is the reason cumulants, rather than moments, are the natural
bookkeeping device for ICA: a mixture is an independent sum, so its cumulants
are sums of the source cumulants.

\subsection{Analytic characteristic functions}
\label{ssec:cf-analytic}

\begin{theorem}[Analyticity in a strip]
\label{thm:analytic-strip}
Let $X$ be a random vector in $\R^d$ and suppose that there is an $a>0$ with
\begin{align}
  \label{eq:exp-moment}
  \Ex\bigl[\exp\bigl(a\norm{X}\bigr)\bigr] &< \infty .
\end{align}
Then the map
\begin{align}
  z &\longmapsto \Ex\bigl[\exp\bigl(i\,z\T X\bigr)\bigr]
\end{align}
is well defined and holomorphic on the tube
$S_a := \{z\in\C^d : \norm{\Imag z} < a\}$, and it restricts to $\cf{X}$ on
$\R^d$.
If \cref{eq:exp-moment} holds for every $a>0$, then $\cf{X}$ extends to an
entire function on $\C^d$.
\end{theorem}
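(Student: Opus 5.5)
The plan has three steps: a domination bound, holomorphy via Morera's theorem, and then two immediate consequences.

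\textbf{Domination.} For $z = x + iy \in \C^d$ with $x,y\in\R^d$ we have $\abs{\exp(i\,z\T X)} = \exp(-y\T X) \le \exp(\norm{y}\,\norm{X})$ by Cauchy--Schwarz. On the closed sub-tube $\overline{S_b} = \{\norm{\Imag z}\le b\}$ with $b<a$, the integrand is therefore dominated by the single integrable function $\exp(b\norm{X}) \le \exp(a\norm{X})$, uniformly in $z$. This gives well-definedness on all of $S_a$, since every $z\in S_a$ lies in some such $\overline{S_b}$. Dominated convergence, applied along sequences $z_n\to z$, then shows that $F(z) := \Ex[\exp(i\,z\T X)]$ is continuous on $S_a$.

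\textbf{Holomorphy.} A continuous function on an open subset of $\C^d$ is holomorphic if it is holomorphic in each variable separately; this is Osgood's lemma, and continuity makes it elementary via the iterated Cauchy integral formula. So I fix all coordinates except $z_j$ and check holomorphy in $z_j$ on the open slice of $S_a$ using Morera's theorem. For any closed triangle $\Delta$ in that slice, Fubini applies because the integrand is jointly measurable and bounded on $\Delta\times\Omega$ by an integrable function: $\Delta$ is compact inside the open tube, so it lies in some $\overline{S_b}$ with $b<a$. Hence
\[
\oint_{\partial\Delta} F \, dz_j = \Ex\Bigl[\oint_{\partial\Delta} \exp(i\,z\T X)\, dz_j\Bigr] = 0,
\]
because $z_j \mapsto \exp(i\,z\T X(\omega))$ is entire for every $\omega$. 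Morera's theorem gives holomorphy in $z_j$.

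\textbf{Conclusions.} For real $z = t$ we have $\Imag z = 0$, so $F(t) = \cf{X}(t)$ and $F$ restricts to $\cf{X}$. If \cref{eq:exp-moment} holds for every $a$, then the extensions on the tubes $S_a$ are all given by the same formula, so they agree on overlaps; since $\bigcup_a S_a = \C^d$, they define an entire function.

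The main obstacle is the justification of holomorphy in several variables. I would either cite Osgood's lemma as standard several-variable background, or avoid it altogether by differentiating under the integral sign: dominating $\abs{X_j}\exp(b\norm{X})$ by $C\exp(a\norm{X})$ for $b<a$ shows that the partial derivatives exist and are continuous, and continuity of the first partials together with the Cauchy--Riemann equations gives holomorphy. The rest is routine domination.
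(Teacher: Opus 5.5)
Your proposal is correct and follows the paper's route. You bound the integrand by $\exp(a\norm{X})$, get continuity by dominated convergence, apply Morera with Fubini in one variable, and use Osgood's lemma to pass from separate to joint holomorphy; the paper writes out only $d=1$ and cites Osgood/Hartogs for general $d$, so you have spelled out the step it sketches. The restriction to sub-tubes $\overline{S_b}$ with $b<a$ is harmless but unnecessary for the Morera argument, since $\exp(\norm{\Imag z}\,\norm{X})\le\exp(a\norm{X})$ already holds on all of $S_a$; it is genuinely needed only in your alternative via differentiation under the integral sign.
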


\begin{proof}
We give the argument for $d=1$, which is the only case used below: every
appeal to this theorem in these notes is to a real-valued random variable, and
the one place where a multivariate statement is wanted, \cref{cor:rigidity},
is reduced to one variable via \cref{thm:cramer-wold}.  For general $d$
the same argument applied in each coordinate separately, with
$\abs{\cdot}$ replaced by $\norm{\cdot}$, yields \emph{separate} holomorphy
together with local boundedness on $S_a$; joint holomorphy then follows from
Osgood's lemma (or, dispensing with boundedness, from Hartogs' theorem).
For $z = s+iu$ with $\abs{u} < a$ we have
$\abs{\exp(izX)} = \exp(-uX) \le \exp(a\abs{X})$,
which is integrable by \cref{eq:exp-moment}; so
$F(z) := \Ex[\exp(izX)]$ is well defined on the strip $S_a$, and it is
continuous there by dominated convergence.
For any closed triangle $\Delta \subseteq S_a$, Fubini's theorem applies
because $(z,\omega)\mapsto \exp(izX(\omega))$ is jointly measurable and
dominated by the integrable $\exp(a\abs{X})$ uniformly on the compact set
$\partial\Delta$; hence
\begin{align}
  \oint_{\partial\Delta} F(z)\,dz
    &= \Ex\left[\oint_{\partial\Delta}\exp(izX)\,dz\right] = 0,
\end{align}
the inner integral vanishing because $z\mapsto\exp(izX)$ is entire for each
fixed value of $X$.
By Morera's theorem $F$ is holomorphic on $S_a$, and $F|_{\R} = \cf{X}$ by
construction.
\end{proof}

\begin{corollary}[Rigidity of analytic characteristic functions]
\label{cor:rigidity}
Let $X,Y$ be random vectors in $\R^d$ both satisfying \cref{eq:exp-moment} for
some $a>0$.  If $\cf{X} = \cf{Y}$ on some neighbourhood of the origin in
$\R^d$, then $X \eqd Y$.
\end{corollary}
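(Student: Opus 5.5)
The plan is to reduce to one variable with the Cram\'er--Wold device and then apply the identity theorem. This matches the comment in the proof of \cref{thm:analytic-strip} that the multivariate rigidity statement is to be reduced to $d=1$.

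\emph{Reduction to one dimension.} Fix $u\in\R^d$ and set $X_u:=u\T X$ and $Y_u:=u\T Y$. By \cref{thm:cramer-wold} it suffices to show $X_u\eqd Y_u$ for every $u$, and the case $u=0$ is trivial. We have $\abs{X_u}\le\norm{u}\,\norm{X}$, so
\begin{align}
  \Ex\bigl[\exp\bigl((a/\norm{u})\abs{X_u}\bigr)\bigr] &\le \Ex\bigl[\exp\bigl(a\norm{X}\bigr)\bigr]<\infty .
\end{align}
Hence both $X_u$ and $Y_u$ satisfy \cref{eq:exp-moment} in dimension one with $a_u:=a/\norm{u}>0$. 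If $\cf{X}=\cf{Y}$ on a neighbourhood $V$ of $0$, then $V$ contains a ball $B_\delta(0)$. For $\abs{s}<\delta/\norm{u}$ we have $su\in B_\delta(0)$, so \cref{eq:cf-projection} gives
\begin{align}
  \cf{X_u}(s)=\cf{X}(su)=\cf{Y}(su)=\cf{Y_u}(s).
\end{align}
Thus the two one-dimensional characteristic functions agree on the interval $(-\delta/\norm{u},\delta/\norm{u})$.

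\emph{Identity theorem.} By \cref{thm:analytic-strip} in $d=1$, $\cf{X_u}$ and $\cf{Y_u}$ extend to holomorphic functions $F$ and $G$ on the strip $S_{a_u}=\{z\in\C:\abs{\Imag z}<a_u\}$. The strip is open and connected, and it contains the real line. The difference $F-G$ is holomorphic on $S_{a_u}$ and vanishes on a real interval, which has accumulation points in $S_{a_u}$. The identity theorem then gives $F\equiv G$ on $S_{a_u}$, so in particular $\cf{X_u}=\cf{Y_u}$ on all of $\R$. \Cref{thm:uniqueness}\,(i) now gives $X_u\eqd Y_u$. Since $u$ was arbitrary, \cref{thm:cramer-wold} yields $X\eqd Y$.

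\emph{Main obstacle.} There is no real obstacle; the only care needed is in the reduction step. One must check that the exponential moment condition transfers to projections, with the width of the strip scaling as $a/\norm{u}$. One must also check that local agreement in $\R^d$ yields agreement on a genuine interval for each projection, which requires a ball around the origin inside the neighbourhood. The connectedness of the strip, which the identity theorem needs, is immediate.
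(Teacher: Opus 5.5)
Your proposal is correct and follows the paper's proof essentially step for step. Both arguments project along $u$, note the exponential moment condition with constant $a/\norm{u}$, get agreement on $\abs{s}<\delta/\norm{u}$, extend to a strip via \cref{thm:analytic-strip}, and conclude with the identity theorem, \cref{thm:uniqueness}\,(i) and \cref{thm:cramer-wold}. The only difference is that the paper first replaces the two possibly different constants by their minimum so that $X$ and $Y$ share one $a$; your argument assumes this implicitly, and the step is harmless.
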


\begin{proof}
Replacing $a$ by the smaller of the two constants we may assume that $X$ and
$Y$ satisfy \cref{eq:exp-moment} with the same $a>0$, and let $\delta>0$ be
such that $\cf{X}=\cf{Y}$ on $B_\delta(0)$.
Fix $u\in\R^d\setminus\{0\}$.  By \cref{eq:cf-projection},
\begin{align}
  \cf{u\T X}(s) = \cf{X}(su) = \cf{Y}(su) = \cf{u\T Y}(s),
  \qquad \abs{s} < \delta/\norm{u}.
\end{align}
Moreover $\abs{u\T X}\le\norm{u}\norm{X}$, so the real-valued random variables
$u\T X$ and $u\T Y$ satisfy \cref{eq:exp-moment} with the constant
$a/\norm{u}$, and by \cref{thm:analytic-strip} their characteristic functions
extend holomorphically to a strip in $\C$.
Two functions holomorphic on a connected open subset of $\C$ which agree on a
real interval agree on all of it, by the identity theorem in one complex
variable; hence $\cf{u\T X} = \cf{u\T Y}$ on all of $\R$ and therefore
$u\T X \eqd u\T Y$ by \cref{thm:uniqueness}\,(i).
Since $u$ was arbitrary, \cref{thm:cramer-wold} gives $X \eqd Y$.
\end{proof}

We deliberately reduced to one complex variable here.  In several variables
the \emph{naive} form of the identity theorem fails: agreement of two
holomorphic functions on a set with accumulation points is not sufficient, as
$f(z_1,z_2)=z_1$, vanishing on the uncountable set $\{0\}\times\C$, shows.
A $d$-dimensional argument is nevertheless available -- at a real point of a
real neighbourhood on which $h$ vanishes, every complex partial derivative of
$h$ agrees with the corresponding real one and hence vanishes, so $h\equiv0$
on a polydisc and then on all of the connected $S_a$ -- but the reduction to
one variable is shorter and is all we need.

\Cref{cor:rigidity} is exactly what \cref{rem:local-not-enough} rules out in
general: the tent function of that remark is the characteristic function of a
law whose density is of order $x^{-2}$, which has no exponential moments at
all.

\begin{proposition}[Ridge property]
\label{prop:ridge}
Let $d=1$ and let $X$ satisfy \cref{eq:exp-moment} for some $a>0$, and denote
again by $\cf{X}$ the holomorphic extension of \cref{thm:analytic-strip}.
Then for all $s\in\R$ and $\abs{u}<a$,
\begin{align}
  \label{eq:ridge}
  \abs{\cf{X}(s+iu)} &\le \cf{X}(iu) = \Ex\bigl[\exp(-uX)\bigr] \in \Rpos,
\end{align}
that is, on each horizontal line the modulus is maximal on the imaginary axis.
\end{proposition}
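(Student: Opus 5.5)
The plan is to evaluate the holomorphic extension of \cref{thm:analytic-strip} at $z=s+iu$ directly and bound the modulus inside the expectation. The extension is given by $\cf{X}(z)=\Ex[\exp(izX)]$ on the strip $S_a$, so for $s\in\R$ and $\abs{u}<a$
\begin{align}
  \cf{X}(s+iu) &= \Ex\bigl[\exp(isX)\exp(-uX)\bigr].
\end{align}
This expectation is well defined because $\abs{\exp(isX)\exp(-uX)}=\exp(-uX)\le\exp(a\abs{X})$, which is integrable by \cref{eq:exp-moment}; this is the same domination used in the proof of \cref{thm:analytic-strip}.

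First I will treat the point on the imaginary axis. Setting $s=0$ gives $\cf{X}(iu)=\Ex[\exp(-uX)]$. The integrand is strictly positive and finite in expectation, so this value lies in $\Rpos$. Strict positivity holds because the expectation of an almost surely strictly positive random variable is strictly positive.

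Second, I will apply the triangle inequality for expectations, $\abs{\Ex[W]}\le\Ex[\abs{W}]$ for the integrable complex random variable $W=\exp(isX)\exp(-uX)$. Since $\abs{\exp(isX)}=1$, this gives
\begin{align}
  \abs{\cf{X}(s+iu)} &\le \Ex\bigl[\exp(-uX)\bigr] = \cf{X}(iu),
\end{align}
which is \cref{eq:ridge}.

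I do not expect a real obstacle. The only point needing care is that the value of the extension at non-real arguments really is the expectation formula, and not just some analytic continuation. This is settled by the construction in \cref{thm:analytic-strip}, where $F(z)=\Ex[\exp(izX)]$ is defined pointwise on $S_a$. Uniqueness of holomorphic extension on the connected strip, via the identity theorem, makes "the" extension unambiguous.
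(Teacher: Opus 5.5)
Your proposal is correct and follows the paper's proof essentially line by line. Both write the extension on the strip as $\Ex[e^{isX}e^{-uX}]$, bound its modulus by $\Ex[\abs{e^{isX}}e^{-uX}]=\Ex[e^{-uX}]=\cf{X}(iu)$, and note that this value is real and strictly positive because the integrand $e^{-uX}$ is.
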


\begin{proof}
On the strip we have $\cf{X}(z) = \Ex[\exp(izX)]$, so
\begin{align}
  \abs{\cf{X}(s+iu)} = \bigabs{\Ex\bigl[e^{isX}e^{-uX}\bigr]}
    \le \Ex\bigl[\bigabs{e^{isX}}e^{-uX}\bigr]
     = \Ex\bigl[e^{-uX}\bigr] = \cf{X}(iu),
\end{align}
which is real and strictly positive because $e^{-uX}>0$.
\end{proof}

\begin{remark}[Ridge property and the moment generating function]
\label{rem:ridge}
Evaluating the holomorphic extension of \cref{thm:analytic-strip} at
$z=-iu$ gives $\cf{X}(-iu) = \Ex[\exp(uX)] = M_X(u)$, the moment generating
function of $X$; \cref{eq:ridge} then says that $\abs{\cf{X}}$ is maximal on
each horizontal line at its purely imaginary point.
If $\cf{X}$ is analytic in \emph{some} neighbourhood of the origin --
equivalently, if $0$ lies in the interior of $\{u : M_X(u)<\infty\}$ -- then
its maximal strip of analyticity is exactly the one cut out by that interior
({\citealp[Theorem~7.1.1]{lukacs1970}}); the moment generating function may
well be finite at a boundary abscissa while $\cf{X}$ is analytic only on the
open strip.  Without the hypothesis at the origin the statement is false: a
law with an exponential right tail and a $\abs{x}^{-3}$ left tail has
$\{M_X<\infty\} = [0,1)$ with non-empty interior, yet $\Ex[X^2]=\infty$, so
$\cf{X}$ is not even twice differentiable at the origin, let alone analytic
there.
\end{remark}

The following theorem is the single most important analytic input for these
notes.  It says that the exponential of a polynomial is a characteristic
function only in the Gaussian case.

\begin{theorem}[Marcinkiewicz' theorem, \citealp{marcinkiewicz1939}]
\label{thm:marcinkiewicz}
Let $Y$ be a real-valued random variable whose characteristic function is of
the form
\begin{align}
  \cf{Y}(t) &= \exp\bigl(g(t)\bigr)
\end{align}
in a neighbourhood of the origin, with a (complex) polynomial $g$.
Then $\Deg(g) \le 2$; more precisely, after subtracting from $g$ the constant
$g(0) \in 2\pi i\,\Ints$, which changes $\exp(g)$ not at all and the degree
only in the trivial case of a constant $g$,
\begin{align}
  g(t) &= -\tfrac{1}{2}\sigma^2 t^2 + i\mu t,
  & \sigma^2 &\in \Rnn, & \mu &\in \R,
\end{align}
and consequently $Y$ has a (possibly degenerate) Gaussian distribution,
$Y \sim \Normal(\mu,\sigma^2)$, where the degenerate case $\sigma^2 = 0$
means $Y \sim \Normal(\mu,0) = \Dirac_\mu$.
\end{theorem}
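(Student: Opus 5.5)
The plan is the classical route.

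First, I show that $\cf{Y}$ is entire and equal to $\exp(g)$ on all of $\C$. Since $g$ is a polynomial, $\exp(g)$ is smooth at the origin, so $\cf{Y}$ is infinitely differentiable there. By \cref{lem:moments-from-derivatives}, $Y$ then has moments of all orders, and the cumulants are read off as $\kappa_m(Y)=i^{-m}g^{(m)}(0)$; they vanish for $m>\Deg g$. This does not yet give analyticity, so I bound the moments directly. The derivatives $\cf{Y}^{(2m)}(0)$ are the coefficients of the entire function $\exp(g)$, so $\Ex[Y^{2m}]=\abs{\cf{Y}^{(2m)}(0)}\le (2m)!\,C R^{-2m}\max_{\abs{z}=R}\abs{e^{g(z)}}$ for every $R$, by Cauchy's estimates. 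Hence $\limsup(\Ex[Y^{2m}]/(2m)!)^{1/2m}=0$, and $\Ex[e^{a\abs{Y}}]\le\sum_m a^m\Ex\abs{Y}^m/m!<\infty$ for every $a>0$ (odd moments by Lyapunov). \Cref{thm:analytic-strip} now makes $\cf{Y}$ entire. It agrees with $e^{g}$ near $0$ on $\R$, so by the identity theorem $\cf{Y}=e^{g}$ on $\C$.

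Second comes the growth argument, which is the heart of the proof. Write $g(z)=\sum_{k\le n}c_kz^k$ with $n=\Deg g\ge3$ assumed, and aim for a contradiction. By \cref{prop:ridge}, for real $s$ and $u$,
\begin{align}
  \Real g(s+iu) \le \Real g(iu) = \log \Ex[e^{-uY}] .
\end{align}
I compare the two sides on the circle $z=re^{i\theta}$ for large $r$. The right-hand side is at most $\abs{c_n}r^n+O(r^{n-1})$. The left-hand side is $\abs{c_n}r^n\cos(n\theta+\arg c_n)+O(r^{n-1})$. Because $n\ge3$, I can choose $\theta$ in the open upper or lower half-plane, away from $\pm\pi/2$, with $\cos(n\theta+\arg c_n)=1$. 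I also need a bound on $\Real g$ along the imaginary axis, $\Real g(\pm ir)=\abs{c_n}r^n\cos(\pm n\pi/2+\arg c_n)+O(r^{n-1})$. The key observation is that $s+iu=re^{i\theta}$ and $iu=ir\sin\theta$ has modulus $r\abs{\sin\theta}<r$. This gives $\abs{c_n}r^n(1+o(1))\le\abs{c_n}r^n\abs{\sin\theta}^n(1+o(1))$, which is a contradiction as $r\to\infty$. Among the $2n\ge6$ maximising directions equally spaced by $2\pi/n$, at most two are $\pm\pi/2$, and the real-axis directions $0$ and $\pi$ are also fine since there $u=0$. So a suitable $\theta$ always exists. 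Therefore $n\le2$.

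Finally, the normalisation. From $\cf{Y}(0)=1$ we get $g(0)\in2\pi i\Ints$, and I subtract it. Then $g(t)=c_1t+c_2t^2$, and $\cf{Y}(-t)=\overline{\cf{Y}(t)}$ for real $t$ forces $c_1=i\mu$ with $\mu\in\R$ and $c_2$ real. Writing $c_2=-\sigma^2/2$, the bound $\abs{\cf{Y}}\le1$ forces $\sigma^2\ge0$. \Cref{thm:uniqueness} then identifies $Y\sim\Normal(\mu,\sigma^2)$. The step I expect to be the main obstacle is the first one, upgrading a local identity to an entire characteristic function via the moment bounds. The ridge comparison is mechanical once $\cf{Y}=e^{g}$ holds globally.
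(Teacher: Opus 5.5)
Your proposal is correct and follows the paper's proof essentially step for step. You upgrade the local identity to $\cf{Y}=e^{g}$ on all of $\C$ (the paper does this via \cref{lem:entire-cf} by reading off absolute convergence of the Taylor series, you via Cauchy's estimates, which amounts to the same thing), run the ridge comparison of \cref{prop:ridge} along a maximiser of $\theta\mapsto\cos(n\theta+\arg c_n)$ off the imaginary axis, and finally normalise using $\cf{Y}(-t)=\overline{\cf{Y}(t)}$ and $\abs{\cf{Y}}\le1$. One small slip does not affect the argument: in $[0,2\pi)$ there are $n$ such maximisers spaced by $2\pi/n$, not $2n$, but $n\ge3$ already guarantees one with $\abs{\sin\theta}<1$, so the contradiction goes through unchanged.
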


The proof is in \cref{ssec:proof-marcinkiewicz}; see also
\citet[Lemma~1.4.2]{kagan1973characterization} and
\citet[Section~7.3]{lukacs1970}.

In the language of \cref{def:cumulants}: there is no probability distribution
whose cumulant generating function \emph{equals} a polynomial of degree $3$ or
higher near the origin.
The superficially similar statement that the vanishing of all cumulants of
order $>n$ already forces the vanishing of all cumulants of order $>2$ is also
true, but it is not a restatement of \cref{thm:marcinkiewicz}: vanishing
Taylor coefficients do not by themselves make $\psi_Y$ a polynomial, and an
additional analytic argument is required, namely a growth bound on the
moments strong enough to make $\cf{Y}$ entire.
For $n=2$ -- the only case we shall need -- such a bound is immediate,
because then the moments \emph{are} Gaussian moments; we give the argument
inside the proof of \cref{thm:gaussian-char}, in the implication
(v)$\Rightarrow$(ii).
For general $n$ one argues instead as follows.  The number of set partitions
of $\{1,\dots,m\}$ all of whose blocks have size at most $n$ is bounded by
$C(n)^m(m!)^{1-1/n}$, so the moment--cumulant formula gives
$\abs{\Ex[Y^m]}\le C'^m(m!)^{1-1/n}$ with a constant $C'\ge1$ depending on
$n$ and on the finitely many non-zero cumulants.  Lyapunov's inequality turns
this into a bound of the same shape for the absolute moments, with a larger
constant: $\Ex\abs{Y}^m\le\bigl(\Ex[Y^{m+1}]\bigr)^{m/(m+1)}
 \le (C')^{m+1}\bigl((m+1)!\bigr)^{1-1/n}$ for odd $m$, and since
$\bigl((m+1)!\bigr)^{1-1/n}/m! = (m+1)^{1-1/n}(m!)^{-1/n}$ one gets
$\sum_{m\ge0}a^m\,\Ex\abs{Y}^m/m!<\infty$ for every $a>0$.  By Tonelli's
theorem that sum equals $\Ex[\exp(a\abs{Y})]$, so \cref{eq:exp-moment} holds
for every $a>0$ and \cref{thm:analytic-strip} makes $\cf{Y}$ entire; then
$\psi_Y$ is holomorphic near $0$ and equal to its Taylor polynomial there, and
\cref{thm:marcinkiewicz} applies.  We shall not use this generalisation.

\begin{remark}[How \cref{thm:marcinkiewicz} will be used]
\label{rem:marcinkiewicz-use}
We shall use \cref{thm:marcinkiewicz} in the following two guises.
\begin{enumerate}[label=(\roman*)]
  \item If two real-valued random variables $Y_1$, $Y_2$ satisfy
        \begin{align}
          \label{eq:marcinkiewicz-use-scaled}
          \cf{Y_2}(\lambda t) &= \cf{Y_1}(t)\exp\bigl(g(t)\bigr)
        \end{align}
        near the origin with a polynomial $g$ and a scalar
        $\lambda\in\R\setminus\{0\}$, then $Y_2$ is Gaussian if and only if
        $Y_1$ is Gaussian.
        Indeed, by \cref{eq:cf-affine} the left hand side is
        $\cf{\lambda Y_2}(t)$, so it suffices to treat $\lambda=1$ and then to
        note that $\lambda Y_2$ is Gaussian if and only if $Y_2$ is, again by
        \cref{eq:cf-affine} and $\lambda\neq0$.
        For $\lambda=1$: if $Y_1$ is Gaussian then $\cf{Y_1}$ is itself the
        exponential of a polynomial of degree $\le 2$, hence so is $\cf{Y_2}$
        near the origin and \cref{thm:marcinkiewicz} applies; the converse
        follows by exchanging the roles and replacing $g$ by $-g$.
  \item The sum of an independent non-Gaussian random variable and a Gaussian
        random variable is again non-Gaussian: if $Y = Y_1 + Y_2$ with
        $Y_1 \indep Y_2$ and $Y_2$ Gaussian, then
        $\cf{Y_1}(t) = \cf{Y}(t)\cf{Y_2}(t)^{-1}$, and $\cf{Y_2}^{-1}$ is the
        exponential of a polynomial of degree $\le 2$; so if $Y$ were Gaussian,
        $Y_1$ would be Gaussian as well.
\end{enumerate}
Note that $\cf{Y_2}$ has no zeros for Gaussian $Y_2$, so the above quotients are
well defined on all of $\R$.
\end{remark}

\subsection{Gaussian, sub-Gaussian and super-Gaussian distributions}
\label{ssec:sub-super}

\begin{theorem}[Characterisation of the Gaussian law]
\label{thm:gaussian-char}
Let $X$ be a random vector in $\R^d$.  The following are equivalent.
\begin{enumerate}[label=(\roman*)]
  \item $X \sim \Normal(\mu,\Sigma)$ is (possibly degenerate) Gaussian in the
        sense of \cref{not:degenerate};
  \item $\cf{X}(t) = \exp\bigl(i\,t\T\mu - \tfrac12 t\T\Sigma t\bigr)$ for all
        $t\in\R^d$, for some $\mu\in\R^d$ and some positive semi-definite
        $\Sigma$;
  \item every one-dimensional projection $u\T X$, $u\in\R^d$, is a (possibly
        degenerate) Gaussian real-valued random variable.
\end{enumerate}
For $d=1$ these are further equivalent to each of:
\begin{enumerate}[label=(\roman*),start=4]
  \item $\psi_X$ coincides with a polynomial on a neighbourhood of the origin;
  \item $X$ has moments of all orders and $\kappa_m(X)=0$ for all $m\ge 3$.
\end{enumerate}
\end{theorem}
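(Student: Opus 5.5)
(i)$\Leftrightarrow$(ii) is the definition in \cref{not:degenerate}, so nothing needs proving there. For (ii)$\Rightarrow$(iii) we use \cref{eq:cf-projection}: $\cf{u\T X}(s)=\cf{X}(su)=\exp\bigl(is\,u\T\mu-\tfrac12 s^2\,u\T\Sigma u\bigr)$, which is the characteristic function of $\Normal(u\T\mu,u\T\Sigma u)$ because $u\T\Sigma u\ge0$.

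For (iii)$\Rightarrow$(ii) we first extract moments. Taking $u=e_j$ shows each $X_j$ is Gaussian and hence square integrable, so $\mu:=\Ex X$ and $\Sigma:=\Cov(X)$ exist and $\Sigma$ is positive semi-definite. By hypothesis $u\T X\sim\Normal(m_u,s_u^2)$, and necessarily $m_u=\Ex[u\T X]=u\T\mu$ and $s_u^2=\Var(u\T X)=u\T\Sigma u$. Then \cref{eq:cf-projection} at $s=1$ gives $\cf{X}(u)=\cf{u\T X}(1)=\exp\bigl(iu\T\mu-\tfrac12u\T\Sigma u\bigr)$ for every $u$, which is (ii). \Cref{thm:uniqueness} is not even needed here, since (ii) is itself the definition.

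Now take $d=1$. (ii)$\Rightarrow$(iv): $\psi_X(t)=i\mu t-\tfrac12\sigma^2t^2$ is continuous, vanishes at $0$ and exponentiates to $\cf{X}$, so it is the distinguished logarithm by the uniqueness part of \cref{lem:distinguished-log}. (iv)$\Rightarrow$(ii): if $\cf{X}=\exp(g)$ near $0$ with $g$ a polynomial, \cref{thm:marcinkiewicz} yields $X\sim\Normal(\mu,\sigma^2)$. (ii)$\Rightarrow$(v): the Gaussian characteristic function is entire, or equivalently $\Ex e^{a\abs{X}}<\infty$ for all $a$, so all moments exist; since $\psi_X$ is the quadratic polynomial above, comparing with \cref{eq:cumulant-expansion} gives $\kappa_m(X)=0$ for $m\ge3$.

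The main obstacle is (v)$\Rightarrow$(ii), because vanishing Taylor coefficients do not make $\psi_X$ a polynomial. The plan is to show that the moments of $X$ coincide with those of $Y\sim\Normal(\mu,\sigma^2)$, where $\mu=\kappa_1(X)$ and $\sigma^2=\kappa_2(X)$, and then use analyticity.
\begin{enumerate}[label=(\alph*)]
  \item Moments are polynomial functions of the cumulants of lower or equal order. To avoid quoting the moment--cumulant formula, obtain this from $\cf{X}=\exp(\psi_X)$: differentiate $m$ times at $0$ via Fa\`a di Bruno, or run the recursion $\cf{X}'=\psi_X'\cf{X}$ with Leibniz's rule. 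Either way $\Ex[X^m]$ depends only on $\kappa_1,\dots,\kappa_m$, through a universal polynomial.
  \item The same polynomials apply to $Y$, whose cumulants are $\mu,\sigma^2,0,0,\dots$ by (ii)$\Rightarrow$(v). Hence $\Ex[X^m]=\Ex[Y^m]$ for all $m$.
  \item Bound the absolute moments: for even $m$, $\Ex\abs{X}^m=\Ex[Y^m]\le C^m\,(m-1)!!$. For odd $m$, use Lyapunov's inequality exactly as in the remark after \cref{thm:marcinkiewicz}.
  \item Conclude $\sum_m a^m\Ex\abs{X}^m/m!<\infty$ for every $a>0$, so $\Ex e^{a\abs{X}}<\infty$ by Tonelli.
  \item By \cref{thm:analytic-strip}, both $\cf{X}$ and $\cf{Y}$ are entire. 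On a neighbourhood of the real line in $\C$ their power series at $0$ have the same coefficients $i^m\Ex[X^m]/m!=i^m\Ex[Y^m]/m!$, so by the identity theorem $\cf{X}=\cf{Y}$ on $\R$. This gives (ii); alternatively, \cref{cor:rigidity} finishes the argument once the two functions agree near $0$.
\end{enumerate}
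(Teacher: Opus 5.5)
Your proof is correct and follows the paper's argument step for step. This includes the key point that (v)$\Rightarrow$(ii) identifies the moments of $X$ with Gaussian moments via the moment--cumulant relations, and then needs an exponential-moment bound to make $\cf{X}$ entire. The only cosmetic difference is in your steps (c)--(d): the paper skips odd moments and Lyapunov altogether by using $\exp(a\abs{x})\le\exp(ax)+\exp(-ax)$, so that $\Ex[\exp(a\abs{X})]\le 2\sum_{k}a^{2k}\Ex[N^{2k}]/(2k)!$ is dominated directly by the Gaussian's own exponential moment.
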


\begin{proof}
(i)$\Leftrightarrow$(ii) is \cref{not:degenerate} combined with
\cref{thm:uniqueness}\,(i).

\noindent
(ii)$\Rightarrow$(iii): \cref{eq:cf-projection} gives
\begin{align}
  \cf{u\T X}(s)
    &= \exp\Bigl(i s\, u\T\mu - \tfrac12 s^2\, u\T\Sigma u\Bigr),
\end{align}
which is the characteristic function of
$\Normal(u\T\mu,\,u\T\Sigma u)$.
(iii)$\Rightarrow$(ii): put $\mu_u := \Ex[u\T X]$ and
$\sigma^2_u := \Var(u\T X)$, which are finite by (iii); then
$\cf{X}(u) = \cf{u\T X}(1) = \exp(i\mu_u-\tfrac12\sigma_u^2)$, and
$u\mapsto\mu_u$ is linear while $u\mapsto\sigma_u^2$ is a positive
semi-definite quadratic form, so $\mu_u = u\T\mu$ and
$\sigma^2_u = u\T\Sigma u$ for $\mu := \Ex[X]$ and $\Sigma := \Cov(X)$.
(ii)$\Rightarrow$(iv) is clear, and (iv)$\Rightarrow$(ii) is Marcinkiewicz'
\cref{thm:marcinkiewicz}.
(ii)$\Rightarrow$(v) is immediate from \cref{eq:cumulant-expansion}.

For (v)$\Rightarrow$(ii) some care is needed, because
\cref{eq:cumulant-expansion} is only an asymptotic expansion: knowing all
Taylor coefficients of $\psi_X$ does \emph{not} by itself make $\psi_X$ a
polynomial, so \cref{thm:marcinkiewicz} is not directly applicable.
Instead, put $\mu:=\kappa_1(X)$ and $\sigma^2:=\kappa_2(X)$.
The moment--cumulant relations express $\Ex[X^m]$ as a universal polynomial in
$\kappa_1(X),\dots,\kappa_m(X)$; with $\kappa_m(X)=0$ for all $m\ge3$ these
are exactly the moments of $\Normal(\mu,\sigma^2)$.  Since
$\exp(a\abs{x})\le\exp(ax)+\exp(-ax)$, monotone convergence gives, for every
$a>0$,
\begin{align}
  \Ex\bigl[\exp(a\abs{X})\bigr]
    &\le 2\sum_{k\ge0}\frac{a^{2k}}{(2k)!}\Ex\bigl[X^{2k}\bigr]
     = 2\sum_{k\ge0}\frac{a^{2k}}{(2k)!}\Ex\bigl[N^{2k}\bigr] < \infty,
\end{align}
where $N\sim\Normal(\mu,\sigma^2)$.  The last series converges because it is
dominated by $2\,\Ex[\exp(a\abs{N})]$, which is finite.
So $X$ satisfies \cref{eq:exp-moment} for every $a>0$, and by
\cref{thm:analytic-strip} its characteristic function is entire, hence equal
to its everywhere convergent Taylor series about the origin.  By
\cref{prop:moments} that series is determined by the moments of $X$, which are
those of $N$, so $\cf{X} = \cf{N}$, which is (ii).
\end{proof}

\begin{theorem}[Cram\'er's decomposition theorem, \citealp{cramer1936}]
\label{thm:cramer}
If $Y_1 \indep Y_2$ are real-valued random variables such that $Y_1 + Y_2$ is
Gaussian, then both $Y_1$ and $Y_2$ are Gaussian (possibly degenerate).
\end{theorem}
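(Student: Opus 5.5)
The plan is the classical route: exponential moments, then entire characteristic functions of order at most two, then Hadamard factorisation, then \cref{thm:marcinkiewicz}. First I reduce to a centred symmetric situation, or simply work with translates. Write $Y=Y_1+Y_2\sim\Normal(\mu,\sigma^2)$. If $\sigma^2=0$, then $Y_1+Y_2$ is a.s.\ constant and, by independence, both summands are constant. Indeed, $\abs{\cf{Y_1}}\,\abs{\cf{Y_2}}\equiv1$ forces $\abs{\cf{Y_j}}\equiv1$ by \cref{prop:cf-elementary}\,(i), which makes each $Y_j$ a Dirac measure. Hence assume $\sigma^2>0$ and normalise $\mu=0$, $\sigma=1$.

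\textbf{Step 1 (tail bounds).} Choose a median $m_2$ of $Y_2$. Then
\[
\Prb(Y_1-m_2>x)\,\Prb(Y_2\ge m_2)\le\Prb(Y_1+Y_2>x),
\]
so $\Prb(Y_1-m_2>x)\le 2\Prb(Y>x)\le 2e^{-x^2/2}$. The same holds for the left tail and, symmetrically, for $Y_2$. Consequently $\Ex[\exp(a\abs{Y_j})]<\infty$ for every $a>0$. Moreover, the stronger estimate $\Ex[\exp(c Y_j^2)]<\infty$ holds for $c<1/2$.

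\textbf{Step 2 (entire functions of order $\le 2$).} By \cref{thm:analytic-strip}, each $\cf{Y_j}$ extends to an entire function $f_j$. The identity $f_1f_2=e^{-z^2/2}$ holds on $\R$, hence on $\C$ by the identity theorem. By \cref{prop:ridge},
\[
\abs{f_j(s+iu)}\le\Ex[e^{-uY_j}]\le C e^{u^2/(4c')}
\]
for a suitable $c'>0$ obtained from the Gaussian tail bound. Hence $\abs{f_j(z)}\le Ce^{K\abs{z}^2}$, so each $f_j$ has order at most $2$.

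\textbf{Step 3 (no zeros, then Hadamard).} Since $f_1f_2=e^{-z^2/2}$ never vanishes, neither factor has zeros. An entire zero-free function of order $\le2$ is, by Hadamard's factorisation theorem, of the form $e^{g_j}$ with $g_j$ a polynomial of degree $\le2$. If I want to avoid quoting Hadamard, I would argue directly: $g_j=\log f_j$ is entire with $\Real g_j(z)\le\log C+K\abs{z}^2$, and the Borel--Carathéodory inequality then bounds $\abs{g_j}$ by $O(\abs{z}^2)$ on circles, so Liouville's theorem (the Cauchy estimates) makes $g_j$ a polynomial of degree $\le2$. Then $\cf{Y_j}=\exp(g_j)$ on $\R$, and \cref{thm:marcinkiewicz} yields that $Y_j$ is Gaussian.

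The main obstacle is Step 2: getting a quantitative growth bound of order two from the one-sided information $Y_1+Y_2$ Gaussian. The crux is the median/tail comparison in Step 1, which must yield genuinely Gaussian-type exponential moments $\Ex[e^{-uY_j}]\le Ce^{Ku^2}$, not merely finiteness. I would obtain this by integrating the tail bound $\Prb(\abs{Y_j-m_j}>x)\le4e^{-x^2/2}$ against $e^{\abs{u}x}$, which gives $\Ex[e^{-uY_j}]\le C'e^{\abs{u}\abs{m_j}+u^2/2}\cdot(1+\abs{u})$, and this bound is sufficient.
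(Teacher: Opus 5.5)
Your argument is correct and follows the same classical route as the paper. Both go through Gaussian tails via a median, entire extensions with $\cf{Y_1}\cf{Y_2}=e^{-\sigma^2z^2/2}$ on $\C$ (so neither factor has zeros), and a quadratic growth bound from \cref{prop:ridge}. Both then force the zero-free logarithm to be a polynomial of degree at most two and finish with \cref{thm:marcinkiewicz}. For that last step the paper uses exactly your ``avoid Hadamard'' alternative, in the coefficient form of \cref{lem:borel-caratheodory}.

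Two sub-steps are organised differently.

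\emph{Tails.} You take a median of the \emph{other} summand directly. This is shorter than the paper's argument, which symmetrises first and then recentres.

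\emph{Growth.} You single this out as the main obstacle and extract $\Ex[e^{-uY_j}]\le Ce^{Ku^2}$ quantitatively from the Gaussian tail estimate. The paper needs only that all exponential moments are finite, and gets the quadratic growth from the factorisation itself. At $z=iy$ one has $\cf{Y_j}(iy)=e^{\sigma^2y^2/2}/\cf{Y_l}(iy)$. Jensen's inequality $\cf{Y_l}(iy)=\Ex[e^{-yY_l}]\ge e^{-y\Ex[Y_l]}$ bounds the denominator from below, giving $\abs{\cf{Y_j}(z)}\le\exp(\tfrac12\sigma^2\abs{z}^2+C\abs{z})$ with the sharp constant. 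Your route works equally well, since any finite type suffices for Hadamard or Borel--Carath\'eodory. The paper's is more economical because its tail step can stay qualitative.

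Three small slips need fixing.

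\begin{enumerate}
  \item Your displayed tail inequality has the wrong sign. The valid inclusion is $\{Y_1+m_2>x\}\cap\{Y_2\ge m_2\}\subseteq\{Y_1+Y_2>x\}$. With $Y_1-m_2$ the inequality fails in general when $m_2<0$.
  \item In your last paragraph, the centring point for $Y_j$ is minus the median of the other summand, not a median $m_j$ of $Y_j$. This is harmless, since any centring only contributes a factor $e^{\abs{u}\abs{m}}$.
  \item In the degenerate case, \cref{prop:cf-elementary}\,(i) only gives $\abs{\cf{Y_j}}\equiv1$, not that $Y_j$ is Dirac. Add that $Y_j-Y_j'$, with $Y_j'$ an independent copy, has characteristic function $\abs{\cf{Y_j}}^2\equiv1$. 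Hence it is $0$ almost surely by \cref{thm:uniqueness}\,(i), as in the paper.
\end{enumerate}
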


The proof is in \cref{ssec:proof-cramer}; see also
{\citealp[Theorem~1.1.1]{kagan1973characterization}}.

\Cref{thm:cramer,thm:marcinkiewicz} are dual to each other and together
explain the special role of the Gaussian law: it cannot be built out of
non-Gaussian independent pieces, and it cannot be approached by ``almost
polynomial'' cumulant generating functions.
The next definition quantifies the deviation from normality that ICA
exploits.

\begin{definition}[Excess kurtosis; sub- and super-Gaussian]
\label{def:kurtosis}
Let $X$ be a real-valued random variable with $\Ex[X^4]<\infty$ and
$\sigma^2 := \Var(X) \in (0,\infty)$.  Its \emph{excess kurtosis} is
\begin{align}
  \kurt(X) &:= \frac{\kappa_4(X)}{\sigma^4}
             = \frac{\Ex\bigl[(X-\Ex X)^4\bigr]}{\sigma^4} - 3 .
\end{align}
We call $X$
\begin{itemize}
  \item \emph{super-Gaussian} (or \emph{leptokurtic}) if $\kurt(X)>0$,
  \item \emph{sub-Gaussian} (or \emph{platykurtic}) if $\kurt(X)<0$,
  \item \emph{mesokurtic} if $\kurt(X)=0$.
\end{itemize}
\end{definition}

This is the convention of the ICA literature
\citep{comon1994,hyvarinen2001}; see \cref{rem:two-subgaussians} for the
clash with the concentration-theoretic use of the same words, and
\cref{rem:kurtosis-tails} for what the sign of $\kurt$ does and does not say
about tails.
For the laws customarily used as ICA source models the picture is the familiar
one: the super-Gaussian ones are peaked at the mode and heavy tailed -- the
Laplace and Student laws used to model the ``sparse'' sources of natural
images and audio -- while the sub-Gaussian ones are flat-topped, the uniform
law being the prototype.  \Cref{rem:kurtosis-tails} shows that this
correspondence is a property of those particular families rather than a
general implication.

\begin{proposition}[Kurtosis through the characteristic function]
\label{prop:kurtosis-cf}
Let $X$ be a real-valued random variable with $\Ex[X^4]<\infty$.  Then
\begin{align}
  \kappa_4(X) &= \psi_X^{(4)}(0)
   = \frac{d^4}{dt^4}\,\Log\cf{X}(t)\Big|_{t=0},
\end{align}
and for independent real-valued random variables $X_1,\dots,X_k$ with finite
fourth moments and weights $w\in\R^k$,
\begin{align}
  \label{eq:kappa4-additive}
  \kappa_4\Bigl(\sum_{j=1}^{k}w_jX_j\Bigr)
    &= \sum_{j=1}^{k} w_j^4\,\kappa_4(X_j).
\end{align}
\end{proposition}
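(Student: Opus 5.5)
The first identity is a direct unwinding of the definitions. By \cref{def:cumulants} we have $\kappa_4(X) = i^{-4}\psi_X^{(4)}(0)$, and $i^{-4}=1$, so $\kappa_4(X)=\psi_X^{(4)}(0)$. It remains to identify $\psi_X$ with $\Log\cf{X}$ near the origin. In the existence part of \cref{lem:distinguished-log}, $\psi_X$ was constructed as $\Log\circ\cf{X}$ on a ball $B_\delta(0)$ on which $\abs{\cf{X}-1}<1$. By the uniqueness part of the same lemma, any other continuous logarithm vanishing at $0$ coincides with it there. The lemma also gives $\psi_X\in C^4$ on $B_\delta(0)$ under $\Ex[X^4]<\infty$, so the fourth derivative at $0$ exists. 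Hence $\psi_X^{(4)}(0)=\frac{d^4}{dt^4}\Log\cf{X}(t)\big|_{t=0}$.

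For \cref{eq:kappa4-additive} I would combine the cumulant calculus of \cref{prop:cumulant-calculus} with an induction on $k$. First I check that every random variable involved has a finite fourth moment, so that $\kappa_4$ is defined:
\begin{itemize}
  \item $w_jX_j$ has finite fourth moment trivially;
  \item each partial sum $S_r:=\sum_{j\le r}w_jX_j$ has finite fourth moment by Minkowski's inequality in $L^4$.
\end{itemize}
Next, $S_{r}$ and $w_{r+1}X_{r+1}$ are independent. This holds because $S_r$ is a measurable function of $(X_1,\dots,X_r)$, and mutual independence of the whole family (as stipulated in \cref{not:dist}) makes $(X_1,\dots,X_r)\indep X_{r+1}$.

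The induction step then reads
\begin{align}
  \kappa_4(S_{r+1}) = \kappa_4(S_r)+\kappa_4(w_{r+1}X_{r+1})
  = \kappa_4(S_r)+w_{r+1}^4\kappa_4(X_{r+1}),
\end{align}
using additivity (i) and then homogeneity (ii) of \cref{prop:cumulant-calculus}. The base case $k=1$ is homogeneity alone. The degenerate weight $w_j=0$ causes no trouble, since $\kappa_4(0)=0$ because the constant $0$ has $\psi\equiv0$.

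I expect no real obstacle. The only points needing care are the bookkeeping that the distinguished logarithm in \cref{def:cumulants} is the principal $\Log\circ\cf{X}$ near $0$, and that mutual (not just pairwise) independence is what licenses the inductive independence $S_r\indep X_{r+1}$.
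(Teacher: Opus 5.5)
Your proposal is correct and follows the paper's proof. The first identity is \cref{def:cumulants} with $m=4$ and $i^{-4}=1$, together with \cref{lem:distinguished-log}; the second is additivity and homogeneity from \cref{prop:cumulant-calculus}. Your induction on $k$, the Minkowski check of fourth moments and the grouping argument for $S_r\indep X_{r+1}$ spell out details that the paper leaves implicit.
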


\begin{proof}
The first identity is \cref{def:cumulants} with $m=4$, using $i^{-4}=1$ and
\cref{lem:distinguished-log}.
The second follows from \cref{prop:cumulant-calculus}\,(i)--(ii).
\end{proof}

\begin{remark}[$\kurt = 0$ does not mean Gaussian]
\label{rem:kurt-zero}
By \cref{thm:gaussian-char}\,(v) a non-degenerate Gaussian random variable
is mesokurtic, but
the converse fails badly.
Consider the symmetric three-point law
\begin{align}
  \label{eq:three-point}
  \Prb[X = a] = \Prb[X=-a] = p, \qquad \Prb[X=0] = 1-2p,
\end{align}
with $a>0$ and $p\in(0,\tfrac12)$.  Then $\cf{X}(t) = 1-2p+2p\cos(at)$,
$\sigma^2 = 2pa^2$, $\Ex[X^4] = 2pa^4$, and hence
\begin{align}
  \label{eq:three-point-kurt}
  \kappa_4(X) &= 2pa^4(1-6p), & \kurt(X) &= \frac{1-6p}{2p}.
\end{align}
For $p=\tfrac16$ this vanishes, yet $X$ is a three-point law and certainly not
Gaussian.
Consequently, non-Gaussianity is a strictly weaker requirement than
$\kurt\neq 0$ -- which is precisely why the identifiability theorems of
\cref{sec:non-gaussian,sec:gaussian-free} are formulated in terms of
non-normality rather than in terms of kurtosis.
\end{remark}

\begin{remark}[Kurtosis and tail weight]
\label{rem:kurtosis-tails}
Excess kurtosis is routinely glossed as ``tail weight''.  The gloss is
convenient, but it is not a theorem, and it is worth being precise about what
$\kurt$ does and does not measure.

\emph{What it measures.}  Let $Z := (X-\Ex X)/\sigma$ be the standardisation
of $X$, so that $\Ex[Z]=0$ and $\Ex[Z^2]=1$.  Since
$\Var(Z^2) = \Ex[Z^4]-(\Ex[Z^2])^2 = \Ex[Z^4]-1$, we have the identity
\begin{align}
  \label{eq:moors}
  \kurt(X) &= \Ex\bigl[Z^4\bigr]-3 = \Var\bigl(Z^2\bigr) - 2
            = \Ex\bigl[(Z^2-1)^2\bigr]-2 .
\end{align}
The quantity $Z^2-1$ vanishes exactly at $Z=\pm1$, that is at
$X = \Ex X \pm \sigma$, so excess kurtosis is the dispersion of $X$ about
\emph{those two points}: probability mass moved away from $\Ex X\pm\sigma$
increases it, whether it moves outwards into the tails or inwards towards the
centre.  This is Moors' reading of kurtosis as a measure of dispersion around
$\mu\pm\sigma$ \citep{moors1986meaning}; \citet{balanda1988kurtosis} phrase it
as the location- and scale-free movement of probability mass from the
``shoulders'' of a distribution into its centre \emph{and} its tails, and
argue that the notion is irreducibly vague, admitting several inequivalent
formalisations.  \Cref{eq:moors} also yields the sharp bound $\kurt\ge-2$,
with equality if and only if $Z^2=1$ almost surely, i.e.\ exactly when $X$
takes the two values $\Ex X\pm\sigma$ with probability $\tfrac12$ each -- any
affine image of the Rademacher law, whose entry in \cref{tab:examples}
records this value.
For the same reason kurtosis is not a measure of peakedness either
\citep{kaplansky1945common,darlington1970kurtosis}.

\emph{What it does not measure.}  Read ``tail weight'' as the asymptotic decay
of the density, or of $\Prb[\abs{X}>x]$, relative to a Gaussian.  Under that
reading the sign of $\kurt$ and tail weight are logically independent, in both
directions.
\begin{enumerate}[label=(\alph*)]
  \item \emph{Light tails, arbitrarily large kurtosis.}  The three-point law
        \cref{eq:three-point} is supported on $\{-a,0,a\}$, so its tails are
        as light as tails can be; yet $\kurt(X) = (1-6p)/(2p)\to\infty$ as
        $p\downarrow0$, by \cref{eq:three-point-kurt}.
  \item \emph{Heavy tails, negative kurtosis.}  Let $X$ have the mixture law
        \begin{align}
          \label{eq:heavy-platykurtic}
          \Law(X) &= (1-\varepsilon)\,\mathrm{Unif}[-1,1]
            + \varepsilon\,\mathrm{Laplace}(1),
          & \varepsilon &= 10^{-3} .
        \end{align}
        Then $\sigma^2 = 67/200$ and
        $\kurt(X) = -4515/4489 \approx -1.006$, so $X$ is sub-Gaussian in the
        sense of \cref{def:kurtosis}.
        Its density equals $\tfrac12\varepsilon e^{-\abs{x}}$ for
        $\abs{x}>1$, and therefore exceeds the density of the Gaussian law of
        the same variance for $\abs{x}\ge x_0$ with $x_0\approx 2.56$, by a
        factor of about $3.1\cdot10^{5}$ at $\abs{x}=4$.
  \item \emph{Heavy tails, no kurtosis at all.}  The Cauchy law, and the
        Student $t_\nu$ laws with $\nu\le4$, have $\Ex[X^4]=\infty$, so
        $\kurt$ is not even defined; cf.\ \cref{tab:examples}.
\end{enumerate}
Under a different reading of ``tail'' the verdict changes.  If tail weight is
taken to mean the propensity to produce observations far from the mean
\emph{measured in units of $\sigma$}, rather than asymptotic decay, then
kurtosis is a tail functional, and \citet{westfall2014} argues that this is
its only unambiguous interpretation.  Example~(a) shows what separates the two
readings: the three-point law has compact support, but its atoms sit at
$\pm a = \pm\sigma/\sqrt{2p}$, that is arbitrarily many standard deviations
from the mean.

\emph{Convention.}  In these notes ``sub-Gaussian'' and ``super-Gaussian''
always mean \cref{def:kurtosis}, the sign of $\kurt$, and never a statement
about tails.  Where tails are meant -- in \cref{tab:examples}, in
\cref{rem:two-subgaussians} and in \cref{cor:gf-criteria} -- they are named
explicitly and always refer to asymptotic decay.
\end{remark}

\begin{remark}[Two incompatible meanings of ``sub-Gaussian'']
\label{rem:two-subgaussians}
In concentration of measure and high-dimensional statistics, a real-valued
random variable $X$ is called \emph{sub-Gaussian} when its tails are dominated
by those of a Gaussian law, equivalently -- up to the value of the constant
$\sigma$ -- when
\begin{align}
  \Ex\bigl[\exp\bigl(\lambda(X-\Ex X)\bigr)\bigr]
    &\le \exp\Bigl(\tfrac12\lambda^2\sigma^2\Bigr),
  & \lambda &\in\R,
\end{align}
for some $\sigma>0$.  By \cref{thm:analytic-strip} this forces $\cf{X}$ to be
entire, and the displayed bound gives
$\abs{\cf{X}(z)} \le \Ex\bigl[e^{\abs{\Imag z}\abs{X}}\bigr]
 \le 2\exp\bigl(\abs{\Imag z}\abs{\Ex X}+\tfrac12\sigma^2\abs{\Imag z}^2\bigr)$,
so $\cf{X}$ is of order at most $2$.
This is \emph{not} the notion of \cref{def:kurtosis}, and neither notion
implies the other.
Indeed, the three-point law \cref{eq:three-point} with $p<\tfrac16$ is bounded,
hence sub-Gaussian in the concentration sense, while
\cref{eq:three-point-kurt} gives $\kurt(X) = (1-6p)/(2p) > 0$, so it is
super-Gaussian in the sense of \cref{def:kurtosis}.
Conversely, the mixture of \cref{rem:kurtosis-tails}\,(b) has exponential
tails and so is \emph{not} sub-Gaussian in the concentration sense, while its
excess kurtosis is negative, so it \emph{is} sub-Gaussian in the sense of
\cref{def:kurtosis}.
\emph{Throughout these notes, ``sub-Gaussian'' and ``super-Gaussian'' always
refer to \cref{def:kurtosis}.}
\end{remark}

\begin{table}[htbp]
  \centering
  \footnotesize
  \setlength{\tabcolsep}{5pt}
  \renewcommand{\arraystretch}{1.45}
  \begin{tabular}{@{}lccll@{}}
    \toprule
    Law of $X$ & $\cf{X}(t)$ & $\kurt(X)$ & type & analyticity of $\cf{X}$\\
    \midrule
    $\Normal(0,\sigma^2)$
      & $e^{-\sigma^2t^2/2}$
      & $0$ & Gaussian & entire, order $2$\\
    Uniform on $[-a,a]$
      & $\dfrac{\sin(at)}{at}$
      & $-\dfrac{6}{5}$ & sub-Gaussian & entire, order $1$\\
    Rademacher on $\{\pm1\}$
      & $\cos t$
      & $-2$ & sub-Gaussian & entire, order $1$\\
    Three-point \cref{eq:three-point}
      & $1-2p+2p\cos(at)$
      & $\dfrac{1-6p}{2p}$
      & sub- iff $p>\tfrac16$ & entire, order $1$\\
    Laplace, scale $b$
      & $\dfrac{1}{1+b^2t^2}$
      & $3$ & super-Gaussian
      & strip $\abs{\Imag z}<b^{-1}$\\
    Student $t_\nu$, $\nu>4$
      & $\dfrac{(\sqrt{\nu}\abs{t})^{\nu/2}
                K_{\nu/2}(\sqrt{\nu}\abs{t})}{\Gamma(\nu/2)\,2^{\nu/2-1}}$
      & $\dfrac{6}{\nu-4}$ & super-Gaussian
      & not analytic at $0$\\
    Cauchy, scale $\gamma$
      & $e^{-\gamma\abs{t}}$
      & undefined & no moments
      & not differentiable at $0$\\
    \bottomrule
  \end{tabular}
  \caption{Characteristic functions, excess kurtosis and domain of
    analyticity for the standard examples.  $K_{\nu/2}$ denotes the modified
    Bessel function of the second kind.  Power-law tails destroy analyticity
    at the origin; exponential tails give a strip; bounded or Gaussian tails
    give an entire function.}
  \label{tab:examples}
\end{table}

\subsection{Why non-Gaussianity is what makes ICA work}
\label{ssec:why-nongaussian}

The classical heuristic behind ICA is the central limit theorem: a normalised
mixture $w\T Z = \sum_j w_j Z_j$ of many independent sources looks ``more
Gaussian'' than any individual source, so one may hope to recover the sources
by searching for the directions in which the mixture is \emph{least}
Gaussian.
\Cref{eq:kappa4-additive} turns this heuristic into a precise statement.

\begin{proposition}[Kurtosis as a contrast function]
\label{prop:kurtosis-contrast}
Let $Z_1,\dots,Z_k$ be independent real-valued random variables with
$\Ex[Z_j]=0$, $\Var(Z_j)=1$ and $\Ex[Z_j^4]<\infty$, and let $w\in\R^k$ with
$\norm{w}_2 = 1$.  Then $\Var(w\T Z) = 1$ and
\begin{align}
  \bigabs{\kurt\bigl(w\T Z\bigr)}
    = \Bigabs{\sum_{j=1}^{k}w_j^4\,\kurt(Z_j)}
    \;\le\; \max_{1\le j\le k}\bigabs{\kurt(Z_j)} .
\end{align}
If $\max_j\abs{\kurt(Z_j)}>0$, then equality holds if and only if
$w = \pm e_j$ for an index $j$ attaining the maximum; so the maximisers of
$w\mapsto\abs{\kurt(w\T Z)}$ on the unit sphere are exactly the signed
coordinate directions of maximal $\abs{\kurt}$.
If instead $\kurt(Z_j)=0$ for every $j$ -- in particular if all sources are
Gaussian -- then both sides vanish for every $w$ and the criterion carries no
information at all.
\end{proposition}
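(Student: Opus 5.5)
The plan is to compute everything through the cumulant calculus and then reduce the extremal question to an elementary inequality on the sphere.

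\emph{Variance and the kurtosis formula.} Independence and $\Var(Z_j)=1$ give $\Var(w\T Z)=\sum_j w_j^2 = 1$. In particular $\kurt(w\T Z)$ is defined and equals $\kappa_4(w\T Z)$. By \cref{eq:kappa4-additive} of \cref{prop:kurtosis-cf},
\begin{align}
  \kappa_4(w\T Z) = \sum_j w_j^4\,\kappa_4(Z_j).
\end{align}
Since $\kappa_4(Z_j)=\kurt(Z_j)$ for unit-variance sources, this is exactly the displayed equality.

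\emph{The inequality.} Put $M:=\max_j\abs{\kurt(Z_j)}$. The triangle inequality gives
\begin{align}
  \Bigabs{\sum_j w_j^4\kurt(Z_j)} \le \sum_j w_j^4\abs{\kurt(Z_j)} \le M\sum_j w_j^4 .
\end{align}
Because $0\le w_j^2\le 1$, we have $w_j^4\le w_j^2$, so $\sum_j w_j^4\le\sum_j w_j^2=1$. This proves the bound. If every $\kurt(Z_j)=0$, the formula shows that both sides vanish for all $w$.

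\emph{Equality case, assuming $M>0$.} For equality, both inequalities in the chain must be tight. First, $M\sum_j w_j^4 = M$ forces $\sum_j w_j^4=1$. Since $\sum_j (w_j^2-w_j^4)=0$ is a sum of nonnegative terms, each $w_j^2\in\{0,1\}$. With $\norm{w}_2=1$ this means $w=\pm e_j$ for a single index $j$.

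Second, for $w=\pm e_j$ the left side is $\abs{\kurt(Z_j)}$, so equality requires $\abs{\kurt(Z_j)}=M$, i.e.\ $j$ attains the maximum. Conversely, any such $\pm e_j$ obviously gives equality. Hence the maximisers are exactly the signed coordinate directions of maximal $\abs{\kurt}$.

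There is no serious obstacle here. The only care needed is in the equality analysis. One should not try to characterise equality in the first (triangle) inequality, since sign cancellations there are irrelevant. Instead, tightness of the second step, $M\sum_j w_j^4 = M$ with $M>0$, already pins $w$ to $\pm e_j$, and the triangle step then becomes trivial.
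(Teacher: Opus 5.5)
Your proof is correct and follows the paper's argument essentially step for step: cumulant additivity gives the formula, and the triangle inequality together with $\sum_j w_j^4\le 1$ gives the bound. For $M>0$, tightness of $M\sum_j w_j^4\le M$ pins $w$ to $\pm e_j$, where you bound $\sum_j w_j^4$ termwise via $w_j^4\le w_j^2$ instead of by $(\sum_j w_j^2)^2$, which makes the step from $\sum_j w_j^4=1$ to $w_j^2\in\{0,1\}$ slightly more explicit than in the paper.
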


\begin{proof}
Since $\Var(Z_j)=1$ and $\norm{w}_2=1$ we get $\kappa_2(w\T Z) = 1$ by
\cref{prop:cumulant-calculus}, so $\kurt(w\T Z) = \kappa_4(w\T Z)$, and
\cref{eq:kappa4-additive} gives the stated identity.
Then
\begin{align}
  \Bigabs{\sum_{j}w_j^4\kurt(Z_j)}
   \le \sum_j w_j^4\bigabs{\kurt(Z_j)}
   \le \max_j\bigabs{\kurt(Z_j)}\cdot\sum_j w_j^4
   \le \max_j\bigabs{\kurt(Z_j)},
\end{align}
where the last step uses
$\sum_j w_j^4 \le \bigl(\sum_j w_j^2\bigr)^2 = 1$.
Write $M := \max_j\abs{\kurt(Z_j)}$ and assume $M>0$.
Then equality in the last inequality, which reads $M\sum_j w_j^4 \le M$, holds
if and only if $\sum_j w_j^4 = 1$, i.e.\ if and only if exactly one coordinate
of $w$ is non-zero and thus $w = \pm e_j$; equality in the middle inequality
then forces $\abs{\kurt(Z_j)} = M$.
If $M=0$ all three quantities vanish for every $w$, which is the last
assertion.
\end{proof}

\Cref{prop:kurtosis-contrast} is the theoretical justification of the
kurtosis-based contrast functions of \citet{comon1994} and
\citet{hyvarinen2001}, and it already displays the two phenomena that the rest
of these notes make exact:
\begin{enumerate}[label=(\roman*)]
  \item non-Gaussian sources are recoverable, and the recovery is only ever
        determined up to sign, scale and the ordering of the coordinates --
        the ambiguities of \cref{eq:ica-intro};
  \item Gaussian sources are not recoverable at all, since the contrast
        degenerates.  Compare \cref{rem:gaussian-not-identifiable}.
\end{enumerate}
What the following sections add is that neither fourth moments nor any moments
at all are actually needed: the correct hypothesis is non-normality, in the
sense of \cref{thm:gaussian-char}, and the correct tool is
\cref{thm:marcinkiewicz} rather than a contrast function.

\begin{remark}[Gaussian sources are genuinely unidentifiable]
\label{rem:gaussian-not-identifiable}
Let $Z \sim \Normal(0,\Id{k})$ and let $A \in \R^{p\times k}$.
For every orthogonal $Q \in \Orth{k}$ we have $QZ \sim \Normal(0,\Id{k})$ by
\cref{eq:cf-affine}, so $QZ$ again has mutually independent components, while
\begin{align}
  A Z &\eqd (A Q\T)(Q Z).
\end{align}
So $A$ and $AQ\T$ are indistinguishable from the law of $X = AZ$, and the
ambiguity is a whole orthogonal group rather than only the discrete
permutations and the diagonal rescalings of \cref{sec:intro}.
This is why every identifiability statement below either excludes Gaussian
sources, or explicitly quantifies the remaining Gaussian ambiguity.
\end{remark}

\section{Identifiability for Non-Constant Independent Sources}
\label{sec:non-constant}

We begin with the weakest set of assumptions on the sources: they are mutually
independent and non-constant, and nothing else.
Before any uniqueness statement can be made we have to normalise the
representation, because two trivial mechanisms produce genuinely different
representations of the same random vector: zero columns of the mixing matrix,
and columns that are proportional to each other.
\Cref{rem:normalisation} removes both.

\begin{remark}[Normalising a representation]
\label{rem:normalisation}
Consider a representation of a $p$-dimensional random vector $X \in \R^p$:
\begin{align}
  \label{eq:rep-basic}
  X &= A Z + \mu = \sum_{j=1}^{k} a_j Z_j + \mu,
\end{align}
where $Z = [Z_1,\dots,Z_k]\T \in \R^{k}$ is a random vector with mutually
independent components $\{Z_1,\dots,Z_k\}$, where $\mu \in \R^p$ is a
(non-random) column vector, and where
$A = [a_1,\dots,a_k] \in \R^{p\times k}$ is a (non-random) matrix with column
vectors $a_j \in \R^p$.

\emph{Step 1: removing zero columns and constant sources.}
By deleting the zero columns $a_j = 0$ and by absorbing every almost surely
constant summand $a_j Z_j$ into $\mu$, we arrive at a representation of $X$ in
which $A$ has no zero column and in which every component of $Z$ is almost
surely non-constant.

\emph{Step 2: removing proportional columns.}
The columns of $A$ may still be proportional to each other, say
$a_j = \lambda a_l$ with a proportionality constant
$\lambda \in \R \setminus \{0\}$ and $j \neq l$.
This is a true ambiguity of the representation, since we can always merge the
two corresponding sources,
\begin{align}
  a_j Z_j + a_l Z_l &= a_l\bigl(\lambda Z_j + Z_l\bigr)
                     =: a_l \widetilde Z_l .
\end{align}
For a first identifiability result we therefore have to aggregate these
variables.
Proportionality is an equivalence relation on $\R^p \setminus \{0\}$, so we
can choose columns $\tilde a_1,\dots,\tilde a_{k'}$ of $A$ forming a system of
representatives of
$\bigl(\{a_1,\dots,a_k\}\setminus\{0\}\bigr)/\mathord{\propto}$;
that is, a set of non-zero columns of $A$ that are pairwise non-proportional
and such that every non-zero $a_j$ is proportional to a (then necessarily
unique) $\tilde a_l$, written $a_j \propto \tilde a_l$.
Denoting the corresponding proportionality constants by $\lambda_j$, i.e.\
$a_j = \lambda_j \tilde a_l$, we obtain
\begin{align}
  X &= A Z + \mu \\
    &= \sum_{j=1}^{k} a_j Z_j + \mu \\
    &= \sum_{l=1}^{k'} \tilde a_l
       \underbrace{\Biggl( \sum_{a_j \propto \tilde a_l} \lambda_j Z_j \Biggr)
       }_{=:\ \widetilde Z_l} + \mu \\
    &= \sum_{l=1}^{\tilde k} \tilde a_l \widetilde Z_l
       + \underbrace{\sum_{l=\tilde k+1}^{k'} \tilde a_l \widetilde Z_l
         + \mu}_{=:\ \tilde\mu} \\
    &= \widetilde A \widetilde Z + \tilde\mu ,
\end{align}
where -- after possibly re-indexing -- $\tilde k\le k'$ denotes the number of
indices $l$ for which $\widetilde Z_l$ is \emph{not} almost surely constant,
so that the sum inside $\tilde\mu$ collects exactly the almost surely
constant $\widetilde Z_l$ and $\tilde\mu$ is (a.s.\ equal to) a
deterministic vector, and where
$\widetilde A := [\tilde a_1,\dots,\tilde a_{\tilde k}]$ and
$\widetilde Z := [\widetilde Z_1,\dots,\widetilde Z_{\tilde k}]\T$.

If Step~1 has already been carried out, so that every $Z_j$ is non-constant,
then in fact $\tilde k = k'$ and the second sum is empty: an independent sum
of non-constant random variables is never a.s.\ constant, because
$\abs{\cf{Y_1}\cf{Y_2}}\equiv1$ forces $\abs{\cf{Y_1}}\equiv1$ and hence
$Y_1$ degenerate.  We nevertheless state Step~2 in the general form, so that
it can be applied on its own.

We have thus found a representation of $X$ in which the columns of
$\widetilde A$ are non-zero and pairwise non-proportional and in which every
component of $\widetilde Z$ is almost surely non-constant.
Moreover, if the components of $Z$ are mutually independent then so are those
of $\widetilde Z$, since each $Z_j$ enters exactly one $\widetilde Z_l$.
\end{remark}

Now that the existence of such normalised representations is established, we
restrict attention to them and investigate their uniqueness -- up to the
transformations that are unavoidable.

\begin{theorem}[Identifiability for independent non-constant sources;
{\citealp[Chapter~10, Lemma~10.2.3 and Theorem~10.3.1]{kagan1973characterization}}]
\label{thm:kagan}
Let $X \in \R^p$ be a $p$-dimensional random vector with two representations
\begin{align}
  \label{eq:two-rep-kagan}
  A\up{1} Z\up{1} + \mu\up{1} \;=\; X \;=\; A\up{2} Z\up{2} + \mu\up{2},
\end{align}
with the following properties for $i = 1,2$:
\begin{enumerate}[label=(\roman*)]
  \item $A\up{i} \in \R^{p \times k\up{i}}$ is a (non-random) matrix whose
        columns are non-zero and pairwise non-proportional;
  \item $\mu\up{i} \in \R^p$ is a (non-random) column vector;
  \item $Z\up{i} \in \R^{k\up{i}}$ is a random vector such that
        \begin{enumerate}[label=(\alph*)]
          \item its $k\up{i}$ components
                $\{Z\up{i}_1,\dots,Z\up{i}_{k\up{i}}\}$ are mutually
                independent, and
          \item each component $Z\up{i}_j$ is almost surely non-constant,
                i.e.\ $\Law(Z\up{i}_j)$ is not a Dirac measure,
                $j = 1,\dots,k\up{i}$.
        \end{enumerate}
\end{enumerate}
Then
\begin{align}
  \mu\up{2} - \mu\up{1} &\in \im A\up{1} = \im A\up{2},
  & \rank\bigl(A\up{1}\bigr) &= \rank\bigl(A\up{2}\bigr) .
\end{align}
In particular there exist $c\up{1} \in \R^{k\up{1}}$ and
$c\up{2} \in \R^{k\up{2}}$ with
$\mu\up{2} - \mu\up{1} = A\up{1} c\up{1} = A\up{2} c\up{2}$.

Furthermore, the following statements hold.
\begin{enumerate}[label=(\arabic*)]
  \item If the $l$-th column of $A\up{2}$ is not proportional to any column of
        $A\up{1}$, then $Z\up{2}_l$ is Gaussian.
  \item Assume that the $l$-th column of $A\up{2}$ is proportional to the
        $j$-th column of $A\up{1}$ with proportionality
        constant\footnote{\label{fn:missing-constant}
        Note that this proportionality constant was not reintroduced in
        {\citep[Theorem~10.3.1]{kagan1973characterization}} after it had been
        removed ``without loss of generality'' in
        {\citep[Lemmata~10.2.4 and 10.2.5]{kagan1973characterization}}.
        It matters here, since we do not normalise the columns of $A\up{i}$.}
        $\lambda \in \R\setminus\{0\}$, i.e.\
        $a\up{2}_l = \lambda \cdot a\up{1}_j$.
        Then there exists a (complex) polynomial $g$ such that the
        characteristic functions of $Z\up{2}_l$ and $Z\up{1}_j$ satisfy, in a
        neighbourhood of the origin,
        \begin{align}
          \label{eq:cf-kagan}
          \cf{Z\up{2}_l}(\lambda t)
            &= \cf{Z\up{1}_j}(t)\cdot \exp\bigl(g(t)\bigr).
        \end{align}
        In particular, by Marcinkiewicz' \cref{thm:marcinkiewicz} together
        with \cref{rem:marcinkiewicz-use}, $Z\up{2}_l$ is Gaussian if and only
        if $Z\up{1}_j$ is Gaussian.
\end{enumerate}
\end{theorem}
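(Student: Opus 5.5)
Equating the two characteristic functions of $X$ and passing to distinguished logarithms gives a functional equation near the origin; I would eliminate all but one unknown from it by finite differences and then conclude with Marcinkiewicz' theorem. By \cref{eq:cf-affine} and \cref{eq:cf-product}, for all $t\in\R^p$,
\begin{align}
  e^{i t\T\mu\up{1}}\prod_{j=1}^{k\up{1}}\cf{Z\up{1}_j}\bigl(a\up{1}\T_j t\bigr)
  &= e^{i t\T\mu\up{2}}\prod_{l=1}^{k\up{2}}\cf{Z\up{2}_l}\bigl(a\up{2}\T_l t\bigr).
\end{align}
Choose $\delta>0$ so that every factor has a distinguished logarithm $\psi$ on the relevant ball (\cref{lem:distinguished-log}). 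Both sides are continuous, vanish at $0$ and exponentiate to $\cf{X}$. By the uniqueness part of that lemma, on a ball $B_\delta(0)$,
\begin{align}
  \sum_{j}\psi\up{1}_j\bigl(a\up{1}\T_j t\bigr) - \sum_{l}\psi\up{2}_l\bigl(a\up{2}\T_l t\bigr) &= i\,t\T\bigl(\mu\up{2}-\mu\up{1}\bigr).
\end{align}

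\emph{Image, rank and offset.} Let $w\perp\im A\up{1}$. For $t=sw$ the left sum vanishes, leaving $-\sum_l\psi\up{2}_l(s\,a\up{2}\T_l w)=is\,w\T(\mu\up{2}-\mu\up{1})$. The real part gives $\sum_l\log\abs{\cf{Z\up{2}_l}(s\,a\up{2}\T_l w)}=0$. Each term is $\le0$, so each vanishes, and $\abs{\cf{Z\up{2}_l}}=1$ on an interval around $0$ whenever $a\up{2}\T_l w\ne0$. A characteristic function of modulus $1$ near $0$ forces a lattice law with arbitrarily fine spacing, i.e.\ a Dirac law; equivalently, $\Ex[1-\cos(s(Y-Y'))]=0$ for a symmetrised copy $Y-Y'$. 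This contradicts non-constancy, hence $a\up{2}\T_lw=0$ for all $l$. Therefore $\im A\up{2}\subseteq\im A\up{1}$, and by symmetry the images are equal, which gives equal ranks. Plugging back in, $w\T(\mu\up{2}-\mu\up{1})=0$, so $\mu\up{2}-\mu\up{1}\in\im A\up{1}$.

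\emph{Finite differences.} Fix a target column, say $b:=a\up{2}_l$, and let $c_1,\dots,c_r$ be all the other columns of both matrices that are not proportional to $b$. For each $c_m$ pick $h_m\in\R^p$ with $c_m\T h_m=0$ but $b\T h_m\ne0$; this is possible by pairwise non-proportionality and a generic choice. The difference operator $\Delta_{h}f(t):=f(t+h)-f(t)$ kills any function of $c_m\T t$ when $c_m\T h=0$. Applying $\Delta_{\varepsilon_1h_1}\cdots\Delta_{\varepsilon_rh_r}$, plus two further generic differences to kill the linear right-hand side, annihilates all terms except those whose column is proportional to $b$. There are at most two such terms, namely $\psi\up{2}_l(b\T t)$ and possibly $\psi\up{1}_j(a\up{1}\T_jt)$ with $a\up{1}_j=\lambda^{-1}b$. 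Restricting to $t=sb/\norm{b}^2$ yields a one-variable function $\phi(s):=\psi\up{2}_l(s)-\psi\up{1}_j(s/\lambda)$, or just $\psi\up{2}_l(s)$ in case (1), whose iterated differences with small steps $\eta_1,\dots,\eta_N$ (where $\eta_m\propto\varepsilon_m b\T h_m$) vanish identically near $0$. Since the $\varepsilon_m$ range over a neighbourhood, the steps fill a neighbourhood too. Fréchet's functional equation then says that a continuous $\phi$ with $\Delta_{\eta_1}\cdots\Delta_{\eta_N}\phi=0$ for all small $\eta$ is a polynomial of degree $<N$ near $0$; this is the lemma proved in the appendix.

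\emph{Conclusion.} In case (1), $\psi_{Z\up{2}_l}$ is a polynomial near $0$, so $Z\up{2}_l$ is Gaussian by \cref{thm:marcinkiewicz}. In case (2), $\psi\up{2}_l(\lambda t)-\psi\up{1}_j(t)=:g(t)$ is a polynomial, which is \cref{eq:cf-kagan}, and \cref{rem:marcinkiewicz-use} gives the Gaussian equivalence.

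The main obstacle is the bookkeeping in the difference step. The $h_m$ and step sizes must be chosen so that every shifted point stays inside the ball where the logarithms exist, the surviving steps along $b$ must cover a full neighbourhood, and the proportionality constant $\lambda$ must be tracked through the whole argument. I would handle this by taking all shifts in a small cube and shrinking $\delta$ accordingly, and by invoking Fréchet's equation in its local form.
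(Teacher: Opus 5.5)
Your proposal is correct. For the column dichotomy (1)--(2) it is essentially the paper's own proof in \cref{app:kagan-proof}. The ingredients match one for one:
\begin{itemize}
  \item the same logarithmic identity among ridge functions on a ball;
  \item difference operators $\Delta_h$ with $h$ orthogonal to each unwanted column but not to the target, which is the one place pairwise non-proportionality enters;
  \item two differences to kill the affine term;
  \item the local Fr\'echet \cref{lem:frechet};
  \item Marcinkiewicz together with \cref{rem:marcinkiewicz-use}\,(i), carrying the factor $\lambda$.
\end{itemize}

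The genuine difference is the first part. The paper (\cref{prop:affine-hull}) argues geometrically. $\supp\Law(Z\up{i})$ is a product of sets with at least two points, so the affine hull of $\supp\Law(X)$ is $\mu\up{i}+\im A\up{i}$ for both $i$. Images, ranks and the offset are read off from that. You instead extract it from the same logarithmic identity, evaluated along $t=sw$ with $w\perp\im A\up{1}$. Taking real parts and using that a characteristic function of modulus one near the origin belongs to a Dirac law gives $(a\up{2}_l)\T w=0$ for all $l$.

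Both routes are valid. Yours lets a single identity drive the whole theorem using only characteristic-function facts. The paper's needs no logarithms, shows that the first part is elementary and independent of the appendix, and yields the slightly stronger statement about the affine hull of the support of $\Law(X)$.

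One wording slip: $\abs{\cf{Y}(u)}=1$ at $u\neq0$ confines $Y$ to a lattice of spacing $2\pi/\abs{u}$. That spacing becomes arbitrarily \emph{coarse} as $u\to0$, not fine. Your alternative argument is the clean finish. With $D:=Y-Y'$, integrating $\Ex[1-\cos(sD)]=0$ over $s\in(0,r)$ gives $\Ex\bigl[r-\sin(rD)/D\bigr]=0$, with the integrand read as $0$ on $\{D=0\}$. That integrand is non-negative and strictly positive on $\{D\neq0\}$, so $D=0$ almost surely.
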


The proof is in \cref{app:kagan-proof}.  It is a self-contained
finite-difference argument that uses nothing from the sections in between, so
the reader may turn to it at any point; we have deferred it only because it is
considerably longer than the statement and would interrupt the development
here.  The first part of the theorem is elementary, and we record it right
away as \cref{prop:affine-hull} below.

\begin{remark}[Reading \cref{thm:kagan}]
\label{rem:reading-kagan}
\Cref{thm:kagan} is best read as a dichotomy at the level of \emph{columns}.
Every column of $A\up{2}$ either
\begin{enumerate}[label=(\roman*)]
  \item is proportional to a column of $A\up{1}$, in which case, by
        \cref{eq:cf-kagan}, the two associated sources agree up to that
        proportionality factor and a factor $\exp(g)$ with $g$ a polynomial;
        or
  \item is not, in which case its source is forced to be Gaussian, and by
        \cref{rem:gaussian-not-identifiable} we should not have expected to
        recover it in the first place.
\end{enumerate}
In case (i), \cref{thm:kagan} by itself bounds neither $\Deg(g)$ nor the
direction in which the perturbation acts; under the additional hypotheses of
\cref{thm:non-gaussian} the factor becomes an independent Gaussian
perturbation of one of the two sources.
All the work in the following sections consists of ruling out the second
alternative by strengthening the assumptions on the sources, and of
bookkeeping the Gaussian perturbation in the first.
\end{remark}

\Cref{thm:kagan} is the engine of everything that follows.  Its first part --
the statement about images, ranks and offsets -- needs none of the machinery
of \cref{app:kagan-proof}, so we prove it here; only the column dichotomy
(1)--(2) is postponed.

\begin{proposition}[The first part of \cref{thm:kagan}]
\label{prop:affine-hull}
Under the hypotheses of \cref{thm:kagan}, the affine hull of the support of
$\Law(X)$ equals $\mu\up{i} + \im A\up{i}$ for $i=1,2$.  Consequently
\begin{align}
  \mu\up{2}-\mu\up{1} &\in \im A\up{1} = \im A\up{2},
  & \rank\bigl(A\up{1}\bigr) &= \rank\bigl(A\up{2}\bigr).
\end{align}
\end{proposition}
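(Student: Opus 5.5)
The plan is to prove the single claim that the affine hull of $\supp\Law(X)$ equals $\mu + \im A$ for any representation $X = AZ+\mu$ satisfying (i)--(iii) of \cref{thm:kagan}. The consequences then follow at once. Applying the claim to both representations gives $\mu\up{1}+\im A\up{1} = \mu\up{2}+\im A\up{2}$. Two affine subspaces that coincide have the same direction space, so $\im A\up{1}=\im A\up{2}$, and in particular the ranks agree. Moreover $\mu\up{2}$ lies in $\mu\up{1}+\im A\up{1}$, which gives $\mu\up{2}-\mu\up{1}\in\im A\up{1}$, and the vectors $c\up{i}$ exist by definition of the image.

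To avoid fussing with supports, I work with the affine hull through linear functionals. For $u\in\R^p$ the statement ``$u\T X$ is a.s.\ constant'' is what matters. The affine hull $H$ of the support of $\Law(X)$ is the smallest affine subspace carrying $X$ almost surely. Its direction space $D$ satisfies $D^\perp = \{u : u\T X \text{ is a.s.\ constant}\}$. To see this, note that if $u\T X = c$ a.s.\ then the support lies in the closed hyperplane $\{u\T x=c\}$, and hence so does $H$. Conversely, $H$ is closed and contains the support, so $X\in H$ a.s., and any $u\perp D$ is constant on $H$. So it suffices to show that
\begin{align}
  \{u\in\R^p : u\T X \text{ a.s.\ constant}\} &= (\im A)^\perp = \ker A\T ,
\end{align}
together with $\mu\in H$.

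The central step is to compute $u\T X = \sum_j (u\T a_j) Z_j + u\T\mu$, an independent sum. If $u\in\ker A\T$, this is the constant $u\T\mu$. Conversely, suppose $u\T a_j\neq0$ for some $j$. Then $(u\T a_j)Z_j$ is non-constant by (iii)(b). By the argument already given in \cref{rem:normalisation}, an independent sum with a non-constant summand is non-constant. In terms of characteristic functions, $\abs{\cf{u\T X}}\equiv1$ would force $\abs{\cf{(u\T a_j)Z_j}}\equiv1$. That makes the variable degenerate, because $\abs{\cf{Y}}\equiv 1$ means $\cf{Y}(t)=e^{i\theta(t)}$ with $\Ex[\cos(tY-\theta(t))]=1$, so $tY\in\theta(t)+2\pi\Ints$ a.s.\ for every $t$. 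Taking two incommensurable small $t$ pins $Y$ to a single point. This establishes the display, and hence $D=\im A$.

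Finally, the offset: taking $u\in\ker A\T$ we get $u\T X = u\T\mu$ a.s. So $X$ lies a.s.\ in $\mu+(\ker A\T)^\perp=\mu+\im A$, which is an affine space with direction $\im A = D$. Since $H\subseteq \mu+\im A$ and both have the same dimension, they are equal, which gives $\mu\in H$. The only mildly delicate point is the degeneracy lemma for $\abs{\cf{Y}}\equiv1$. I would cite it from the remark or prove it via the symmetrisation $Y-Y'$, whose characteristic function $\abs{\cf{Y}}^2\equiv1$ equals that of $\Dirac_0$ by \cref{thm:uniqueness}. That gives $Y=Y'$ a.s.\ for independent copies, so $Y$ is constant. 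Note that hypothesis (i) on non-proportional columns is not needed for this part.
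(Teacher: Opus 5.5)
Your proof is correct, but it runs dual to the paper's. The paper works with the support itself. By independence, $\supp\Law(Z)=\prod_j\supp\Law(Z_j)$, a fact about products of Borel measures on second countable spaces. Each factor has at least two points, so $\operatorname{aff}(\supp\Law(Z))=\R^k$. Then $\supp\Law(AZ+\mu)=\overline{A\,\supp\Law(Z)+\mu}$, and since affine hulls commute with affine maps and ignore closures, this gives $\mu+\im A$.

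You instead characterise the direction space of the affine hull through its annihilator, the set of $u$ with $u\T X$ a.s.\ constant. This reduces everything to the scalar statement that an independent sum with a non-constant summand is non-constant, which you settle with characteristic functions. Your symmetrisation argument is the one the paper itself uses in the degenerate case of Cram\'er's theorem, and your lattice argument with two incommensurable $t$ is also fine.

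What each route buys: yours avoids the product-of-supports and pushforward-support lemmas altogether and fits the one-dimensional-projection approach of the notes. The paper's route is purely topological, needs no Fourier analysis, and yields strictly more, namely an explicit formula for $\supp\Law(X)$ rather than only its affine hull.

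The only point worth writing out is in your last step. To pass from ``$u\T X=u\T\mu$ a.s.\ for each $u\in\ker A\T$'' to ``$X\in\mu+\im A$ a.s.'', apply it to a finite basis of $\ker A\T$, so that only finitely many null sets are discarded.
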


\begin{proof}
Fix $i$ and abbreviate $A := A\up{i}$, $Z := Z\up{i}$, $\mu := \mu\up{i}$,
$k := k\up{i}$.
Since the components of $Z$ are mutually independent, $\Law(Z)$ is the product
of the laws of its components, and the support of a finite product of Borel
probability measures on second countable spaces -- in particular on $\R$ -- is
the product of the supports:
$\supp\Law(Z) = \prod_{j=1}^{k}\supp\Law(Z_j)$.
Each factor contains at least two points, because $Z_j$ is almost surely
non-constant.
Hence, for each $j$, choosing two distinct points of $\supp\Law(Z_j)$ and
fixing an arbitrary point of $\supp\Law(Z_l)$ in every other coordinate
$l\neq j$ produces two elements of $\supp\Law(Z)$ whose difference is a
non-zero multiple of $e_j$.  The direction space of the affine hull of
$\supp\Law(Z)$ therefore contains every $e_j$, so
\begin{align}
  \operatorname{aff}\bigl(\supp\Law(Z)\bigr) &= \R^{k} .
\end{align}
Now let $T(z) := Az+\mu$.  For a continuous map $T$ one has
$\supp\Law(T(Z)) = \overline{T\bigl(\supp\Law(Z)\bigr)}$: the inclusion
``$\supseteq$'' holds because $T^{-1}(U)$ is an open neighbourhood of any
preimage point and therefore has positive mass, and ``$\subseteq$'' because
the closed set $\overline{T(\supp\Law(Z))}$ has $\Law(T(Z))$-measure~$1$.
Affine hulls are unchanged by taking closures, since affine subspaces are
closed, and they commute with affine maps.  Hence
\begin{align}
  \operatorname{aff}\bigl(\supp\Law(X)\bigr)
    &= T\bigl(\operatorname{aff}(\supp\Law(Z))\bigr)
     = A\,\R^{k}+\mu = \mu + \im A .
\end{align}
Applying this to $i=1$ and $i=2$ and equating the two descriptions of the same
affine subspace gives $\mu\up{1}+\im A\up{1} = \mu\up{2}+\im A\up{2}$, whence
$\im A\up{1} = \im A\up{2}$ (the two direction spaces coincide) and
$\mu\up{2}-\mu\up{1}\in\im A\up{1}$.  Equality of ranks is equality of the
dimensions of these images.
\end{proof}

\section{Identifiability for Non-Gaussian Independent Sources}
\label{sec:non-gaussian}

\Cref{thm:kagan} exhibits a clear distinction between Gaussian and non-Gaussian
components in a representation $X = AZ + \mu$: Gaussian components tend to be
non-identifiable.
It therefore makes sense to separate them off from the beginning and to work
with a representation in which the Gaussian part appears as one additive noise
vector.

\begin{remark}[Separated representations]
\label{rem:separated}
Start from a normalised representation as in \cref{rem:normalisation} and
write it as
\begin{align}
  X = A Z + \mu
    &= \sum_{j=1}^{k} a_j Z_j + \mu \\
    &= \sum_{l=1}^{\hat k} a_l Z_l
       + \underbrace{\sum_{l=\hat k+1}^{k} a_l Z_l + \mu
         }_{=:\ \widetilde E \ \sim\ \Normal(\tilde\mu,\widetilde\Sigma)} \\
    &= \widetilde A \widetilde Z + \widetilde E ,
\end{align}
where -- after possibly re-indexing -- $\hat k$ is the number of
non-Gaussian components and the second sum collects all Gaussian
components $Z_l$ into a single $p$-variate (possibly degenerate)
Gaussian random vector $\widetilde E \sim \Normal(\tilde\mu,\widetilde\Sigma)$,
where $\widetilde Z := [Z_1,\dots,Z_{\hat k}]\T$ is a vector of mutually
independent, non-constant, non-Gaussian components with
$\widetilde Z \indep \widetilde E$, and where the mixing matrix
$\widetilde A := [a_1,\dots,a_{\hat k}]$ has non-zero, pairwise
non-proportional columns.
\end{remark}

Before stating the general identifiability result -- which will be based on
\cref{thm:kagan} -- we investigate how the above outsourcing of the Gaussian
components can be \emph{reverted}.
This is the technical heart of this section: we need to be able to move
Gaussian mass back and forth between the noise vector and additional columns
of the mixing matrix.

\begin{lemma}[Trading Gaussian noise against columns of the mixing matrix]
\label{lem:noise-trade}
Let $X \in \R^p$ be a $p$-variate random vector with a representation
\begin{align}
  X &= A Z + E,
\end{align}
where
\begin{itemize}
  \item $A \in \R^{p\times k}$ is a (non-random) matrix with non-zero,
        pairwise non-proportional columns,
  \item $Z \in \R^k$ is a random vector with mutually independent,
        non-constant, non-Gaussian components, and
  \item $E \in \R^p$ is a (possibly degenerate) Gaussian random vector,
        $E \sim \Normal(\mu,\Sigma)$, with mean vector $\mu \in \R^p$ and
        positive semi-definite covariance matrix $\Sigma \in \R^{p\times p}$
        of rank $r \le p$, such that $Z \indep E$.
\end{itemize}
Then there exist a matrix $B \in \R^{p\times r}$ of rank $r$ and a random
vector $V \sim \Normal(0,\Id{r})$ such that
\begin{align}
  \Sigma &= B B\T, & E &= B V + \mu \ \text{a.s.}, & Z &\indep V,
\end{align}
which yields the representation
\begin{align}
  \label{eq:rep-with-B}
  X &= A Z + B V + \mu .
\end{align}
If $r = 0$ then $B$ is the empty $p\times0$ matrix, $V$ the empty vector and
$E=\mu$ a.s.  If $r \ge 2$ then $B$ can moreover be chosen such that no column
of $B$ is proportional to any column of $A$.

Furthermore, for either choice of $B$ and for every $r \ge 0$, we obtain a
normalised representation in the sense of \cref{rem:normalisation}:
\begin{align}
  \label{eq:rep-normalised-from-B}
  X &= \bmat{A & \widetilde B}\bmat{\widetilde Z\\ \widetilde V} + \mu,
\end{align}
where
\begin{itemize}
  \item $\widetilde B \in \R^{p \times \tilde r}$ with $\tilde r \le r$
        consists of those columns of $B$ that are not proportional to any
        column of $A$, and $\widetilde V \sim \Normal(0,\Id{\tilde r})$
        collects the corresponding components of $V$;
  \item $\widetilde Z = Z + \widehat V$ with
        $\widehat V \sim \Normal(0,\Gamma)$ for a diagonal matrix $\Gamma$
        (possibly with zeros on the diagonal), where
        $\{Z, \widehat V, \widetilde V\}$ is mutually independent;
  \item every component of $\widetilde Z$ is non-Gaussian, the random vector
        $[\widetilde Z\T, \widetilde V\T]\T$ has mutually independent
        non-constant components, and the matrix $[A, \widetilde B]$ has
        non-zero, pairwise non-proportional columns.
\end{itemize}
\end{lemma}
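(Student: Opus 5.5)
The proof has three parts: factor the covariance and realise the noise almost surely, rotate the factor so its columns avoid the directions of $A$, and then normalise by merging proportional columns into the sources.

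\emph{Factorisation and a.s.\ representation.}
Diagonalise $\Sigma = Q D Q\T$ with $Q$ orthogonal and $D=\diag(d_1,\dots,d_r,0,\dots,0)$, $d_i>0$. Set $B_0 := Q_{[:,1:r]}D_r^{1/2}$, which has rank $r$ and satisfies $B_0B_0\T=\Sigma$. To get $E = BV+\mu$ almost surely, rather than merely in law, I need $V$ to be a function of $E$. Take $V := B\pinv(E-\mu)$, where $B\pinv = (B\T B)^{-1}B\T$ is the left inverse. Then $V$ is linear Gaussian with covariance $B\pinv\Sigma B\invT = \Id{r}$. The residual $E-\mu-BV = (\Id{p}-BB\pinv)(E-\mu)$ is the projection onto $(\im B)^\perp = (\im\Sigma)^\perp$. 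It has covariance $0$ and mean $0$, so it vanishes a.s. Since $V$ is measurable with respect to $E$ and $Z\indep E$, we get $Z\indep V$. The case $r=0$ is trivial.

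\emph{Avoiding proportional columns when $r\ge2$.}
For any orthogonal $R\in\Orth{r}$, $B:=B_0R$ again satisfies $BB\T=\Sigma$ and has rank $r$. A column $B_0Re_m$ is proportional to $a_j$ only if $a_j\in\im B_0$ and $Re_m$ lies on the single line $B_0\pinv a_j\R$. So I must choose $R$ such that no column $Re_m$ lies on any of the finitely many lines $\ell_j$, $j=1,\dots,k$, in $\R^r$. Each set $\{R : Re_m\in\ell_j\}$ is a proper closed subset of $\Orth{r}$: it is the preimage of a two-point set in the sphere $S^{r-1}$, which has positive dimension since $r\ge2$. It therefore has Haar measure zero, and a generic $R$ works. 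This is the only place where $r\ge2$ is needed. For $r=1$ the single column is $\pm B_0$ and cannot be moved.

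\emph{Normalisation.}
Fix either choice of $B$. Since $\{Z_1,\dots,Z_k,V_1,\dots,V_r\}$ is mutually independent, I apply \cref{rem:normalisation}, Step~2, to $X=AZ+BV+\mu$. The columns of $A$ are pairwise non-proportional. Two columns of $B$ cannot be proportional to each other, since $B$ has full column rank, and so no column of $B$ is proportional to two different columns of $A$. Each column $b_m$ of $B$ is therefore either proportional to exactly one $a_j$, say $b_m=\lambda_m a_j$, or to none. In the first case it is merged into the source: $\widetilde Z_j := Z_j + \sum_{m:\,b_m\propto a_j}\lambda_m V_m$. In the second case $b_m$ goes into $\widetilde B$ and $V_m$ into $\widetilde V$.

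Then $\widehat V_j := \sum_m\lambda_mV_m$ uses disjoint sets of coordinates of $V$ for different $j$, and also disjoint from $\widetilde V$. Hence $\widehat V\sim\Normal(0,\Gamma)$ with $\Gamma$ diagonal, and $\{Z,\widehat V,\widetilde V\}$ is mutually independent. Each $\widetilde Z_j$ is non-Gaussian by \cref{rem:marcinkiewicz-use}\,(ii), and in particular non-constant. Each $\widetilde V_m$ is non-constant as a standard normal. Finally, the columns of $[A,\widetilde B]$ are non-zero and pairwise non-proportional by construction. The $r=1$ case needs no separate treatment here: the single column either merges or is kept.

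I expect the main obstacle to be the genericity argument in the $r\ge2$ step. It has to be made precise, for example by noting that $\{R\in\Orth{r}: Re_m\in\ell\}$ is a proper real-algebraic subset, or by a direct construction that perturbs a rotation in a $2$-plane. It must also be checked that this really uses $r\ge2$.
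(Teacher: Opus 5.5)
Your proof is correct and has the same structure as the paper's. You use the spectral factorisation with $V$ a linear function of $E-\mu$; your $V=B\pinv(E-\mu)$ is exactly the paper's $V=[\Delta_r^{-1/2},\,0]Q\T(E-\mu)$ when $B=B_0$. You then use the freedom $B\mapsto BR$ with $R\in\Orth{r}$, and finally merge the columns of $B$ that are proportional to columns of $A$ into the sources via \cref{rem:marcinkiewicz-use}\,(ii). Your projection argument, that $(\Id{p}-BB\pinv)(E-\mu)$ has zero mean and zero covariance, is a useful detail that the paper only asserts.

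The one real difference is the rotation step for $r\ge2$.

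The paper's \cref{lem:not-prop} constructs the rotation explicitly. Put $\tilde a=B\pinv a$ and pick any non-zero $t_{\tilde a}\perp\tilde a$. It chooses a unit $q_1$ avoiding the finitely many hyperplanes $\tilde a^\perp$ and $t_{\tilde a}^\perp$. This makes $q_1$ non-proportional to every $\tilde a$ and keeps every $\tilde a$ out of $q_1^\perp$. It then completes $q_1$ to an orthonormal basis. The remaining columns lie in $q_1^\perp$, so none of them can be proportional to any $\tilde a$. This uses only that hyperplanes in $\R^r$ are Lebesgue-null.

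Your genericity argument on $\Orth{r}$ is shorter and shows more, namely that almost every rotation works. However, your stated reason, ``proper closed subset, therefore Haar-null'', is not a valid inference: proper closed sets can have positive measure. Here is a one-line repair. Let $\nu$ be the Haar probability measure on $\Orth{r}$. By left invariance, $\nu\{R: Re_m=x\}=\nu\{R: Re_m=Ux\}$ for every $U\in\Orth{r}$. For $r\ge2$ the orbit of $x$ is the infinite sphere $S^{r-1}$, and these fibres are pairwise disjoint, so each has measure zero. A line meets $S^{r-1}$ in two points, and there are finitely many pairs $(m,j)$, so the bad set is null.

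With that fix both routes are complete. Yours trades the paper's elementary linear-algebra construction for the existence and invariance of Haar measure on $\Orth{r}$.
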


\begin{proof}
The first statement follows directly from the spectral decomposition of the
symmetric positive semi-definite matrix $\Sigma$ (equivalently, from its
singular value decomposition):
\begin{align}
  \Sigma &= Q \bmat{\Delta_r & 0\\ 0 & 0} Q\T,
\end{align}
with an orthogonal matrix $Q \in \Orth{p} \subseteq \R^{p\times p}$ and a
diagonal matrix $\Delta_r \in \R^{r\times r}$ with strictly positive diagonal
entries.  We may then put
\begin{align}
  B &:= Q\bmat{\Delta_r^{1/2}\\ 0},
  & V &:= \bmat{\Delta_r^{-1/2} & 0} Q\T (E-\mu),
\end{align}
which yields exactly the desired properties:
\begin{align}
  \rank(B) &= r, & \Sigma &= B B\T, & E &= BV+\mu \ \text{a.s.},
  & V &\sim \Normal(0,\Id{r}), & Z &\indep V .
\end{align}
Note that if $G \in \Orth{r}$ is orthogonal, then replacing $B$ by $BG$ and
$V$ by $G\T V$ preserves all of these properties.
For $r \ge 2$ this freedom can be used to make every column of $BG$
non-proportional to every column of $A$; the technical details are the content
of \cref{lem:not-prop}.

In any case, let $\widetilde B$ be the matrix consisting of those columns of
$B = [b_1,\dots,b_r]$ that are not proportional to any column of $A$, and let
$\widetilde V$ be the random vector of the corresponding components of $V$.
After re-indexing we may assume these to be the first $\tilde r$ columns of
$B$, so that $\widetilde B = [b_1,\dots,b_{\tilde r}]$.
For $l > \tilde r$ we then have $b_l = \lambda_l a_{j_l}$ for a unique index
$j_l$ and some $\lambda_l \in \R \setminus \{0\}$.
With this we obtain
\begin{align}
  X &= A Z + E \\
    &= A Z + B V + \mu \\
    &= \sum_{j=1}^{k} a_j Z_j + \sum_{l=1}^{r} b_l V_l + \mu \\
    &= \sum_{j=1}^{k} a_j Z_j + \sum_{l=1}^{\tilde r} b_l V_l
       + \sum_{l=\tilde r+1}^{r} b_l V_l + \mu \\
    &= \sum_{j=1}^{k} a_j Z_j + \sum_{l=1}^{\tilde r} b_l V_l
       + \sum_{l=\tilde r+1}^{r} a_{j_l}\lambda_l V_l + \mu \\
    &= \sum_{j=1}^{k} a_j
       \underbrace{\Biggl( Z_j
         + \overbrace{\sum_{b_l \propto a_j} \lambda_l V_l}^{=:\ \widehat V_j}
         \Biggr)}_{=:\ \widetilde Z_j}
       + \sum_{l=1}^{\tilde r} b_l V_l + \mu \\
    &= \sum_{j=1}^{k} a_j \widetilde Z_j
       + \sum_{l=1}^{\tilde r} b_l V_l + \mu \\
    &= \bmat{A & \widetilde B}\bmat{\widetilde Z\\ \widetilde V} + \mu .
\end{align}
Each $\widetilde Z_j$ is non-Gaussian, being the sum of the non-Gaussian $Z_j$ and
independent Gaussian noise, by Marcinkiewicz' \cref{thm:marcinkiewicz}, cf.\
\cref{rem:marcinkiewicz-use}\,(ii).
The components of $[\widetilde Z\T,\widetilde V\T]\T$ are mutually independent
because the components of $[Z\T,V\T]\T$ are mutually independent and each
component $V_l$ enters exactly one component of
$[\widetilde Z\T,\widetilde V\T]\T$.
The columns of $\widetilde B$ are pairwise non-proportional because
$\rank(B) = r$ makes the columns of $B$ linearly independent, and they are
non-proportional to the columns of $A$ by the very choice of $\widetilde B$.
So the matrix $[A,\widetilde B]$ has
non-zero, pairwise non-proportional columns.
\end{proof}

The following lemma provides the rotation used in the proof above.  It is the
only place where we need a genuinely geometric argument.

\begin{lemma}[Avoiding finitely many hyperplanes]
\label{lem:avoid-hyperplanes}
Let $k \ge 1$ and let $\mathcal{A}\subseteq\R^k\setminus\{0\}$ be a finite set.
Then there exists $q\in\R^k$ with $q\T a \neq 0$ for every $a\in\mathcal{A}$.
In fact the set of such $q$ is open and its complement is a Lebesgue null set.
\end{lemma}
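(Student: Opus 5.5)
The plan is to write the bad set as a finite union of hyperplanes and show that each hyperplane is closed and Lebesgue-null. For each $a\in\mathcal{A}$ put
\begin{align}
  H_a &:= \{q\in\R^k : q\T a = 0\}.
\end{align}
Since $a\neq 0$, the map $q\mapsto q\T a$ is a non-zero linear functional, so $H_a$ is its kernel, a linear subspace of dimension $k-1$. The set of admissible $q$ is then exactly the complement $\R^k\setminus\bigcup_{a\in\mathcal{A}}H_a$.

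Openness comes first. Each $H_a$ is the preimage of the closed set $\{0\}$ under a continuous map, so it is closed. A finite union of closed sets is closed, and hence its complement is open.

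Next, null measure. Each $H_a$ is a proper linear subspace, so it has Lebesgue measure zero. To see this concretely, choose an orthogonal $Q$ that maps $H_a$ onto $\{q : q_k = 0\}$. Lebesgue measure is invariant under orthogonal maps, and by Fubini the set $\{q_k=0\}=\R^{k-1}\times\{0\}$ has measure $\lambda^{k-1}(\R^{k-1})\cdot\lambda^1(\{0\})=0$. A finite union of null sets is null. In particular the complement of a null set in $\R^k$ has positive, indeed infinite, measure and so is non-empty, which gives existence of $q$.

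An alternative existence argument avoids measure theory. The polynomial $p(q):=\prod_{a\in\mathcal{A}} q\T a$ is a product of non-zero polynomials, hence non-zero, because the polynomial ring over $\R$ has no zero divisors. A non-zero polynomial cannot vanish on all of $\R^k$, so some $q$ satisfies $p(q)\neq0$. There is no real obstacle in this proof; the only point needing care is the null-set claim for a hyperplane, and the rotation-plus-Fubini argument settles it.
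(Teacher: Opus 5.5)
Your proof is correct and follows the paper's argument: the set of bad $q$ is the finite union of the hyperplanes $a^\perp$, each closed and Lebesgue-null, so its complement is open, has full measure, and is therefore non-empty. Your rotation-plus-Fubini justification that a hyperplane is null fills in a step the paper only asserts. Your polynomial argument for existence is a valid alternative that the paper does not use.
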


\begin{proof}
For $a \neq 0$ the orthogonal complement
$a^\perp = \{v\in\R^k : v\T a = 0\}$ is a linear subspace of dimension $k-1$,
hence closed and a Lebesgue null set.
A finite union of null sets is a null set, so
$\R^k\setminus\bigcup_{a\in\mathcal{A}}a^\perp$ has full Lebesgue measure and
is in particular non-empty; it is open as the complement of a finite union of
closed sets.
\end{proof}

\begin{lemma}[Rotating a frame away from finitely many directions]
\label{lem:not-prop}
Let $p \ge k \ge 2$, let $\mathcal{A} \subseteq \R^p \setminus \{0\}$ be a
finite set of non-trivial (column) vectors in $\R^p$, and let
$B \in \R^{p\times k}$ be a matrix with $\rank(B) = k$.
Then there exists an orthogonal matrix
$Q \in \Orth{k} \subseteq \R^{k\times k}$ such that for every $a \in
\mathcal{A}$ and every column vector $c$ of $C := BQ \in \R^{p\times k}$ the
set $\{a,c\}$ is linearly independent, i.e.\ $a$ is not proportional to $c$
(and vice versa).
\end{lemma}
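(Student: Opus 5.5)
The plan is to reduce the statement to \cref{lem:avoid-hyperplanes} by looking at the problem in the coordinates of $B$. The $m$-th column of $C=BQ$ is $Bq_m$, where $q_m$ is the $m$-th column of $Q$.

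\emph{Reduction.} Fix $a\in\mathcal{A}$. If $a\notin\im B$, then $a$ is never proportional to any $Bq$ with $q\neq0$, so such $a$ can be discarded. If $a\in\im B$, then $a=B\hat a$ for a unique $\hat a\in\R^k\setminus\{0\}$, since $B$ has full column rank. Moreover $Bq\propto a$ if and only if $q\propto\hat a$. So it suffices to find $Q\in\Orth{k}$ none of whose columns is proportional to any vector of the finite set $\widehat{\mathcal{A}}\subseteq\R^k\setminus\{0\}$ of these $\hat a$. The vectors $\hat a$ may be normalised to unit length, giving finitely many points $\pm\hat a$ on the sphere $S^{k-1}$ that every column of $Q$ must avoid.

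\emph{Construction of $Q$.} The plan is to use that $k\ge2$ and that the set of bad orthogonal matrices is small. For each column index $m$ and each $\hat a$, the condition ``$Qe_m=\pm\hat a/\norm{\hat a}$'' defines a closed subset of $\Orth{k}$. This subset is a coset of the stabiliser of a unit vector, which has dimension $\dim\Orth{k-1}<\dim\Orth{k}$ precisely when $k\ge2$. Hence it is nowhere dense, and a finite union of such sets cannot exhaust $\Orth{k}$.

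To avoid the manifold-dimension argument, I would instead give an explicit construction.
\begin{itemize}
\item Choose a unit vector $q_1$ outside the finite set $\{\pm\hat a/\norm{\hat a}\}$. This is possible since $S^{k-1}$ is infinite for $k\ge2$.
\item Inductively, choose $q_m$ as a unit vector in $W_m:=\{q_1,\dots,q_{m-1}\}^\perp$ that avoids the same finite set. This works while $\dim W_m\ge2$, because then $W_m\cap S^{k-1}$ is an infinite sphere.
\item The last column $q_k$ is forced up to sign, and this is the obstacle. It must be handled by choosing $q_{k-1}$ and $q_k$ together: pick any orthonormal basis $u,v$ of the two-dimensional space $W_{k-1}$ and set $q_{k-1}=\cos\theta\,u+\sin\theta\,v$ and $q_k=-\sin\theta\,u+\cos\theta\,v$. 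Each condition $q_{k-1}\propto\hat a$ or $q_k\propto\hat a$ excludes at most finitely many $\theta\in[0,2\pi)$, namely at most two values per $\hat a$ per column. So some $\theta$ avoids all of them.
\end{itemize}
For $k=2$ this rotation step is the whole construction. For $k>2$ the first $k-2$ columns come from the greedy step.

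\emph{Conclusion and main step.} Setting $Q=[q_1,\dots,q_k]$ gives an orthogonal matrix, and every column $c=Bq_m$ of $C$ is non-zero, because $B$ is injective. If $c$ were proportional to some $a\in\mathcal{A}$, then $a\in\im B$ and $q_m\propto\hat a$, which contradicts the choice of $q_m$. Hence $\{a,c\}$ is linearly independent for all $a\in\mathcal{A}$ and all columns $c$.

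The only delicate point is the last column, where the freedom collapses. This is exactly where $k\ge2$ enters: the final two columns are chosen jointly as a rotation in a plane, and only finitely many angles are excluded. This also shows why the statement fails for $k=1$, where $Q=\pm1$ and $B$ itself might be proportional to some $a$.
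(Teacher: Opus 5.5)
Your proof is correct. It shares the paper's reduction but builds $Q$ differently.

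\emph{The reduction.} The paper discards the $a$ with $B\T a=0$ and pulls the rest back by the left inverse $B\pinv$. You discard $a\notin\im B$ and use the exact preimage $\hat a$. Both work for the same reason: every column $Bq_m$ lies in $\im B$, and $B$ is injective.

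\emph{Your construction of $Q$.} You build $Q$ column by column, taking each unit vector in the orthogonal complement of the previous ones and away from the finitely many points $\pm\hat a/\norm{\hat a}$. You correctly notice that the last column is then forced up to sign. You handle this by choosing the final pair jointly as $(\cos\theta\,u+\sin\theta\,v,\,-\sin\theta\,u+\cos\theta\,v)$ and excluding finitely many angles.

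\emph{The paper's construction of $Q$.} The paper removes that obstacle rather than working around it. It chooses only $q_1$, requiring that $q_1$ be neither proportional nor orthogonal to any $\tilde a$. This comes from \cref{lem:avoid-hyperplanes}, applied to the $\tilde a$ together with auxiliary vectors $t_{\tilde a}\perp\tilde a$; that is where $k\ge2$ enters. Non-orthogonality means no $\tilde a$ lies in $q_1^\perp$. Hence any orthonormal basis $q_2,\dots,q_k$ of $q_1^\perp$ completes $Q$ with no further choices.

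\emph{What each route buys.} The paper's route is shorter, and it shows the admissible $Q$ are plentiful: a generic first column with the remaining columns arbitrary. Yours needs no null-set argument, only that circles and higher-dimensional spheres are infinite. It also makes the role of $k\ge2$ visible as the freedom of a planar rotation. The cost is an induction plus a separate treatment of the last two columns.
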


\begin{proof}
Put $B\pinv := (B\T B)^{-1}B\T \in \R^{k\times p}$, which is a left inverse of
$B$ by the assumption $\rank(B) = k$.

We first discard from $\mathcal{A}$ those $a$ with $B\T a = 0$.  Such an $a$
is automatically linearly independent of every column $c$ of $BQ$, whatever
$Q\in\Orth{k}$: indeed $c\in\im B$, so $a=\lambda c$ with $\lambda\neq0$
would give $a\in\im B$, whereas $B\T a = 0$ says $a\perp\im B$, and together
these force $a=0$, which is excluded.
Note also that $B\pinv a = 0$ if and only if $B\T a = 0$, since $B\T B$ is
invertible.  Replacing $\mathcal{A}$ by $\{a\in\mathcal{A}: B\T a\neq0\}$ --
if this is empty any $Q\in\Orth{k}$ will do -- we may therefore assume
$B\pinv a\neq0$ for every $a\in\mathcal{A}$, and consider the sets
\begin{align*}
  \widetilde{\mathcal{A}}
    &:= \bigl\{ B\pinv a \given a \in \mathcal{A} \bigr\}
       \subseteq \R^k\setminus\{0\}, \\
  \mathcal{A}'
    &:= \widetilde{\mathcal{A}} \cup
       \bigl\{ t_{\tilde a} \given \tilde a \in \widetilde{\mathcal{A}}
       \bigr\} \subseteq \R^k,
\end{align*}
where for $\tilde a \in \widetilde{\mathcal{A}}$ the column vector
$t_{\tilde a}\in\R^k$ is any fixed non-trivial vector with
$t_{\tilde a}\T \tilde a = 0$; such a vector exists because $k \ge 2$.
By \cref{lem:avoid-hyperplanes} there exists a vector
$q_1 \in \R^k$ that is not orthogonal to any vector in $\mathcal{A}'$, and
without loss of generality we may scale it to unit norm, $\norm{q_1}_2 = 1$.

It follows that $q_1$ is not proportional to any
$\tilde a \in \widetilde{\mathcal{A}}$: otherwise $q_1 = \lambda \tilde a$
with $\lambda \in \R$, and hence
$t_{\tilde a}\T q_1 = \lambda\, t_{\tilde a}\T \tilde a = 0$, contradicting
the choice of $q_1$.

Since $q_1$ is also not orthogonal to any element of
$\widetilde{\mathcal{A}}$, the set $\widetilde{\mathcal{A}}$ is disjoint from
the orthogonal complement
\begin{align*}
  q_1^{\perp} &:= \bigl\{ v \in \R^k \given v\T q_1 = 0 \bigr\},
\end{align*}
which is a $(k-1)$-dimensional subspace of $\R^k$ and thus admits an
orthonormal basis $q_2,\dots,q_k$.
We can now build the orthogonal matrix
\begin{align*}
  Q &:= [q_1,\dots,q_k] \in \Orth{k} \subseteq \R^{k\times k}.
\end{align*}
Assume, by way of contradiction, that there exist $a \in \mathcal{A}$ and a
column vector $c$ of $C := BQ$, say $c = Ce_j = Bq_j$, such that $\{a,c\}$ is
linearly dependent.
Since $a,c \neq 0$ there is then a $\lambda \in \R\setminus\{0\}$ with
$a = \lambda c$, and multiplying by $B\pinv$ gives
\begin{align*}
  \tilde a := B\pinv a = \lambda \cdot B\pinv c
            = \lambda \cdot B\pinv B Q e_j = \lambda \cdot q_j .
\end{align*}
For $j = 1$ this contradicts $q_1 \neq \lambda^{-1}\tilde a$ for all
$\tilde a \in \widetilde{\mathcal{A}}$.
For $j \ge 2$ it contradicts $\tilde a \in \widetilde{\mathcal{A}}$ together
with
$\widetilde{\mathcal{A}} \cap \spn\{q_2,\dots,q_k\} = \emptyset$.
So $\{a,c\}$ is linearly independent for every $a \in \mathcal{A}$ and every
column vector $c$ of $BQ$, which proves the claim.
\end{proof}

\begin{theorem}[Identifiability for independent non-Gaussian sources]
\label{thm:non-gaussian}
Let $X \in \R^p$ be a random vector and assume that we have two
representations
\begin{align}
  \label{eq:two-rep-nonnormal}
  A\up{1} Z\up{1} + E\up{1} \;=\; X \;=\; A\up{2} Z\up{2} + E\up{2},
\end{align}
with the following properties for $i=1,2$:
\begin{enumerate}[label=(\roman*)]
  \item $A\up{i} \in \R^{p\times k\up{i}}$ is a (non-random) matrix whose
        columns are non-zero and pairwise non-proportional;
  \item $E\up{i} \in \R^p$ is a $p$-variate Gaussian random vector,
        $E\up{i} \sim \Normal(\mu\up{i}, \Sigma\up{i})$, possibly degenerate,
        with mean vector $\mu\up{i}\in\R^p$ and positive semi-definite
        covariance matrix $\Sigma\up{i}$;
  \item $Z\up{i} \in \R^{k\up{i}}$ is a random vector such that
        \begin{enumerate}[label=(\alph*)]
          \item its $k\up{i}$ components
                $\{Z\up{i}_1,\dots,Z\up{i}_{k\up{i}}\}$ are mutually
                independent, and
          \item each component $Z\up{i}_j$ is a (non-constant) non-Gaussian
                random variable, for $j=1,\dots,k\up{i}$;
        \end{enumerate}
  \item $E\up{i}$ is independent of $Z\up{i}$: $E\up{i} \indep Z\up{i}$.
\end{enumerate}
Then $k\up{1} = k\up{2} =: k$, and there exist a permutation matrix
$P = P(\rho) \in \R^{k\times k}$, given by $Pe_j = e_{\rho(j)}$ for a
permutation $\rho$, and an invertible diagonal matrix
$\Lambda = \diag(\lambda_1,\dots,\lambda_k) \in \R^{k\times k}$ such that
\begin{align}
  A\up{2} &= A\up{1}P\Lambda,
  & \rank\bigl(A\up{2}\bigr) &= \rank\bigl(A\up{1}\bigr),
\end{align}
and such that for every $j = 1,\dots,k$ there exists a (complex) polynomial
$g_j$ with
\begin{align}
  \label{eq:cf-nonnormal}
  \cf{Z\up{2}_j}(\lambda_j t)
    &= \cf{Z\up{1}_{\rho(j)}}(t)\cdot\exp\bigl(g_j(t)\bigr)
\end{align}
in a neighbourhood of the origin.

Furthermore, and in particular, $A\up{2}$ has a left inverse
($\rank(A\up{2}) = k$) if and only if $A\up{1}$ has a left inverse
($\rank(A\up{1}) = k$).
If this is the case, then for each $j=1,\dots,k$ separately we have
$\Deg(g_j)\le 2$ and there exist a constant $\nu_j \in \R$ and a (possibly
degenerate) random variable $G_j \sim \Normal(0,\sigma_j^2)$ such that at
least one of the following two cases holds, according to the sign of the
degree-two coefficient of $g_j$:
\begin{align}
  \bigl(\lambda_j Z\up{2}_j + \nu_j\bigr)
    &\eqd Z\up{1}_{\rho(j)} + G_j,
    & G_j &\indep Z\up{1}_{\rho(j)},
    & \text{if } g_j'' &\le 0, \\
  Z\up{1}_{\rho(j)}
    &\eqd \bigl(\lambda_j Z\up{2}_j + \nu_j\bigr) + G_j,
    & G_j &\indep Z\up{2}_j,
    & \text{if } g_j'' &\ge 0 .
\end{align}
If $g_j''\neq0$ exactly one of the two cases applies; if $g_j'' = 0$ then
$G_j = 0$ and both hold and coincide.
The polynomial $g_j$ is determined near the origin only up to an additive
constant in $2\pi i\,\Ints$, because $\cf{Z\up{1}_{\rho(j)}}$ does not vanish
there; in particular $\Deg(g_j)$ and $g_j''$ are well defined.
\end{theorem}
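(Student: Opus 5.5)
The plan is to reduce to \cref{thm:kagan} by trading the Gaussian noise back into extra columns, using \cref{lem:noise-trade}.

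\textbf{Step 1: normalise both representations.} For $i=1,2$, apply \cref{lem:noise-trade} to $X = A\up{i}Z\up{i}+E\up{i}$, choosing $B\up{i}$ with no column proportional to a column of $A\up{i}$ when $r\up{i}\ge2$. This yields normalised representations
\begin{align}
X = \bmat{A\up{i} & \widetilde B\up{i}}\bmat{\widetilde Z\up{i}\\ \widetilde V\up{i}} + \mu\up{i}.
\end{align}
Here $\widetilde Z\up{i} = Z\up{i}+\widehat V\up{i}$ has non-Gaussian components, $\widetilde V\up{i}$ is standard Gaussian, and all hypotheses of \cref{thm:kagan} hold.

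\textbf{Step 2: match the non-Gaussian columns.} Take a column $a\up{2}_l$ of $A\up{2}$. Since $\widetilde Z\up{2}_l$ is non-Gaussian, \cref{thm:kagan}(1) forces $a\up{2}_l$ to be proportional to some column of $[A\up{1},\widetilde B\up{1}]$. That column cannot belong to $\widetilde B\up{1}$: \cref{thm:kagan}(2) would give \cref{eq:cf-kagan} with a Gaussian $\widetilde V\up{1}$, making $\widetilde Z\up{2}_l$ Gaussian by \cref{rem:marcinkiewicz-use}(i), a contradiction. So $a\up{2}_l=\lambda_l a\up{1}_{\rho(l)}$. The index $\rho(l)$ is unique because the columns of $A\up{1}$ are pairwise non-proportional, and $\rho$ is injective because the columns of $A\up{2}$ are pairwise non-proportional. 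Running the symmetric argument from side $1$ gives the reverse injection. Hence $k\up{1}=k\up{2}$, $\rho$ is a bijection, and $A\up{2}=A\up{1}P\Lambda$; the rank statement follows.

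\textbf{Step 3: derive \cref{eq:cf-nonnormal}.} \Cref{thm:kagan}(2) gives $\cf{\widetilde Z\up{2}_j}(\lambda_j t)=\cf{\widetilde Z\up{1}_{\rho(j)}}(t)\exp(h(t))$ near the origin, with $h$ a polynomial. Since $\widetilde Z = Z+\widehat V$ with $\widehat V$ Gaussian and independent of $Z$, each $\cf{\widehat V}$ is the exponential of a quadratic polynomial with no zeros. Dividing by these factors leaves the same identity for $Z\up{i}$ with a modified polynomial $g_j$.

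\textbf{Step 4: full column rank case.} Here I expect the main obstacle, namely showing $\Deg(g_j)\le2$. My first attempt is to work directly at the level of $X$. By Step 2, $A\up{2}=A\up{1}P\Lambda$, so
\begin{align}
\cf{X}(t)=\prod_j \cf{Z\up{1}_{j}}\bigl(a\up{1}\T_j t\bigr)\,\cf{E\up{1}}(t)=\prod_j \cf{Z\up{2}_j}\bigl(\lambda_j a\up{1}\T_{\rho(j)}t\bigr)\,\cf{E\up{2}}(t).
\end{align}
Because $A\up{1}$ has a left inverse, I can choose $t$ with $A\up{1}\T t = s e_{\rho(j)}$. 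Taking distinguished logarithms near $0$ (\cref{lem:distinguished-log}), every other factor contributes $\psi_{Z}(0)=0$, and the identity reduces to
\begin{align}
\psi_{Z\up{2}_j}(\lambda_j s)-\psi_{Z\up{1}_{\rho(j)}}(s)=q(s),
\end{align}
with $q$ the difference of two quadratic Gaussian exponents. So $g_j$ is quadratic modulo $2\pi i\Ints$.

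\textbf{Step 5: extract the Gaussian factor.} Write $g_j(s)=i\nu s-\tfrac12 c s^2$; the linear coefficient is purely imaginary by the conjugate symmetry of characteristic functions. If $c\ge0$, then $\exp(g_j)$ with the shift absorbed is the characteristic function of $G_j\sim\Normal(0,c)$, and \cref{thm:uniqueness} (the identity holds globally by Step 4's argument, since the left inverse lets us work at every $s$ on the real line) gives the first case. If $c\le0$, the same argument with the roles exchanged gives the second case. Finally, $c=0$ gives $G_j=0$, so both cases hold and coincide.
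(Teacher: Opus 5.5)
Your proposal is correct and follows the paper's proof essentially step for step. Steps 1--3 coincide with the paper. Your Step 4 evaluates $\cf{X}$ at $t = s\,(A\upinv{1})\T e_{\rho(j)}$; take this $t$ linear in $s$, so that the Gaussian factors become exponentials of quadratics in $s$. That evaluation is just the Fourier-side version of the paper's left multiplication of both representations by $e_{\rho(j)}\T A\upinv{1}$, and it yields the same global identity between $\cf{Z\up{1}_{\rho(j)}}$ and $\cf{\lambda_j Z\up{2}_j}$ up to nowhere-vanishing Gaussian factors. Deciding the case by the sign of $g_j''$ directly, rather than by comparing $\tau\up{1}_j$ with $\tau\up{2}_j$ and then identifying $g_j'' = \mp\sigma_j^2$ as the paper does, is only a cosmetic difference.
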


\begin{proof}
Apply \cref{lem:noise-trade} to both sides to obtain the
representations
\begin{align}
  \bmat{A\up{1} & \widetilde B\up{1}}
  \bmat{\widetilde Z\up{1}\\ \widetilde V\up{1}} + \mu\up{1}
  \;=\; X \;=\;
  \bmat{A\up{2} & \widetilde B\up{2}}
  \bmat{\widetilde Z\up{2}\\ \widetilde V\up{2}} + \mu\up{2},
\end{align}
which satisfy the requirements of \cref{thm:kagan}.

Since $\widetilde Z\up{2}_j$ is non-Gaussian, \cref{thm:kagan} implies that
the $j$-th column $a\up{2}_j$ of $A\up{2}$ is proportional to some column of
$[A\up{1}, \widetilde B\up{1}]$.
All components of $\widetilde V\up{1}$ are Gaussian whereas those of
$\widetilde Z\up{1}$ are non-Gaussian, so \cref{thm:kagan} forces
$a\up{2}_j$ to be proportional to a column of $A\up{1}$, say to
$a\up{1}_{\rho(j)}$.
The index $\rho(j)$ is unique because the columns of $A\up{1}$ are pairwise
non-proportional, and $j\mapsto\rho(j)$ is injective: $\rho(j)=\rho(j')$ would
give $a\up{2}_j \propto a\up{1}_{\rho(j)} \propto a\up{2}_{j'}$, contradicting
pairwise non-proportionality of the columns of $A\up{2}$; hence
$k\up{2}\le k\up{1}$.
Since the same argument applies to all columns of $A\up{2}$, and also with the
roles of $A\up{1}$ and $A\up{2}$ interchanged, we obtain a bijection between
the columns of $A\up{1}$ and those of $A\up{2}$, together with their
proportionality constants.
This already proves the first part of the claim, namely
$k\up{1} = k\up{2} =: k$ and the representation
\begin{align*}
  A\up{2} &= A\up{1}P\Lambda
\end{align*}
for some permutation matrix $P = P(\rho)$ with permutation $\rho$ and some
invertible diagonal matrix $\Lambda = \diag(\lambda_1,\dots,\lambda_k)$.
Indeed, multiplying by the unit vector $e_j$ from the right gives
\begin{align*}
  a\up{2}_j = A\up{2}e_j
    &= A\up{1}P\Lambda e_j
     = A\up{1}P\lambda_j e_j
     = A\up{1}\lambda_j e_{\rho(j)}
     = \lambda_j a\up{1}_{\rho(j)} .
\end{align*}
Again by \cref{thm:kagan} there exists a polynomial $\tilde g_j$ with
\begin{align}
  \cf{\widetilde Z\up{2}_j}(\lambda_j t)
    = \cf{\lambda_j \widetilde Z\up{2}_j}(t)
    = \cf{\widetilde Z\up{1}_{\rho(j)}}(t)\cdot
      \exp\bigl(\tilde g_j(t)\bigr)
\end{align}
in a neighbourhood of the origin.
\Cref{lem:noise-trade} gives us the representations
\begin{align}
  \widetilde Z\up{2}_j &= Z\up{2}_j + \widehat V\up{2}_j,
    & Z\up{2}_j &\indep \widehat V\up{2}_j,\\
  \widetilde Z\up{1}_{\rho(j)}
    &= Z\up{1}_{\rho(j)} + \widehat V\up{1}_{\rho(j)},
    & Z\up{1}_{\rho(j)} &\indep \widehat V\up{1}_{\rho(j)},
\end{align}
with Gaussian $\widehat V\up{2}_j$ and
$\widehat V\up{1}_{\rho(j)}$.
Taking characteristic functions yields
\begin{align}
  \cf{\lambda_j \widetilde Z\up{2}_j}(t)
    &= \cf{\lambda_j Z\up{2}_j}(t)\cdot
       \cf{\lambda_j \widehat V\up{2}_j}(t),\\
  \cf{\widetilde Z\up{1}_{\rho(j)}}(t)
    &= \cf{Z\up{1}_{\rho(j)}}(t)\cdot
       \cf{\widehat V\up{1}_{\rho(j)}}(t).
\end{align}
Combining this with the previous display we get
\begin{align}
  \cf{Z\up{2}_j}(\lambda_j t) = \cf{\lambda_j Z\up{2}_j}(t)
    &= \cf{\lambda_j\widetilde Z\up{2}_j}(t)\cdot
       \cf{\lambda_j \widehat V\up{2}_j}(t)^{-1}\\
    &= \cf{\widetilde Z\up{1}_{\rho(j)}}(t)\cdot
       \exp\bigl(\tilde g_j(t)\bigr)\cdot
       \cf{\lambda_j \widehat V\up{2}_j}(t)^{-1}\\
    &= \cf{Z\up{1}_{\rho(j)}}(t)\cdot
       \underbrace{\cf{\widehat V\up{1}_{\rho(j)}}(t)\cdot
       \exp\bigl(\tilde g_j(t)\bigr)\cdot
       \cf{\lambda_j\widehat V\up{2}_j}(t)^{-1}
       }_{=:\ \exp(g_j(t))}\\
    &= \cf{Z\up{1}_{\rho(j)}}(t)\cdot\exp\bigl(g_j(t)\bigr),
\end{align}
where we used that the logarithm of the characteristic function of a (possibly
degenerate) Gaussian distribution is a polynomial of degree $\le 2$ and that
such characteristic functions have no zeros.
Hence $g_j$ is a well-defined polynomial.
This proves the main claim.

Now assume in addition that $\rank(A\up{1}) = k \le p$.
Then $A\up{1}$ has the left inverse
\begin{align}
  A\upinv{1} &:= \bigl(A\uT{1}A\up{1}\bigr)^{-1}A\uT{1}.
\end{align}
Multiplying the original representation \cref{eq:two-rep-nonnormal} by
$e_{\rho(j)}\T A\upinv{1}$ from the left gives
\begin{align}
  Z\up{1}_{\rho(j)} + e_{\rho(j)}\T A\upinv{1} E\up{1}
    &= e_{\rho(j)}\T A\upinv{1}\bigl(A\up{1}Z\up{1} + E\up{1}\bigr)\\
    &= e_{\rho(j)}\T A\upinv{1}\bigl(A\up{2}Z\up{2} + E\up{2}\bigr)\\
    &= e_{\rho(j)}\T A\upinv{1}\bigl(A\up{1}P\Lambda Z\up{2} + E\up{2}\bigr)\\
    &= e_{\rho(j)}\T P\Lambda Z\up{2}
       + e_{\rho(j)}\T A\upinv{1}E\up{2}\\
    &= e_j\T \Lambda Z\up{2} + e_{\rho(j)}\T A\upinv{1}E\up{2}\\
    &= \lambda_j Z\up{2}_j + e_{\rho(j)}\T A\upinv{1}E\up{2},
\end{align}
with Gaussian
\begin{align}
  N\up{i}_j &:= e_{\rho(j)}\T A\upinv{1}E\up{i}
    \sim \Normal\bigl(\nu\up{i}_j,(\tau\up{i}_j)^2\bigr),
\end{align}
which are independent of the corresponding $Z\up{i}$.
Taking characteristic functions in
\begin{align}
  Z\up{1}_{\rho(j)} + N\up{1}_j &= \lambda_j Z\up{2}_j + N\up{2}_j
\end{align}
gives
\begin{align}
  \cf{Z\up{1}_{\rho(j)}}(t)\cdot\cf{N\up{1}_j}(t)
    &= \cf{\lambda_j Z\up{2}_j}(t)\cdot\cf{N\up{2}_j}(t).
\end{align}
Now put
\begin{align}
  \sigma_j &:= \sqrt{\Bigl\lvert(\tau\up{2}_j)^2-(\tau\up{1}_j)^2\Bigr\rvert},
  & \nu_j &:= \nu\up{2}_j - \nu\up{1}_j,
  & G_j &\sim \Normal(0,\sigma_j^2).
\end{align}
Treating the two cases $\tau\up{1}_j \ge \tau\up{2}_j$ and
$\tau\up{1}_j \le \tau\up{2}_j$ separately we obtain
\begin{align}
  \cf{\lambda_j Z\up{2}_j}(t)
    &= \cf{Z\up{1}_{\rho(j)}}(t)\cdot\cf{N\up{1}_j}(t)\cdot
       \cf{N\up{2}_j}(t)^{-1}\\
    &= \cf{Z\up{1}_{\rho(j)}}(t)\cdot\cf{G_j-\nu_j}(t),
    & \tau\up{1}_j &\ge \tau\up{2}_j,\\
  \cf{Z\up{1}_{\rho(j)}}(t)
    &= \cf{\lambda_j Z\up{2}_j}(t)\cdot\cf{N\up{2}_j}(t)\cdot
       \cf{N\up{1}_j}(t)^{-1}\\
    &= \cf{\lambda_j Z\up{2}_j}(t)\cdot\cf{G_j+\nu_j}(t),
    & \tau\up{1}_j &\le \tau\up{2}_j .
\end{align}
This shows that we are in one of the two cases
\begin{align}
  \lambda_j Z\up{2}_j
    &\eqd Z\up{1}_{\rho(j)} + G_j - \nu_j,
    & G_j &\indep Z\up{1}_{\rho(j)},
    & \tau\up{1}_j &\ge \tau\up{2}_j,\\
  Z\up{1}_{\rho(j)}
    &\eqd \lambda_j Z\up{2}_j + G_j + \nu_j,
    & G_j &\indep Z\up{2}_j,
    & \tau\up{1}_j &\le \tau\up{2}_j,
\end{align}
which is the claim, once the case distinction is expressed through $g_j$.
For that, compare the two displays above with \cref{eq:cf-kagan}.  Near the
origin $\cf{Z\up{1}_{\rho(j)}}$ does not vanish, so dividing gives
\begin{align}
  \exp\bigl(g_j(t)\bigr)
    &= \frac{\cf{\lambda_jZ\up{2}_j}(t)}{\cf{Z\up{1}_{\rho(j)}}(t)}
     = \begin{cases}
         \cf{G_j-\nu_j}(t), & \tau\up{1}_j\ge\tau\up{2}_j,\\[2pt]
         \cf{G_j+\nu_j}(t)^{-1}, & \tau\up{1}_j\le\tau\up{2}_j,
       \end{cases}
\end{align}
near the origin.  By \cref{not:degenerate} the right hand side equals
$\exp\bigl(\mp\tfrac12\sigma_j^2t^2 - i\nu_jt\bigr)$ in the two cases, so,
applying the uniqueness part of \cref{lem:distinguished-log} to
$\cf{G_j\mp\nu_j}$ and its exponent $\pm g_j$,
\begin{align}
  g_j(t) &\equiv \mp\tfrac12\sigma_j^2t^2 - i\nu_j t
  \quad\bigl(\mathrm{mod}\ 2\pi i\,\Ints\bigr),
  & g_j'' &= \mp\sigma_j^2 .
\end{align}
In particular $\Deg(g_j)\le2$, the coefficient $g_j''$ is real,
$\sigma_j^2=\abs{g_j''}$, and $g_j''\le0$ holds exactly in the first case and
$g_j''\ge0$ exactly in the second.
\end{proof}

\begin{remark}[The four remaining ambiguities]
\label{rem:four-ambiguities}
\Cref{thm:non-gaussian} states that, under the assumption of independent
non-Gaussian sources and full column rank of the mixing matrix, the
distributions of the sources can be recovered up to translation, scale,
permutation and componentwise additive Gaussian noise.  These are the four
ambiguities the theorem leaves.
Each of the first three ambiguities can be removed by a normalisation
convention:
\begin{enumerate}[label=(\arabic*)]
  \item the translation ambiguity by centring all random variables, e.g.\ by
        subtracting their means (assuming these exist);
  \item the scale ambiguity by rescaling, e.g.\ by dividing by the standard
        deviations (assuming these exist and are finite).  Beware that this
        pins the scale down to a sign only when the fourth ambiguity is
        absent: in the setting of \cref{thm:non-gaussian} the first case
        gives $\lambda_j^2\Var(Z\up{2}_j) = \Var(Z\up{1}_{\rho(j)})
        +\sigma_j^2$, so standardising both source vectors leaves
        $\lambda_j = \pm\sqrt{1+\sigma_j^2}$.  In the noiseless model
        $\sigma_j=0$ and only the sign survives, as \cref{cor:complete-ident}
        records with $\Lambda = \diag(\pm1,\dots,\pm1)$;
  \item the permutation ambiguity by enforcing a recognisable ordering of the
        sources, as is done for causal models such as the linear non-Gaussian
        acyclic model (LiNGAM) of \citet{shimizu2006}; there the ordering is
        not merely a convention but is itself identified, see
        \cref{cor:lingam}.
\end{enumerate}
The fourth ambiguity -- componentwise additive Gaussian noise -- is of a
different nature, since it changes the law of the sources rather than just
their parametrisation.  Removing it is the subject of
\cref{sec:gaussian-free}.
\end{remark}

\section{Identifiability for Gaussian-Free Independent Sources}
\label{sec:gaussian-free}

\Cref{thm:non-gaussian} left us with one irreducible ambiguity: the
sources are determined only up to an additive Gaussian perturbation.
That ambiguity is caused by sources that still contain some Gaussian noise
which could equally well be attributed to the noise vector.
In this section we remove it by strengthening the hypothesis on the sources
from ``non-Gaussian'' to ``no Gaussian noise can be split off at all''.

\subsection{Gaussian-free random variables}
\label{ssec:gaussian-free}

\begin{definition}[Gaussian-free]
\label{def:gaussian-free}
A real-valued random variable $Z$ is called \emph{Gaussian-free} if for every
decomposition
\begin{align}
  Z &\eqd U_1 + U_2, & U_1 &\indep U_2,
\end{align}
neither $U_1$ nor $U_2$ is a non-degenerate Gaussian random variable.
Equivalently: there is no $\sigma>0$ and no real-valued random variable $Y$
with $Y \indep G$, $G\sim\Normal(0,\sigma^2)$ and $Z \eqd Y+G$; that is,
$\Law(Z)$ has no non-degenerate Gaussian convolution factor.
\end{definition}

\begin{remark}[Gaussian-freeness is affine invariant]
\label{rem:gf-affine}
Whether $Z$ is Gaussian-free depends only on $\Law(Z)$, and it is invariant
under invertible affine maps: if $Z$ is Gaussian-free, $\lambda\neq0$ and
$c\in\R$, then $\lambda Z+c$ is Gaussian-free as well.
Indeed, a decomposition $\lambda Z+c \eqd U_1+U_2$ with $U_1\indep U_2$ and
$U_2\sim\Normal(m,\tau^2)$, $\tau>0$, would give
$Z \eqd \lambda^{-1}(U_1-c) + \lambda^{-1}U_2$ with
$\lambda^{-1}U_2 \sim \Normal(m/\lambda,\tau^2/\lambda^2)$ again
non-degenerate, contradicting Gaussian-freeness of $Z$.
We use this silently whenever a rescaled source is called Gaussian-free, for
instance in the proof of \cref{thm:gf-identifiability}.
\end{remark}

\begin{remark}[Gaussian-free versus non-Gaussian]
\label{rem:gf-vs-nongaussian}
A \emph{non-constant} Gaussian-free random variable is automatically
non-Gaussian: a non-degenerate Gaussian $Z$ fails to be Gaussian-free, because
the trivial decomposition $Z\eqd Z+0$ already exhibits a non-degenerate
Gaussian factor.
The converse fails, by \cref{ex:gaussian-free}\,(d), and constants are
Gaussian-free but degenerate -- which is why non-constancy has to be assumed
separately.
Consequently, in \cref{thm:gf-identifiability} the Gaussian-free hypothesis
re-derives the non-Gaussianity demanded by \cref{thm:non-gaussian} \emph{for
those components that are assumed Gaussian-free}, and for them only the
non-constancy has to be carried over.  In part~(1) of that theorem, where only
$Z\up{1}$ is assumed Gaussian-free, the non-Gaussianity of $Z\up{2}$ remains
a genuine hypothesis and cannot be dropped: with $p=2$, $A\up{1}=[e_1]$,
$Z\up{1}$ Rademacher and $E\up{1}=e_2N$ for $N\sim\Normal(0,1)$, the same
$X$ also equals $A\up{2}Z\up{2}$ with $A\up{2}=[e_1,e_2]$,
$Z\up{2}=[Z\up{1},N]\T$ and $E\up{2}=0$, and the conclusion fails because
$Z\up{2}_2$ is Gaussian.
\end{remark}

The interpretation is that from a Gaussian-free random variable one cannot
shave off any further Gaussian noise: it carries, in this sense, the cleanest
possible signal.

\begin{remark}[Terminology]
\label{rem:terminology}
The classical name for the property in \cref{def:gaussian-free} is that
$\Law(Z)$ has \emph{no Gaussian component}, or equivalently \emph{no normal
component}, where ``component'' means \emph{convolution factor}.
This is the vocabulary of the arithmetic of probability distributions of
\citet{linnik1977}, whose chapters are devoted to distributions ``with a
Gaussian component'', and it is used in exactly this sense in the ICA
literature \citep{eriksson2006}.
We prefer the adjective \emph{Gaussian-free} for two reasons: it does not
collide with the ``components'' $Z_1,\dots,Z_k$ of a random vector, which we
would otherwise have to call components having no components, and it does not
collide with the ``Gaussian components'' of a Gaussian mixture model, which
are mixture summands rather than convolution factors.

A related but genuinely different classical notion is the Khinchin--Linnik
class $I_0$ of distributions having no \emph{indecomposable} factors.  Here a
law is a \emph{factor} of $\mu$ if $\mu$ is its convolution with some law --
so every law is a factor of itself -- and $\mu$ is \emph{indecomposable} if it
is non-degenerate and in every factorisation $\mu=\mu_1*\mu_2$ one of
$\mu_1,\mu_2$ is degenerate.
The two notions are logically independent: $\Normal(0,1)$ lies in $I_0$ by
Cram\'er's \cref{thm:cramer}, since all its factors are Gaussian and every
non-degenerate Gaussian is decomposable, but it is not Gaussian-free; whereas
a non-degenerate Bernoulli law is Gaussian-free by \cref{cor:gf-criteria}
below yet is itself indecomposable -- a convolution of two laws with $\abs{A}$
and $\abs{B}$ support points has at least $\abs{A}+\abs{B}-1$ of them, so two
non-degenerate factors would force at least three -- and hence is not in
$I_0$.
\end{remark}

To make \cref{def:gaussian-free} quantitative we measure how much Gaussian
noise \emph{can} be split off.

\begin{definition}[Splittable Gaussian scales]
\label{def:sigmax}
For a real-valued random variable $Z$ and $s\in\Rnn$ set
\begin{align}
  \label{eq:gs-def}
  g_s(t) &:= \cf{Z}(t)\exp\Bigl(\tfrac12 s^2t^2\Bigr), \qquad t\in\R,
\end{align}
and put
\begin{align}
  \label{eq:S-set}
  S(Z) &:= \bigl\{\, s \in \Rnn \;:\; g_s \text{ is a characteristic
    function} \,\bigr\},
\end{align}
as well as
\begin{align}
  \sigma_{\max}(Z) &:= \sup S(Z) \in [0,\infty] .
\end{align}
We call $\sigma_{\max}(Z)$ the \emph{maximal Gaussian scale} of $Z$.
\end{definition}

The name is not standard.  It is justified by the following reading: $s\in
S(Z)$ if and only if a $\Normal(0,s^2)$ factor can be split off $Z$, so
$\sigma_{\max}(Z)$ is the supremum of the standard deviations of the
Gaussians that $Z$ contains as convolution factors -- and by
\cref{lem:S-structure}\,(iii) below that supremum is attained, so it really
is the largest such standard deviation.

\begin{lemma}[Structure of $S(Z)$]
\label{lem:S-structure}
Let $Z$ be a real-valued random variable.  Then:
\begin{enumerate}[label=(\roman*)]
  \item for every $s\in S(Z)$ we have the tail bound
        \begin{align}
          \label{eq:gf-tail-bound}
          \abs{\cf{Z}(t)} &\le \exp\Bigl(-\tfrac12 s^2t^2\Bigr),
          & t &\in\R;
        \end{align}
  \item $\sigma_{\max}(Z) < \infty$, always;
  \item $S(Z) = \bigl[0,\sigma_{\max}(Z)\bigr]$; in particular the supremum is
        attained;
  \item $Z$ is Gaussian-free if and only if $\sigma_{\max}(Z) = 0$.
\end{enumerate}
\end{lemma}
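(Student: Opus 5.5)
Each item follows from the identity $\cf{Z} = g_s\cdot\exp(-\tfrac12 s^2t^2)$, which exhibits $\Normal(0,s^2)$ as a convolution factor exactly when $g_s$ is a characteristic function; I take the four items in order.

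For (i), let $s\in S(Z)$, so $g_s=\cf{Y}$ for some $Y$. Then $\abs{g_s}\le1$ by \cref{prop:cf-elementary}\,(i), which gives $\abs{\cf{Z}(t)}\le\exp(-\tfrac12s^2t^2)$ at once.

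For (ii), argue by contradiction. Suppose $S(Z)$ is unbounded, so $s_n\in S(Z)$ with $s_n\to\infty$. By (i), $\abs{\cf{Z}(t)}\le\exp(-\tfrac12s_n^2t^2)\to0$ for every $t\neq0$. But $\cf{Z}$ is continuous with $\cf{Z}(0)=1$, so $\abs{\cf{Z}}>1/2$ on a neighbourhood of $0$, a contradiction. Hence $\sigma_{\max}(Z)<\infty$. Note that $0\in S(Z)$ trivially, so the supremum is over a non-empty set.

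For (iii) two facts are needed.

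\emph{Downward closedness.} If $s\in S(Z)$ and $0\le s'\le s$, then
$g_{s'}=g_s\cdot\exp\bigl(-\tfrac12(s^2-s'^2)t^2\bigr)$. This is the characteristic function of $Y+G$ with $Y\indep G$ and $G\sim\Normal(0,s^2-s'^2)$, by \cref{eq:cf-product}. Independent copies exist on a product space, and only laws matter here.

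\emph{Closedness.} Take $s_n\uparrow\sigma:=\sigma_{\max}(Z)$ with $s_n\in S(Z)$. Then $g_{s_n}(t)\to g_\sigma(t)$ pointwise, since the defining formula \cref{eq:gs-def} is continuous in $s$ for fixed $t$. The limit $g_\sigma$ is continuous, in particular at $0$. By L\'evy's \cref{thm:levy}\,(ii), $g_\sigma$ is therefore a characteristic function, so $\sigma\in S(Z)$.

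Together these give $S(Z)=[0,\sigma_{\max}(Z)]$. This step is the only one with real content, and L\'evy's theorem is what makes it work. It is exactly the use of \cref{thm:levy} instead of Bochner that the paper announced.

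For (iv), unpack \cref{def:gaussian-free}. $Z$ fails to be Gaussian-free if and only if $Z\eqd Y+G$ with $Y\indep G$ and $G\sim\Normal(m,\tau^2)$, $\tau>0$. Absorbing the mean $m$ into $Y$ reduces this to $m=0$. By \cref{eq:cf-product} and \cref{thm:uniqueness}, such a decomposition exists if and only if $\cf{Z}(t)\exp(\tfrac12\tau^2t^2)=\cf{Y}(t)$, i.e.\ if and only if $\tau\in S(Z)$.

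Conversely, if $\tau\in S(Z)$ with $\tau>0$, realise $g_\tau$ as $\cf{Y}$ and take an independent $G$. Then $\cf{Y+G}=\cf{Z}$, so $Z\eqd Y+G$.

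Hence $Z$ is Gaussian-free if and only if $S(Z)=\{0\}$, which by (iii) is equivalent to $\sigma_{\max}(Z)=0$.
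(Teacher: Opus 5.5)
Your proposal is correct and follows the paper's proof essentially step by step. The steps are the tail bound from $\abs{g_s}\le 1$, the contradiction between unboundedness of $S(Z)$ and continuity of $\cf{Z}$ at the origin, downward closedness via multiplication by a Gaussian characteristic function, and attainment of the supremum via L\'evy's \cref{thm:levy}\,(ii); the only cosmetic difference is that in (iv) you phrase Gaussian-freeness as $S(Z)=\{0\}$ instead of proving the two implications separately, which is equivalent because $0\in S(Z)\subseteq\Rnn$.
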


\begin{proof}
(i) If $s\in S(Z)$ then $g_s$ is a characteristic function, so
$\abs{g_s(t)}\le 1$ by \cref{prop:cf-elementary}\,(i), which is
\cref{eq:gf-tail-bound}.

(ii) Suppose $S(Z)$ were unbounded and pick $s_n\in S(Z)$ with
$s_n\to\infty$.  Fix $t\neq0$.  By \cref{eq:gf-tail-bound},
$\abs{\cf{Z}(t)}\le\exp(-\tfrac12 s_n^2t^2)\to 0$, so $\cf{Z}(t)=0$.
Thus $\cf{Z}$ vanishes on $\R\setminus\{0\}$ while $\cf{Z}(0)=1$,
contradicting the continuity of $\cf{Z}$
(\cref{prop:cf-elementary}\,(ii)).

(iii) First, $0\in S(Z)$, and $S(Z)$ is downward closed: if $s\in S(Z)$ and
$0\le s'\le s$, then
\begin{align}
  g_{s'}(t) &= g_s(t)\cdot\exp\Bigl(-\tfrac12\bigl(s^2-s'^2\bigr)t^2\Bigr)
\end{align}
is a product of the characteristic function $g_s$ with that of a
$\Normal(0,s^2-s'^2)$ variable, hence a characteristic function by
\cref{eq:cf-product}.
So $S(Z)$ is an interval containing $0$, and it is bounded by (ii).
It remains to see that $\sigma:=\sigma_{\max}(Z)$ itself lies in $S(Z)$.
Choose $s_n\in S(Z)$ with $s_n\uparrow\sigma$.  For every $t\in\R$ we have the
pointwise convergence $g_{s_n}(t)\to g_\sigma(t)$, and $g_\sigma$ is
continuous at $t=0$ because $\cf{Z}$ and $\exp$ are.
By L\'evy's continuity \cref{thm:levy}\,(ii), $g_\sigma$ is a characteristic
function, i.e.\ $\sigma\in S(Z)$.

(iv) If $\sigma_{\max}(Z)=\sigma>0$ then, writing $\cf{Z}(t) =
g_\sigma(t)\exp(-\tfrac12\sigma^2t^2)$ with the characteristic function
$g_\sigma$ from (iii), we exhibit a non-degenerate Gaussian convolution factor
of $\Law(Z)$, so $Z$ is not Gaussian-free.
Conversely, if $Z$ is not Gaussian-free, say $Z \eqd U+V$ with $U \indep V$
and $V\sim\Normal(\mu,\tau^2)$, $\tau>0$, then
\begin{align}
  \cf{Z}(t)\exp\Bigl(\tfrac12\tau^2t^2\Bigr)
    &= \cf{U}(t)\,\exp\bigl(i\mu t\bigr) = \cf{U+\mu}(t)
\end{align}
is a characteristic function, so $\tau\in S(Z)$ and
$\sigma_{\max}(Z)\ge\tau>0$.
\end{proof}

\Cref{lem:S-structure} puts us in a position to prove the following, which
turns the informal wish of the introduction to this section into a theorem:
\emph{every} real-valued random variable splits into a Gaussian-free signal
and independent Gaussian noise, and it does so in essentially one way.

\begin{theorem}[Gaussian splitting]
\label{thm:gauss-split}
Let $Z$ be a real-valued random variable and put
$\sigma := \sigma_{\max}(Z) \in [0,\infty)$.
Then there exist a Gaussian-free random variable $Y$ and a random variable
$G\sim\Normal(0,\sigma^2)$ with $Y\indep G$ such that
\begin{align}
  \label{eq:gauss-split}
  Z &\eqd Y + G .
\end{align}
The decomposition is maximal and unique up to a translation: if
$Z \eqd Y'+G'$ with $Y'\indep G'$, $Y'$ Gaussian-free and
$G'\sim\Normal(m,\tau^2)$, then $\tau = \sigma$ and $Y'\eqd Y-m$.
\end{theorem}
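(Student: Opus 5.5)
Existence follows from \cref{lem:S-structure}\,(iii). Since $\sigma=\sigma_{\max}(Z)\in S(Z)$, the function $g_\sigma(t)=\cf{Z}(t)\exp(\tfrac12\sigma^2t^2)$ is a characteristic function, say of a random variable $Y$. Realising $Y$ and $G\sim\Normal(0,\sigma^2)$ independently on a product space gives $\cf{Y+G}=g_\sigma\cdot\exp(-\tfrac12\sigma^2t^2)=\cf{Z}$ by \cref{eq:cf-product}. Hence $Z\eqd Y+G$ by \cref{thm:uniqueness}\,(i).

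To show that $Y$ is Gaussian-free, I use \cref{lem:S-structure}\,(iv) and prove $\sigma_{\max}(Y)=0$. Suppose instead $s\in S(Y)$ with $s>0$. Then
\[
\cf{Y}(t)\exp(\tfrac12 s^2t^2)=\cf{Z}(t)\exp\bigl(\tfrac12(\sigma^2+s^2)t^2\bigr)
\]
is a characteristic function. This says $\sqrt{\sigma^2+s^2}\in S(Z)$, which exceeds $\sigma=\sup S(Z)$, a contradiction.

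For uniqueness, let $Z\eqd Y'+G'$ with $Y'$ Gaussian-free and $G'\sim\Normal(m,\tau^2)$. Then $\cf{Z}(t)\exp(\tfrac12\tau^2t^2)=\cf{Y'+m}(t)$, which is a characteristic function. So $\tau\in S(Z)$ and $\tau\le\sigma$. Conversely, divide $g_\sigma$ by the non-vanishing Gaussian factor to get
\[
\cf{Y'+m}(t)=\cf{Y}(t)\exp\bigl(-\tfrac12(\sigma^2-\tau^2)t^2\bigr).
\]
Rewriting, $\cf{Y'+m}(t)\exp\bigl(\tfrac12(\sigma^2-\tau^2)t^2\bigr)=\cf{Y}(t)$ is a characteristic function. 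Hence $\sqrt{\sigma^2-\tau^2}\in S(Y'+m)$, so $S(Y'+m)$ contains a positive scale whenever $\tau<\sigma$. But $Y'+m$ is Gaussian-free by \cref{rem:gf-affine}, so $S(Y'+m)=\{0\}$ by \cref{lem:S-structure}\,(iii)--(iv). Therefore $\tau=\sigma$.

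With $\tau=\sigma$ the last display reads $\cf{Y'+m}=\cf{Y}$ on all of $\R$. \Cref{thm:uniqueness}\,(i) then gives $Y'+m\eqd Y$, that is, $Y'\eqd Y-m$.

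Every step is bookkeeping with the set $S(\cdot)$. The one point that needs care is that all identities hold globally on $\R$, not just near $0$, so that \cref{thm:uniqueness} and \cref{rem:local-not-enough} impose no obstruction. This is automatic because Gaussian characteristic functions have no zeros, so division by them is valid everywhere. The other substantive input is the attainment of the supremum, which is exactly \cref{lem:S-structure}\,(iii) via L\'evy's theorem.
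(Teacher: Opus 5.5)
Your proposal is correct and follows the paper's proof essentially step for step. Existence comes from \cref{lem:S-structure}\,(iii). Gaussian-freeness of $Y$ holds because any split-off scale $s>0$ would put $\sqrt{\sigma^2+s^2}$ into $S(Z)$. Uniqueness comes from first getting $\tau\le\sigma$ and then excluding $\tau<\sigma$ by the Gaussian-freeness of $Y'$. The only difference is cosmetic: you phrase both contradictions through the sets $S(Y)$ and $S(Y'+m)$, using \cref{lem:S-structure}\,(iv) and \cref{rem:gf-affine}, where the paper directly exhibits a Gaussian convolution factor.
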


\begin{proof}
By \cref{lem:S-structure}\,(iii) we have $\sigma\in S(Z)$, so that
$t\mapsto g_\sigma(t) = \cf{Z}(t)\exp(\tfrac12\sigma^2t^2)$ is the
characteristic function of some random variable $Y$, and
\begin{align}
  \cf{Z}(t) &= g_\sigma(t)\cdot\exp\Bigl(-\tfrac12\sigma^2t^2\Bigr)
             = \cf{Y}(t)\cdot\cf{G}(t)
\end{align}
for $G\sim\Normal(0,\sigma^2)$ chosen independent of $Y$; this is
\cref{eq:gauss-split} by \cref{eq:cf-product} and
\cref{thm:uniqueness}\,(i).  We allow the degenerate case $\sigma=0$, where
$G=0$ and $Y\eqd Z$.

To see that $Y$ is Gaussian-free, suppose $Y \eqd U+V$ with $U\indep V$ and
$V\sim\Normal(\mu,\tau^2)$, $\tau>0$.  Then, as in the proof of
\cref{lem:S-structure}\,(iv), $\cf{Y}(t)\exp(\tfrac12\tau^2t^2)$ is a
characteristic function, hence so is
\begin{align}
  \cf{Z}(t)\exp\Bigl(\tfrac12\bigl(\sigma^2+\tau^2\bigr)t^2\Bigr)
    &= \cf{Y}(t)\exp\Bigl(\tfrac12\tau^2t^2\Bigr),
\end{align}
so that $\bar s := \sqrt{\sigma^2+\tau^2}\in S(Z)$.  But then
\begin{align}
  \sigma_{\max}(Z) = \sigma < \sqrt{\sigma^2+\tau^2} = \bar s \in S(Z),
\end{align}
contradicting the definition of $\sigma_{\max}$.

For the uniqueness statement, let $Z\eqd Y'+G'$ be as stated.
Then $\cf{Z}(t)\exp(\tfrac12\tau^2t^2) = \cf{Y'}(t)\exp(i m t)$ is a
characteristic function, so $\tau\in S(Z)$ and hence $\tau\le\sigma$ by
\cref{lem:S-structure}\,(iii).
If we had $\tau<\sigma$, then
\begin{align}
  \cf{Y'}(t)\exp\Bigl(\tfrac12\bigl(\sigma^2-\tau^2\bigr)t^2\Bigr)
    &= \cf{Z}(t)\exp\Bigl(\tfrac12\sigma^2t^2\Bigr)\exp(-imt)
     = \cf{Y-m}(t)
\end{align}
would be a characteristic function, exhibiting a
$\Normal(0,\sigma^2-\tau^2)$ factor of $\Law(Y')$ and contradicting that $Y'$
is Gaussian-free.  So $\tau=\sigma$, and then
$\cf{Y'}(t) = \cf{Z}(t)\exp(\tfrac12\sigma^2t^2)\exp(-imt) = \cf{Y-m}(t)$.
\end{proof}

\begin{remark}[A dichotomy, not a trichotomy]
\label{rem:dichotomy}
For every real-valued random variable $Z$ exactly one of the following holds.
\begin{enumerate}[label=(\arabic*)]
  \item $\sigma_{\max}(Z)=0$.  Equivalently, $Z$ is Gaussian-free.  This is the
        degenerate case of \cref{thm:gauss-split}, with $G=0$ and $Y\eqd Z$.
  \item $0<\sigma_{\max}(Z)<\infty$.  Here \cref{thm:gauss-split} gives a
        proper decomposition $Z \eqd Y+G$ with a non-degenerate Gaussian $G$, a
        Gaussian-free $Y$, and $Y\indep G$.
\end{enumerate}
One might expect a third case, in which arbitrarily large Gaussian factors can
be shaved off; \cref{lem:S-structure}\,(ii) shows that this cannot happen.
The reason is \cref{eq:gf-tail-bound}: splitting off a $\Normal(0,s^2)$ factor
forces the characteristic function to decay at least as fast as
$\exp(-\tfrac12 s^2t^2)$, and no characteristic function can decay faster than
every Gaussian without vanishing identically off the origin.
\end{remark}

The tail bound \cref{eq:gf-tail-bound} is also the most convenient practical
criterion, alongside a support argument.

\begin{corollary}[Two sufficient criteria]
\label{cor:gf-criteria}
Let $Z$ be a real-valued random variable.  Each of the following implies that
$Z$ is Gaussian-free.
\begin{enumerate}[label=(\roman*)]
  \item $\supp\Law(Z) \neq \R$; this holds in particular if $Z$ is bounded, or
        bounded from one side.
  \item $\exp(\tfrac12 s^2t^2)\abs{\cf{Z}(t)}$ is unbounded in $t$ for every
        $s>0$; in particular, this holds whenever
        $\liminf_{\abs{t}\to\infty}
          \bigl(-\log\abs{\cf{Z}(t)}\bigr)\big/t^2 = 0$.
\end{enumerate}
\end{corollary}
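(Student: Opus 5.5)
Both criteria go through the contrapositive: assume $Z$ is not Gaussian-free and derive a contradiction. By \cref{lem:S-structure}\,(iv) this gives $\sigma:=\sigma_{\max}(Z)>0$. By \cref{thm:gauss-split} there are a random variable $Y$ and $G\sim\Normal(0,\sigma^2)$ with $Y\indep G$ and $Z\eqd Y+G$. Equivalently, by \cref{lem:S-structure}\,(iii), $\sigma\in S(Z)$. Each criterion is then refuted using one of these two descriptions.

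For (ii) the tail bound does all the work. Since $\sigma\in S(Z)$, \cref{eq:gf-tail-bound} gives
\begin{align}
  \exp\bigl(\tfrac12\sigma^2t^2\bigr)\abs{\cf{Z}(t)} &\le 1
  \qquad\text{for all } t,
\end{align}
so this function is bounded for $s=\sigma>0$. That contradicts the hypothesis in (ii).

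For the ``in particular'' clause, I would show that the $\liminf$ condition implies unboundedness for every $s>0$. If $\liminf_{\abs{t}\to\infty}(-\log\abs{\cf{Z}(t)})/t^2=0$, then there is a sequence $\abs{t_n}\to\infty$ with $-\log\abs{\cf{Z}(t_n)}\le\tfrac14 s^2t_n^2$. Hence
\begin{align}
  \exp\bigl(\tfrac12 s^2t_n^2\bigr)\abs{\cf{Z}(t_n)}
  &\ge \exp\bigl(\tfrac14 s^2t_n^2\bigr)\to\infty .
\end{align}
Points where $\cf{Z}(t)=0$ give the value $+\infty$ for $-\log\abs{\cf{Z}(t)}$, so they do not affect a $\liminf$ equal to $0$.

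For (i) I use the support. Take $Z\eqd Y+G$ with $Y\indep G$ and $G$ a non-degenerate Gaussian. I claim $\supp\Law(Z)=\R$. Fix $x\in\R$ and $\varepsilon>0$. By Fubini,
\begin{align}
  \Prb\bigl[\abs{Y+G-x}<\varepsilon\bigr]
  &= \Ex\Bigl[\Prb\bigl[\abs{G-(x-y)}<\varepsilon\bigr]\big|_{y=Y}\Bigr].
\end{align}
The inner probability is strictly positive for every $y$, because $G$ has an everywhere positive density. An expectation of a strictly positive function is strictly positive, so every open interval has positive $\Law(Z)$-mass. Thus $\supp\Law(Z)=\R$, contradicting the hypothesis. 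A bounded, or one-sided bounded, $Z$ has support contained in a closed half-line, which is not $\R$.

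Every step is short. The only points needing care are the Fubini/positivity argument in (i) and the treatment of zeros of $\cf{Z}$ in the $\liminf$ clause of (ii).
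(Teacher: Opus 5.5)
Your proof is correct and follows the paper's route: (ii) is the contrapositive of the tail bound \cref{eq:gf-tail-bound} combined with \cref{lem:S-structure}\,(iv), and (i) rests on the fact that a law with a non-degenerate Gaussian convolution factor has full support. The differences are cosmetic: the paper cites the identity $\supp(\mu*\nu)=\overline{\supp\mu+\supp\nu}$, whereas you check full support directly by conditioning on $Y$ and using positivity of the Gaussian density; you also spell out why the $\liminf$ condition implies unboundedness, which the paper leaves to the reader.
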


\begin{proof}
(i) The support of a convolution of two laws is the closure of the sum of
their supports.  A non-degenerate Gaussian law has support $\R$, so any law with
a non-degenerate Gaussian convolution factor has support $\R$.
(ii) is the contrapositive of \cref{eq:gf-tail-bound}, together with
\cref{lem:S-structure}\,(iv).
\end{proof}

\begin{example}[Which laws are Gaussian-free?]
\label{ex:gaussian-free}
\begin{enumerate}[label=(\alph*)]
  \item By \cref{cor:gf-criteria}\,(i), every law whose support is not all of
        $\R$ is Gaussian-free: the uniform, Rademacher and three-point laws
        of \cref{tab:examples}, every Bernoulli law, and also the
        exponential, Gamma, Poisson and $\chi^2$ laws, which are supported on
        a half line.
  \item By \cref{cor:gf-criteria}\,(ii), every law whose characteristic
        function decays more slowly than any Gaussian is Gaussian-free.  This
        covers the Laplace law, with $\cf{Z}(t) = (1+b^2t^2)^{-1}$ decaying
        polynomially; the Cauchy law, with $\cf{Z}(t)=\exp(-\gamma\abs{t})$;
        and the Student $t_\nu$ laws, whose characteristic functions decay
        exponentially.  So the standard super-Gaussian sources of ICA are all
        Gaussian-free.
  \item A non-degenerate Gaussian law is of course not Gaussian-free, with
        $\sigma_{\max}\bigl(\Normal(\mu,\sigma^2)\bigr)=\sigma$.
  \item The instructive example is $Z := R+G$ with $R$ Rademacher,
        $G\sim\Normal(0,1)$ and $R\indep G$, i.e.\ the Gaussian mixture
        $\tfrac12\Normal(-1,1)+\tfrac12\Normal(1,1)$.
        Here $\cf{Z}(t)=\cos(t)\exp(-\tfrac12t^2)$, so
        $\cf{Z}(t)\exp(\tfrac12 s^2t^2) = \cos(t)\exp(\tfrac12(s^2-1)t^2)$ is
        unbounded for $s>1$ and hence $\sigma_{\max}(Z)=1$, with Gaussian-free
        part $R$ -- determined, as \cref{thm:gauss-split} says, only up to a
        translation.
        By \cref{rem:marcinkiewicz-use}\,(ii) the variable $Z$ is
        \emph{non-Gaussian}, so it is an admissible source for
        \cref{thm:non-gaussian}, but it is \emph{not} Gaussian-free and
        therefore not admissible for \cref{thm:gf-identifiability} below.
        This shows that the hypothesis of this section is strictly stronger
        than that of \cref{sec:non-gaussian}.
\end{enumerate}
\end{example}

\begin{remark}[The infinitely divisible case]
\label{rem:infinitely-divisible}
For readers familiar with the L\'evy--Khintchine representation
\citep[Chapter~7]{kallenberg2021}, there is a clean characterisation.
Let $Z$ be infinitely divisible with triplet $(a,\nu,\gamma)$, i.e.
\begin{align}
  \cf{Z}(t) &= \exp\left( i\gamma t - \tfrac12 a t^2
    + \int_{\R}\Bigl(e^{itx}-1-itx\mathbf{1}_{\{\abs{x}\le1\}}\Bigr)
      \nu(dx)\right),
\end{align}
with $a\ge0$ and a L\'evy measure $\nu$ satisfying
$\int\min(1,x^2)\,\nu(dx)<\infty$.
Then $Z$ is Gaussian-free if and only if $a=0$, and in fact
$\sigma_{\max}(Z)^2 = a$.

Indeed, if $a>0$ then splitting the factor $\exp(-\tfrac12 a t^2)$ off
$\cf{Z}$ leaves the characteristic function of the infinitely divisible law
with triplet $(0,\nu,\gamma)$, so $\sqrt{a}\in S(Z)$.
Conversely, assume $a=0$ and let $s\in S(Z)$.  Taking moduli,
\begin{align}
  -\log\abs{\cf{Z}(t)} &= \int_{\R}\bigl(1-\cos(tx)\bigr)\,\nu(dx),
\end{align}
and this is $o(t^2)$ as $\abs{t}\to\infty$: on $\{\abs{x}\le1\}$ we have
$(1-\cos(tx))/t^2 \le x^2/2$, which is $\nu$-integrable there and tends to $0$
pointwise, so dominated convergence applies; on $\{\abs{x}>1\}$ the integral
is bounded by $2\nu(\abs{x}>1)<\infty$.
On the other hand \cref{eq:gf-tail-bound} gives
$-\log\abs{\cf{Z}(t)} \ge \tfrac12 s^2t^2$.
Comparing the two forces $s=0$, so $\sigma_{\max}(Z)=0$.

The same comparison gives the identity $\sigma_{\max}(Z)^2 = a$ for general
$a\ge0$, not just the two implications.  Indeed, without assuming $a=0$,
\begin{align}
  -\log\abs{\cf{Z}(t)}
    &= \tfrac12 a t^2 + \int_{\R}\bigl(1-\cos(tx)\bigr)\,\nu(dx)
     = \tfrac12 a t^2 + o\bigl(t^2\bigr),
  \qquad \abs{t}\to\infty,
\end{align}
by the estimate just given, while \cref{eq:gf-tail-bound} bounds the left
hand side below by $\tfrac12s^2t^2$ for every $s\in S(Z)$.  Dividing by $t^2$
and letting $\abs{t}\to\infty$ yields $s^2\le a$, hence
$\sigma_{\max}(Z)^2\le a$; combined with $\sqrt a\in S(Z)$ this is
equality.
Note that this argument never assumes that the cofactor is itself infinitely
divisible -- which matters, since factors of infinitely divisible laws need
not be infinitely divisible.
\end{remark}

\subsection{Identifiability in the presence of additive Gaussian noise}
\label{ssec:gf-identifiability}

In the following we explicitly assume that the sources of our ICA model have
been separated according to \cref{thm:gauss-split}, so that they are mutually
independent and Gaussian-free.
Note that Gaussian random variables are considered noise and are
usually not properly identifiable in ICA models anyway, cf.\
\cref{rem:gaussian-not-identifiable}.
Assuming that the proper signals in an ICA model are Gaussian-free is
therefore a reasonable assumption, even though it is more restrictive than the
usual assumption of mere non-normality -- and by
\cref{ex:gaussian-free}\,(b) it is satisfied by all the standard
super-Gaussian source models.
The reward for this stronger assumption is a stronger identifiability result:
it identifies the sources inside a mixture even in the presence of additive
\emph{and dependent} Gaussian noise.

\begin{theorem}[Identifiability for independent Gaussian-free sources with
additive Gaussian noise]
\label{thm:gf-identifiability}
Let $X \in \R^p$ be a random vector and assume that we have two
representations of $X$ satisfying all the assumptions of
\cref{thm:non-gaussian}:
\begin{align}
  A\up{1}Z\up{1} + E\up{1} \;=\; X \;=\; A\up{2}Z\up{2} + E\up{2}.
\end{align}
Assume furthermore that $A\up{1}$ (equivalently, by
\cref{thm:non-gaussian}, $A\up{2}$) has full column rank,
$\rank(A\up{1}) = k \le p$.
\begin{enumerate}[label=(\arabic*)]
  \item If all components of $Z\up{1}$ are Gaussian-free, then there exist a
        permutation matrix $P \in \R^{k\times k}$, an invertible diagonal
        matrix $\Lambda \in \R^{k\times k}$ and a (possibly degenerate) Gaussian
        random vector $H \sim \Normal(\xi,\Delta)$ with mean vector
        $\xi \in \R^k$ and diagonal, possibly degenerate, covariance matrix
        $\Delta \in \R^{k\times k}$, independent of $Z\up{1}$ and of
        $E\up{2}$, such that
        \begin{align}
          A\up{2} &= A\up{1}P\Lambda, \\
          Z\up{2} &\eqd \Lambda^{-1}P^{-1}\bigl(Z\up{1} + H\bigr),
            & H &\indep \bigl(Z\up{1},E\up{2}\bigr),\\
          E\up{1} &\eqd A\up{1}H + E\up{2},\\
          \mu\up{1} &= A\up{1}\xi + \mu\up{2},\\
          \Sigma\up{1} &= A\up{1}\Delta A\uT{1} + \Sigma\up{2}.
        \end{align}
  \item If, in addition, all components of $Z\up{2}$ are Gaussian-free as
        well, then $H = \xi$ and $\Delta = 0$, and we further get
        \begin{align}
          Z\up{2} &\eqd \Lambda^{-1}P^{-1}\bigl(Z\up{1} + \xi\bigr),
          & E\up{2} &\eqd E\up{1} - A\up{1}\xi,
          & \Sigma\up{2} &= \Sigma\up{1}.
        \end{align}
\end{enumerate}
\end{theorem}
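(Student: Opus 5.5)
We build on \cref{thm:non-gaussian}, whose hypotheses are assumed. It already gives $k\up{1}=k\up{2}=k$, the permutation $\rho$ with matrix $P=P(\rho)$, and the invertible diagonal $\Lambda$ with $A\up{2}=A\up{1}P\Lambda$. Since $\rank(A\up{1})=k$, it also gives the per-coordinate dichotomy: for each $j$ there are $\nu_j\in\R$ and $G_j\sim\Normal(0,\sigma_j^2)$ such that either $\lambda_jZ\up{2}_j+\nu_j\eqd Z\up{1}_{\rho(j)}+G_j$ (case $g_j''\le0$), or $Z\up{1}_{\rho(j)}\eqd\lambda_jZ\up{2}_j+\nu_j+G_j$ (case $g_j''\ge0$). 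So the remaining work is (a) to decide which case occurs using Gaussian-freeness, (b) to lift the marginal statements to a joint one, and (c) to transfer the conclusion to the noise terms.

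\emph{Step (a).} Assume the components of $Z\up{1}$ are Gaussian-free. In the second case with $\sigma_j>0$, the law of $Z\up{1}_{\rho(j)}$ would have the non-degenerate Gaussian convolution factor $\Normal(\nu_j,\sigma_j^2)$, contradicting \cref{def:gaussian-free}. Hence for every $j$ the first case holds (possibly with $\sigma_j=0$), and
\begin{align}
  \lambda_jZ\up{2}_j \eqd Z\up{1}_{\rho(j)}+H_{\rho(j)},
  \qquad H_{\rho(j)}\sim\Normal(-\nu_j,\sigma_j^2),\quad H_{\rho(j)}\indep Z\up{1}_{\rho(j)} .
\end{align}
Now realise $H=[H_1,\dots,H_k]\T$, with independent components, on a product extension of the probability space, independent of $(Z\up{1},E\up{1},E\up{2})$. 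This makes $H\sim\Normal(\xi,\Delta)$ with diagonal $\Delta=\diag(\sigma^2_{\rho^{-1}(\cdot)})$ and $H\indep(Z\up{1},E\up{2})$.

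\emph{Step (b).} The vector $\Lambda^{-1}P^{-1}(Z\up{1}+H)$ has mutually independent components, because $\{Z\up{1}_1,\dots,Z\up{1}_k,H_1,\dots,H_k\}$ is mutually independent and each $H_i$ is paired with a single $Z\up{1}_i$. Its $j$-th component is $\lambda_j^{-1}(Z\up{1}_{\rho(j)}+H_{\rho(j)})$, since $P^{-1}e_{\rho(j)}=e_j$. The vector $Z\up{2}$ also has independent components, and the marginals agree by Step (a). A product law is determined by its marginals, so $Z\up{2}\eqd\Lambda^{-1}P^{-1}(Z\up{1}+H)$.

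\emph{Step (c), the main obstacle.} Using the two representations and the independences $Z\up{i}\indep E\up{i}$, we have for $t\in\R^p$
\begin{align}
  \cf{Z\up{1}}(A\uT{1}t)\,\cf{E\up{1}}(t)=\cf{X}(t)=\cf{Z\up{2}}(A\uT{2}t)\,\cf{E\up{2}}(t)
  =\cf{Z\up{1}}(A\uT{1}t)\,\cf{H}(A\uT{1}t)\,\cf{E\up{2}}(t).
\end{align}
The last equality uses $A\uT{2}=\Lambda P\T A\uT{1}$ and $P\T=P^{-1}$, so that $A\up{2}\Lambda^{-1}P^{-1}=A\up{1}$. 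We cannot cancel the common factor $\cf{Z\up{1}}(A\uT{1}t)$ globally, because it may vanish away from the origin (cf.\ \cref{rem:local-not-enough}). It is non-zero on a neighbourhood of $0$, however, and there we obtain $\cf{E\up{1}}=\cf{A\up{1}H+E\up{2}}$. Both sides are characteristic functions of Gaussian vectors, i.e.\ exponentials of quadratic polynomials on all of $\R^p$. By the uniqueness part of \cref{lem:distinguished-log}, the exponents agree near $0$, hence as polynomials everywhere. (Alternatively, invoke \cref{cor:rigidity}, since Gaussians have all exponential moments.) Thus $E\up{1}\eqd A\up{1}H+E\up{2}$. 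Comparing means and covariances, and using $H\indep E\up{2}$, gives $\mu\up{1}=A\up{1}\xi+\mu\up{2}$ and $\Sigma\up{1}=A\up{1}\Delta A\uT{1}+\Sigma\up{2}$.

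\emph{Part (2).} If the components of $Z\up{2}$ are also Gaussian-free, then by \cref{rem:gf-affine} so is $\lambda_jZ\up{2}_j+\nu_j$. In the first case this variable has the factor $G_j$, so Gaussian-freeness forces $\sigma_j=0$ for all $j$. Hence $\Delta=0$ and $H=\xi$ a.s. Substituting into part (1) yields $Z\up{2}\eqd\Lambda^{-1}P^{-1}(Z\up{1}+\xi)$, $E\up{2}\eqd E\up{1}-A\up{1}\xi$ and $\Sigma\up{2}=\Sigma\up{1}$.
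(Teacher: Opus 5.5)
Your proposal is correct and follows essentially the same route as the paper. In both, Gaussian-freeness of $Z\up{1}$ excludes the second case of \cref{thm:non-gaussian}; the componentwise identities are assembled into a joint one via mutual independence; and $E\up{1}\eqd A\up{1}H+E\up{2}$ follows by cancelling $\cf{A\up{1}Z\up{1}}$ only near the origin and then extending equality of the quadratic exponents to all of $\R^p$. Your only departure, realising $H$ on a product extension at the outset rather than at the end, is a harmless reordering.
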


\begin{proof}
\emph{Part (1).}
By \cref{thm:non-gaussian} we already have $A\up{2} = A\up{1}P\Lambda$,
and for each $j = 1,\dots,k$ separately one of the two cases
\begin{align}
  \bigl(\lambda_j Z\up{2}_j + \nu_j\bigr)
    &\eqd Z\up{1}_{\rho(j)} + G_j,
    & G_j &\indep Z\up{1}_{\rho(j)},\\
  Z\up{1}_{\rho(j)}
    &\eqd \bigl(\lambda_j Z\up{2}_j + \nu_j\bigr) + G_j,
    & G_j &\indep Z\up{2}_j,
\end{align}
with some $\nu_j \in \R$ and $G_j \sim \Normal(0,\sigma_j^2)$.
Since $Z\up{1}_{\rho(j)}$ is Gaussian-free, the second case exhibits a Gaussian
convolution factor of $\Law(Z\up{1}_{\rho(j)})$ and can therefore only occur
with $\sigma_j^2 = 0$, i.e.\ $G_j = 0$; but then it is subsumed by the first
case.
We may therefore assume the first case for all $j = 1,\dots,k$, which we
rearrange into
\begin{align}
  e_j\T Z\up{2} = Z\up{2}_j
    &\eqd \lambda_j^{-1}Z\up{1}_{\rho(j)} - \lambda_j^{-1}\nu_j
          + \lambda_j^{-1}G_j,
    & G_j &\indep Z\up{1}_{\rho(j)},\\
    &= e_j\T\Lambda^{-1}P^{-1}Z\up{1} - e_j\T\Lambda^{-1}\nu
       + e_j\T\Lambda^{-1}G,\\
    &= e_j\T\Lambda^{-1}P^{-1}\bigl(Z\up{1} - P\nu + PG\bigr),\\
    &= e_j\T\Lambda^{-1}P^{-1}\bigl(Z\up{1} + H\bigr),
\end{align}
where we used the abbreviations
\begin{align}
  \nu &:= [\nu_1,\dots,\nu_k]\T, & \xi &:= -P\nu, \\
  G &:= [G_1,\dots,G_k]\T \sim \Normal(0,\Gamma),
    & \Gamma &:= \diag(\sigma_1^2,\dots,\sigma_k^2),\\
  H &:= \xi + PG \sim \Normal(\xi,\Delta),
    & \Delta &:= P\Gamma P\T .
\end{align}
Note that the $G_j$ were constructed componentwise, so we may and do take $G$
to have independent components; this implies that $\Delta = P\Gamma P\T$ is
again a diagonal matrix, and hence that $H$ has independent components as
well.
Together with the fact that the components of $Z\up{1}$ and of $Z\up{2}$ are
each mutually independent, we may gather all components into the single
distributional equation
\begin{align}
  \label{eq:Z2-vs-Z1}
  Z\up{2} &\eqd \Lambda^{-1}P^{-1}\bigl(Z\up{1} + H\bigr),
  & H &\indep Z\up{1}.
\end{align}
By the continuous mapping theorem we may multiply \cref{eq:Z2-vs-Z1} by
$A\up{2} = A\up{1}P\Lambda$ and obtain
\begin{align}
  A\up{2}Z\up{2} &\eqd A\up{1}Z\up{1} + A\up{1}H, & H &\indep Z\up{1},
\end{align}
which in terms of characteristic functions reads
\begin{align}
  \cf{A\up{2}Z\up{2}}(t) &= \cf{A\up{1}Z\up{1}}(t)\cdot\cf{A\up{1}H}(t).
\end{align}
Plugging this into the characteristic functions of the original model
equation,
\begin{align}
  \cf{A\up{2}Z\up{2}}(t)\cdot\cf{E\up{2}}(t)
    &= \cf{A\up{1}Z\up{1}}(t)\cdot\cf{E\up{1}}(t),
\end{align}
we obtain an identity that we would like to divide by
$\cf{A\up{1}Z\up{1}}$.
Here we may not simply cancel the factor $\cf{A\up{1}Z\up{1}}$, since a
characteristic function may well have zeros: for $p=k=1$, $A\up{1}=1$ and
$Z\up{1}\sim\mathrm{Unif}[-1,1]$ -- an admissible Gaussian-free, non-Gaussian
source -- one has $\cf{Z\up{1}}(t) = \sin(t)/t$, which vanishes at $t=\pi$.
We argue locally instead.  By continuity and $\cf{A\up{1}Z\up{1}}(0)=1$ there
is a $\delta>0$ with $\cf{A\up{1}Z\up{1}}\neq0$ on the ball
$B_\delta(0)\subseteq\R^p$, and cancelling there gives
\begin{align}
  \label{eq:gf-local-cancel}
  \cf{A\up{1}H}(t)\cdot\cf{E\up{2}}(t) &= \cf{E\up{1}}(t),
  & t &\in B_\delta(0).
\end{align}
Both sides of \cref{eq:gf-local-cancel} are characteristic functions of
(possibly degenerate) multivariate Gaussian distributions, hence of the form
$\exp(q(t))$ with polynomials $q$ of degree $\le 2$ and $q(0)=0$; here we may
in addition take $H$ independent of $E\up{2}$: all assertions here are
statements about laws, so we may enlarge the underlying probability space
(replacing $\Omega$ by $\Omega\times[0,1]$) and realise an $H$ with law
$\Normal(\xi,\Delta)$ independent of the pair $(Z\up{1},E\up{2})$.  Writing the two sides as
$\exp(q_1)$ and $\exp(q_2)$, we get $\exp(q_1-q_2)\equiv1$ on
$B_\delta(0)$, so the continuous function $(q_1-q_2)/(2\pi i)$ takes values in
$\Ints$ on the connected set $B_\delta(0)$ and vanishes at the origin; hence
$q_1 = q_2$ on $B_\delta(0)$.  Two polynomials that agree on a non-empty open
subset of $\R^p$ agree everywhere, so $q_1=q_2$ identically.  Hence
\cref{eq:gf-local-cancel} in fact holds for every $t\in\R^p$, and
\cref{thm:uniqueness} gives
\begin{align}
  A\up{1}H + E\up{2} &\eqd E\up{1},
  & H &\indep \bigl(Z\up{1},E\up{2}\bigr),
\end{align}
and comparing mean vectors and covariance matrices yields
\begin{align}
  A\up{1}\xi + \mu\up{2} &= \mu\up{1},\\
  A\up{1}\Delta A\uT{1} + \Sigma\up{2} &= \Sigma\up{1}.
\end{align}
This proves all claims of part (1).

\emph{Part (2).}
If the components $Z\up{2}_j$ are Gaussian-free as well, then so are the
affinely transformed variables $\lambda_j Z\up{2}_j + \nu_j$ by
\cref{rem:gf-affine}, and the only possibility left is
\begin{align}
  \bigl(\lambda_j Z\up{2}_j + \nu_j\bigr)
    &\eqd Z\up{1}_{\rho(j)} + G_j,
    & G_j &\indep Z\up{1}_{\rho(j)},
\end{align}
with $\sigma_j^2 = 0$ and thus $G_j = 0$.
This shows $H = \xi$ and $\Delta = 0$, and everything else follows from part
(1).
\end{proof}

\begin{remark}[What \cref{thm:gf-identifiability} buys us]
\label{rem:final-discussion}
Compared with \cref{thm:non-gaussian}, the stronger assumption on the sources
removes the fourth ambiguity of \cref{rem:four-ambiguities} -- but only
part~(2) removes it entirely.  Part~(1) makes it one-sided: any residual
Gaussian must sit in $Z\up{2}$ and is exactly compensated in $E\up{2}$, which
is what the vector $H$ records.  It is not vacuous.  Take $p=k=1$,
$A\up{1}=A\up{2}=[1]$, $Z\up{1}$ Rademacher (Gaussian-free) with
$E\up{1}\sim\Normal(0,1)$, against $Z\up{2} = Z\up{1}+E\up{1}$ (non-constant
and, by \cref{rem:marcinkiewicz-use}\,(ii), non-Gaussian) with $E\up{2}=0$:
both representations satisfy every hypothesis of part~(1), yet $\Law(Z\up{2})$
is absolutely continuous while $\Law(Z\up{1})$ has two atoms, so no affine map
carries one to the other.
Three aspects deserve emphasis.
\begin{enumerate}[label=(\roman*)]
  \item The Gaussian noise vectors $E\up{i}$ are \emph{not} assumed to have
        independent components, nor a non-degenerate covariance matrix.  They
        may be arbitrarily dependent across the $p$ observed coordinates, and
        they may be partially deterministic.  Nevertheless, under the
        hypothesis of part~(2), the sources are pinned down up to permutation,
        scale and translation.
  \item Part~(2) is a genuine two-sided statement: as soon as \emph{both}
        candidate source vectors are Gaussian-free, not only the sources but
        also the noise laws agree, $\Sigma\up{1} = \Sigma\up{2}$, so the whole
        model $(A,\Law(Z),\Law(E))$ is identified up to the unavoidable
        permutation, scaling and shift.
  \item The Gaussian-free assumption is not a restriction on which
        \emph{observations} can be modelled, only on how the model is
        parametrised.  Indeed, by \cref{thm:gauss-split} every source splits
        as $Z_j \eqd Y_j + G_j$ with $Y_j$ Gaussian-free and
        $G_j\sim\Normal(0,\sigma_j^2)$ independent of $Y_j$, and moving the
        Gaussian part into the noise is the trivial direction of
        \cref{lem:noise-trade}: writing $a_j$ for the $j$-th column of $A$,
        \begin{align}
          a_j Z_j &\eqd a_j Y_j + a_j G_j ,
          & a_jG_j &\sim \Normal\bigl(0,\sigma_j^2 a_ja_j\T\bigr)
                    \indep Y_j ,
        \end{align}
        so that $a_jG_j$ may simply be added to $E$ -- which is why we allow
        $E$ to have an arbitrary, possibly degenerate covariance matrix.
        Since the $Z_j$ are mutually independent, the pairs $(Y_j,G_j)$ may
        be taken jointly independent, so all $k$ Gaussian parts can be moved
        at once.
        (\Cref{lem:noise-trade} proves the harder converse direction, moving
        Gaussian noise out of $E$ and into extra columns of $A$.)
        One caveat: if $Z_j$ was itself Gaussian, then its Gaussian-free part
        $Y_j$ is a \emph{constant}, and the corresponding column must then be
        deleted and absorbed into the offset $\mu$, exactly as in Step~1 of
        \cref{rem:normalisation}, before \cref{thm:gf-identifiability} can be
        applied to the reduced model.  What the assumption buys is that this
        splitting and pruning has been carried out, so that no Gaussian mass
        is left sitting ambiguously between $Z$ and $E$.
\end{enumerate}
In practice, part~(1) is the statement one applies when a candidate solution
is compared against the ground truth, and part~(2) is the statement one
applies when both are outputs of a procedure that is guaranteed to return
Gaussian-free sources.
\end{remark}

\section[Complete Noiseless ICA: Estimation by Equivariant Gradient Descent]
  {Complete Noiseless ICA:\texorpdfstring{\\}{ }
   Estimation by Equivariant Gradient Descent}
\label{sec:algorithm}

The previous sections answered the question of \emph{what} is identifiable.
This section answers the complementary question of \emph{how} one estimates
it, in the special case that is by far the most common in applications: a
square invertible mixing matrix and no noise.
We first specialise the identifiability theory to that case
(\cref{ssec:complete-ident}), then set up the maximum likelihood objective
(\cref{ssec:ml}), then explain what the customary ``preconditioner'' in the
gradient step really is (\cref{ssec:relative-gradient}), and finally analyse
when the resulting algorithm converges to a separating solution
(\cref{ssec:stability}).

\begin{example}[The cocktail party problem]
\label{ex:cocktail}
Two loudspeakers in a room play different pieces of music, modelled as
real-valued signals $Z_1(t)$ and $Z_2(t)$ indexed by time $t$.
Two microphones at different positions each record a mixture of the two,
\begin{align}
  X_1(t) &= a_{11}Z_1(t) + a_{12}Z_2(t), &
  X_2(t) &= a_{21}Z_1(t) + a_{22}Z_2(t) .
\end{align}
The task is to recover the two pieces of music $Z_j(t)$ from the two recorded
mixtures $X_i(t)$.
What makes this more than a matrix inversion is that \emph{both} the mixing
matrix $A = (a_{ij})$ and the sources $Z_j$ are unknown; all that is assumed
is that the two sources are statistically independent.
\Cref{cor:complete-ident} below says that this is enough -- provided at most
one source is Gaussian.
\end{example}

\subsection{The complete noiseless model and its identifiability}
\label{ssec:complete-ident}

\begin{assumption}[Complete noiseless ICA model]
\label{asm:ica}
We observe $X(1),\dots,X(T)$ in $\R^p$, all with the same law, and assume:
\begin{enumerate}[label=(\roman*)]
  \item \emph{Independent sources.}  There are random vectors
        $Z(t) = [Z_1(t),\dots,Z_k(t)]\T$ in $\R^k$ whose components
        $\{Z_1(t),\dots,Z_k(t)\}$ are mutually independent for each fixed
        $t$, and almost surely non-constant.  Nothing is assumed about the
        dependence \emph{across} $t$.
  \item \emph{Linear noiseless mixing.}  $X(t) = A\,Z(t)$ for all $t$, with a
        matrix $A\in\R^{p\times k}$ that does not depend on $t$.
  \item \emph{Completeness.}  $p = k$ and $A$ is invertible.  We write
        $W := A^{-1}$ and denote the rows of $W$ by $w_1\T,\dots,w_k\T$.
  \item \emph{Non-Gaussianity.}  At most one of the components
        $Z_1(t),\dots,Z_k(t)$ is Gaussian.
\end{enumerate}
For the estimation part we shall in addition normalise:
\begin{enumerate}[label=(\roman*),start=5]
  \item \emph{Centring and scale.}  $\Ex[Z_j(t)]=0$ for all $j$ and $t$, and
        the scale of each $Z_j$ is fixed either by $\Var(Z_j)=1$ or by the
        fixed-point normalisation \cref{eq:scale-normalisation}; see
        \cref{rem:ica-normalisation}.
\end{enumerate}
Since the law of $X(t)$ does not depend on $t$, we drop the argument and write
$X = AZ$ for a generic sample whenever only the common law matters.
\end{assumption}

\begin{remark}[The two scale conventions]
\label{rem:ica-normalisation}
Rescaling the sources is without loss of generality: it is the
reparametrisation $X = (AD)\bigl(D^{-1}Z\bigr)$ with $D$ diagonal and
invertible, which preserves \cref{asm:ica}\,(i)--(iv).  Centring them,
however, requires centring the data as well, since \cref{asm:ica}\,(ii)
carries no offset; that is step~(1) of \cref{rem:preprocessing}.
Two scale conventions appear below, and they are \emph{not} the same.
The first is $\Var(Z_j)=1$, used wherever only second-order quantities matter
(\cref{rem:preprocessing}); it requires the extra assumption
$\Ex[Z_j^2]<\infty$, which \cref{asm:ica} does not otherwise impose.
The second is the fixed-point scale of \cref{lem:fixed-point-scale}, which is
the one \cref{thm:stability} needs and which is generically \emph{not} the
unit-variance scale.
\end{remark}

The identifiability statement usually quoted in the data-science literature is
the following.  It is exactly the specialisation of \cref{thm:kagan} to a
square invertible mixing matrix, with the ``at most one Gaussian'' clause
handled by an extra argument at the end of the proof.

\begin{corollary}[Identifiability of the complete noiseless model;
{\citealp[Theorem~11]{comon1994}}]
\label{cor:complete-ident}
Let $X$ be a random vector in $\R^k$ with two representations
\begin{align}
  \label{eq:two-rep-complete}
  A\up{1}Z\up{1} + \mu\up{1} \;=\; X \;=\; A\up{2}Z\up{2} + \mu\up{2},
\end{align}
where for $i = 1,2$:
\begin{enumerate}[label=(\roman*)]
  \item $A\up{i}\in\R^{k\times k}$ is invertible and $\mu\up{i}\in\R^k$;
  \item the components of $Z\up{i}$ are mutually independent and almost surely
        non-constant;
\end{enumerate}
and, for $i=1$ only,
\begin{enumerate}[label=(\roman*),start=3]
  \item at most one component of $Z\up{1}$ is Gaussian.
\end{enumerate}
Then there exist a permutation matrix $P\in\R^{k\times k}$, an invertible
diagonal matrix $\Lambda\in\R^{k\times k}$ and a vector $c\in\R^k$ such that
\begin{align}
  A\up{2} &= A\up{1}P\Lambda,
  & Z\up{1} &= P\Lambda\,Z\up{2} + c \quad\text{almost surely}.
\end{align}
If moreover both source vectors are centred with unit variances, then
$\Lambda = \diag(\pm1,\dots,\pm1)$ and $c = 0$: the sources are determined up
to permutation and sign, and the columns of the mixing matrix up to
permutation and sign.
\end{corollary}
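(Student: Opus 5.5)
The plan is to feed the two representations \cref{eq:two-rep-complete} directly into \cref{thm:kagan}, and then use the extra hypothesis (iii) on $Z\up{1}$ to rule out the Gaussian alternative of that theorem by a counting argument plus a short characteristic-function computation.

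\textbf{Checking the hypotheses and the column dichotomy.} Since $A\up{i}$ is invertible, its columns are linearly independent, hence non-zero and pairwise non-proportional. Together with (ii), this gives all hypotheses of \cref{thm:kagan}. By part~(1) of that theorem, every column of $A\up{2}$ is either proportional to some column of $A\up{1}$, or its source is Gaussian. Applied with the roles of $1$ and $2$ exchanged, the same holds for the columns of $A\up{1}$. The proportionality relation between the two column sets is injective in both directions, by pairwise non-proportionality. So if $m$ columns of $A\up{2}$ are unmatched, then exactly $k-m$ pairs are matched, and hence exactly $m$ columns of $A\up{1}$ are unmatched. Each unmatched column of $A\up{1}$ carries a Gaussian source of $Z\up{1}$. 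By (iii), there is at most one such source, so $m\le1$.

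\textbf{Excluding $m=1$ (the main obstacle).} Suppose $m=1$, with $a\up{2}_l$ and $a\up{1}_j$ the unmatched columns, so $Z\up{2}_l$ is Gaussian. Put $W:=(A\up{1})^{-1}$ and $M:=WA\up{2}$.
\begin{itemize}
  \item Each matched column of $M$ is a multiple $\lambda e_{i}$ with $i\neq j$.
  \item The remaining column is $c:=Wa\up{2}_l$. Invertibility of $M$ forces $c_j\neq0$.
  \item Non-proportionality of $a\up{2}_l$ to $a\up{1}_j$ forces some $c_i\neq0$ with $i\neq j$.
\end{itemize}
Applying $W$ to \cref{eq:two-rep-complete} gives $Z\up{1}=MZ\up{2}+\text{const}$. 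Hence
\begin{align}
  Z\up{1}_i &= \lambda Z\up{2}_{i'} + c_iZ\up{2}_l + \text{const}, &
  Z\up{1}_j &= c_jZ\up{2}_l + \text{const}.
\end{align}
Moreover $Z\up{2}_l$ is non-degenerate Gaussian, because it is non-constant.

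Now use that $Z\up{1}_i\indep Z\up{1}_j$. Comparing the joint characteristic function with the product of the marginals, and using independence of the components of $Z\up{2}$ via \cref{eq:cf-affine,eq:cf-product}, gives
\begin{align}
  \cf{Z\up{2}_{i'}}(\lambda s)\,\cf{Z\up{2}_l}(c_is+c_jt)
  = \cf{Z\up{2}_{i'}}(\lambda s)\,\cf{Z\up{2}_l}(c_is)\,\cf{Z\up{2}_l}(c_jt).
\end{align}
Near the origin $\cf{Z\up{2}_{i'}}$ does not vanish, and the Gaussian characteristic functions never vanish. Cancelling leaves $\exp(-\sigma^2c_ic_jst)=1$ on a neighbourhood of $0$ in $\R^2$, where $\sigma^2>0$ is the variance of $Z\up{2}_l$. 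This is impossible since $c_ic_j\neq0$. Hence $m=0$.

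\textbf{Conclusion.} With $m=0$ we obtain a bijective matching of columns, which we write as $A\up{2}=A\up{1}P\Lambda$ exactly as in the proof of \cref{thm:non-gaussian}. Multiplying \cref{eq:two-rep-complete} by $(A\up{1})^{-1}$ then gives, pathwise,
\begin{align}
  Z\up{1}=P\Lambda Z\up{2}+c, \qquad c:=(A\up{1})^{-1}(\mu\up{2}-\mu\up{1}),
\end{align}
so the identity holds almost surely and not merely in law.

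For the normalised case, take expectations: since both source vectors are centred, $c=0$. Taking variances componentwise gives $\lambda_j^2=1$, so $\Lambda=\diag(\pm1,\dots,\pm1)$.
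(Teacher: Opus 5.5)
Your proof is correct and follows the paper's overall strategy. Both feed the two representations into \cref{thm:kagan}, count matched columns, and then eliminate the single exceptional Gaussian column. Your cross-term computation $\exp(-\sigma^2c_ic_jst)=1$ is exactly the paper's Step~4.

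The one genuine difference is in the bookkeeping.

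\begin{itemize}
  \item The paper partitions indices by Gaussianity, $N_i=\{j : Z\up{i}_j \text{ non-Gaussian}\}$. It then needs both parts (1) and (2) of \cref{thm:kagan} to see that non-Gaussian sources are matched only to non-Gaussian sources; part (2) rests on Marcinkiewicz via \cref{rem:marcinkiewicz-use}. As a by-product this gives $\abs{N_1}=\abs{N_2}$, so hypothesis (iii) transfers to $Z\up{2}$.
  \item You partition by proportionality instead and use only part (1), applied in both directions. The equal count of unmatched columns on the two sides then comes for free from the partial bijection. Your exclusion of $m=1$ never uses (iii), so it shows that a single unmatched pair of columns can never occur; (iii) is needed only to rule out $m\ge2$.
\end{itemize}

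Your route is slightly leaner. As a small bonus, your ratio of Gaussian characteristic functions is a real exponential, so no distinguished-logarithm step is needed there.

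One minor notational point: you use $c$ both for $Wa\up{2}_l$ and for the offset $(A\up{1})^{-1}(\mu\up{2}-\mu\up{1})$.
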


\begin{proof}
An invertible matrix has non-zero, pairwise non-proportional columns, so both
representations satisfy the hypotheses of \cref{thm:kagan}.

\emph{Step 1: matching the non-Gaussian columns.}
Let $N_i := \{j : Z\up{i}_j \text{ is non-Gaussian}\}$.
Let $l\in N_2$.  By \cref{thm:kagan}\,(1) the column $a\up{2}_l$ must be
proportional to some column $a\up{1}_{\rho(l)}$ of $A\up{1}$, and by
\cref{thm:kagan}\,(2) the source $Z\up{1}_{\rho(l)}$ is then non-Gaussian,
i.e.\ $\rho(l)\in N_1$.
The index $\rho(l)$ is unique because the columns of $A\up{1}$ are pairwise
non-proportional, and the map $\rho\colon N_2\to N_1$ is injective: if
$\rho(l)=\rho(l')$ then $a\up{2}_l \propto a\up{1}_{\rho(l)} \propto
a\up{2}_{l'}$, contradicting pairwise non-proportionality of the columns of
$A\up{2}$.
Hence $\abs{N_2}\le\abs{N_1}$, and by symmetry $\abs{N_1} = \abs{N_2} =: n$,
with $\rho\colon N_2\to N_1$ a bijection.  In particular the two
representations have Gaussian sources in equal number, $k-n$ each.  This is
why hypothesis (iii) needs to be imposed on one of the two representations
only: assuming it for $i=1$ gives $n\ge k-1$, and hence at most one Gaussian
component of $Z\up{2}$ as well.  In either case $n\in\{k-1,k\}$.

\emph{Step 2: reduction to a single matrix.}
Put $C := (A\up{1})^{-1}A\up{2}$, which is invertible, so that
$A\up{2} = A\up{1}C$; it therefore suffices to prove $C = P\Lambda$.
Multiplying \cref{eq:two-rep-complete} by $(A\up{1})^{-1}$ gives, almost
surely,
\begin{align}
  \label{eq:C-relation}
  Z\up{1} &= C\,Z\up{2} + d,
  & d &:= \bigl(A\up{1}\bigr)^{-1}\bigl(\mu\up{2}-\mu\up{1}\bigr),
\end{align}
so a factorisation $C = P\Lambda$ gives the second assertion as well, with
$c := d$.
For $l\in N_2$ we have $a\up{2}_l = \lambda_l a\up{1}_{\rho(l)}$ with
$\lambda_l\neq0$, hence $Ce_l = \lambda_l e_{\rho(l)}$: every column of $C$
indexed by $N_2$ is a non-zero multiple of a standard basis vector.

\emph{Step 3: the case $n=k$.}
Here $N_2 = \{1,\dots,k\}$ and $\rho$ is a permutation, so \emph{every}
column of $C$ is of that form and $C = P(\rho)\Lambda$ with
$\Lambda = \diag(\lambda_1,\dots,\lambda_k)$ invertible.

\emph{Step 4: the case $n=k-1$.}
Let $l_0\notin N_2$ and $k_0\notin N_1$ be the two exceptional indices, so
that $Z\up{2}_{l_0}$ and $Z\up{1}_{k_0}$ are Gaussian, and non-degenerate
because they are non-constant.
By Step 2 every column of $C$ except the $l_0$-th is a non-zero multiple of a
standard basis vector, the corresponding indices $\rho(l)$ exhausting
$\{1,\dots,k\}\setminus\{k_0\}$.
Reading off rows, \cref{eq:C-relation} becomes
\begin{align}
  Z\up{1}_{k_0} &= \beta\,Z\up{2}_{l_0} + d_{k_0},
  & \beta &:= C_{k_0l_0},\\
  Z\up{1}_{j} &= \lambda_{\rho^{-1}(j)}Z\up{2}_{\rho^{-1}(j)}
     + \beta_j\,Z\up{2}_{l_0} + d_j,
  & \beta_j &:= C_{jl_0}, \qquad j \neq k_0 .
\end{align}
Invertibility of $C$ forces $\beta\neq0$.
Suppose, for contradiction, that $\beta_j\neq0$ for some $j\neq k_0$.
Abbreviate $U := Z\up{2}_{\rho^{-1}(j)}$, $\lambda := \lambda_{\rho^{-1}(j)}
\neq 0$ and $G := Z\up{2}_{l_0}\sim\Normal(m,\tau^2)$ with $\tau>0$; note
$U\indep G$.
The components $Z\up{1}_j$ and $Z\up{1}_{k_0}$ are independent, so for all
$s,t\in\R$,
\begin{align}
  \cf{U}(s\lambda)\,\cf{G}(s\beta_j+t\beta)
    &= \cf{U}(s\lambda)\,\cf{G}(s\beta_j)\,\cf{G}(t\beta),
\end{align}
where the constants $d_j,d_{k_0}$ have cancelled.
Since $\cf{U}$ is continuous with $\cf{U}(0)=1$, it is non-zero in a
neighbourhood of the origin, so we may cancel it there and take logarithms of
the nowhere vanishing Gaussian characteristic functions.  The two exponents
are the quadratics of \cref{not:degenerate}; their difference is continuous,
takes values in $2\pi i\,\Ints$ on the connected set where the identity holds,
and vanishes at $(0,0)$, hence is identically zero.  This gives, for all small
$s,t$,
\begin{align}
  im\bigl(s\beta_j+t\beta\bigr)
    - \tfrac12\tau^2\bigl(s\beta_j+t\beta\bigr)^2
  &= im\bigl(s\beta_j+t\beta\bigr)
    - \tfrac12\tau^2\Bigl(\bigl(s\beta_j\bigr)^2+\bigl(t\beta\bigr)^2\Bigr),
\end{align}
i.e.\ $\tau^2\beta_j\beta\,st = 0$ for all small $s,t$.  As $\tau>0$ and
$\beta\neq0$ this forces $\beta_j = 0$, a contradiction.

Hence $\beta_j=0$ for all $j\neq k_0$, so the $l_0$-th column of $C$ equals
$\beta e_{k_0}$ and $C = P\Lambda$ for a permutation matrix $P$ and an
invertible diagonal $\Lambda$.  With \cref{eq:C-relation} this proves the
first display, and $A\up{2} = A\up{1}C = A\up{1}P\Lambda$.

\emph{The normalised form.}
If both source vectors are centred, taking expectations in
$Z\up{1} = P\Lambda Z\up{2}+c$ gives $c=0$.  If in addition all variances
equal $1$, then each component of $Z\up{1}$ equals $\lambda$ times a
component of $Z\up{2}$, where $\lambda$ is the corresponding diagonal entry
of $\Lambda$; taking variances gives $1=\lambda^2$, so $\lambda=\pm1$.
\end{proof}

\begin{remark}[Two Gaussian sources already destroy identifiability]
\label{rem:two-gaussians}
Hypothesis (iii) of \cref{cor:complete-ident} cannot be weakened.
Let $k=2$ and $Z\sim\Normal(0,\Id{2})$, $A\up{1} = \Id{2}$, so $X = Z$.
For any orthogonal $Q\in\Orth{2}$ put $A\up{2} := Q\T$ and $Z\up{2} := QZ$.
Then $Z\up{2}\sim\Normal(0,\Id{2})$ again has independent components and
$A\up{2}Z\up{2} = Q\T QZ = X$, so the whole rotation group is compatible with
the observed law; cf.\ \cref{rem:gaussian-not-identifiable}.
The same construction applies inside any two-dimensional Gaussian block, which
is why at most one Gaussian source is allowed.
This degeneracy reappears analytically in \cref{prop:gaussian-boundary}
below: two Gaussian sources force the stability quantities to satisfy
$\zeta_j\zeta_l = 1$, so the linearised dynamics acquires a neutral direction.
With the true score the implication reverses as well, by \cref{cor:true-score}
($\zeta_j\ge1$ with equality only in the Gaussian case).  When in addition the
two source variances agree, the neutral direction is the antisymmetric one,
i.e.\ the infinitesimal version of the rotation $Q$ above; for unequal
variances it is $(\sigma_j^2,-\sigma_l^2)$ instead.
\end{remark}

\begin{remark}[Relation to the general theory]
\label{rem:complete-vs-general}
\Cref{cor:complete-ident} is the noiseless, square, full-rank corner of the
theory of \cref{sec:non-gaussian,sec:gaussian-free}.
Comparing:
\cref{thm:non-gaussian} allows a rectangular $A$ and additive Gaussian noise
but must then leave the sources undetermined up to an extra additive Gaussian
term; \cref{thm:gf-identifiability} removes that term at the price of the
Gaussian-free hypothesis.
In the complete noiseless model there is no noise to hide anything in, so the
weakest hypothesis -- at most one Gaussian source -- already suffices, and the
conclusion holds almost surely rather than merely in distribution.
\end{remark}

\subsection{The maximum likelihood objective}
\label{ssec:ml}

From here on we work under \cref{asm:ica} and additionally assume that each
source $Z_j$ has a strictly positive Lebesgue density $p_{Z_j}$.
The model is parametrised by the unmixing matrix $W = A^{-1}$ together with
$k$ \emph{model} source densities $p_1,\dots,p_k$, which are a modelling
choice and need not equal the true densities $p_{Z_1},\dots,p_{Z_k}$:
\begin{align}
  \theta &= \bigl(W, p_1,\dots,p_k\bigr),
  & p_Z(z) &= p_1(z_1)\cdots p_k(z_k) .
\end{align}

\begin{proposition}[Model density and log-likelihood]
\label{prop:ml-objective}
Under \cref{asm:ica}, the density of $X = W^{-1}Z$ induced by the parameter
$\theta$ is
\begin{align}
  \label{eq:model-density}
  q_\theta(x) &= \abs{\det W}\prod_{j=1}^{k}p_j\bigl(w_j\T x\bigr),
  \qquad x\in\R^k .
\end{align}
Consequently the population log-likelihood, with $X$ distributed according to
the true law $q$, is
\begin{align}
  \label{eq:ml-objective}
  \mathcal{L}(\theta) &:= \Ex_{X\sim q}\bigl[\log q_\theta(X)\bigr]
   = \log\abs{\det W}
     + \sum_{j=1}^{k}\Ex_{X\sim q}\bigl[\log p_j\bigl(w_j\T X\bigr)\bigr],
\end{align}
and its empirical counterpart is
\begin{align}
  \label{eq:ml-empirical}
  \mathcal{L}_T(\theta) &:= \frac1T\sum_{t=1}^{T}\log q_\theta\bigl(X(t)\bigr)
   = \log\abs{\det W}
     + \frac1T\sum_{t=1}^{T}\sum_{j=1}^{k}
       \log p_j\bigl(w_j\T X(t)\bigr).
\end{align}
Moreover, if $q$ has a density and finite differential entropy,
\begin{align}
  \label{eq:ml-is-kl}
  \mathcal{L}(\theta)
    &= -\,\mathrm{KL}\bigl(q\,\|\,q_\theta\bigr) - H(q),
  & H(q) &:= -\Ex_{X\sim q}[\log q(X)] .
\end{align}
\end{proposition}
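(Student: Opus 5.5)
The plan is the change-of-variables formula for the density, then linearity of expectation, then a rewriting of the cross-entropy as KL plus entropy.

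\emph{Step 1 (model density).} Under $\theta$ the source vector $Z$ has the product density $p_Z(z)=\prod_j p_j(z_j)$ with respect to Lebesgue measure on $\R^k$. This uses that the components are independent, so the joint law is the product law. The map $z\mapsto W^{-1}z$ is a linear bijection of $\R^k$ with inverse $x\mapsto Wx$, and its Jacobian determinant has modulus $\abs{\det W}$. For every Borel set $B$, the substitution $z=Wx$ in $\Prb[W^{-1}Z\in B]=\int_{W B}p_Z(z)\,dz$ gives $\int_B p_Z(Wx)\abs{\det W}\,dx$. Since $(Wx)_j=w_j\T x$, this is exactly \cref{eq:model-density}.

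\emph{Step 2 (log-likelihood).} Taking logarithms in \cref{eq:model-density} gives the pointwise identity
\begin{align}
  \log q_\theta(x) &= \log\abs{\det W}+\sum_{j=1}^{k}\log p_j\bigl(w_j\T x\bigr).
\end{align}
This is well defined because every $p_j$ is strictly positive. Integrating against $q$ gives \cref{eq:ml-objective}, and averaging over the samples $X(1),\dots,X(T)$ gives \cref{eq:ml-empirical}.

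The one point needing care is the meaning of the expectation. I will read $\mathcal{L}(\theta)$ as defined, possibly equal to $-\infty$, whenever the positive parts $\Ex[\log^+p_j(w_j\T X)]$ are finite. I will state this convention explicitly and not assume more integrability than needed. Under it, linearity of the expectation is justified.

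\emph{Step 3 (KL identity).} Assume $q$ has a density and $H(q)$ is finite. On the set $\{q>0\}$, which carries full $q$-mass, write
\begin{align}
  \log q_\theta = \log q - \log\frac{q}{q_\theta}.
\end{align}
Taking $q$-expectations gives $\mathcal{L}(\theta)=\Ex_q[\log q]-\mathrm{KL}(q\,\|\,q_\theta)=-H(q)-\mathrm{KL}(q\,\|\,q_\theta)$.

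The main obstacle is again integrability. $\Ex_q[\log q]$ is finite by assumption, and $\mathrm{KL}(q\,\|\,q_\theta)\in[0,\infty]$ is always well defined: its negative part is integrable, since $\log(q/q_\theta)\ge 1-q_\theta/q$ and $\int q(1-q_\theta/q)\ge 0$. The sum of the finite term and the term in $[0,\infty]$ is therefore never of the indeterminate form $\infty-\infty$. So the identity holds in $[-\infty,\infty)$, and both sides are simultaneously finite or simultaneously $-\infty$. This is the step I would write out carefully; the rest is routine.
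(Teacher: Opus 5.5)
Your proposal is correct and follows the paper's proof: the change of variables $z=Wx$ gives the density, taking logarithms gives the two objectives, and splitting $\log q_\theta=\log q-\log(q/q_\theta)$ gives the KL identity. The one addition is your integrability bookkeeping, which the paper leaves implicit and which is sound: the quasi-integrability convention, and the integrable lower bound $q\log(q/q_\theta)\ge q-q_\theta$ that rules out $\infty-\infty$.
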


\begin{proof}
\Cref{eq:model-density} is the change-of-variables formula for the
diffeomorphism $z = Wx$ with Jacobian determinant $\det W$: for a Borel set
$S$, $\Prb[X\in S] = \Prb[Z \in WS] = \int_{WS}p_Z(z)dz
= \int_S p_Z(Wx)\abs{\det W}\,dx$.
\Cref{eq:ml-objective,eq:ml-empirical} follow by taking logarithms and using
$\log\prod_j = \sum_j\log$.
For \cref{eq:ml-is-kl},
$\mathrm{KL}(q\|q_\theta) = \Ex_q[\log q] - \Ex_q[\log q_\theta]
= -H(q) - \mathcal{L}(\theta)$.
\end{proof}
Since $H(q)$ does not depend on $\theta$, \cref{eq:ml-is-kl} says that
maximising $\mathcal{L}$ is the same as minimising the Kullback--Leibler
divergence from the true law to the model.

\begin{remark}[The empirical objective does not need independence over time]
\label{rem:no-time-independence}
\Cref{eq:ml-empirical} is the exact log-likelihood only if the samples
$X(1),\dots,X(T)$ are independent.  \Cref{asm:ica}\,(i) deliberately does not
assume this: in the cocktail-party application consecutive samples of an audio
signal are strongly dependent.
Nothing below breaks, because we never use \cref{eq:ml-empirical} as a
likelihood.  We use it as an \emph{M-estimation criterion} whose population
version \cref{eq:ml-objective} depends only on the common law of $X(t)$, and
all our statements -- the stationarity conditions of
\cref{prop:stationary-points} and the stability analysis of
\cref{thm:stability} -- are statements about that population criterion.
Provided $(Z(t))_t$ is stationary and ergodic -- enough for a law of large
numbers, so that $\mathcal{L}_T\to\mathcal{L}$ almost surely -- dependence
across $t$ affects only the rate of that convergence, not the location of the
optima.  Some such hypothesis is needed: if $Z(t)\equiv Z(1)$ for every $t$,
then all five conditions of \cref{asm:ica} hold, yet
$\mathcal{L}_T = \log q_\theta(x(1))$ for every $T$, and this is unbounded
above (send a row of $W$ towards a direction orthogonal to $x(1)$ while
$\log\abs{\det W}\to\infty$).
\end{remark}

\begin{remark}[Centring and whitening]
\label{rem:preprocessing}
Two preprocessing steps are customary.
\begin{enumerate}[label=(\arabic*)]
  \item \emph{Centring:} replace $X(t)$ by $X(t)-\Ex[X]$, so that
        $\Ex[X]=0$; since $A$ is invertible this is equivalent to $\Ex[Z]=0$,
        and being a deterministic shift of each $Z_j$ separately it preserves
        \cref{asm:ica}\,(i).  In practice one subtracts the sample mean
        $\bar X$, which is an approximation to this whose validity again needs
        the law of large numbers of \cref{rem:no-time-independence}:
        subtracting $\bar X$ mixes the time points and can destroy the
        independence of the components at a fixed $t$.
  \item \emph{Whitening:} assume $\Ex\norm{Z}^2<\infty$ (so that
        $\Sigma_X := \Cov(X)$ exists and is invertible) and replace $X$ by
        $X' := \Sigma_X^{-1/2}X$, whose mixing matrix is
        $A' := \Sigma_X^{-1/2}A$.  With centred unit-variance sources this
        gives
        \begin{align}
          \Id{k} = \Cov(X') = A'\,\Cov(Z)\,(A')\T = A'(A')\T,
        \end{align}
        with the \emph{symmetric} square root $\Sigma_X^{-1/2}$, which exists
        and is invertible because the $Z_j$ are non-constant with finite
        variance; that is, the mixing matrix of the whitened data \emph{is}
        orthogonal.  For algorithms that constrain $W$ to $\Orth{k}$ this
        reduces the
        number of free parameters from $k^2$ to the $k(k-1)/2$ dimensions of
        $\Orth{k}$, and it is what most ICA algorithms exploit.  The
        relative-gradient update \cref{eq:update-final} does not preserve
        orthogonality, so it gains no such reduction.
\end{enumerate}
The algorithm developed below needs neither step in order to be well defined.
By \cref{prop:equivariance} its trajectory depends on $A$ only through the
initial global system matrix $R_0 = W_0A$, so whitening cannot change the
shape of the dynamics -- but it does change $R_0$, replacing it by an
orthogonal matrix when $W_0 = \Sigma_X^{-1/2}$, and a better conditioned
starting point is a genuine practical benefit.  Centring is advisable in any
case, because \cref{thm:stability} assumes $\Ex[Z]=0$.
\end{remark}

\begin{proposition}[Gradient of the log-likelihood]
\label{prop:gradient}
Fix source densities $p_1,\dots,p_k$ that are differentiable and strictly
positive, and write
\begin{align}
  \label{eq:score-def}
  \eta_j &:= \bigl(\log p_j\bigr)' = \frac{p_j'}{p_j}
\end{align}
for the \emph{model score} of the $j$-th source (\cref{rem:two-scores}).
Put $C := WX$, and write $\eta(C)$ for the vector with components
$\eta_j(C_j)$.
Then, under integrability conditions permitting differentiation under the
expectation,
\begin{align}
  \label{eq:gradient}
  \frac{\partial\mathcal{L}}{\partial W_{ij}}
    &= \Ex\bigl[\eta_i(C_i)\,X_j\bigr] + \bigl(W^{-1}\bigr)_{ji},
  \qquad\text{that is}\qquad
  \nabla_W\mathcal{L} = W\invT + \Ex\bigl[\eta(C)X\T\bigr].
\end{align}
\end{proposition}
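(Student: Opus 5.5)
The plan is to differentiate the two summands of \cref{eq:ml-objective} separately and then add the results. The first summand, $\log\abs{\det W}$, is handled by Jacobi's formula. On the open set of invertible matrices, $\partial\det W/\partial W_{ij}$ equals the $(i,j)$ cofactor of $W$, which is $\det(W)\,(W^{-1})_{ji}$ by the adjugate formula $\operatorname{adj}(W) = \det(W)W^{-1}$. Because $\log\abs{\cdot}$ has derivative $1/x$ on $\R\setminus\{0\}$, this gives
\[
\partial\log\abs{\det W}/\partial W_{ij} = (W^{-1})_{ji}.
\]
Note that no sign case distinction is needed. Collected into a matrix, this term is $W\invT$.

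For the second summand, I fix $j'$ and differentiate $\Ex[\log p_{j'}(w_{j'}\T X)]$ with respect to $W_{ij}$. The argument $w_{j'}\T X = \sum_l W_{j'l}X_l$ depends on $W_{ij}$ only when $j'=i$, and then its partial derivative is $X_j$. By the chain rule and the definition \cref{eq:score-def} of the score, the pointwise derivative of the integrand is
\[
\frac{p_i'(C_i)}{p_i(C_i)}\,X_j = \eta_i(C_i)\,X_j, \qquad C = WX.
\]
Interchanging $\partial/\partial W_{ij}$ with $\Ex$ then yields $\Ex[\eta_i(C_i)X_j]$. In matrix form this is the $(i,j)$ entry of $\Ex[\eta(C)X\T]$. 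Adding the two contributions gives \cref{eq:gradient}.

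The only non-routine step is the interchange of differentiation and expectation. The statement explicitly assumes whatever integrability permits it, so I will make that hypothesis concrete. A sufficient version is the following: there is a neighbourhood $N$ of $W$ such that $\sup_{W'\in N}\abs{\eta_i(w_i'^{\mathsf T}X)\,X_j}$ is integrable. By the mean value theorem the difference quotients are then dominated, and dominated convergence applies. Under this condition the integrand is differentiable in $W_{ij}$ along the segment because $p_i$ is differentiable and strictly positive. I will state this as the integrability assumption and not pursue weaker conditions, since the proposition defers them.
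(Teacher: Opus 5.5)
Your proposal is correct and follows the paper's proof: you apply the chain rule term by term to the summands $\log p_l(w_l\T X)$, of which only $l=i$ survives, and you use Jacobi's formula for $\log\abs{\det W}$. Your explicit dominated-convergence condition, obtained via the mean value theorem, is a sensible way to make concrete the integrability hypothesis that the paper leaves implicit.
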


\begin{proof}
Differentiating \cref{eq:ml-objective} term by term,
$\partial_{W_{ij}}\log p_l(w_l\T X) = \delta_{li}\,\eta_l(w_l\T X)\,X_j$ by
the chain rule, and summing over $l$ leaves the single term
$\eta_i(C_i)X_j$.
For the determinant, Jacobi's formula gives
$\partial_{W_{ij}}\log\abs{\det W} = (W^{-1})_{ji}$, i.e.\
$\nabla_W\log\abs{\det W} = W\invT$.
\end{proof}

\begin{remark}[Two scores: the model score and the true score]
\label{rem:two-scores}
From here to the end of the section, two different objects compete for the
name ``score'', and almost every subtlety in what follows is a statement about
their relationship.  We keep them notationally apart.
\begin{itemize}
  \item $\eta_j := (\log p_j)'$ of \cref{eq:score-def}, the score of the
        \emph{model} density
        $p_j$.  This is a modelling choice, fixed by the user before the
        algorithm is run, and it is the function the algorithm actually
        evaluates -- in \cref{alg:online-ica} it is applied to the current
        estimate $C_j$ of the $j$-th source.  We also call it the
        \emph{nonlinearity}.  Nothing forces it to be the score of the
        \emph{true} source density.  The three used in practice are in fact
        scores of perfectly good densities -- just not, in general, of the
        right one: $-\tanh$ is the score of the hyperbolic secant density
        $\tfrac1\pi\sech$, with $\kurt = +2$; $\tanh-\mathrm{id}$ is the
        score of $\propto\cosh(u)e^{-u^2/2}$, i.e.\ of the mixture
        $\tfrac12\bigl(\Normal(-1,1)+\Normal(1,1)\bigr)$, with
        $\kurt = -\tfrac12$; and $-u^3$ is the score of
        $\propto e^{-u^4/4}$, with $\kurt \approx -0.81$.  Choosing $\eta_j$
        thus amounts to positing a source law, and the three choices posit a
        leptokurtic, a mildly platykurtic and a platykurtic one respectively.
        Not every nonlinearity is a score, though: $+u^3$ would require the
        non-integrable $e^{u^4/4}$.
  \item $\tsc_j := \bigl(\log p_{Z_j}\bigr)'$, the score of the \emph{true}
        density $p_{Z_j}$ of the source $Z_j$.  This is a property of the data
        generating process, and it is unknown.
\end{itemize}
The maximum likelihood derivation of \cref{ssec:ml} would use $\tsc_j$ if it
could; the algorithm uses $\eta_j$ because it must.
\Cref{thm:stability} is a statement about an arbitrary $\eta_j$, and this is
exactly what makes it useful: it says how far $\eta_j$ may deviate from
$\tsc_j$ before the separating solution stops attracting.
\Cref{cor:true-score} is the special case $\eta_j = \tsc_j$.
Note that $\eta_j$ is evaluated along the trajectory at $C_j = (RZ)_j$, a
mixture of the sources, whereas $\tsc_j$ belongs to $Z_j$ alone; the two are
brought into contact only through their values on the sources, i.e.\ at the
fixed point $R=\Id{k}$, where $C_j = Z_j$.
A third score makes a single appearance, after \cref{prop:stationary-points}:
$(\log p_{C_j})'$, the true score of the current \emph{estimate}, which
coincides with $\tsc_j$ at $R=\Id{k}$.
\end{remark}

\subsection{The relative gradient: what the ``preconditioner'' really is}
\label{ssec:relative-gradient}

A plain gradient ascent step $W \leftarrow W + \alpha\nabla_W\mathcal{L}$
requires the matrix inverse $W\invT$ at every step, which is both
expensive and numerically delicate.  The standard remedy is to multiply the
gradient by $W\T W$ on the right, giving the update
\begin{align}
  \label{eq:update-rule}
  W &\leftarrow W + \alpha\,\bigl(\nabla_W\mathcal{L}\bigr)\,W\T W .
\end{align}
This is often presented as a Newton step with an approximate inverse Hessian.
It is not, and \cref{rem:not-a-hessian} explains what it is instead: an
\emph{exact} gradient, taken with respect to a different -- and for this
problem far more natural -- Riemannian metric on the group of invertible
matrices.

\begin{definition}[Relative gradient]
\label{def:relative-gradient}
Let $F$ be a differentiable real-valued function on the group $GL(k)$ of
invertible $k\times k$ matrices.  The \emph{relative gradient} of $F$ at $W$
is the matrix $\nabla^{\mathrm{rel}}F(W)$ determined by the first-order
expansion under \emph{multiplicative} perturbations,
\begin{align}
  \label{eq:rel-grad-def}
  F\bigl((\Id{k}+\varepsilon)W\bigr)
    &= F(W)
     + \bigl\langle \nabla^{\mathrm{rel}}F(W),\,\varepsilon\bigr\rangle
     + o\bigl(\norm{\varepsilon}\bigr),
  \qquad \varepsilon\in\R^{k\times k},
\end{align}
where $\langle M,N\rangle := \tr(M\T N)$ is the Frobenius inner product.
\end{definition}

\begin{proposition}[Relative gradient of the ICA likelihood]
\label{prop:rel-gradient}
For any differentiable $F$ on $GL(k)$ we have
$\nabla^{\mathrm{rel}}F(W) = \bigl(\nabla F(W)\bigr)W\T$, so that a relative
gradient ascent step $W\leftarrow(\Id{k}+\alpha\nabla^{\mathrm{rel}}F(W))W$ is
exactly \cref{eq:update-rule}.
For the ICA log-likelihood \cref{eq:ml-objective},
\begin{align}
  \label{eq:rel-gradient-ica}
  \nabla^{\mathrm{rel}}\mathcal{L}(W)
    &= \Id{k} + \Ex\bigl[\eta(C)\,C\T\bigr],
  \qquad C = WX,
\end{align}
and the update rule reads
\begin{align}
  \label{eq:update-final}
  W &\leftarrow \Bigl(\Id{k}
      + \alpha\bigl(\Id{k} + \Ex\bigl[\eta(C)C\T\bigr]\bigr)\Bigr)W .
\end{align}
\end{proposition}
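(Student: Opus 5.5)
The plan is to compute the relative gradient straight from \cref{def:relative-gradient}, using the ordinary chain rule and the cyclic property of the trace. After that, \cref{prop:gradient} gives the ICA case.

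\emph{Step 1: the general identity.} Fix $W\in GL(k)$ and consider the map $\varepsilon\mapsto(\Id{k}+\varepsilon)W = W+\varepsilon W$. Since $GL(k)$ is open and $F$ is differentiable at $W$, the ordinary first-order expansion gives
\begin{align}
  F(W+\varepsilon W) = F(W) + \bigl\langle\nabla F(W),\,\varepsilon W\bigr\rangle + o\bigl(\norm{\varepsilon W}\bigr).
\end{align}
Here $\norm{\varepsilon W}\le\norm{\varepsilon}\norm{W}$, so the remainder is $o(\norm{\varepsilon})$. The trace identity $\tr(M\T\varepsilon W)=\tr(WM\T\varepsilon)=\tr\bigl((MW\T)\T\varepsilon\bigr)$ then yields $\langle\nabla F(W),\varepsilon W\rangle=\langle\nabla F(W)W\T,\varepsilon\rangle$.

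Uniqueness of the matrix in \cref{eq:rel-grad-def} follows because a linear functional on $\R^{k\times k}$ that is $o(\norm{\varepsilon})$ must vanish. Hence $\nabla^{\mathrm{rel}}F(W)=\nabla F(W)W\T$. The update $(\Id{k}+\alpha\nabla F(W)W\T)W = W+\alpha\nabla F(W)W\T W$ is then literally \cref{eq:update-rule}.

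\emph{Step 2: the ICA case.} Insert \cref{eq:gradient} and multiply by $W\T$ on the right:
\begin{align}
  \bigl(W\invT+\Ex[\eta(C)X\T]\bigr)W\T = \Id{k} + \Ex\bigl[\eta(C)(WX)\T\bigr] = \Id{k}+\Ex\bigl[\eta(C)C\T\bigr].
\end{align}
Pulling the deterministic matrix $W\T$ inside the expectation is legitimate by linearity, given the integrability conditions already assumed in \cref{prop:gradient}. Substituting this into the update from Step 1 gives \cref{eq:update-final}.

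\emph{Main obstacle.} There is no real difficulty; the only points needing care are bookkeeping. One is keeping the transpose conventions consistent with the Frobenius pairing, so as to land on $\nabla F\,W\T$ rather than $W\T\nabla F$. The other is making sure the integrability hypotheses of \cref{prop:gradient} are simply inherited, rather than re-derived, when $W\T$ is moved inside the expectation.
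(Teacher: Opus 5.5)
Your proof is correct and follows essentially the same route as the paper: expand $F(W+\varepsilon W)$ by the ordinary chain rule, use the cyclic trace identity to obtain $\nabla F(W)W\T$, then substitute \cref{eq:gradient} using $W\invT W\T=\Id{k}$ and $X\T W\T=C\T$. Your checks that the remainder is $o(\norm{\varepsilon})$ and that the matrix in \cref{eq:rel-grad-def} is unique are details the paper leaves implicit, and they are correct.
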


\begin{proof}
By the ordinary chain rule,
$F((\Id{k}+\varepsilon)W) = F(W+\varepsilon W)
 = F(W) + \langle\nabla F(W),\varepsilon W\rangle + o(\norm{\varepsilon})$,
and
$\langle\nabla F(W),\varepsilon W\rangle
 = \tr\bigl(\nabla F(W)\T\varepsilon W\bigr)
 = \tr\bigl(W\,\nabla F(W)\T\,\varepsilon\bigr)
 = \bigl\langle \nabla F(W)W\T,\varepsilon\bigr\rangle$,
which is \cref{eq:rel-grad-def} with
$\nabla^{\mathrm{rel}}F(W) = \nabla F(W)W\T$.
Substituting \cref{eq:gradient} and using
$W\invT W\T = \Id{k}$ and $X\T W\T = C\T$,
\begin{align}
  \nabla^{\mathrm{rel}}\mathcal{L}(W)
    &= \Bigl(W\invT + \Ex\bigl[\eta(C)X\T\bigr]\Bigr)W\T
     = \Id{k} + \Ex\bigl[\eta(C)\,X\T W\T\bigr]
     = \Id{k} + \Ex\bigl[\eta(C)C\T\bigr].
\end{align}
\end{proof}

\begin{remark}[It is a metric, not a Hessian]
\label{rem:not-a-hessian}
Differentiating \cref{eq:gradient} once more gives the true Hessian of
$\mathcal{L}$,
\begin{align}
  \label{eq:true-hessian}
  \frac{\partial^2\mathcal{L}}{\partial W_{lm}\,\partial W_{ij}}
    &= \delta_{il}\,\Ex\bigl[\eta_i'(C_i)X_jX_m\bigr]
     - \bigl(W^{-1}\bigr)_{jl}\bigl(W^{-1}\bigr)_{mi},
\end{align}
using $\partial(W^{-1})_{ji}/\partial W_{lm} = -(W^{-1})_{jl}(W^{-1})_{mi}$.
The operator implicitly used in \cref{eq:update-rule} is, in coordinates,
\begin{align}
  K_{(lm),(ij)} &= \delta_{li}\bigl(W\T W\bigr)_{jm}
   = \delta_{li}\sum_{r=1}^{k}W_{rj}W_{rm},
\end{align}
since $\bigl[(\nabla\mathcal{L})W\T W\bigr]_{lm}
 = \sum_{ij}K_{(lm),(ij)}(\nabla\mathcal{L})_{ij}$.
Two observations settle the matter.
First, $K$ does not depend on the data at all, whereas
\cref{eq:true-hessian} does, through
$\Ex[\eta_i'(C_i)X_jX_m]$; so $K$ cannot be the inverse of
\cref{eq:true-hessian}, not even approximately, uniformly in the source
distribution.
Second, $K$ has an exact interpretation that has nothing to do with curvature:
it is the inverse metric tensor of the right-invariant Riemannian metric on
$GL(k)$ in which distances are measured multiplicatively, and
\cref{eq:update-final} is the corresponding steepest-ascent step.
This is Amari's \emph{natural gradient} \citep{amari1996newlearning,
amari1998natural} and Cardoso and Laheld's \emph{relative gradient}
\citep{cardoso1996equivariant}; MacKay's derivation of exactly the update
\cref{eq:update-final} calls it the \emph{covariant} algorithm
\citep{mackay1996ica}.
The essential point for what follows is that \cref{eq:update-final} involves
\emph{no approximation whatsoever}: it is the exact gradient of the exact
objective for the right-invariant metric just described.  (Not
for a different \emph{chart}: a gradient in a chart is the gradient for the
flat metric that chart induces, and no flat left- or right-invariant metric
exists on $GL(k)$ for $k\ge2$.)  The only approximation in the whole algorithm
is the choice of the model scores $\eta_j$ -- that is, the extent to which
they fail to be the true scores $\tsc_j$ -- and that is the subject of
\cref{ssec:stability}.
\end{remark}

The reason to prefer this particular gradient is the following invariance,
which is why the algorithm is used at all.

\begin{proposition}[Equivariance]
\label{prop:equivariance}
Write $R := WA$ for the \emph{global system matrix}, so that $C = WX = RZ$.
Then the update \cref{eq:update-final} induces on $R$ the update
\begin{align}
  \label{eq:update-R}
  R &\leftarrow \Bigl(\Id{k}
     + \alpha\bigl(\Id{k}+\Ex\bigl[\eta(RZ)(RZ)\T\bigr]\bigr)\Bigr)R,
\end{align}
which does not involve $A$.
Consequently, if $(W_n)_{n\ge0}$ and $(W_n')_{n\ge0}$ are the iterates
produced from data $X = AZ$ and $X' = A'Z$ with the same sources and with
initialisations satisfying $W_0A = W_0'A'$, then $W_nA = W_n'A'$ and
$W_nX = W_n'X'$ for all $n$.
\end{proposition}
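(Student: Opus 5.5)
The argument is a direct substitution followed by an induction on the iteration count.

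First I derive the update for $R$. Right-multiply the update \cref{eq:update-final} by $A$. With $R := WA$ and $C = WX = WAZ = RZ$ this gives
\begin{align}
  WA &\leftarrow \Bigl(\Id{k} + \alpha\bigl(\Id{k}+\Ex\bigl[\eta(C)C\T\bigr]\bigr)\Bigr)WA .
\end{align}
The expectation $\Ex[\eta(C)C\T]$ is an expectation of a function of $C = RZ$ only. Its value therefore depends on $R$ and on the law of $Z$, but not on $A$ separately. Substituting $C = RZ$ yields exactly \cref{eq:update-R}, in which $A$ no longer appears.

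For the consequence, I write the map in \cref{eq:update-R} as $R \mapsto \Phi(R)$. This $\Phi$ depends only on $\alpha$, on the model scores $\eta$ and on $\Law(Z)$, and these are the same for both runs because the sources coincide.

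Put $R_n := W_nA$ and $R_n' := W_n'A'$. By the first part, $R_{n+1} = \Phi(R_n)$ and $R_{n+1}' = \Phi(R_n')$. Since $R_0 = R_0'$ by hypothesis, induction on $n$ gives $R_n = R_n'$ for all $n$. Hence
\begin{align}
  W_nX = W_nAZ = R_nZ = R_n'Z = W_n'A'Z = W_n'X' .
\end{align}

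No step is a real obstacle. The only point needing care is to state explicitly that the expectation is a function of $R$ alone, and that the integrability conditions of \cref{prop:gradient} are assumed along the iterates, so that every update is well defined.
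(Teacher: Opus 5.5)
Your proposal is correct and follows the paper's proof essentially step for step: you right-multiply \cref{eq:update-final} by $A$, substitute $C = RZ$ to obtain \cref{eq:update-R}, and then induct on $n$, using that both runs apply the same $A$-free map to the same matrix $R_n$, which gives $W_nA = W_n'A'$ and $W_nX = R_nZ = W_n'X'$. Your remark that the map depends only on $\alpha$, the model scores $\eta$ and $\Law(Z)$ states explicitly what the paper leaves implicit.
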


\begin{proof}
Multiplying \cref{eq:update-final} on the right by $A$ and using $C = WX =
WAZ = RZ$ turns the update for $W$ into \cref{eq:update-R}, in which $A$ no
longer appears.  The second statement follows by induction: the hypothesis
$W_nA = W_n'A'$ makes both iterates apply the same map \cref{eq:update-R} to
the same matrix $R_n$, so $W_{n+1}A = W_{n+1}'A'$; and
$W_nX = R_nZ = W_n'X'$.
\end{proof}
The trajectory of the recovered signal $C$ therefore depends on the mixing
matrix only through $R_0 = W_0A$: two mixings $A$ and $MA$ give identical
$C$-trajectories provided the initialisations are co-transformed, however
badly conditioned $M$ is.  This is \emph{not} the statement that the
conditioning of $A$ is irrelevant; see \cref{rem:terminology-equivariant}.

\begin{remark}[Equivariant, relative, natural, covariant]
\label{rem:terminology-equivariant}
Four names circulate for the same update, and they name four different things
about it.
\emph{Equivariant} \citep{cardoso1996equivariant} names the property proved in
\cref{prop:equivariance}: writing $\Phi_n(x,W_0)$ for the $n$-th iterate
produced from data $x$ and initialisation $W_0$, that proposition says
\begin{align}
  \Phi_n\bigl(Mx,\;W_0M^{-1}\bigr) &= \Phi_n(x,W_0)\,M^{-1},
  \qquad M\in GL(k),
\end{align}
which is equivariance of the map $(x,W_0)\mapsto W_n$ under the action
$x\mapsto Mx$, $W_0\mapsto W_0M^{-1}$ of $GL(k)$.  Note that the
initialisation is co-transformed: for a \emph{fixed} $W_0$, such as the
default $W_0 = \Id{k}$ of \cref{alg:online-ica}, the iterates are not
equivariant, which is the point already made in \cref{rem:preprocessing}.
\emph{Relative gradient} \citep{cardoso1996equivariant} names the
construction: the gradient with respect to multiplicative perturbations,
\cref{def:relative-gradient}.
\emph{Natural gradient} \citep{amari1998natural} names the same object as
steepest ascent for a Riemannian metric on $GL(k)$.
\emph{Covariant} \citep{mackay1996ica} borrows the physicists' sense of the
word -- form-invariant under a change of parametrisation.

We prefer \emph{equivariant}.  It names a property of the algorithm that we
have actually proved, rather than a property of the metric used to derive it;
it is the established term in the source-separation literature; and
``covariant'' is doubly unfortunate here, since it invites confusion with
covariance matrices, and since in the tensor-calculus sense of the word the
object $K$ of \cref{rem:not-a-hessian} raises an index and so produces a
\emph{contra}variant quantity.  When the emphasis is on the derivation rather
than on the algorithm, ``relative gradient'' and ``natural gradient'' are both
accurate.
\end{remark}

\subsection{Stationary points, stability, and the choice of nonlinearity}
\label{ssec:stability}

We now address the question that the derivation so far has left open: the
true scores $\tsc_j$ are unknown, so the algorithm is run with a fixed guess
$\eta_j$ in their place (\cref{rem:two-scores}).  Why should it still work?

\begin{proposition}[Stationary points are nonlinear decorrelations]
\label{prop:stationary-points}
$W$ is a stationary point of the relative gradient, i.e.\
$\nabla^{\mathrm{rel}}\mathcal{L}(W) = 0$, if and only if
\begin{align}
  \label{eq:stationarity}
  \Ex\bigl[\eta_j(C_j)\,C_l\bigr] &= -\delta_{jl},
  \qquad j,l = 1,\dots,k .
\end{align}
\end{proposition}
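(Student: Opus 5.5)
The claim follows directly from \cref{prop:rel-gradient}, which gives the closed form \cref{eq:rel-gradient-ica} of the relative gradient,
\begin{align}
  \nabla^{\mathrm{rel}}\mathcal{L}(W) &= \Id{k} + \Ex\bigl[\eta(C)\,C\T\bigr],
  \qquad C = WX .
\end{align}
The proof therefore reduces to reading off this matrix entrywise. I would work under the same integrability conditions that were assumed in \cref{prop:gradient} to justify differentiating under the expectation, so that $\Ex[\eta_j(C_j)C_l]$ is finite for all $j,l$ and the formula is available.

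\emph{Step 1: compute the entries.} The $(j,l)$ entry of the outer product $\eta(C)C\T$ is $\eta_j(C_j)\,C_l$, because $\eta(C)$ is the vector with components $\eta_j(C_j)$. Expectation acts entrywise, so
\begin{align}
  \bigl(\nabla^{\mathrm{rel}}\mathcal{L}(W)\bigr)_{jl} &= \delta_{jl} + \Ex\bigl[\eta_j(C_j)\,C_l\bigr].
\end{align}

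\emph{Step 2: conclude.} A matrix vanishes if and only if all of its entries vanish. Hence $\nabla^{\mathrm{rel}}\mathcal{L}(W)=0$ holds exactly when $\Ex[\eta_j(C_j)C_l] = -\delta_{jl}$ for all $j,l$, which is \cref{eq:stationarity}.

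There is no real obstacle. The one point worth noting is a side remark rather than part of the proof. Since $\nabla^{\mathrm{rel}}F(W) = \nabla F(W)\,W\T$ and $W$ is invertible, the relative and the Euclidean gradient vanish simultaneously. So the stationary points characterised here are the ordinary critical points of $\mathcal{L}$ on $GL(k)$.
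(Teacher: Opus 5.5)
Your proof is correct and is the paper's argument: the paper likewise reads off the $(j,l)$ entry $\delta_{jl}+\Ex[\eta_j(C_j)C_l]$ of \cref{eq:rel-gradient-ica} and sets it to zero. Your side remark, that the relative and Euclidean gradients vanish together because $W$ is invertible, is also correct.
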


\begin{proof}
Immediate from \cref{eq:rel-gradient-ica}: the $(j,l)$ entry of
$\Id{k}+\Ex[\eta(C)C\T]$ is $\delta_{jl}+\Ex[\eta_j(C_j)C_l]$.
\end{proof}
The $k$ diagonal equations of \cref{eq:stationarity} fix the scale of each
recovered component; the $k(k-1)$ off-diagonal equations say that each
transformed component $\eta_j(C_j)$ is orthogonal to every untransformed
$C_l$, $l\neq j$, and hence -- the data being centred -- uncorrelated with
it.  This is a \emph{nonlinear decorrelation} condition.
If the model score $\eta_j$ happens to coincide with the true score
$(\log p_{C_j})'$ of the current estimate $C_j$, and $c\,p_{C_j}(c)\to0$ as
$\abs{c}\to\infty$, then the $j$-th diagonal equation holds automatically:
integration by parts gives
$\Ex[\eta_j(C_j)C_j] = \int c\,p_{C_j}'(c)\,dc = -1$.

By \cref{prop:equivariance} the dynamics lives on the global system matrix
$R$, and by \cref{cor:complete-ident} the separating solutions are exactly the
matrices $R = P\Lambda$.  Relabelling and rescaling the sources, it suffices
to analyse the fixed point $R = \Id{k}$.  When the $\eta_j$ differ across
components this relabelling matters: the conditions below are to be read for
the pairing of $\eta_j$ with the source that $P$ sends to position $j$, and a
different pairing has different $\zeta$'s and its own verdict.  That is
exactly what makes a wrong assignment of model scores to components unstable.
We first record that the scale can always be arranged.

\begin{lemma}[The fixed-point scale exists and is unique]
\label{lem:fixed-point-scale}
Let $Y$ be a real-valued random variable that is not almost surely $0$ and let
the model score $\eta$ be one of
\begin{align}
  \eta(u) = -\tanh(u), \qquad
  \eta(u) = \tanh(u)-u, \qquad
  \eta(u) = -u^3 ,
\end{align}
with $\Ex[\abs{Y}]<\infty$ in the first case, $\Ex[Y^2]<\infty$ in the second
and $\Ex[Y^4]<\infty$ in the third.  Then there is exactly one $c>0$ with
\begin{align}
  \Ex\bigl[\eta(cY)\,cY\bigr] &= -1 .
\end{align}
\end{lemma}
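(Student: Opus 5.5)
We reduce the claim to the strict monotonicity of a scalar function on $(0,\infty)$, namely
\begin{align}
  h(c) &:= \Ex\bigl[-\eta(cY)\,cY\bigr], \qquad c>0,
\end{align}
and show that $h$ takes the value $1$ exactly once. In all three cases write $-\eta(u)\,u = \phi(u)$ with an even, nonnegative function $\phi$. Explicitly, $\phi(u)=u\tanh(u)$, $\phi(u)=u^2-u\tanh(u)$ and $\phi(u)=u^4$ respectively. Then $h(c)=\Ex[\phi(c\abs{Y})]$, and it suffices to show three things: $h$ is finite and continuous on $(0,\infty)$; $h$ is strictly increasing; and $h(c)\to0$ as $c\downarrow0$ while $h(c)\to\infty$ as $c\to\infty$. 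The intermediate value theorem then gives existence, and strict monotonicity gives uniqueness.

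\emph{Finiteness and continuity.} We use the bounds $0\le u\tanh u\le\abs{u}$, $0\le u^2-u\tanh u\le u^2$ and $u^4$, which are dominated by $\abs{Y}$, $Y^2$ and $Y^4$ times a power of $c$. These are locally uniform in $c$, so the assumed moment conditions give finiteness, and dominated convergence gives continuity. The same domination shows $h(c)\to0$ as $c\downarrow0$, because $\phi(0)=0$ and $\phi$ is continuous.

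\emph{Monotonicity and growth.} The key pointwise fact is that $\phi$ is strictly increasing on $[0,\infty)$ and unbounded.
\begin{itemize}
  \item For $u\tanh u$ this holds since the derivative $\tanh u+u\sech^2u$ is positive for $u>0$.
  \item For $u^4$ it is clear.
  \item For $\psi(u):=u^2-u\tanh u = u(u-\tanh u)$ both factors are positive and increasing for $u>0$, since $(u-\tanh u)'=\tanh^2u>0$. Hence $\psi$ is strictly increasing, and $\psi(u)\ge u(u-1)\to\infty$.
\end{itemize}
Since $Y$ is not a.s.\ $0$, the event $\{\abs{Y}>0\}$ has positive probability. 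On it, $c\mapsto\phi(c\abs{Y})$ is strictly increasing, and off it the expression equals $0$. Therefore $h(c_1)<h(c_2)$ whenever $c_1<c_2$. Monotone convergence, applied on $\{\abs{Y}>0\}$ where $\phi(c\abs{Y})\uparrow\infty$, gives $h(c)\to\infty$.

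\emph{Main obstacle.} The only nontrivial step is the pointwise monotonicity of $\phi$ in the second case, $u^2-u\tanh u$. The naive derivative $2u-\tanh u-u\sech^2u$ is awkward to sign directly. The factorisation $u\,(u-\tanh u)$ into two positive increasing factors avoids computing it. Everything else is dominated and monotone convergence.
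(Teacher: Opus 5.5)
Your proof is correct and is essentially the paper's argument. The paper also writes $h(c)=\Ex[\varphi(cY)]$ with the same three even, non-negative functions and gets continuity from dominated convergence on compacts. It derives strict monotonicity from pointwise strict monotonicity in $\abs{u}$ on $\{Y\neq0\}$, using for $u(u-\tanh u)$ the same observation as yours: $u-\tanh u$ has the sign of $u$ and increases in modulus. It then concludes with $h(0^+)=0$, monotone convergence to $\infty$, and the intermediate value theorem. You make the domination bounds and the derivative $(u-\tanh u)'=\tanh^2 u$ explicit, which the paper leaves implicit.
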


\begin{proof}
In all three cases write $h(c) := -\Ex[\eta(cY)cY] = \Ex[\varphi(cY)]$ with
$\varphi(u) := u\tanh(u)$, $\varphi(u) := u\bigl(u-\tanh(u)\bigr)$ and
$\varphi(u) := u^4$ respectively.  Each $\varphi$ is non-negative, even,
vanishes only at $0$ and is strictly increasing in $\abs{u}$; for the second
this is because $u-\tanh u$ has the sign of $u$ and $\abs{u-\tanh u}$
increases in $\abs{u}$.
Hence $c\mapsto\varphi(cY(\omega))$ is non-decreasing in $c>0$ for every
$\omega$, and strictly increasing whenever $Y(\omega)\neq0$.
So $h$ is continuous by dominated convergence on compacts, strictly
increasing, $h(0^+)=0$, and $h(c)\to\infty$ as $c\to\infty$ by monotone
convergence.  The intermediate value theorem gives a $c>0$ with $h(c)=1$, and strict
monotonicity makes it unique.
\end{proof}

\begin{remark}[Why the normalisation makes $\zeta_j$ well defined]
\label{rem:zeta-well-defined}
The quantities $\beta_j$, $\gamma_j$ and $\zeta_j$ of
\cref{eq:stability-moments} below are \emph{not} invariant under rescaling
the source: replacing $Z_j$ by $cZ_j$ changes
$\beta_j = \Ex[\eta_j'(Z_j)]$ and $\sigma_j^2 = \Ex[Z_j^2]$ separately, and
in general changes their product.  This is not a defect but a consequence of
the fact that the fixed point under analysis is $R=\Id{k}$, which presupposes
a choice of scale for the sources.  \Cref{lem:fixed-point-scale} says that
for the three nonlinearities in use there is exactly one such choice
compatible with stationarity \emph{up to sign}, namely the one making
\cref{eq:scale-normalisation} hold; the sign is invisible to
$\beta_j,\gamma_j,\zeta_j$, because $\eta_j'$ is even for all three, and it
is exactly the sign ambiguity of \cref{cor:complete-ident}.  Once that normalisation is imposed,
$\zeta_j$ is a well-defined functional of $\Law(Z_j)$ and of the model score
$\eta_j$ alone,
and the stability conditions \cref{eq:stability-conditions} are statements
about the source law rather than about an arbitrary scaling.  This is what
makes the entries of \cref{tab:stability} meaningful.
\end{remark}

\begin{theorem}[Local stability of the separating solution]
\label{thm:stability}
Let $k\ge2$, let $Z$ have mutually independent components with $\Ex[Z_j]=0$
and $\sigma_j^2 := \Ex[Z_j^2]\in(0,\infty)$, let the model scores
$\eta_1,\dots,\eta_k$ of \cref{rem:two-scores} be arbitrary twice
continuously differentiable functions, and assume
\begin{align}
  \label{eq:stability-regularity}
  \sup_{u\in\R}\bigabs{\eta_j''(u)} &< \infty
  \quad\text{for all } j,
  & \Ex\bigl[\norm{Z}^3\bigr] &< \infty
\end{align}
(see \cref{rem:reading-stability}).  Assume finally the normalisation
\begin{align}
  \label{eq:scale-normalisation}
  \Ex\bigl[\eta_j(Z_j)Z_j\bigr] &= -1, \qquad j=1,\dots,k,
\end{align}
holds.  Then $R = \Id{k}$ is a stationary point of \cref{eq:update-R}.
Define
\begin{align}
  \label{eq:stability-moments}
  \beta_j &:= \Ex\bigl[\eta_j'(Z_j)\bigr], &
  \gamma_j &:= \Ex\bigl[\eta_j'(Z_j)Z_j^2\bigr], &
  \zeta_j &:= -\beta_j\,\sigma_j^2 .
\end{align}
Writing $R = \Id{k}+\varepsilon$, the update \cref{eq:update-R} linearised at
$\varepsilon=0$ decouples into the $k$ scalar recursions
\begin{align}
  \label{eq:lin-diagonal}
  \varepsilon_{jj} &\longmapsto
     \bigl(1+\alpha(\gamma_j-1)\bigr)\,\varepsilon_{jj}
\end{align}
and the $\binom{k}{2}$ two-dimensional recursions
\begin{align}
  \label{eq:lin-offdiagonal}
  \bmat{\varepsilon_{jl}\\ \varepsilon_{lj}}
    &\longmapsto \bigl(\Id{2}+\alpha M_{jl}\bigr)
       \bmat{\varepsilon_{jl}\\ \varepsilon_{lj}},
  & M_{jl} &:= \bmat{\beta_j\sigma_l^2 & -1\\ -1 & \beta_l\sigma_j^2},
  \qquad j\neq l .
\end{align}
Consequently the conditions
\begin{align}
  \label{eq:stability-conditions}
  \gamma_j &< 1 \quad\text{for all } j,
  & \zeta_j &> 0 \quad\text{for all } j,
  & \zeta_j\,\zeta_l &> 1 \quad\text{for all } j\neq l
\end{align}
are sufficient for $R=\Id{k}$ to be locally asymptotically stable for all
sufficiently small step sizes $\alpha>0$; if any one of them is reversed
strictly, then $R=\Id{k}$ is unstable for every $\alpha>0$.
\end{theorem}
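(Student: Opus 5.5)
Write the update map \cref{eq:update-R} as $R\mapsto \Phi_\alpha(R) := R + \alpha F(R)R$ with $F(R) := \Id{k}+\Ex[\eta(RZ)(RZ)\T]$. The plan has four steps: stationarity, linearisation, spectral analysis of the blocks, and the transfer to the nonlinear map.

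\emph{Stationarity.} At $R=\Id{k}$ we have $F(\Id{k})_{jl} = \delta_{jl}+\Ex[\eta_j(Z_j)Z_l]$. The diagonal entries vanish by \cref{eq:scale-normalisation}. For the off-diagonal entries, independence gives $\Ex[\eta_j(Z_j)Z_l]=\Ex[\eta_j(Z_j)]\Ex[Z_l]=0$ because $\Ex[Z_l]=0$. Here $\Ex\abs{\eta_j(Z_j)}<\infty$ follows from the quadratic growth bound $\abs{\eta_j(u)}\le a+b\abs{u}+c u^2$ implied by $\sup\abs{\eta_j''}<\infty$, together with $\Ex\norm{Z}^3<\infty$.

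\emph{Linearisation.} Put $R=\Id{k}+\varepsilon$ and $C=Z+\varepsilon Z$. Since $F(\Id{k})=0$, the derivative of $\Phi_\alpha$ at $\Id{k}$ is $\varepsilon\mapsto\varepsilon+\alpha\,DF(\Id{k})[\varepsilon]$. Taylor expansion gives
$\eta_j(C_j)=\eta_j(Z_j)+\eta_j'(Z_j)(\varepsilon Z)_j+r_j$ with $\abs{r_j}\le\tfrac12\sup\abs{\eta_j''}\,\abs{(\varepsilon Z)_j}^2$. Hence
\begin{align}
F(R)_{jl} = F(\Id{k})_{jl} + \Ex[\eta_j(Z_j)(\varepsilon Z)_l] + \Ex[\eta_j'(Z_j)(\varepsilon Z)_j Z_l] + O(\norm{\varepsilon}^2).
\end{align}
The $O(\norm{\varepsilon}^2)$ bound uses $\Ex\norm{Z}^3<\infty$, since the remainder terms are at most cubic in $Z$. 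The same estimates make $\Phi_\alpha$ differentiable at $\Id{k}$ with a remainder that is $o(\norm{\varepsilon})$ locally uniformly, and this is what the stability theorem for maps requires.

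Next, expand $(\varepsilon Z)_l=\sum_m\varepsilon_{lm}Z_m$ and use independence and centring to evaluate both sums. For $j=l$ this gives $-\varepsilon_{jj}+\gamma_j\varepsilon_{jj}$; the first summand uses \cref{eq:scale-normalisation} once more. For $j\ne l$ it gives $-\varepsilon_{lj}+\beta_j\sigma_l^2\varepsilon_{jl}$. The linear part therefore acts diagonally on each $\varepsilon_{jj}$ with factor $\gamma_j-1$, and on each pair $(\varepsilon_{jl},\varepsilon_{lj})$ through the matrix $M_{jl}$. This is exactly \cref{eq:lin-diagonal,eq:lin-offdiagonal}.

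\emph{Spectrum.} The eigenvalues of the linearised map are $1+\alpha(\gamma_j-1)$ together with $1+\alpha\mu$, where $\mu$ ranges over the eigenvalues of $M_{jl}$. The matrix $M_{jl}$ is real symmetric, so $\mu$ is real. Both eigenvalues are negative if and only if
\begin{align}
\tr M_{jl}<0 \quad\text{and}\quad \det M_{jl}>0.
\end{align}
Since $\det M_{jl}=\zeta_j\zeta_l-1$ and $\tr M_{jl}=-(\zeta_j/\sigma_j^2)\sigma_l^2-(\zeta_l/\sigma_l^2)\sigma_j^2$, these two conditions reduce to $\zeta_j\zeta_l>1$ together with positive signs of $\zeta_j,\zeta_l$. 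Summing over all pairs, the conditions \cref{eq:stability-conditions} make every $\mu$ and every $\gamma_j-1$ strictly negative. For $\alpha$ smaller than $2/\max\abs{\mu}$, all eigenvalues of the linearised map then lie in $(-1,1)$, and linearised stability gives local asymptotic stability.

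\emph{Instability.} If some condition is reversed strictly, some eigenvalue $\mu$ of a block, or some $\gamma_j-1$, is strictly positive. This is immediate for $\gamma_j>1$ and for $\det M_{jl}<0$. If instead $\zeta_j<0$ with the other conditions intact, pair $j$ with any $l$ (possible because $k\ge2$): for $\zeta_l>0$ we get $\det M_{jl}<0$, and otherwise $\tr M_{jl}>0$. Then $1+\alpha\mu>1$ for every $\alpha>0$, and the unstable-manifold theorem for maps (or the simpler Lyapunov instability criterion for a $C^1$ map with an eigenvalue of modulus greater than $1$) gives instability.

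\emph{Main obstacle.} I expect two parts to need the most care. The first is justifying differentiability of $\Phi_\alpha$ with the remainder bound under only the third-moment assumption and bounded $\eta''$. The second is the case bookkeeping in the instability direction: when $\zeta_j<0$ while $\zeta_j\zeta_l>1$, both $\zeta$'s are negative, so the trace is positive, and this case must be included rather than only the determinant case.
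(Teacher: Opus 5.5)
Your proposal is correct and follows the paper's proof essentially step for step: stationarity from the normalisation and independence; the first-order expansion of $\Ex[\eta_j(C_j)C_l]$ giving the decoupled scalar and $2\times2$ blocks; the trace--determinant analysis of the symmetric $M_{jl}$ rewritten via $\zeta_j$; and the same instability case split, including the positive-trace case when $\zeta_j,\zeta_l<0$. Your quadratic-growth bound on $\eta_j$ makes explicit the integrability that the paper leaves to \cref{rem:reading-stability}; the one small slip is that the step-size bound should read $\alpha<2/\max\bigl(\max\abs{\mu},\max_j\abs{\gamma_j-1}\bigr)$, since the scalar multipliers $1+\alpha(\gamma_j-1)$ must also stay above $-1$.
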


\begin{proof}
Stationarity at $R=\Id{k}$: the $(j,l)$ entry of
$\Id{k}+\Ex[\eta(Z)Z\T]$ is $\delta_{jl}+\Ex[\eta_j(Z_j)Z_l]$, which vanishes
for $j=l$ by \cref{eq:scale-normalisation} and for $j\neq l$ because
independence and $\Ex[Z_l]=0$ give
$\Ex[\eta_j(Z_j)Z_l] = \Ex[\eta_j(Z_j)]\Ex[Z_l] = 0$.

Now let $R = \Id{k}+\varepsilon$ and $C = RZ$, so
$C_j = Z_j + \sum_m\varepsilon_{jm}Z_m$.  Write
$G(R) := \Id{k}+\Ex[\eta(C)C\T]$.  Since \cref{eq:update-R} reads
$R\mapsto(\Id{k}+\alpha G(R))R$ and $G(\Id{k})=0$, the induced map on
$\varepsilon$ is
$\varepsilon\mapsto\varepsilon+\alpha G(\Id{k}+\varepsilon)
 +\alpha G(\Id{k}+\varepsilon)\varepsilon
 = \varepsilon+\alpha G_1(\varepsilon)+O(\norm{\varepsilon}^2)$,
where $G_1$ is the derivative of $G$ at $\Id{k}$.
Expanding to first order,
\begin{align}
  \eta_j(C_j) &= \eta_j(Z_j)
     + \eta_j'(Z_j)\sum_m\varepsilon_{jm}Z_m + O\bigl(\norm{\varepsilon}^2\bigr),
\end{align}
so that
\begin{align}
  \Ex\bigl[\eta_j(C_j)C_l\bigr]
    &= \underbrace{\Ex\bigl[\eta_j(Z_j)Z_l\bigr]}_{=-\delta_{jl}}
     + \sum_m\varepsilon_{lm}\Ex\bigl[\eta_j(Z_j)Z_m\bigr]
     + \sum_m\varepsilon_{jm}\Ex\bigl[\eta_j'(Z_j)Z_mZ_l\bigr]
     + O\bigl(\norm{\varepsilon}^2\bigr).
\end{align}
In the second sum $\Ex[\eta_j(Z_j)Z_m]$ equals $-1$ for $m=j$ and $0$
otherwise, contributing $-\varepsilon_{lj}$.
In the third sum, independence and $\Ex[Z_m]=0$ leave only $m=l$ when
$l\neq j$, contributing $\varepsilon_{jl}\Ex[\eta_j'(Z_j)]\Ex[Z_l^2]
= \beta_j\sigma_l^2\varepsilon_{jl}$; and only $m=j$ when $l=j$, contributing
$\gamma_j\varepsilon_{jj}$.
Hence
\begin{align}
  G_{jl}(\Id{k}+\varepsilon) &=
  \begin{cases}
    (\gamma_j-1)\,\varepsilon_{jj}, & l=j,\\[2pt]
    \beta_j\sigma_l^2\,\varepsilon_{jl} - \varepsilon_{lj}, & l\neq j,
  \end{cases}
\end{align}
up to $O(\norm{\varepsilon}^2)$, which is exactly
\cref{eq:lin-diagonal,eq:lin-offdiagonal}.

For the stability criterion, $\Id{k}+\alpha M$ is a contraction for all
sufficiently small $\alpha>0$ as soon as every eigenvalue of $M$ has
strictly negative real part, and it fails to be one, for every $\alpha>0$, as
soon as some eigenvalue has strictly positive real part; eigenvalues on the
imaginary axis are undecided.  For the scalar blocks the relevant condition
is $\gamma_j-1<0$.
Each $M_{jl}$ is real symmetric, so its eigenvalues are real and both are
negative if and only if $\tr M_{jl}<0$ and $\det M_{jl}>0$, that is
\begin{align}
  \beta_j\sigma_l^2 + \beta_l\sigma_j^2 &< 0,
  & \beta_j\beta_l\,\sigma_j^2\sigma_l^2 &> 1 .
\end{align}
In terms of $\zeta_j = -\beta_j\sigma_j^2$ the second condition reads
$\zeta_j\zeta_l>1$, and the first reads
$\zeta_j\sigma_l^2/\sigma_j^2+\zeta_l\sigma_j^2/\sigma_l^2>0$.
If $\zeta_j\zeta_l>1$ then $\zeta_j$ and $\zeta_l$ have the same sign; if both were
negative the first condition would fail, so both are positive, and conversely
$\zeta_j,\zeta_l>0$ implies the first condition.  This is
\cref{eq:stability-conditions}.

It remains to check that each strict reversal produces an eigenvalue with
strictly positive real part.  For $\gamma_j>1$ the diagonal multiplier
$1+\alpha(\gamma_j-1)$ exceeds $1$ for every $\alpha>0$.
For $\zeta_j\zeta_l<1$ we get $\det M_{jl} = \zeta_j\zeta_l-1<0$, so
$M_{jl}$ has one positive and one negative eigenvalue.
For $\zeta_j<0$, pick any $l\neq j$ (possible since $k\ge2$): if
$\zeta_j\zeta_l<1$ we are in the previous case; otherwise $\zeta_j\zeta_l\ge1$
forces $\zeta_l<0$, and then
$\tr M_{jl} = -\zeta_j\sigma_l^2/\sigma_j^2-\zeta_l\sigma_j^2/\sigma_l^2>0$,
so at least one eigenvalue of $M_{jl}$ is positive.
Finally, if $\Id{k}+\alpha M$ has an eigenvalue $1+\alpha\lambda$ with
$\Real\lambda>0$, then
$\abs{1+\alpha\lambda}^2 = 1+2\alpha\Real\lambda+\alpha^2\abs{\lambda}^2>1$
for every $\alpha>0$, so the linearised map is expanding in the corresponding
direction.  Since the remainder in \cref{eq:lin-diagonal,eq:lin-offdiagonal}
is $O(\norm{\varepsilon}^2)$ and the linear part is symmetric, the
unstable-manifold theorem for maps then makes $R=\Id{k}$ unstable for the
nonlinear iteration as well.
\end{proof}

\begin{remark}[Reading \cref{thm:stability}]
\label{rem:reading-stability}
Four comments on the statement.
\begin{enumerate}[label=(\roman*)]
  \item \emph{What the regularity condition is for.}  The perturbation
        $\Delta_j := \sum_m\varepsilon_{jm}Z_m$ that appears in the proof is
        \emph{not} bounded almost surely -- only
        $\abs{\Delta_j}\le\norm{\varepsilon}\norm{Z}$ -- so the mean value
        theorem has to be applied at an intermediate point that can be far
        from $Z_j$.  A bound on $\eta_j''$ that is uniform on all of $\R$
        removes the difficulty, and $\Ex\norm{Z}^3<\infty$ then makes the
        Taylor remainder $O(\norm{\varepsilon}^2)$.
  \item \emph{What it covers.}  \Cref{eq:stability-regularity} holds for
        $\eta_j=-\tanh$ and $\eta_j=\tanh-\mathrm{id}$, but not for the cubic
        $\eta_j(u)=-u^3$, whose second derivative $\eta_j''(u)=-6u$ is
        unbounded.  For the cubic one uses instead the polynomial variant of
        the same estimate, valid whenever $\Ex\norm{Z}^4<\infty$: there
        $\Ex[\eta_j(C_j)C_l]$ is an explicit polynomial in $\varepsilon$ of
        degree at most four whose coefficients are moments of $Z$ of order at
        most four.
  \item \emph{The borderline.}  The only cases left undecided by the
        linearisation are $\gamma_j=1$ and $\zeta_j\zeta_l=1$ with
        $\zeta_j,\zeta_l>0$.  The apparent third borderline case
        $\zeta_j=0$ is decided: it gives $\zeta_j\zeta_l=0<1$, hence
        instability.
  \item \emph{The case $k=1$.}  There is then nothing to separate, and only
        the condition $\gamma_1<1$ survives.
\end{enumerate}
\end{remark}

\Cref{thm:stability} is the stability analysis of
\citet{amari1997stability}, recast in the notation of these notes; see also
\citet[Section~VI]{cardoso1998statistical}.

The quantity $\zeta_j$ therefore decides everything.  The next three results
compute it in the cases of interest.  The first explains, in one line, why
Gaussian sources are the exact borderline -- for \emph{every} choice of model
score, correctly specified or not.

\begin{proposition}[Gaussian sources lie exactly on the stability boundary]
\label{prop:gaussian-boundary}
In the setting of \cref{thm:stability}, suppose $Z_j\sim\Normal(0,\sigma_j^2)$
with $\sigma_j>0$, and suppose in addition that $\eta_j(z)\,p(z)\to0$ as
$\abs{z}\to\infty$, where $p$ denotes the $\Normal(0,\sigma_j^2)$ density
(see \cref{rem:score-hypotheses}).
Then $\zeta_j = 1$, whatever the model score $\eta_j$ -- correctly specified
or not.
Consequently, if two sources are Gaussian, then $\zeta_j\zeta_l=1$ and
\cref{eq:stability-conditions} fails.
\end{proposition}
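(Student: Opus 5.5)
The plan is to compute $\beta_j = \Ex[\eta_j'(Z_j)]$ by Gaussian integration by parts (Stein's identity) and to use the normalisation \cref{eq:scale-normalisation} to evaluate the resulting expectation. Write $p$ for the $\Normal(0,\sigma_j^2)$ density. It satisfies $p'(z) = -(z/\sigma_j^2)\,p(z)$.

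\emph{Step 1.} Integrate by parts on $[-M,M]$:
\begin{align}
  \int_{-M}^{M}\eta_j'(z)\,p(z)\,dz
    &= \bigl[\eta_j(z)p(z)\bigr]_{-M}^{M}
       + \frac{1}{\sigma_j^2}\int_{-M}^{M}\eta_j(z)\,z\,p(z)\,dz .
\end{align}
By the decay hypothesis $\eta_j(z)p(z)\to0$, the boundary term vanishes as $M\to\infty$.

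\emph{Step 2.} Justify passing to the limit in both integrals. The right integral converges absolutely, because $\Ex[\eta_j(Z_j)Z_j]$ is finite: it equals $-1$ by \cref{eq:scale-normalisation}, which presupposes integrability. The left side is then the limit of $\int_{-M}^M\eta_j'p$. To identify this limit with $\beta_j=\Ex[\eta_j'(Z_j)]$, I would note that $\eta_j''$ is bounded by \cref{eq:stability-regularity}. Hence $\abs{\eta_j'(z)}\le a+b\abs{z}$, which is integrable against $p$, so dominated convergence applies.

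This integrability bookkeeping is the only delicate step. The decay hypothesis is exactly what removes the boundary term, and the integrability of $\eta_j'$ is supplied by the standing regularity assumption.

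\emph{Step 3.} Combining the two steps gives
\begin{align}
  \beta_j = \frac{1}{\sigma_j^2}\,\Ex\bigl[\eta_j(Z_j)Z_j\bigr] = -\frac{1}{\sigma_j^2},
\end{align}
and therefore $\zeta_j = -\beta_j\sigma_j^2 = 1$. No property of $\eta_j$ beyond the stated hypotheses has been used, so the conclusion holds whether or not $\eta_j$ is the true score.

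\emph{Consequence.} If $Z_j$ and $Z_l$ are both Gaussian, applying the above to each index gives $\zeta_j\zeta_l = 1$. This violates the strict inequality $\zeta_j\zeta_l>1$ in \cref{eq:stability-conditions}. Equivalently, $\det M_{jl}=0$ in \cref{eq:lin-offdiagonal}, so the linearisation has a neutral direction.
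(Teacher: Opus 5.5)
Your proof is correct and follows the same route as the paper. Both integrate by parts against the Gaussian density to get Stein's identity $\Ex[\eta_j(Z_j)Z_j]=\sigma_j^2\beta_j$, with the boundary term removed by the decay hypothesis, and then use the normalisation \cref{eq:scale-normalisation} to conclude $\beta_j\sigma_j^2=-1$. Your Step~2 only makes explicit the limit passages that the paper leaves implicit; using the bound on $\eta_j''$ from \cref{eq:stability-regularity} is legitimate in the stated setting, although finiteness of $\beta_j$ alone would suffice.
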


\begin{proof}
Let $p$ be the $\Normal(0,\sigma_j^2)$ density, so that
$z\,p(z) = -\sigma_j^2p'(z)$.  Integration by parts, whose boundary term
$[\eta_j p]_{-\infty}^{\infty}$ vanishes by hypothesis, gives Stein's
identity
\begin{align}
  \Ex\bigl[\eta_j(Z_j)Z_j\bigr] = \int\eta_j(z)\,z\,p(z)\,dz
    = -\sigma_j^2\int\eta_j(z)p'(z)\,dz
    = \sigma_j^2\int\eta_j'(z)p(z)\,dz
    = \sigma_j^2\,\beta_j .
\end{align}
The normalisation \cref{eq:scale-normalisation} says that the left hand side
is $-1$, so $\zeta_j = -\beta_j\sigma_j^2 = 1$.
\end{proof}

\begin{corollary}[The true score: stability matches identifiability exactly]
\label{cor:true-score}
In the setting of \cref{thm:stability}, but requiring of each $\eta_j$ only
one continuous derivative and assuming neither \cref{eq:stability-regularity}
nor \cref{eq:scale-normalisation}, suppose the model is correctly specified,
that is
\begin{align}
  \label{eq:correctly-specified}
  \eta_j &= \tsc_j := \bigl(\log p_{Z_j}\bigr)', \qquad j=1,\dots,k,
\end{align}
where $p_{Z_j}$ is the density of $Z_j$.
Assume that each $p_{Z_j}$ is strictly positive and twice differentiable with
continuous second derivative, that
$\bigl(1+z^2\bigr)p_{Z_j}''\in L^1(\R)$, that the expectations in
\cref{eq:stability-moments,eq:scale-normalisation} converge absolutely, and
that the boundary conditions
\begin{align}
  \label{eq:score-boundary}
  z\,p_{Z_j}(z) &\longrightarrow 0,
  & z^2\,p_{Z_j}'(z) &\longrightarrow 0,
  & \abs{z}&\to\infty
\end{align}
hold.  Then \cref{eq:scale-normalisation} holds automatically,
\begin{align}
  \zeta_j &= \Fisher(Z_j)\,\sigma_j^2 \ \ge\ 1,
  & \Fisher(Z_j) &:= \Ex\bigl[\tsc_j(Z_j)^2\bigr],
\end{align}
with equality if and only if $Z_j$ is Gaussian, and $\gamma_j<1$.
Hence the inequalities \cref{eq:stability-conditions} hold if and only if at
most one of the sources is Gaussian.
\end{corollary}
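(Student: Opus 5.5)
The plan is to reduce each claim to a one-dimensional integration-by-parts identity for $p := p_{Z_j}$, and then to get $\zeta_j \ge 1$ from the Cauchy--Schwarz inequality, with equality forcing a linear score and hence a Gaussian. Write $\tsc_j = p'/p$.

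\emph{Step 1: the normalisation.} Integration by parts gives
\begin{align}
  \Ex[\tsc_j(Z_j)Z_j] = \int z\,p'(z)\,dz = \bigl[z\,p\bigr]_{-\infty}^{\infty} - \int p = -1,
\end{align}
where the boundary term vanishes by \cref{eq:score-boundary}. This is \cref{eq:scale-normalisation}.

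\emph{Step 2: $\beta_j = -\Fisher(Z_j)$.} Since $\tsc_j' = p''/p - (p'/p)^2$, we have
\begin{align}
  \beta_j = \int p''\,dz - \Fisher(Z_j).
\end{align}
The integral $\int p''$ exists because $p'' \in L^1$. It equals $\lim p'(z)$ at $+\infty$ minus the limit at $-\infty$. These limits exist because $p''\in L^1$, and they must be $0$ because $z^2 p'(z) \to 0$. Hence $\zeta_j = \Fisher(Z_j)\sigma_j^2$.

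\emph{Step 3: $\zeta_j \ge 1$ and the equality case.} Combine Step 1 with the centring $\Ex[Z_j]=0$ and Cauchy--Schwarz:
\begin{align}
  1 = \bigl(\Ex[\tsc_j(Z_j)Z_j]\bigr)^2 \le \Fisher(Z_j)\,\sigma_j^2 .
\end{align}
Equality holds iff $\tsc_j(Z_j) = cZ_j$ almost surely. Because $p>0$ everywhere and $\tsc_j$ is continuous, this means $p'/p = cz$ on all of $\R$. Integrating gives $p \propto e^{cz^2/2}$, so integrability forces $c<0$ and $Z_j$ is Gaussian. Conversely, a Gaussian has $\Fisher = 1/\sigma^2$.

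\emph{Step 4: $\gamma_j<1$.} This is the step I expect to be the main obstacle. We need
\begin{align}
  \gamma_j = \int z^2\Bigl(p'' - \frac{p'^2}{p}\Bigr)dz .
\end{align}
For the first term, integrate by parts twice:
\begin{align}
  \int z^2 p'' = \bigl[z^2p'\bigr] - 2\int z\,p' = 0 - 2(-1) = 2 .
\end{align}
The boundary term vanishes by \cref{eq:score-boundary}, and $\int z p' = -1$ comes from Step 1. The hypothesis $(1+z^2)p''\in L^1$ guarantees this integral exists. So
\begin{align}
  \gamma_j = 2 - \Ex\bigl[Z_j^2\,\tsc_j(Z_j)^2\bigr].
\end{align}
Now apply Cauchy--Schwarz with the weight $Z_j^2$, pairing $Z_j\,\tsc_j(Z_j)$ with $1$:
\begin{align}
  \Ex\bigl[Z_j^2\tsc_j(Z_j)^2\bigr] \ge \bigl(\Ex[Z_j\tsc_j(Z_j)]\bigr)^2 = 1 .
\end{align}
Equality would need $Z_j\tsc_j(Z_j) \equiv -1$ almost surely. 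Then $p'/p = -1/z$, so $p \propto 1/|z|$, which is not integrable; this is excluded. Hence the inequality is strict and $\gamma_j < 1$. I would double-check the boundary terms here, especially whether absolute convergence of $\gamma_j$ together with $p''$ integrability suffices to split the integrand into its two pieces.

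\emph{Conclusion.} By Steps 3 and 4, $\gamma_j<1$ always and every $\zeta_j \ge 1 > 0$. So the conditions \cref{eq:stability-conditions} reduce to $\zeta_j\zeta_l>1$ for all $j\neq l$. This product is at least $1$, with equality iff both $\zeta_j = \zeta_l = 1$, i.e.\ iff both $Z_j$ and $Z_l$ are Gaussian. Therefore \cref{eq:stability-conditions} holds iff no two sources are Gaussian, i.e.\ iff at most one source is Gaussian.
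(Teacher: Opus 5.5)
Your proposal is correct and follows the paper's proof essentially step for step. Both use integration by parts for the normalisation and to get $\int p''=0$ and $\int z^2p''=2$. Both then apply Cauchy--Schwarz against $Z_j$ to get $\zeta_j\ge1$, with a linear score as the equality case. Finally, both apply Cauchy--Schwarz against $1$ to get $\gamma_j<1$, with the non-integrable $\abs{z}^{-1}$ density as the excluded equality case.

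The splitting you wanted to double-check is fine. Absolute convergence of $\beta_j$ and $\gamma_j$, together with $(1+z^2)p''\in L^1$, forces $(p')^2/p$ and $z^2(p')^2/p$ to be integrable. So the two pieces of each integrand may be integrated separately.
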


\begin{proof}
Write $p := p_{Z_j}$ and $\eta := \eta_j = \tsc_j$, so that $\eta p = p'$
and $\eta' p = p'' - (p')^2/p$; it is \cref{eq:correctly-specified} that
makes the first of these identities available, and it is what turns each of
the expectations below into an integral of $p$ and its derivatives.
The second boundary condition in \cref{eq:score-boundary} forces
$p'(z)\to0$, and $\int p'' = \lim_{R\to\infty}\bigl(p'(R)-p'(-R)\bigr) = 0$;
the integral exists because $p''\in L^1$.
Normalisation: integration by parts, with vanishing boundary term by the
first condition in \cref{eq:score-boundary}, gives
$\Ex[\eta(Z_j)Z_j] = \int zp'(z)\,dz = -\int p = -1$.
For $\beta_j$,
\begin{align}
  \beta_j = \int\Bigl(\frac{p''}{p}-\frac{(p')^2}{p^2}\Bigr)p
    = \int p'' - \Ex\bigl[\eta(Z_j)^2\bigr] = -\Fisher(Z_j).
\end{align}
The inequality is Cauchy--Schwarz applied to the normalisation:
\begin{align}
  1 = \bigl(\Ex[\eta(Z_j)Z_j]\bigr)^2
    \le \Ex\bigl[\eta(Z_j)^2\bigr]\,\Ex\bigl[Z_j^2\bigr]
    = \Fisher(Z_j)\sigma_j^2 = \zeta_j ,
\end{align}
with equality if and only if $\eta(Z_j)$ and $Z_j$ are almost surely
proportional, i.e.\ $(\log p)'(z) = -z/\sigma_j^2$, i.e.\ $p$ is the
$\Normal(0,\sigma_j^2)$ density.
For $\gamma_j$, integrating by parts twice -- the first boundary term
vanishing by the second condition in \cref{eq:score-boundary} and the second
by the first condition -- we get $\int z^2p'' = -2\int zp' = 2$, so
\begin{align}
  \gamma_j = \int z^2\Bigl(\frac{p''}{p}-\eta^2\Bigr)p
    = 2 - \Ex\bigl[\eta(Z_j)^2Z_j^2\bigr]
    \le 2 - \bigl(\Ex[\eta(Z_j)Z_j]\bigr)^2 = 1,
\end{align}
again by Cauchy--Schwarz, with equality only if $\eta(Z_j)Z_j$ is almost
surely constant, i.e.\ $p(z)\propto\abs{z}^{-1}$, which is not integrable; so
$\gamma_j<1$ strictly.
Finally, $\zeta_j\ge1$ for all $j$ with equality exactly at the Gaussian
sources, so $\zeta_j\zeta_l>1$ for all $j\neq l$ if and only if at most one
$\zeta_j$ equals $1$.
\end{proof}

\begin{remark}[On the hypotheses of
\cref{prop:gaussian-boundary,cor:true-score}]
\label{rem:score-hypotheses}
The decay condition $\eta_j(z)p(z)\to0$ of \cref{prop:gaussian-boundary} is
what makes the boundary term in Stein's identity vanish; it holds whenever
$\eta_j$ has at most polynomial growth, and in particular for all three
nonlinearities of \cref{lem:fixed-point-scale}.
In \cref{cor:true-score} the second derivative of $p_{Z_j}$ is needed because
$\tsc_j{}' = p''/p-(p'/p)^2$, so \cref{thm:stability} presupposes it anyway,
and its continuity is what makes $\eta_j$ continuously differentiable;
\cref{thm:stability} asks for one derivative more.
Note also what \cref{cor:true-score} does and does not deliver: it gives the
\emph{inequalities} \cref{eq:stability-conditions}, and these coincide with
the hypothesis of \cref{cor:complete-ident}; but to turn them into local
stability one still needs the regularity \cref{eq:stability-regularity},
which a true score need not satisfy.
\end{remark}

\Cref{cor:true-score} is the conceptual answer to ``why does gradient descent
find the right answer''.  With correctly specified scores, $\eta_j = \tsc_j$,
the inequalities \cref{eq:stability-conditions} hold exactly when the
identifiability theory declares the model identifiable; subject to the
regularity caveat of \cref{rem:score-hypotheses}, a separating solution is
then a locally stable equilibrium of the mean dynamics.
The converse is false, and it is worth being explicit about it: \emph{stable
non-separating equilibria can exist even under a perfectly specified model}.
For $k=2$ with $Z_1,Z_2$ i.i.d.\ from the smooth, strictly positive,
non-Gaussian density $0.4\,\Normal(-1.5,0.25^2)+0.2\,\Normal(0,0.25^2)
+0.4\,\Normal(1.5,0.25^2)$ and $\eta_j = \tsc_j$ its exact score, the
matrix
$R^\star = 0.80993\cdot\tfrac{1}{\sqrt2}\bmat{1&1\\1&-1}$ is a stationary
point of \cref{eq:update-R} whose linearisation has all four eigenvalues
strictly negative, so it is locally attracting -- and it is not a generalised
permutation matrix, so $C=R^\star Z$ has dependent components.
\Cref{thm:stability} says which \emph{separating} solutions attract; it does
not say that nothing else does.
The remaining question is what happens when $\eta_j\neq\tsc_j$, which in
practice is always.

\begin{corollary}[The cubic nonlinearity: kurtosis is exactly the criterion]
\label{cor:cubic}
Let $\eta_j(u) := -u^3$ and let $Z_j$ have $\Ex[Z_j]=0$, $\Ex[Z_j^4]<\infty$
and be normalised as in \cref{lem:fixed-point-scale}, so that
$\Ex[Z_j^4]=1$.  Then $\gamma_j = -3$ and
\begin{align}
  \zeta_j &= 3\sigma_j^4 = \frac{3}{3+\kurt(Z_j)} .
\end{align}
Hence $\zeta_j>1$ if and only if $\kurt(Z_j)<0$, i.e.\ if and only if $Z_j$ is
sub-Gaussian in the sense of \cref{def:kurtosis}.
\end{corollary}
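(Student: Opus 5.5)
The plan is a direct computation from the definitions in \cref{eq:stability-moments}, using the normalisation \cref{eq:scale-normalisation} to fix the fourth moment.

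\emph{Step 1: the normalisation.} With $\eta_j(u)=-u^3$, condition \cref{eq:scale-normalisation} reads $\Ex[-Z_j^3 Z_j]=-1$, that is $\Ex[Z_j^4]=1$. By \cref{lem:fixed-point-scale} this scale exists and is unique up to sign, provided $Z_j$ is not almost surely $0$ and $\Ex[Z_j^4]<\infty$.

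\emph{Step 2: the moments.} Since $\eta_j'(u)=-3u^2$, we get
\begin{align}
  \gamma_j &= \Ex\bigl[-3Z_j^2\cdot Z_j^2\bigr] = -3\,\Ex[Z_j^4] = -3, &
  \beta_j &= -3\,\Ex[Z_j^2] = -3\sigma_j^2 .
\end{align}
Hence $\zeta_j=-\beta_j\sigma_j^2=3\sigma_j^4$.

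\emph{Step 3: rewrite in terms of kurtosis.} Because $\Ex[Z_j]=0$, \cref{def:kurtosis} gives
\begin{align}
  \kurt(Z_j) = \frac{\Ex[Z_j^4]}{\sigma_j^4}-3 = \frac{1}{\sigma_j^4}-3 .
\end{align}
Solving, $\sigma_j^4 = 1/(3+\kurt(Z_j))$, so $\zeta_j = 3/(3+\kurt(Z_j))$. The denominator equals $1/\sigma_j^4>0$, and this is consistent with $\kurt\ge-2$ from \cref{rem:kurtosis-tails}.

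\emph{Step 4: the criterion.} Since $3+\kurt(Z_j)>0$, the inequality $\zeta_j>1$ is equivalent to $3>3+\kurt(Z_j)$, i.e.\ to $\kurt(Z_j)<0$.

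There is no real obstacle. The only point needing care is that $\sigma_j>0$, so that kurtosis is defined and the division is legitimate. This follows because $\Ex[Z_j^4]=1$ rules out $Z_j=0$ almost surely.
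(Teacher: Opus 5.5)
Your proposal is correct and follows the paper's own proof essentially line by line: compute $\beta_j=-3\sigma_j^2$ and $\gamma_j=-3\,\Ex[Z_j^4]=-3$ from $\eta_j'(u)=-3u^2$, then use $\kurt(Z_j)=\sigma_j^{-4}-3$ under the normalisation $\Ex[Z_j^4]=1$. Your remarks that $\sigma_j>0$ and $3+\kurt(Z_j)=\sigma_j^{-4}>0$ make explicit the positivity that the paper uses tacitly when it divides and compares with $1$.
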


\begin{proof}
$\eta_j'(u) = -3u^2$, so $\beta_j = -3\sigma_j^2$ and
$\zeta_j = 3\sigma_j^4$, while $\gamma_j = -3\Ex[Z_j^4] = -3$.
By \cref{def:kurtosis}, $\kurt(Z_j) = \Ex[Z_j^4]/\sigma_j^4-3
= \sigma_j^{-4}-3$, so $\sigma_j^4 = (3+\kurt(Z_j))^{-1}$ and
$\zeta_j = 3/(3+\kurt(Z_j))$, which exceeds $1$ exactly when
$\kurt(Z_j)<0$.
\end{proof}

\begin{remark}[The hyperbolic tangent, and what the kurtosis rule really is]
\label{rem:tanh}
For the two $\tanh$-based model scores the criterion $\zeta_j>1$ becomes an
explicit inequality.  With the normalisation \cref{eq:scale-normalisation}:
\begin{itemize}
  \item $\eta_j(u) = -\tanh(u)$, so that
        $\Ex[\tanh(Z_j)Z_j] = 1$ and $\beta_j = -\Ex[\sech^2(Z_j)]$; the
        condition $\zeta_j>1$ reads
        \begin{align}
          \label{eq:tanh-condition}
          \Ex\bigl[Z_j^2\bigr]\,\Ex\bigl[\sech^2(Z_j)\bigr]
            &> \Ex\bigl[\tanh(Z_j)\,Z_j\bigr] .
        \end{align}
  \item $\eta_j(u) = \tanh(u)-u$, so that
        $\beta_j = -\Ex\bigl[\tanh^2(Z_j)\bigr]$; here the condition
        $\zeta_j>1$ reads
        \begin{align}
          \Ex\bigl[Z_j^2\bigr]\,\Ex\bigl[\tanh^2(Z_j)\bigr] &> 1 .
        \end{align}
\end{itemize}
\Cref{eq:tanh-condition} is a \emph{Stein discrepancy}: by the identity in the
proof of \cref{prop:gaussian-boundary}, its two sides are equal for every
Gaussian $Z_j$ and for every test function, so the inequality measures
departure from Gaussianity in the direction picked out by $\tanh$.
It is exactly the switching statistic of \citet[eq.~(2.28)]{lee1999extended}.

It is tempting -- and it is what the ICA literature usually says -- to
summarise this as ``use $-\tanh$ for super-Gaussian sources and $\tanh-\mathrm{id}$
for sub-Gaussian ones''.  That rule is a \emph{heuristic}, not a theorem: the
sign of $\kurt(Z_j)$ is exactly the criterion only for the cubic nonlinearity
(\cref{cor:cubic}), and for $\tanh$ the correct criterion is
\cref{eq:tanh-condition}, which must be checked.  It does hold for the
standard source models; \cref{tab:stability} lists the values.
In general it fails in both directions, and the failures are not exotic.
For the symmetric three-point family $\Prb[Y=\pm a]=p/2$, $\Prb[Y=0]=1-p$,
normalised as in \cref{lem:fixed-point-scale}, write $u := ca$ for the
rescaled atom; the normalisation reads $p\,u\tanh u = 1$ and one computes
\begin{align}
  \label{eq:three-point-zeta}
  \zeta &= \frac{u}{\tanh u} - 1,
  & \kurt &= u\tanh u - 3 .
\end{align}
So $\zeta = 1$ happens at $u = 2\tanh u$, i.e.\ at
$\kurt \approx -1.1664$, and every member of the family with
$-1.1664<\kurt<0$ is \emph{sub}-Gaussian yet has $\zeta>1$, so that $-\tanh$
separates it.
In the other direction, mass placed far out in units of $\sigma$ inflates
$\kurt$ while saturating $\tanh$: the four-atom law with $\Prb[Y=\pm M]=q/2$
and $\Prb[Y=\pm1]=(1-q)/2$ has $\kurt\approx451$ but $\zeta\approx0.46$ for
$q=5\cdot10^{-6}$, $M=100$, so two such \emph{super}-Gaussian sources make the
separating solution unstable for $-\tanh$.
Note finally that the sign of the discrepancy in \cref{eq:tanh-condition} is
scale dependent -- it is proportional to $\kurt$ only in the small-scale limit
-- so the criterion is meaningful only at the fixed-point scale of
\cref{lem:fixed-point-scale}; cf.\ \cref{rem:zeta-well-defined}.
\end{remark}

\begin{table}[htbp]
  \centering
  \footnotesize
  \setlength{\tabcolsep}{7pt}
  \renewcommand{\arraystretch}{1.25}
  \begin{tabular}{@{}lcccc@{}}
    \toprule
    Law of $Z_j$ & $\kurt(Z_j)$
      & $\zeta_j$ for $\eta=-\tanh$
      & $\zeta_j$ for $\eta=\tanh-\mathrm{id}$
      & $\zeta_j$ for $\eta(u)=-u^3$\\
    \midrule
    Gaussian            & $0$      & $1.000$ & $1.000$ & $1.000$\\
    Laplace             & $+3$     & $1.414$ & $0.726$ & $0.500$\\
    Student $t_5$       & $+6$     & $1.254$ & $0.808$ & $0.333$\\
    Student $t_8$       & $+1.5$   & $1.126$ & $0.895$ & $0.667$\\
    Logistic            & $+1.2$   & $1.134$ & $0.889$ & $0.714$\\
    Uniform             & $-1.2$   & $0.762$ & $1.273$ & $1.667$\\
    \bottomrule
  \end{tabular}
  \caption{The stability quantity $\zeta_j$ of \cref{eq:stability-moments} for
    the three standard nonlinearities.  Each law is taken in its standard
    normalisation -- $\Normal(0,1)$; Laplace and logistic with unit scale
    parameter; $t_\nu$ standard; uniform on $[-1,1]$ -- and is then rescaled
    by the unique factor $c>0$ of \cref{lem:fixed-point-scale}, which is what
    makes $\zeta_j$ well defined (\cref{rem:zeta-well-defined}); the
    tabulated values do not depend on the starting normalisation.
    Stability of a separating solution requires $\zeta_j\zeta_l>1$ for all
    $j\neq l$, for which $\zeta_j>1$ for all $j$ is sufficient; the remaining
    condition $\gamma_j<1$ is automatic here, since $\eta_j'\le0$ for all
    three nonlinearities and hence $\gamma_j\le0$.  Every Gaussian entry
    equals $1$ exactly, as \cref{prop:gaussian-boundary} predicts, and the
    cubic column is $3/(3+\kurt)$ exactly, as \cref{cor:cubic} predicts.
    The values were obtained by numerical quadrature.}
  \label{tab:stability}
\end{table}

\begin{remark}[What is approximate, and why it does not spoil convergence]
\label{rem:what-is-approximate}
It is worth separating three things that are easy to confuse.
\begin{enumerate}[label=(\roman*)]
  \item The step \cref{eq:update-final} is \emph{exact}: it is the gradient of
        the exact objective for the metric
        (\cref{prop:rel-gradient}).  Nothing is approximated there, and in
        particular no Hessian is being approximated
        (\cref{rem:not-a-hessian}).
  \item The model scores $\eta_j$ \emph{are} an approximation to the true
        scores $\tsc_j$, since the true source densities are unknown.  Their
        entire effect on whether the
        algorithm converges to a separating solution is captured by
        \cref{thm:stability}: a model score with $\eta_j\neq\tsc_j$ is
        harmless as long as
        $\gamma_j<1$, $\zeta_j>0$ and $\zeta_j\zeta_l>1$ -- and for the three
        nonlinearities of \cref{lem:fixed-point-scale} the first two are
        automatic, since $\eta_j'<0$ off a Lebesgue null set and $Z_j$ is
        non-degenerate, so $\gamma_j<0$ and $\zeta_j>0$.  This is why one only
        needs to know each source \emph{qualitatively}: in practice only the
        sign of $\zeta_j-1$, rather than the density.  For the source families
        of \cref{tab:stability} that sign is determined by the sign of the
        excess kurtosis, for each of the three nonlinearities separately, but
        the correspondence does \emph{not} hold in general -- see the warning
        and the counterexamples in \cref{rem:tanh}, and note that $\zeta_j$ is
        a statement about neither kurtosis nor tails.
  \item What $\eta_j\neq\tsc_j$ does cost is \emph{statistical efficiency}.
        The maximum likelihood estimator with the true scores attains the
        Cram\'er--Rao bound asymptotically; with misspecified scores the
        separating
        solutions remain stationary points, at the scale fixed by
        \cref{lem:fixed-point-scale}, and remain locally attracting whenever
        \cref{thm:stability} applies, but the asymptotic variance of the
        resulting estimator is larger.  This is the semiparametric picture of
        \citet{amari1997semiparametric}; see also \citet{cardoso1998statistical}.
\end{enumerate}
Two caveats should be stated plainly.  First, \cref{thm:stability} is a
\emph{local} statement.  The objective \cref{eq:ml-objective} is not concave in
$W$, and \cref{eq:update-R} has stationary points that are not separating,
some of which can themselves be attracting.  The theorem says when a
separating solution attracts, not that the algorithm finds one from an
arbitrary start.
Second, the online form \cref{alg:online-ica} below replaces the expectation in
\cref{eq:update-final} by a single-sample estimate.  It is thus a
Robbins--Monro stochastic approximation scheme \citep{robbins1951stochastic}
for the mean dynamics \cref{eq:update-R} -- provided the samples are i.i.d.\
across $t$, or at least mixing enough that the single-sample estimate is a
martingale difference plus a vanishing bias; with step sizes satisfying
$\sum_n\alpha_n=\infty$ and $\sum_n\alpha_n^2<\infty$, and under the usual
regularity and boundedness conditions, its trajectories track the associated
ordinary differential equation
\begin{align}
  \dot R &= \Bigl(\Id{k}+\Ex\bigl[\eta(RZ)(RZ)\T\bigr]\Bigr)R
\end{align}
and converge almost surely to one of its locally stable equilibria -- which
\cref{thm:stability} identifies.  A constant step size $\alpha$, as used in
practice, gives convergence only to a neighbourhood whose size is
$O(\alpha)$.
\end{remark}

\subsection{The algorithm}
\label{ssec:algorithm}

\begin{algo}[Equivariant online ICA for the complete noiseless model]
\label{alg:online-ica}
\emph{Input:} data $x(1),\dots,x(T)$ in $\R^k$, centred; a step size
$\alpha>0$; model scores $\eta_1,\dots,\eta_k$ chosen according to
\cref{thm:stability}, in practice as in \cref{rem:reading-algorithm};
an invertible initial $W\in\R^{k\times k}$, e.g.\ $W = \Id{k}$.

\emph{Repeat} over the data, until convergence: for each sample $x(t)$,
\begin{enumerate}[label=(\arabic*)]
  \item $c := W\,x(t)$ \hfill (current estimate of the sources)
  \item $y := \bigl[\eta_1(c_1),\dots,\eta_k(c_k)\bigr]\T$
        \hfill (nonlinearity applied componentwise)
  \item $W \leftarrow W + \alpha\bigl(W + y\,c\T W\bigr)
         = W + \alpha\bigl(\Id{k}+y\,c\T\bigr)W$ .
\end{enumerate}

\emph{Output:} the unmixing matrix $W$, and the reconstructed sources
obtained by running step~(1) once more with the final $W$, that is
$c(t) = W\,x(t)$; they are determined up to permutation and scale.
\end{algo}

\begin{remark}[Reading the algorithm]
\label{rem:reading-algorithm}
In practice one takes $\eta_j = -\tanh$ for super-Gaussian and
$\eta_j = \tanh-\mathrm{id}$ for sub-Gaussian sources, with the caveats of
\cref{rem:tanh}.
Step (3) is \cref{eq:update-final} with the expectation replaced by the
current sample, since $\Ex[\eta(C)C\T]$ is estimated by $y\,c\T$.
Written out as $W + \alpha(W + \eta(Wx)\,x\T W\T W)$ it is exactly the update
of \citet{amari1996newlearning}, \citet{cardoso1996equivariant} and
\citet{mackay1996ica} -- see also \citet[Chapter~34]{mackay2003itila} for
the same derivation in an archival source; the infomax algorithm of \citet{bell1995infomax} is the
same update in its plain-gradient form, and the two objectives coincide, as
observed by \citet{cardoso1997infomax}.
Each step costs $O(k^2)$ operations and no matrix inversion or decomposition:
$y\,c\T W$ is computed as $y\,(c\T W)$, a vector--matrix product followed by
an outer product, each $O(k^2)$.
By \cref{prop:equivariance} the whole dynamics is a dynamics on the global
system matrix $R = WA$, so the mixing matrix enters only through
$R_0 = W_0A$: once started, the algorithm behaves identically for every $A$
giving the same $R_0$, however badly conditioned $A$ itself is.  This is what
distinguishes the update from plain gradient ascent on $\mathcal{L}$, whose
trajectory depends on $A$ throughout.
\end{remark}

\subsection{LiNGAM: a causal order removes the permutation}
\label{ssec:lingam}

\Cref{cor:complete-ident} leaves the sources determined only up to
permutation, sign and scale.  In causal discovery one adds structural
assumptions that remove exactly those remaining ambiguities.  The basic such
model is the linear non-Gaussian acyclic model, LiNGAM, of
\citet{shimizu2006}.
We write $\Theta$ for the matrix of structural coefficients, which
\citet{shimizu2006} call $B$; the letter $B$ is already taken in these notes,
and $R$ is the global system matrix of \cref{prop:equivariance}.

\begin{assumption}[LiNGAM]
\label{asm:lingam}
The observed random vector $X\in\R^k$ satisfies the structural equations
\begin{align}
  \label{eq:lingam-model}
  X &= \Theta\,X + Z,
\end{align}
where $\Theta\in\R^{k\times k}$ and the disturbance vector $Z\in\R^{k}$
satisfy:
\begin{enumerate}[label=(\roman*)]
  \item \emph{Acyclicity.}  There is a bijection
        $\pi\colon\{1,\dots,k\}\to\{1,\dots,k\}$, the \emph{causal order},
        such that $\Theta_{ab} = 0$ whenever $\pi(a) \le \pi(b)$.  Equivalently,
        $\Theta$ becomes strictly lower triangular after simultaneously permuting
        rows and columns by $\pi$; the associated directed graph, with an
        edge $b\to a$ whenever $\Theta_{ab}\neq0$, is acyclic.
  \item \emph{Independent disturbances, causal sufficiency.}  The components
        $Z_1,\dots,Z_k$ of the disturbance vector are mutually independent and
        almost surely non-constant.
  \item \emph{Non-Gaussianity.}  At most one $Z_j$ is Gaussian.
\end{enumerate}
\end{assumption}

\begin{lemma}[LiNGAM is a complete noiseless ICA model]
\label{lem:lingam-is-ica}
Under \cref{asm:lingam}\,(i) the matrix $\Id{k}-\Theta$ is invertible with
$\det(\Id{k}-\Theta) = 1$, and \cref{eq:lingam-model} is equivalent to
\begin{align}
  \label{eq:lingam-as-ica}
  X &= A\,Z, & A &:= \bigl(\Id{k}-\Theta\bigr)^{-1},
  & W &:= A^{-1} = \Id{k}-\Theta .
\end{align}
This is a complete noiseless ICA model in the sense of
\cref{asm:ica}\,(i)--(iv), and the unmixing matrix $W$ has all diagonal
entries equal to $1$.
\end{lemma}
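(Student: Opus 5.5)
The plan is to conjugate $\Theta$ by the permutation matrix of the causal order, so that it becomes strictly lower triangular, and then read everything off from the triangular structure.

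Let $P_\pi$ be the permutation matrix with $P_\pi e_b = e_{\pi(b)}$. Then $\widetilde\Theta := P_\pi\Theta P_\pi\T$ has entries $\widetilde\Theta_{\pi(a)\pi(b)} = \Theta_{ab}$. By \cref{asm:lingam}\,(i) these vanish whenever $\pi(a)\le\pi(b)$, so $\widetilde\Theta$ is strictly lower triangular.

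The first consequence is that $P_\pi(\Id{k}-\Theta)P_\pi\T = \Id{k}-\widetilde\Theta$ is lower triangular with unit diagonal, hence has determinant $1$. Since $\det P_\pi\,\det P_\pi\T = 1$, this gives $\det(\Id{k}-\Theta)=1$, and in particular $\Id{k}-\Theta$ is invertible. Two alternative routes lead to the same conclusion: $\widetilde\Theta$, and hence $\Theta$, is nilpotent, with $\Theta^k=0$, so the Neumann series $\sum_{m<k}\Theta^m$ is an explicit inverse of $\Id{k}-\Theta$. Alternatively, the diagonal of $\Theta$ vanishes because $\pi(a)\le\pi(a)$. That last fact also gives directly that $W=\Id{k}-\Theta$ has all diagonal entries equal to $1$.

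For the equivalence, I would rewrite \cref{eq:lingam-model} as $(\Id{k}-\Theta)X = Z$. Multiplying by the inverse $A$ gives $X = AZ$, and multiplying $X = AZ$ by $\Id{k}-\Theta$ gives the model back. The identity holds pointwise on $\Omega$, so the equivalence is almost sure, not merely in distribution.

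It then remains to check \cref{asm:ica}\,(i)--(iv). Condition (i) is \cref{asm:lingam}\,(ii). Condition (ii) is $X=AZ$ with the deterministic matrix $A$. Condition (iii) holds because $p=k$ and $A$ is invertible with $W=A^{-1}=\Id{k}-\Theta$. Condition (iv) is \cref{asm:lingam}\,(iii).

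There is no real obstacle here. The only step needing care is the index bookkeeping in the conjugation: one must make sure the convention $\Theta_{ab}=0$ for $\pi(a)\le\pi(b)$ gives lower rather than upper triangularity. Getting this wrong would be harmless anyway, since either triangular form yields unit determinant.
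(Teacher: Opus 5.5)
Your proposal is correct and follows essentially the same route as the paper. You conjugate $\Theta$ by the permutation matrix of the causal order to obtain a strictly lower triangular, hence nilpotent, matrix. From this you read off $\det(\Id{k}-\Theta)=1$ and the Neumann-series inverse, get the unit diagonal of $W$ from $\Theta_{aa}=0$, and verify \cref{asm:ica}\,(i)--(iv) directly.
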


\begin{proof}
Let $P_\pi$ be the permutation matrix of the causal order, so that
$N := P_\pi \Theta P_\pi\T$ is strictly lower triangular and hence $N^k = 0$;
then $\Theta^k = P_\pi\T N^kP_\pi = 0$, so $\Theta$ is nilpotent,
$\Id{k}-\Theta$ is invertible with inverse $\sum_{j=0}^{k-1}\Theta^j$, and
$\det(\Id{k}-\Theta) = \det(\Id{k}-N) = 1$ because $\Id{k}-N$ is lower triangular
with unit diagonal.  Rearranging \cref{eq:lingam-model} gives
$(\Id{k}-\Theta)X = Z$, which is \cref{eq:lingam-as-ica}.  The diagonal of
$W = \Id{k}-\Theta$ is $1$ because $\Theta_{aa} = 0$ by (i) with $b=a$.
\end{proof}
The normalisation \cref{asm:ica}\,(v) is deliberately \emph{not} imposed
here: LiNGAM fixes the scale of the sources through $\diag(W) = \Id{k}$
rather than through either of the conventions of \cref{rem:ica-normalisation}.

\begin{corollary}[Identifiability of LiNGAM]
\label{cor:lingam}
Suppose $X$ satisfies \cref{asm:lingam} with two parameter sets, that is
\begin{align}
  \Theta\up{1}X + Z\up{1} \;=\; X \;=\; \Theta\up{2}X + Z\up{2},
\end{align}
where each $\Theta\up{i}$ satisfies (i) and each $Z\up{i}$ satisfies (ii) and
(iii).  Then
\begin{align}
  \Theta\up{1} &= \Theta\up{2}, & Z\up{1} &= Z\up{2} \quad\text{almost surely}.
\end{align}
\end{corollary}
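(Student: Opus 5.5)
By \cref{lem:lingam-is-ica} both parameter sets give complete noiseless ICA representations $X = A\up{i}Z\up{i}$ with $A\up{i} = (\Id{k}-\Theta\up{i})^{-1}$ invertible. Hypotheses (i)--(iii) of \cref{cor:complete-ident} hold with $\mu\up{1}=\mu\up{2}=0$. That corollary yields a permutation matrix $P$, an invertible diagonal $\Lambda$ and $c\in\R^k$ with
\begin{align}
  A\up{2} &= A\up{1}P\Lambda, & Z\up{1} &= P\Lambda Z\up{2} + c \quad\text{a.s.}
\end{align}
In terms of the unmixing matrices $W\up{i} = \Id{k}-\Theta\up{i}$ this reads $W\up{2} = \Lambda^{-1}P^{-1}W\up{1}$. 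Equivalently, $P\Lambda W\up{2} = W\up{1}$: the rows of $W\up{1}$ are nonzero multiples of rows of $W\up{2}$, permuted.

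The key step is to show $P=\Id{k}$, and I expect this to be the main obstacle. Write $D := P\Lambda$, a generalised permutation matrix with $D_{\rho(j)j}=\lambda_j\neq0$. The relation $W\up{1} = DW\up{2}$ gives row $\rho(j)$ of $W\up{1}$ equal to $\lambda_j$ times row $j$ of $W\up{2}$. Hence the zero patterns satisfy $\supp(W\up{1}_{\rho(j),\cdot}) = \supp(W\up{2}_{j,\cdot})$. Each row $a$ of $W\up{i}$ contains the diagonal entry $1$, so $W\up{2}_{jj}=1\neq0$ forces $W\up{1}_{\rho(j)j}\neq0$.

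Suppose $\rho\neq\mathrm{id}$. Then $\rho$ has a cycle $j_1\to j_2=\rho(j_1)\to\dots\to j_m=\rho(j_{m-1})\to j_1$ of length $m\ge2$. For each step we get $W\up{1}_{\rho(j)j}\neq0$ with $\rho(j)\neq j$, so $\Theta\up{1}_{\rho(j)j} = -W\up{1}_{\rho(j)j}\neq0$. By acyclicity (i) this forces $\pi\up{1}(\rho(j))>\pi\up{1}(j)$. Going around the cycle gives $\pi\up{1}(j_1)<\pi\up{1}(j_2)<\dots<\pi\up{1}(j_1)$, a contradiction. So $P=\Id{k}$.

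With $P=\Id{k}$ we have $W\up{1}=\Lambda W\up{2}$. Comparing diagonal entries, which are all $1$ in both matrices by \cref{lem:lingam-is-ica}, gives $\lambda_j=1$, so $\Lambda=\Id{k}$. Then $W\up{1}=W\up{2}$, hence $\Theta\up{1}=\Theta\up{2}$. Finally $Z\up{i} = W\up{i}X$ almost surely, so $Z\up{1}=Z\up{2}$ almost surely, and in particular $c=0$. No centring assumption is needed, because the offset is read off directly from $X$.
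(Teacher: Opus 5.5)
Your proposal is correct and follows essentially the same route as the paper. You reduce to \cref{cor:complete-ident} via \cref{lem:lingam-is-ica}, use the unit diagonal to force non-zero entries of $\Theta\up{1}$ along the permutation, rule out non-trivial cycles by acyclicity, and then read off $\Lambda=\Id{k}$ from the diagonal; the only cosmetic difference is that you write the relation as $W\up{1}=P\Lambda W\up{2}$ rather than the paper's equivalent $W\up{2}=\Lambda^{-1}P^{-1}W\up{1}$.
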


\begin{proof}
By \cref{lem:lingam-is-ica} we have two representations $X = A\up{i}Z\up{i}$
with $A\up{i} = (\Id{k}-\Theta\up{i})^{-1}$ invertible, so
\cref{cor:complete-ident} applies (with $\mu\up{i}=0$) and yields a
permutation matrix $P$ and an invertible diagonal $\Lambda$ with
$A\up{2} = A\up{1}P\Lambda$, together with
$Z\up{1} = P\Lambda Z\up{2}+c$.
Inverting, and writing $W\up{i} := \Id{k}-\Theta\up{i}$,
\begin{align}
  \label{eq:lingam-W-relation}
  W\up{2} &= \Lambda^{-1}P^{-1}W\up{1} .
\end{align}
The matrix $M := \Lambda^{-1}P^{-1}$ is a generalised permutation matrix:
there are a permutation $\sigma$ and non-zero scalars $m_1,\dots,m_k$ with
$M_{ac} = m_a\delta_{c,\sigma(a)}$, so that \cref{eq:lingam-W-relation} reads
\begin{align}
  \label{eq:lingam-rows}
  W\up{2}_{ab} &= m_a\,W\up{1}_{\sigma(a),b},
  \qquad a,b = 1,\dots,k .
\end{align}
Both $W\up{i}$ have unit diagonal by \cref{lem:lingam-is-ica}, so taking
$b=a$ in \cref{eq:lingam-rows} gives
\begin{align}
  \label{eq:lingam-diagonal}
  1 &= W\up{2}_{aa} = m_a\,W\up{1}_{\sigma(a),a},
  \qquad\text{hence}\qquad W\up{1}_{\sigma(a),a} \neq 0 \ \text{ for all } a .
\end{align}
Now use acyclicity of $\Theta\up{1}$: for $c\neq b$ we have
$W\up{1}_{cb} = -\Theta\up{1}_{cb}$, which is non-zero only if
$\pi_1(c) > \pi_1(b)$, where $\pi_1$ is the causal order of $\Theta\up{1}$.
So \cref{eq:lingam-diagonal} forces, for every $a$,
\begin{align}
  \text{either}\quad \sigma(a) = a
  \qquad\text{or}\qquad \pi_1\bigl(\sigma(a)\bigr) > \pi_1(a).
\end{align}
Suppose $\sigma\neq\mathrm{id}$ and let $a$ lie on a non-trivial cycle of
$\sigma$, of length $L\ge2$.  Then $\sigma^{j}(a)\neq\sigma^{j+1}(a)$ for
every $j$, so
\begin{align}
  \pi_1(a) < \pi_1\bigl(\sigma(a)\bigr) < \dots
    < \pi_1\bigl(\sigma^{L}(a)\bigr) = \pi_1(a),
\end{align}
a contradiction.  Hence $\sigma = \mathrm{id}$, and then
\cref{eq:lingam-diagonal} gives $m_a = 1/W\up{1}_{aa} = 1$, so $M = \Id{k}$.
Therefore $W\up{2} = W\up{1}$, i.e.\ $\Theta\up{2} = \Theta\up{1}$, and consequently
$Z\up{2} = W\up{2}X = W\up{1}X = Z\up{1}$.
\end{proof}
So the entire structural model -- graph, coefficients and disturbance
distributions -- is determined by the law of $X$; the causal order itself is
identified up to the ordering of variables that are not ancestrally related,
which is exactly the freedom a topological sort leaves.

\begin{remark}[What each assumption does]
\label{rem:lingam-assumptions}
It is worth seeing which ambiguity of \cref{cor:complete-ident} is removed by
which structural assumption.
The scale ambiguity $\Lambda$ is removed by the \emph{form} of
\cref{eq:lingam-model}: the coefficient of $X_a$ in its own equation is fixed
to $1$, which is what makes $W$ have unit diagonal.
The permutation ambiguity $P$ is removed by \emph{acyclicity}, through the
cycle argument above: a non-trivial relabelling would have to increase the
causal order all the way around a cycle.
\Cref{asm:lingam}\,(ii) is causal sufficiency: mutual independence of the
disturbances says exactly that there are no hidden common causes.
Non-Gaussianity is what makes \cref{cor:complete-ident} available in the first
place; without it one recovers the graph at best up to its Markov equivalence
class, and then only under a faithfulness assumption.
Note also that \citet{shimizu2006} assume \emph{all} disturbances
non-Gaussian, whereas \cref{cor:lingam} needs only ``at most one'', because
that is all \cref{cor:complete-ident} needs.
\end{remark}

\begin{algo}[LiNGAM by equivariant ICA, after \citealp{shimizu2006}]
\label{alg:lingam}
\emph{Input:} data $x(1),\dots,x(T)$ in $\R^k$.

\begin{enumerate}[label=(\arabic*)]
  \item Centre the data.
  \item Run \cref{alg:online-ica} to obtain an estimate $\widehat W$ of the
        unmixing matrix.  By \cref{cor:complete-ident} it estimates
        $\Lambda^{-1}P\T W$ for the true $W = \Id{k}-\Theta$ and some unknown
        generalised permutation.
  \item \emph{Undo the permutation.}  Find the row permutation $P_1$ that
        makes all diagonal entries of $P_1\widehat W$ non-zero; in practice
        one minimises $\sum_{j}\bigl|(P_1\widehat W)_{jj}\bigr|^{-1}$ over
        permutations, which is a linear assignment problem.  Put
        $W' := P_1\widehat W$.
  \item \emph{Undo the scaling.}  Divide each row of $W'$ by its own diagonal
        entry, that is, put $D := \diag(W'_{11},\dots,W'_{kk})$ and
        $W'' := D^{-1}W'$, so that $W''$ has unit diagonal.
  \item \emph{Read off the coefficients.} $\widehat \Theta := \Id{k}-W''$.
  \item \emph{Read off the causal order.}  Find the permutation $\pi$ making
        $\widehat \Theta$ as close to strictly lower triangular as possible, e.g.\
        by repeatedly setting the smallest-magnitude entries to zero until a
        causal order exists.  Optionally prune the remaining small
        coefficients.
\end{enumerate}

\emph{Output:} the coefficient matrix $\widehat \Theta$, the causal order $\pi$,
and the estimated disturbances $\widehat z(t) := W''\,x(t)$.
\end{algo}

\begin{remark}[Why steps (3) and (6) are well posed]
\label{rem:lingam-steps}
In the population limit, where $\widehat W = \Lambda^{-1}P\T W$ exactly, the
proof of \cref{cor:lingam} is precisely the statement that step~(3) has a
solution and that it is unique: the cycle argument shows that
$P_1 = P$ is the only row permutation producing a nowhere-zero diagonal, and
step~(4) then recovers $W$ exactly.
Likewise step~(6) succeeds because the true $\widehat \Theta$ is permutable to
strictly lower triangular form.
Note also that step~(2) invokes \cref{alg:online-ica}, which was derived
under the standing assumption of \cref{ssec:ml} that each source has a
strictly positive differentiable density; \cref{asm:lingam} does not require
this, so the assumption has to be added if the ICA-based route is taken.
With finite samples neither search is exact, which is why both become
combinatorial problems with tolerances; see
\citet[Sections~4--5]{shimizu2006} for the details and
\citet{shimizu2011directlingam} for \emph{DirectLiNGAM}, a later algorithm
that estimates the causal order directly by iterated regression and
independence testing, and so avoids the ICA step -- and with it both the local
convergence caveats of \cref{rem:what-is-approximate} and the two
combinatorial searches above.
\end{remark}

\subsection{Generalisations: a brief overview}
\label{ssec:generalisations}

The model of these notes -- one linear mixture, instantaneous, with
independent components -- has been extended in many directions.  This
subsection is a short guide to the landscape, with references rather than
derivations.

\paragraph{Relaxing linearity.}
The obvious generalisation replaces the mixing matrix by a diffeomorphism,
$X = f(Z)$ with $Z$ having independent components.
This model is \emph{never} identifiable: \citet{hyvarinen1999nonlinear} show
that any random vector $X$ with a positive density admits a representation
$X = f(Z)$ with mutually independent $Z_j$, obtained from the
conditional-quantile construction of \citet{darmois1953}, which returns
uniform sources; composing that solution with the measure-preserving
automorphisms of the unit cube then produces infinitely many further ones.
Identifiability has to be bought back with extra structure.  The dominant
device is an observed auxiliary variable $U$ -- a time index, a segment label,
an experimental condition -- with respect to which the sources are
conditionally independent and sufficiently variable,
\begin{align}
  \label{eq:conditional-factorisation}
  p_{Z\mid U}(z\mid u) &= \prod_{j=1}^{k}p_{Z_j\mid U}(z_j\mid u).
\end{align}
Under such assumptions the nonlinear model becomes identifiable up to
permutation and componentwise transformations; see \citet{hyvarinen2016tcl}
for
time-contrastive learning, where $U$ is a time segment label;
\citet{hyvarinen2017pcl} for temporally dependent sources, where the role of
$U$ is played by the previous time point; and \citet{khemakhem2020ivae} for
the identifiable variational autoencoder, which unifies these results.
A different route constrains the geometry of $f$ rather than the statistics of
$Z$: \emph{independent mechanism analysis} \citep{gresele2021ima} requires the
columns of the Jacobian of $f$ to be orthogonal, which by
\citet[Theorem~4.7]{gresele2021ima} excludes the Darmois solutions of
\citet{hyvarinen1999nonlinear} without an auxiliary variable.  It is not, however, known to deliver full
identifiability on its own.

\paragraph{Relaxing independence.}
If the sources fall into groups that are independent \emph{across} groups but
dependent \emph{within}, one obtains multidimensional ICA, or independent
subspace analysis \citep{cardoso1998multidim}: the mixing matrix is identified
only up to a permutation of equal-dimensional blocks and an invertible
transformation within each block -- the analogue, for a general block, of the
orthogonal ambiguity inside a Gaussian block in \cref{rem:two-gaussians}.

\paragraph{Several datasets at once.}
Given $M$ related datasets $X\up{m} = A\up{m}Z\up{m}$, $m=1,\dots,M$, one may,
in addition to independence within each dataset, tie corresponding sources
across datasets.  Independent vector analysis \citep{kim2006iva} keeps the
components independent within a dataset but couples the $j$-th components
across datasets, $j=1,\dots,k$, into a single source component vector; this
aligns the permutations across datasets, leaving only one global permutation
of the $k$ component vectors.
Multi-view ICA goes further and splits the sources into a part shared by all
views and parts specific to each,
\begin{align}
  \label{eq:multiview-ica}
  X\up{m} &= A\up{m}
     \bmat{S\\ Z\up{m}} + E\up{m},
  \qquad m = 1,\dots,M,
\end{align}
with a shared source vector $S$ and view-specific sources $Z\up{m}$.
In \citet{pandeva2023multiview} each $A\up{m}$ is square and invertible and
the noise is isotropic Gaussian \emph{in the latent space}, so that
$E\up{m} = A\up{m}\epsilon\up{m}$ with
$\epsilon\up{m}\sim\Normal(0,\sigma^2\Id{k_m})$ and the observed noise
covariance is $\sigma^2A\up{m}A\up{m\T}$.  See that paper for identifiability
results and an estimation procedure, \citet{pandeva2023omics} for its use in
integrating omics modalities, and \citet{pandeva2025robust} for a related
multi-view latent-variable model that replaces source independence by a shared
sparse precision matrix and targets co-expression network inference.

\paragraph{Noise and overcompleteness.}
The noisy and overcomplete models -- $X = AZ+E$ with $A\in\R^{p\times k}$ and
possibly $k>p$ -- are exactly the setting of
\cref{sec:non-gaussian,sec:gaussian-free}; note that identifiability of $A$
there does \emph{not} entail that the sources can be recovered pointwise.
Additive Gaussian noise leaves them determined only up to a Gaussian summand,
by \cref{thm:non-gaussian}, even when $A$ has a left inverse; and in the
overcomplete case $k>p$ the matrix $A$ has no left inverse at all.

\paragraph{Causal discovery.}
Beyond LiNGAM, dropping linearity while keeping additive noise gives the
additive noise models of \citet{hoyer2008anm}, in which
$X_a = f_a(X_{\mathrm{pa}(a)}) + Z_a$ and identifiability again comes from a
non-Gaussianity- or nonlinearity-induced asymmetry between cause and effect.

\paragraph{Further reading.}
\citet{hyvarinen2001} remains the standard textbook; \citet{comon2010handbook}
is a comprehensive edited handbook covering algorithms, identifiability and
applications.

\appendix

\section{Proofs of the Classical Results on Characteristic Functions}
\label{app:cf-proofs}

This appendix proves the four results of \cref{sec:cf} that were stated there
without proof: the uniqueness and inversion theorem
(\cref{thm:uniqueness}), L\'evy's continuity theorem (\cref{thm:levy}),
Marcinkiewicz' theorem (\cref{thm:marcinkiewicz}) and Cram\'er's decomposition
theorem (\cref{thm:cramer}).
Together with \cref{app:kagan-proof}, which proves \cref{thm:kagan}, this
makes every proof in these notes refer only to other results of these notes,
together with the following standard background, which we use without proof:
\begin{enumerate}[label=(\alph*)]
  \item measure theory: monotone and dominated convergence (hence
        differentiation under the integral sign), Fatou's lemma, Fubini's
        theorem, and the fact that a Borel probability measure on $\R^d$ is
        determined by its integrals against continuous functions of compact
        support;
  \item Prokhorov's theorem: a tight sequence of Borel probability measures on
        $\R^d$ has a weakly convergent subsequence
        \citep[Chapter~13]{klenke2020}, \citep[Chapter~5]{kallenberg2021};
  \item complex analysis of one variable: Morera's theorem, the identity
        theorem, the mean value property of harmonic functions, and the fact
        that a nowhere vanishing entire function is $\exp$ of an entire
        function;
  \item elementary real analysis: Taylor's theorem with Peano remainder,
        Jensen's and Lyapunov's inequalities, existence of a median, the
        orthogonality relations for the trigonometric system together with
        termwise integration of uniformly convergent series, and the
        sub-subsequence criterion for convergence of real sequences.
\end{enumerate}
\Cref{lem:moments-from-derivatives} below supplies the converse half of
\cref{prop:moments}.  Its proof \emph{does} use the forward half; but the
forward half is proved where it is stated and does not use the converse, so
there is no circularity.
Two further standard facts are used outside \cref{sec:cf} and its appendix:
Osgood's lemma in the several-variable half of \cref{thm:analytic-strip}
(which is never applied), and, in \cref{sec:algorithm}, matrix calculus --
Jacobi's formula for $\partial\log\abs{\det W}$ -- together with the
unstable-manifold theorem for maps in \cref{thm:stability}.
Apart from these, the only \emph{result} that a proof in these notes uses
without our having proved it is Bochner's \cref{thm:bochner}, and it is used
exactly once, in \cref{rem:local-not-enough}, where it could be replaced by an
appeal to P\'olya's criterion or by a direct Fourier computation.  (Two more
classical facts are quoted inside remarks -- P\'olya's criterion itself and
the Lukacs characterisation in \cref{rem:ridge} -- but no proof depends on
them.)

\subsection{Uniqueness and inversion}
\label{ssec:proof-uniqueness}

\begin{lemma}[Gaussian Fourier identity]
\label{lem:gauss-fourier}
For every $\varepsilon>0$ and every $x\in\R$,
\begin{align}
  \label{eq:gauss-fourier}
  \int_{\R}e^{-itx}\exp\Bigl(-\tfrac12\varepsilon^2t^2\Bigr)\,dt
    &= \frac{\sqrt{2\pi}}{\varepsilon}
       \exp\Bigl(-\frac{x^2}{2\varepsilon^2}\Bigr) .
\end{align}
Consequently, for $x\in\R^d$,
\begin{align}
  \label{eq:gauss-fourier-d}
  (2\pi)^{-d}\int_{\R^d}e^{-i\,t\T x}
    \exp\Bigl(-\tfrac12\varepsilon^2\norm{t}^2\Bigr)\,dt
    &= \gamma_\varepsilon(x)
     := \bigl(2\pi\varepsilon^2\bigr)^{-d/2}
        \exp\Bigl(-\frac{\norm{x}^2}{2\varepsilon^2}\Bigr),
\end{align}
the Lebesgue density of $\Normal(0,\varepsilon^2\Id{d})$.
\end{lemma}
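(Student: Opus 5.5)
The plan is to prove the one-dimensional identity \cref{eq:gauss-fourier} by analytic continuation, and then to obtain the $d$-dimensional version \cref{eq:gauss-fourier-d} by Fubini's theorem.

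First I substitute $u=\varepsilon t$, which reduces \cref{eq:gauss-fourier} to the case $\varepsilon=1$ with $x$ replaced by $x/\varepsilon$. It therefore suffices to show
\begin{align}
  I(y) := \int_\R e^{-iuy}e^{-u^2/2}\,du = \sqrt{2\pi}\,e^{-y^2/2},
  \qquad y\in\R .
\end{align}

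Next I recognise $I(y)/\sqrt{2\pi}$ as $\cf{N}(-y)$ for $N\sim\Normal(0,1)$, the standard normal with its explicit density. I do not invoke \cref{not:degenerate} here, since that notation \emph{defines} Gaussians through this characteristic function and would make the argument circular.

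The core step uses \cref{thm:analytic-strip}. $N$ has exponential moments of every order, so $F(z):=(2\pi)^{-1/2}\int e^{izu}e^{-u^2/2}\,du$ is entire. For real $z=-iv$ (with $v$ real), completing the square gives
\begin{align}
  F(-iv)=(2\pi)^{-1/2}\int e^{vu-u^2/2}\,du
        =e^{v^2/2}(2\pi)^{-1/2}\int e^{-(u-v)^2/2}\,du
        =e^{v^2/2},
\end{align}
where the last equality uses translation invariance of Lebesgue measure and the normalisation $\int e^{-u^2/2}\,du=\sqrt{2\pi}$. That normalisation is itself the standard polar-coordinates computation, which I will cite as background.

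Since $e^{v^2/2}=e^{-z^2/2}$ at $z=-iv$, the two entire functions $F(z)$ and $e^{-z^2/2}$ agree on the imaginary axis. By the identity theorem they agree everywhere, and in particular on $\R$. This gives $I(y)=\sqrt{2\pi}\,e^{-y^2/2}$.

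As a fallback, if one prefers to avoid the strip theorem, I would differentiate under the integral and integrate by parts to get $I'(y)=-yI(y)$ with $I(0)=\sqrt{2\pi}$, and then solve this ODE.

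For \cref{eq:gauss-fourier-d}, the integrand factorises over the coordinates as $\prod_j e^{-it_jx_j}e^{-\varepsilon^2t_j^2/2}$ and is absolutely integrable. Fubini's theorem then turns the integral into a product of $d$ one-dimensional integrals. Applying \cref{eq:gauss-fourier} to each factor and multiplying by $(2\pi)^{-d}$ gives $(2\pi)^{-d}(\sqrt{2\pi}/\varepsilon)^d e^{-\norm{x}^2/(2\varepsilon^2)}=\gamma_\varepsilon(x)$.

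The only real obstacle is justifying the complex shift in the core step; \cref{thm:analytic-strip} together with the identity theorem handles it cleanly.
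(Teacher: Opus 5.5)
Your proof is correct, but your main route differs from the paper's. The paper's proof is exactly what you list as a fallback. It differentiates $I(x)$ under the integral sign, integrates by parts to get $I'(x)=-xI(x)/\varepsilon^2$, and solves this with $I(0)=\sqrt{2\pi}/\varepsilon$. The $d$-dimensional step is the same coordinatewise product in both versions.

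Your analytic-continuation argument is legitimate and not circular. The proof of \cref{thm:analytic-strip} uses only Fubini's theorem, dominated convergence and Morera's theorem. It never uses \cref{thm:uniqueness}, whose proof is where this lemma is needed, and the identity theorem is part of the paper's listed background. Evaluating on the imaginary axis makes completing the square a mere real translation, so no contour shift is needed. (Where you wrote ``real $z=-iv$'' you mean purely imaginary $z$.)

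The trade-off is this. The paper's argument is entirely real-variable and uses nothing beyond dominated convergence and integration by parts. Yours calls on the strip theorem, Morera and the identity theorem, but it gives slightly more for free: the identity $\int_\R e^{izu}e^{-u^2/2}\,du=\sqrt{2\pi}\,e^{-z^2/2}$ for every complex $z$, not just real ones. Both routes take $\int_\R e^{-u^2/2}\,du=\sqrt{2\pi}$ as known background.
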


\begin{proof}
Write $I(x)$ for the left hand side of \cref{eq:gauss-fourier}.
Differentiation under the integral sign is legitimate, since
$\abs{t}\exp(-\tfrac12\varepsilon^2t^2)$ is integrable, and an integration by
parts gives
\begin{align}
  I'(x) &= \int_\R(-it)e^{-itx}e^{-\varepsilon^2t^2/2}\,dt
         = \frac{i}{\varepsilon^2}\int_\R e^{-itx}
           \frac{d}{dt}\Bigl(e^{-\varepsilon^2t^2/2}\Bigr)dt
         = -\frac{x}{\varepsilon^2}\,I(x),
\end{align}
the boundary terms vanishing.  Hence
$I(x) = I(0)\exp(-x^2/(2\varepsilon^2))$, and
$I(0) = \int_\R e^{-\varepsilon^2t^2/2}dt = \sqrt{2\pi}/\varepsilon$.
This is \cref{eq:gauss-fourier}; \cref{eq:gauss-fourier-d} follows by taking
the product over the $d$ coordinates.
\end{proof}

\begin{proof}[Proof of \cref{thm:uniqueness}]
Let $X$ have law $\mu$ and let $G\sim\Normal(0,\Id{d})$ be independent of $X$,
both defined on one probability space.  Fix $\varepsilon>0$ and put
$X_\varepsilon := X+\varepsilon G$.
Conditioning on $X$ shows that $X_\varepsilon$ has the Lebesgue density
$f_\varepsilon(x) = \int_{\R^d}\gamma_\varepsilon(x-y)\,\mu(dy)$.
Inserting \cref{eq:gauss-fourier-d} and applying Fubini's theorem -- legitimate
because the integrand has modulus $\exp(-\tfrac12\varepsilon^2\norm{t}^2)$,
which is integrable on $\R^d\times\R^d$ against the probability measure $\mu$
-- we obtain
\begin{align}
  \label{eq:smoothed-density}
  f_\varepsilon(x)
    &= (2\pi)^{-d}\int_{\R^d}\int_{\R^d}
       e^{-i\,t\T(x-y)}e^{-\varepsilon^2\norm{t}^2/2}\,dt\,\mu(dy)\\
    &= (2\pi)^{-d}\int_{\R^d}e^{-i\,t\T x}
       e^{-\varepsilon^2\norm{t}^2/2}\,\cf{\mu}(t)\,dt .
\end{align}
So $f_\varepsilon$, and hence the law of $X_\varepsilon$, is determined by
$\cf{\mu}$ alone.

Now let $g\colon\R^d\to\R$ be bounded and continuous.  Since
$X_\varepsilon\to X$ pointwise on the underlying probability space as
$\varepsilon\downarrow0$, dominated convergence gives
\begin{align}
  \label{eq:smoothing-limit}
  \int_{\R^d} g(x)f_\varepsilon(x)\,dx = \Ex\bigl[g(X_\varepsilon)\bigr]
    \xrightarrow[\varepsilon\downarrow0]{} \Ex\bigl[g(X)\bigr]
    = \int_{\R^d}g\,d\mu .
\end{align}
The left hand side depends on $\mu$ only through $\cf{\mu}$, so $\int g\,d\mu$
is determined by $\cf{\mu}$ for every bounded continuous $g$, and therefore
$\mu$ is determined by $\cf{\mu}$.  This proves part~(i).

For part~(ii) assume in addition $\cf{\mu}\in L^1(\R^d)$ and define
\begin{align}
  f(x) &:= (2\pi)^{-d}\int_{\R^d}e^{-i\,t\T x}\,\cf{\mu}(t)\,dt .
\end{align}
Then $f$ is bounded by $(2\pi)^{-d}\norm{\cf{\mu}}_{L^1}$ and continuous, by
dominated convergence.  Comparing with \cref{eq:smoothed-density},
\begin{align}
  \sup_{x\in\R^d}\bigabs{f_\varepsilon(x)-f(x)}
    &\le (2\pi)^{-d}\int_{\R^d}
       \Bigabs{e^{-\varepsilon^2\norm{t}^2/2}-1}\,\abs{\cf{\mu}(t)}\,dt
    \xrightarrow[\varepsilon\downarrow0]{} 0
\end{align}
by dominated convergence, with dominating function $2\abs{\cf{\mu}}$.
Since every $f_\varepsilon$ is a probability density and
$f_\varepsilon\to f$ pointwise, the limit $f$ is real valued and $f\ge0$, so
$\nu(dx):=f(x)\,dx$ defines a Borel measure on $\R^d$.
Let now $g$ be continuous with compact support.  Uniform convergence on that
compact support gives $\int gf_\varepsilon\,dx\to\int gf\,dx$, while
\cref{eq:smoothing-limit} gives $\int gf_\varepsilon\,dx\to\int g\,d\mu$, so
that
\begin{align}
  \label{eq:cc-agreement}
  \int_{\R^d} g\,d\mu &= \int_{\R^d} g\,d\nu,
  \qquad g\in C_c(\R^d).
\end{align}
Choosing $g_k\in C_c(\R^d)$ with $0\le g_k\uparrow1$ pointwise and applying
monotone convergence on both sides of \cref{eq:cc-agreement} gives
$\nu(\R^d) = \mu(\R^d) = 1$, so $\nu$ is a Borel probability measure.
By \cref{eq:cc-agreement} and background fact~(a), $\mu=\nu$; that is, $\mu$
has the density $f$.
\end{proof}

\subsection{L\'evy's continuity theorem}
\label{ssec:proof-levy}

\begin{lemma}[Tail bound from the characteristic function near the origin]
\label{lem:tightness-estimate}
Let $Y$ be a real-valued random variable and $\delta>0$.  Then
\begin{align}
  \label{eq:tightness-estimate}
  \Prb\Bigl[\abs{Y}\ge\frac{2}{\delta}\Bigr]
    &\le \frac{1}{\delta}\int_{-\delta}^{\delta}
         \bigl(1-\Real\cf{Y}(t)\bigr)\,dt .
\end{align}
\end{lemma}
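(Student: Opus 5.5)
The plan is to write the right hand side as an expectation via Fubini and then bound an explicit integrand pointwise from below.

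Since $1-\Real\cf{Y}(t) = \Ex[1-\cos(tY)]$ and the integrand $(t,\omega)\mapsto 1-\cos(tY(\omega))$ is non-negative and jointly measurable, Tonelli's theorem lets me exchange the integrals:
\begin{align}
  \frac{1}{\delta}\int_{-\delta}^{\delta}\bigl(1-\Real\cf{Y}(t)\bigr)\,dt
   = \Ex\left[\frac{1}{\delta}\int_{-\delta}^{\delta}\bigl(1-\cos(tY)\bigr)\,dt\right]
   = 2\,\Ex\left[1-\frac{\sin(\delta Y)}{\delta Y}\right],
\end{align}
with the convention $\sin(0)/0:=1$, so that the bracket vanishes on $\{Y=0\}$. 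The inner integral is elementary: it equals $2\delta - 2\sin(\delta y)/y$ for $y\neq0$.

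It then remains to bound the random variable $h(\delta Y)$, where $h(u):=2(1-\sin u/u)$, from below. First, $h\ge0$ everywhere because $\abs{\sin u}\le\abs{u}$. Second, for $\abs{u}\ge2$ we have $\abs{\sin u/u}\le 1/\abs{u}\le\tfrac12$, hence $h(u)\ge 1$. This gives the pointwise bound
\begin{align}
  h(\delta Y)\ \ge\ \mathbf{1}_{\{\abs{\delta Y}\ge2\}} = \mathbf{1}_{\{\abs{Y}\ge 2/\delta\}},
\end{align}
and taking expectations yields \cref{eq:tightness-estimate}.

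There is no real obstacle. The only points needing care are justifying the exchange of integrals, which is done via non-negativity and Tonelli rather than dominated convergence, and the $y=0$ convention.
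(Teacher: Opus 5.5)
Your proof is correct and follows essentially the same route as the paper: exchange the integrals to obtain $2\,\Ex\bigl[1-\sin(\delta Y)/(\delta Y)\bigr]$, then bound this integrand below by $0$ everywhere and by $\tfrac12$ on $\{\abs{\delta Y}\ge2\}$. Invoking Tonelli via non-negativity instead of Fubini is a harmless variation.
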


\begin{proof}
By Fubini's theorem, using $\Real\cf{Y}(t) = \Ex[\cos(tY)]$,
\begin{align}
  \int_{-\delta}^{\delta}\bigl(1-\Real\cf{Y}(t)\bigr)dt
    &= 2\int_0^\delta\Ex\bigl[1-\cos(tY)\bigr]dt
     = 2\delta\,\Ex\left[1-\frac{\sin(\delta Y)}{\delta Y}\right],
\end{align}
with the convention $\sin(u)/u := 1$ at $u=0$.
The integrand $1-\sin(u)/u$ is non-negative for all $u\in\R$, and for
$\abs{u}\ge2$ it is at least $1-1/\abs{u}\ge\tfrac12$.
Hence
\begin{align}
  2\delta\,\Ex\left[1-\frac{\sin(\delta Y)}{\delta Y}\right]
    &\ge 2\delta\cdot\tfrac12\,\Prb\bigl[\abs{\delta Y}\ge2\bigr]
     = \delta\,\Prb\Bigl[\abs{Y}\ge\frac{2}{\delta}\Bigr],
\end{align}
which is \cref{eq:tightness-estimate}.
\end{proof}

\begin{proof}[Proof of \cref{thm:levy}]
(i) If $X_n\Rightarrow X$ then, for each fixed $t$, the functions
$x\mapsto\cos(t\T x)$ and $x\mapsto\sin(t\T x)$ are bounded and continuous, so
$\cf{X_n}(t) = \Ex[\cos(t\T X_n)]+i\,\Ex[\sin(t\T X_n)]$ converges to
$\cf{X}(t)$.

(ii) Assume $\cf{X_n}\to\phi$ pointwise with $\phi$ continuous at $0$.  Since
$\cf{X_n}(0)=1$ for all $n$ we have $\phi(0)=1$.

\emph{Tightness.}  Let $\varepsilon>0$.  By continuity of $\phi$ at $0$ and
$\phi(0)=1$ we may choose $\delta>0$ so small that
\begin{align}
  \frac{1}{\delta}\int_{-\delta}^{\delta}
     \bigl(1-\Real\phi(se_j)\bigr)\,ds &< \frac{\varepsilon}{2d},
     \qquad j=1,\dots,d .
\end{align}
By \cref{eq:cf-projection} the characteristic function of the $j$-th
coordinate $e_j\T X_n$ is $s\mapsto\cf{X_n}(se_j)$, and
$\abs{1-\Real\cf{X_n}(se_j)}\le2$, so dominated convergence on the bounded
interval $[-\delta,\delta]$ gives
\begin{align}
  \frac{1}{\delta}\int_{-\delta}^{\delta}
     \bigl(1-\Real\cf{X_n}(se_j)\bigr)ds
  \xrightarrow[n\to\infty]{}
  \frac{1}{\delta}\int_{-\delta}^{\delta}
     \bigl(1-\Real\phi(se_j)\bigr)ds < \frac{\varepsilon}{2d}.
\end{align}
By \cref{lem:tightness-estimate} there is therefore an $N$ with
$\Prb[\abs{e_j\T X_n}\ge2/\delta]<\varepsilon/d$ for all $n\ge N$ and all
$j = 1,\dots,d$, whence
$\Prb[\norm{X_n}_\infty\ge2/\delta]<\varepsilon$ for $n\ge N$.
Enlarging the compact set $[-2/\delta,2/\delta]^d$ so as to accommodate the
finitely many laws of $X_1,\dots,X_{N-1}$ as well, we conclude that
$(\Law(X_n))_{n\in\N}$ is tight.

\emph{Identification of the limit.}  Let $(X_{n_l})_l$ be an arbitrary
subsequence.  By Prokhorov's theorem it has a further subsequence converging
weakly to the law of some random vector $X$.  By part~(i) the characteristic
function of that limit is $\lim_l\cf{X_{n_l}} = \phi$; in particular $\phi$ is
a characteristic function.  By \cref{thm:uniqueness}\,(i) every subsequential
weak limit has the same law, namely the one with characteristic function
$\phi$.  Now fix a bounded continuous $g\colon\R^d\to\R$.  By the previous paragraph,
every subsequence of the real sequence $\bigl(\Ex[g(X_n)]\bigr)_{n}$ has a
further subsequence converging to $\Ex[g(X)]$, and a real sequence with that
property converges to $\Ex[g(X)]$.  As $g$ was an arbitrary bounded continuous
function, $X_n\Rightarrow X$.
\end{proof}

\subsection{Entire characteristic functions}
\label{ssec:entire-cf}

Both remaining proofs rest on the following observation: a characteristic
function cannot be analytic ``by accident''.  As soon as it agrees with an
entire function near the origin, the underlying law has Gaussian-type
integrability.

\begin{lemma}[Moments from differentiability at the origin]
\label{lem:moments-from-derivatives}
Let $Y$ be a real-valued random variable, $n\in\N$, and suppose that
$\cf{Y}$ is $2n$ times differentiable at $t=0$.  Then $\Ex[Y^{2n}]<\infty$.
\end{lemma}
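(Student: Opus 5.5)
I will prove \cref{lem:moments-from-derivatives} by induction on $n$, using symmetric second differences of $\cf{Y}$ at the origin together with Fatou's lemma. Each induction step removes two orders of differentiability at the price of passing to a weighted law.

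\emph{Base case $n=1$.} Since $\cf{Y}$ is twice differentiable at $0$, it is differentiable near $0$. Applying Taylor's theorem with Peano remainder at $0$ to the even combination gives
\begin{align}
  \frac{2-\cf{Y}(h)-\cf{Y}(-h)}{h^2} \xrightarrow[h\to0]{} -\cf{Y}''(0).
\end{align}
The left hand side equals $\Ex\bigl[2(1-\cos(hY))/h^2\bigr]$. The integrand is non-negative and converges pointwise to $Y^2$. Fatou's lemma then yields
\begin{align}
  \Ex[Y^2]\le\liminf_{h\to0}\Ex\bigl[2(1-\cos(hY))/h^2\bigr] = -\cf{Y}''(0)<\infty.
\end{align}
One subtlety is that differentiability \emph{at} $0$ only gives the Peano expansion $\cf{Y}(h)=1+\cf{Y}'(0)h+\tfrac12\cf{Y}''(0)h^2+o(h^2)$. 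That is exactly what is needed, because the odd terms cancel in the symmetric sum.

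\emph{Induction step.} Assume the claim holds for $n-1$, and let $\cf{Y}$ be $2n$ times differentiable at $0$. Then $\cf{Y}$ is $2n-2$ times differentiable at $0$, so by induction $\Ex[Y^{2n-2}]<\infty$. Now \cref{prop:moments} gives $\cf{Y}\in C^{2n-2}(\R)$ with
\begin{align}
  \cf{Y}^{(2n-2)}(t)=(-1)^{n-1}\Ex\bigl[Y^{2n-2}e^{itY}\bigr]
\end{align}
for \emph{every} $t$. If $\Ex[Y^{2n-2}]=0$, then $Y=0$ a.s. and there is nothing to prove. Otherwise, define the probability measure $\nu(dy):=y^{2n-2}\,\Law(Y)(dy)/\Ex[Y^{2n-2}]$. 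Its characteristic function is $\cf{\nu}=(-1)^{n-1}\cf{Y}^{(2n-2)}/\Ex[Y^{2n-2}]$. This function is twice differentiable at $0$, because $\cf{Y}$ is $2n$ times differentiable there.

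The base case applied to $\nu$ gives $\int y^2\,\nu(dy)<\infty$, that is, $\Ex[Y^{2n}]<\infty$. Alternatively, one can redo the Fatou argument directly on $\Ex\bigl[Y^{2n-2}\,2(1-\cos(hY))/h^2\bigr]$, avoiding the auxiliary measure.

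The main point needing care is the definition of differentiability: ``$2n$ times differentiable at $0$'' presupposes that the $(2n-1)$-st derivative exists near $0$. Hence the lower-order derivatives exist in a neighbourhood, and the representation from \cref{prop:moments} is valid there and coincides with them. This is what legitimises identifying $\cf{\nu}$ with a rescaled $\cf{Y}^{(2n-2)}$ and applying the base-case Peano expansion. Beyond this, the argument needs only Fatou's lemma and \cref{prop:moments}, and it does not invoke the converse being proved, so there is no circularity.
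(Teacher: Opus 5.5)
Your proof is correct and is essentially the paper's own: induction on $n$, passage to the weighted law $y^{2n-2}\Law(Y)(dy)/\Ex[Y^{2n-2}]$ whose characteristic function is a rescaled $\cf{Y}^{(2n-2)}$ by \cref{prop:moments}, and Fatou's lemma applied to the symmetric second difference quotient, which converges by Taylor's theorem with Peano remainder. The only difference is cosmetic: the paper starts the induction at $n=0$ and does the Fatou step inside the induction step, whereas you isolate it as the base case $n=1$ and then apply that base case to the weighted law.
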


\begin{proof}
We induct on $n$, the case $n=0$ being trivial.
Assume $\cf{Y}^{(2n+2)}(0)$ exists.  Then $\cf{Y}^{(2n+1)}$ exists on a
neighbourhood of the origin, so $\cf{Y}$ is in particular $2n$ times
differentiable at $0$ and the induction hypothesis gives
$\Ex[Y^{2n}]<\infty$.
By \cref{prop:moments} the function $\cf{Y}$ is $2n$ times continuously
differentiable on $\R$ with
$\cf{Y}^{(2n)}(t) = (-1)^n\,\Ex[Y^{2n}e^{itY}]$.
Put $c := \Ex[Y^{2n}]$.  If $c=0$ then $Y=0$ almost surely and there is
nothing to prove, so assume $c>0$ and let $\varrho$ be the probability measure
with $\varrho(dy) := c^{-1}y^{2n}\Law(Y)(dy)$.
Then $(-1)^n c^{-1}\cf{Y}^{(2n)}$ is the characteristic function $\cf{\varrho}$ of
$\varrho$, and by assumption it is twice differentiable at $0$.
Writing $h := \cf{\varrho}$ and using $h(t)+h(-t)-2h(0) = -2\int(1-\cos(ty))\varrho(dy)$
we get, by Fatou's lemma,
\begin{align}
  \int_\R y^2\,\varrho(dy)
    &\le \liminf_{t\to0}\int_\R\frac{2\bigl(1-\cos(ty)\bigr)}{t^2}\,\varrho(dy)
     = -\lim_{t\to0}\frac{h(t)+h(-t)-2h(0)}{t^2}
     = -h''(0) < \infty,
\end{align}
because $2(1-\cos(ty))/t^2\to y^2$ pointwise as $t\to0$, and because the
second symmetric difference quotient converges to $h''(0)$ whenever the
latter exists, by Taylor's theorem with Peano remainder.
Hence $\Ex[Y^{2n+2}] = c\int y^2\varrho(dy)<\infty$.
\end{proof}

\begin{lemma}[Entire characteristic functions have all exponential moments]
\label{lem:entire-cf}
Let $Y$ be a real-valued random variable and suppose that there are $\delta>0$
and an entire function $F$ with $\cf{Y}(t) = F(t)$ for all $t\in(-\delta,\delta)$.
Then
\begin{align}
  \Ex\bigl[\exp(a\abs{Y})\bigr] &< \infty \qquad\text{for every } a>0,
\end{align}
the characteristic function $\cf{Y}$ extends to the entire function
$z\mapsto\Ex[e^{izY}]$, and this extension equals $F$ on all of $\C$.
\end{lemma}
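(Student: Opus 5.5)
We first show that all moments exist and are controlled by the Taylor coefficients of $F$. Since $\cf{Y}=F$ on $(-\delta,\delta)$ and $F$ is entire, $\cf{Y}$ is infinitely differentiable at $0$. \Cref{lem:moments-from-derivatives} then gives $\Ex[Y^{2n}]<\infty$ for every $n$, and \cref{prop:moments} gives $\cf{Y}^{(m)}(0)=i^m\Ex[Y^m]=F^{(m)}(0)$ for all $m$. Writing $F(z)=\sum_m c_m z^m$, we obtain $\Ex[Y^{2n}] = (-1)^n(2n)!\,c_{2n}$. Because $F$ is entire, for every $R>0$ the Cauchy estimates give $\abs{c_m}\le M(R)R^{-m}$, where $M(R):=\max_{\abs{z}=R}\abs{F(z)}$. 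Hence $\Ex[Y^{2n}]/(2n)!\le M(R)R^{-2n}$.

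Next we pass to exponential moments. Fix $a>0$ and choose $R>a$. Using $\exp(a\abs{y})\le\exp(ay)+\exp(-ay)$ and monotone convergence, exactly as in the proof of (v)$\Rightarrow$(ii) in \cref{thm:gaussian-char},
\begin{align}
  \Ex\bigl[\exp(a\abs{Y})\bigr]
    \le 2\sum_{n\ge0}\frac{a^{2n}}{(2n)!}\Ex\bigl[Y^{2n}\bigr]
    \le 2M(R)\sum_{n\ge0}(a/R)^{2n}<\infty .
\end{align}
The first inequality uses that the odd terms cancel in $\Ex[e^{aY}+e^{-aY}]=2\sum_n a^{2n}\Ex[Y^{2n}]/(2n)!$, which is justified by Tonelli's theorem since every term is nonnegative. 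This proves the first claim.

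By \cref{thm:analytic-strip}, now applicable for every $a$, the map $G(z):=\Ex[e^{izY}]$ is entire. Moreover $G=\cf{Y}=F$ on $(-\delta,\delta)$. The identity theorem in one complex variable, applied to the entire functions $G$ and $F$ on the connected set $\C$, gives $G=F$ everywhere. In particular $\cf{Y}=F$ on all of $\R$, not merely near $0$.

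The main obstacle is the first step. We must make sure that local agreement with $F$ yields differentiability of every order at $0$, so that \cref{lem:moments-from-derivatives} applies. We must also identify the derivatives $F^{(2n)}(0)$ with the moments via \cref{prop:moments} before any analyticity of $\cf{Y}$ is known. Both points are fine because the derivatives at $0$ depend only on values in $(-\delta,\delta)$. Once this is done, the Cauchy estimate converts entireness into a geometric bound on the even moments, and the rest is routine.
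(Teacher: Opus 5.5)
Your proof is correct and follows the paper's argument. \cref{lem:moments-from-derivatives} and \cref{prop:moments} identify the Taylor coefficients of $F$ with the moments of $Y$, the even-moment series bounds $\Ex[\exp(a\abs{Y})]$, and \cref{thm:analytic-strip} together with the identity theorem finish the proof. The only difference is cosmetic: where the paper simply uses absolute convergence of the Taylor series of $F$ at $z=a$, you make this explicit via Cauchy's estimates, which also gives the quantitative bound $\Ex[\exp(a\abs{Y})]\le 2M(R)/(1-a^2/R^2)$ for every $R>a$.
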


\begin{proof}
Since $F$ is entire, $\cf{Y}$ is infinitely differentiable on
$(-\delta,\delta)$, so by \cref{lem:moments-from-derivatives} all \emph{even}
moments of $Y$ are finite, hence by Lyapunov's inequality all moments are, and
by \cref{prop:moments}
$\cf{Y}^{(m)}(0) = i^m\Ex[Y^m]$ for every $m$.
These are also the derivatives of $F$ at $0$, so the Taylor series of $F$ at
the origin is
\begin{align}
  F(z) &= \sum_{m\ge0}\frac{i^m\,\Ex[Y^m]}{m!}\,z^m,
\end{align}
and, $F$ being entire, this series converges absolutely for every $z\in\C$.
In particular, for every $a>0$,
\begin{align}
  \sum_{k\ge0}\frac{a^{2k}}{(2k)!}\,\Ex\bigl[Y^{2k}\bigr] < \infty .
\end{align}
Using $\exp(a\abs{y})\le\exp(ay)+\exp(-ay)$ and monotone convergence,
\begin{align}
  \Ex\bigl[\exp(a\abs{Y})\bigr]
    \le \Ex\bigl[e^{aY}\bigr]+\Ex\bigl[e^{-aY}\bigr]
     = 2\sum_{k\ge0}\frac{a^{2k}}{(2k)!}\Ex\bigl[Y^{2k}\bigr] < \infty .
\end{align}
By \cref{thm:analytic-strip} the map $z\mapsto\Ex[e^{izY}]$ is therefore entire
and restricts to $\cf{Y}$ on $\R$.  It agrees with $F$ on the interval
$(-\delta,\delta)$, hence on all of $\C$ by the identity theorem.
\end{proof}

\subsection{Marcinkiewicz' theorem}
\label{ssec:proof-marcinkiewicz}

\begin{proof}[Proof of \cref{thm:marcinkiewicz}]
By assumption $\cf{Y} = e^{g}$ on a neighbourhood of the origin, with a
polynomial $g$; replacing $g$ by $g-g(0)$ -- which changes $e^g$ not at all,
and the degree only in the trivial case of a constant $g$, since
$e^{g(0)} = \cf{Y}(0) = 1$ forces $g(0)\in2\pi i\,\Ints$ -- we may and do
assume $g(0)=0$.
The function $e^{g}$ is entire, so \cref{lem:entire-cf} applies: $Y$ has all
exponential moments, and
\begin{align}
  \label{eq:marc-entire}
  \cf{Y}(z) &= \Ex\bigl[e^{izY}\bigr] = \exp\bigl(g(z)\bigr),
  \qquad z\in\C .
\end{align}
Write $g(z) = \sum_{m=0}^{n}c_mz^m$ with $c_n\neq0$ and suppose, aiming at a
contradiction, that $n\ge3$.

Since $Y$ has all exponential moments, \cref{prop:ridge} applies on every
horizontal strip and gives, for $z = x+iy$ with $x,y\in\R$,
\begin{align}
  \exp\bigl(\Real g(x+iy)\bigr) = \bigabs{\cf{Y}(x+iy)}
    \le \cf{Y}(iy) = \exp\bigl(\Real g(iy)\bigr),
\end{align}
where we used that $\cf{Y}(iy) = \Ex[e^{-yY}]>0$ is real, so that
$\cf{Y}(iy) = \exp(\Real g(iy))$.  Taking logarithms,
\begin{align}
  \label{eq:marc-ridge}
  u(x,y) &\le u(0,y),
  & u(x,y) &:= \Real g(x+iy), \qquad x,y\in\R .
\end{align}

Write $c_n = \abs{c_n}e^{i\psi}$ and, for $\theta\in[0,2\pi)$ and $r>0$, put
$z = re^{i\theta}$.  Since $g$ is a polynomial of degree $n$,
\begin{align}
  \label{eq:marc-lhs}
  u\bigl(r\cos\theta,\,r\sin\theta\bigr)
    &= \Real\bigl(c_nr^ne^{in\theta}\bigr) + O\bigl(r^{n-1}\bigr)
     = \abs{c_n}\cos(\psi+n\theta)\,r^n + O\bigl(r^{n-1}\bigr),
\end{align}
uniformly in $\theta$.  Likewise, for $y\in\R$,
\begin{align}
  \label{eq:marc-rhs}
  u(0,y) &= \Real\bigl(c_n(iy)^n\bigr) + O\bigl(\abs{y}^{n-1}\bigr)
          \le \abs{c_n}\abs{y}^n + O\bigl(\abs{y}^{n-1}\bigr).
\end{align}

Let $\theta^\ast$ be any maximiser of $\theta\mapsto\cos(\psi+n\theta)$, i.e.\
$\cos(\psi+n\theta^\ast)=1$.  Applying \cref{eq:marc-ridge} with
$x = r\cos\theta^\ast$ and $y = r\sin\theta^\ast$ and inserting
\cref{eq:marc-lhs,eq:marc-rhs} gives
\begin{align}
  \abs{c_n}r^n + O\bigl(r^{n-1}\bigr)
    &\le \abs{c_n}\,\abs{\sin\theta^\ast}^n r^n + O\bigl(r^{n-1}\bigr).
\end{align}
Dividing by $r^n$ and letting $r\to\infty$ yields
$\abs{c_n}\le\abs{c_n}\abs{\sin\theta^\ast}^n$, and since $c_n\neq0$ and
$\abs{\sin\theta^\ast}\le1$ this forces $\abs{\sin\theta^\ast}=1$, i.e.\
$\theta^\ast\in\{\pi/2,\,3\pi/2\}$.

But the equation $\cos(\psi+n\theta)=1$ holds exactly when
$\psi+n\theta\in2\pi\,\Ints$, so its solutions form an arithmetic progression
of spacing $2\pi/n$, of which exactly $n$ lie in $[0,2\pi)$.  For $n\ge3$ there are thus at least three maximisers, while
only two values of $\theta$ in $[0,2\pi)$ satisfy $\abs{\sin\theta}=1$.  This
contradiction shows $n\le2$.

It remains to determine the coefficients.  We have $g(t) = c_1t+c_2t^2$ with
$g(0)=0$.  From $\cf{Y}(-t) = \overline{\cf{Y}(t)}$
(\cref{prop:cf-elementary}\,(iii)) we get
$\exp(g(-t)) = \exp(\overline{g(t)})$ for all real $t$, so the polynomial
$t\mapsto g(-t)-\overline{g(t)}$ takes values in $2\pi i\,\Ints$; being
continuous and vanishing at $t=0$, it vanishes identically.  Hence
$-c_1t+c_2t^2 = \overline{c_1}t+\overline{c_2}t^2$ for all real $t$, whence
$c_2 = \overline{c_2}$ is real and $c_1 = -\overline{c_1}$ is purely
imaginary, say $c_1 = i\mu$ with $\mu\in\R$.
Then $\abs{\cf{Y}(t)} = \exp(c_2t^2)\le1$ for all $t\in\R$ forces $c_2\le0$,
so $c_2 = -\tfrac12\sigma^2$ with $\sigma^2 := -2c_2\in\Rnn$.
Therefore $\cf{Y}(t) = \exp(i\mu t-\tfrac12\sigma^2t^2)$, which by
\cref{not:degenerate} and \cref{thm:uniqueness}\,(i) means
$Y\sim\Normal(\mu,\sigma^2)$.
\end{proof}

\subsection{Cram\'er's decomposition theorem}
\label{ssec:proof-cramer}

The growth estimate needed for Cram\'er's theorem is supplied by the following
classical coefficient bound, which replaces an appeal to Hadamard's
factorisation theorem.

\begin{lemma}[Borel--Carath\'eodory coefficient bound]
\label{lem:borel-caratheodory}
Let $h$ be an entire function with $h(0)=0$ and Taylor expansion
$h(z)=\sum_{m\ge1}a_mz^m$.  Then for every $r>0$ and every $m\ge1$,
\begin{align}
  \label{eq:bc-bound}
  \abs{a_m}\,r^m &\le 4\,M(r),
  & M(r) &:= \max_{\abs{z}=r}\Real h(z) .
\end{align}
\end{lemma}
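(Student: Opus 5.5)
We use Fourier coefficients on the circle of radius $r$, together with the positivity trick that converts a bound on $\Real h$ into a bound on all coefficients. Fix $r>0$ and write $u(\theta):=\Real h(re^{i\theta})$. Since $h(re^{i\theta})=\sum_{m\ge1}a_mr^me^{im\theta}$ converges uniformly in $\theta$, we have
\begin{align}
  u(\theta) &= \tfrac12\sum_{m\ge1}\bigl(a_mr^me^{im\theta}+\overline{a_m}r^me^{-im\theta}\bigr).
\end{align}
Termwise integration against the trigonometric system (background fact (d)) then gives
\begin{align}
  \frac{1}{2\pi}\int_0^{2\pi}u(\theta)\,d\theta &= \Real h(0) = 0,
  & \frac{1}{\pi}\int_0^{2\pi}u(\theta)e^{-im\theta}\,d\theta &= a_mr^m ,
  \qquad m\ge1 .
\end{align}
The first identity is also the mean value property of the harmonic function $\Real h$.

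Next we bound the coefficient identity using positivity. Because $u$ has mean zero, we may subtract $M(r)$ inside the integral, or add it, without changing the $m$-th coefficient for $m\ge1$:
\begin{align}
  a_mr^m &= -\frac{1}{\pi}\int_0^{2\pi}\bigl(M(r)-u(\theta)\bigr)e^{-im\theta}\,d\theta .
\end{align}
This holds since $\int_0^{2\pi}e^{-im\theta}\,d\theta=0$ for $m\ge1$. The function $M(r)-u$ is non-negative by the definition of $M(r)$. Hence
\begin{align}
  \abs{a_m}r^m &\le \frac{1}{\pi}\int_0^{2\pi}\bigl(M(r)-u(\theta)\bigr)\,d\theta = 2M(r),
\end{align}
where we again used that $u$ has mean zero. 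This is even sharper than the claimed bound $4M(r)$, so \cref{eq:bc-bound} follows.

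One side remark settles a possible sign concern. Since $\int u=0$, $u$ cannot be everywhere negative, so $M(r)\ge0$ and the bound is meaningful.

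The main point requiring care is the justification of termwise integration. It follows from uniform convergence of the power series on the compact circle $\abs z=r$, because $r$ lies inside the infinite radius of convergence. No real obstacle is expected.
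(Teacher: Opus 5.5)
Your proof is correct and follows essentially the same route as the paper: Fourier coefficients of $\Real h$ on the circle $\abs{z}=r$, the mean-zero property coming from $h(0)=0$, and positivity of $M(r)-\Real h$ against a bounded kernel. The only difference is that you bound the complex coefficient in one stroke via $\bigl|\int_0^{2\pi}(M(r)-u)e^{-im\theta}\,d\theta\bigr|\le\int_0^{2\pi}(M(r)-u)\,d\theta$, whereas the paper bounds $\Real a_m$ and $\Imag a_m$ separately with the kernels $1\pm\cos(m\vartheta)$ and $1\pm\sin(m\vartheta)$ and then adds the two bounds. This is why you get the sharp constant $2$ instead of $4$; either constant suffices for the use of the lemma in Cram\'er's theorem.
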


\begin{proof}
Note first that $M(r)\ge\Real h(0)=0$ by the mean value property applied to
the harmonic function $\Real h$.
Write $a_m = \alpha_m+i\beta_m$ with $\alpha_m,\beta_m\in\R$.  On the circle
$z=re^{i\vartheta}$,
\begin{align}
  \Real h\bigl(re^{i\vartheta}\bigr)
    &= \sum_{m\ge1}r^m\bigl(\alpha_m\cos(m\vartheta)
       -\beta_m\sin(m\vartheta)\bigr),
\end{align}
the series converging uniformly in $\vartheta$.  Termwise integration gives
the Fourier coefficients
\begin{align}
  \frac1\pi\int_0^{2\pi}\Real h\bigl(re^{i\vartheta}\bigr)
      \cos(m\vartheta)\,d\vartheta &= r^m\alpha_m,
  &
  \frac1\pi\int_0^{2\pi}\Real h\bigl(re^{i\vartheta}\bigr)
      \sin(m\vartheta)\,d\vartheta &= -r^m\beta_m,
\end{align}
while the mean value property gives
$\int_0^{2\pi}\Real h(re^{i\vartheta})\,d\vartheta = 2\pi\,\Real h(0) = 0$.
Therefore, for either sign,
\begin{align}
  \pm r^m\alpha_m
    &= \frac1\pi\int_0^{2\pi}\Real h\bigl(re^{i\vartheta}\bigr)
       \bigl(1\pm\cos(m\vartheta)\bigr)d\vartheta
    \le \frac{M(r)}{\pi}\int_0^{2\pi}\bigl(1\pm\cos(m\vartheta)\bigr)
       d\vartheta = 2M(r),
\end{align}
where we used $1\pm\cos(m\vartheta)\ge0$ and
$\Real h(re^{i\vartheta})\le M(r)$.
Hence $\abs{\alpha_m}r^m\le2M(r)$, and the same argument with
$1\pm\sin(m\vartheta)$ gives $\abs{\beta_m}r^m\le2M(r)$.
Thus $\abs{a_m}r^m\le(\abs{\alpha_m}+\abs{\beta_m})r^m\le4M(r)$.
\end{proof}

\begin{proof}[Proof of \cref{thm:cramer}]
Let $Y_1\indep Y_2$ with $S := Y_1+Y_2\sim\Normal(\mu,\sigma^2)$.

\emph{The degenerate case.}  If $\sigma^2=0$ then $\cf{S} = \cf{Y_1}\cf{Y_2}$
has modulus $1$ everywhere, and since $\abs{\cf{Y_j}}\le1$ this forces
$\abs{\cf{Y_j}}\equiv1$.  Then the symmetrised variable $Y_j-Y_j'$, with
$Y_j'$ an independent copy of $Y_j$, has characteristic function
$\abs{\cf{Y_j}}^2\equiv1$, hence is almost surely $0$ by
\cref{thm:uniqueness}\,(i); so $Y_j$ is almost surely constant, i.e.\
degenerate Gaussian.  Assume from now on $\sigma>0$, and, replacing $Y_1$ by
$Y_1-\mu$, that $\mu=0$.

\emph{Step 1: Gaussian tails.}  Let $(Y_1',Y_2')$ be an independent copy of
$(Y_1,Y_2)$ and put $\widetilde Y_j := Y_j-Y_j'$ and
$\widetilde S := \widetilde Y_1+\widetilde Y_2 \sim \Normal(0,2\sigma^2)$.
The variables $\widetilde Y_1,\widetilde Y_2$ are independent and symmetric,
so $\Prb[\widetilde Y_2\ge0]\ge\tfrac12$ and hence, for $x>0$,
\begin{align}
  \Prb\bigl[\widetilde S>x\bigr]
    &\ge \Prb\bigl[\widetilde Y_1>x,\ \widetilde Y_2\ge0\bigr]
     = \Prb\bigl[\widetilde Y_1>x\bigr]\,\Prb\bigl[\widetilde Y_2\ge0\bigr]
    \ge \tfrac12\,\Prb\bigl[\widetilde Y_1>x\bigr].
\end{align}
A Chernoff bound for $\widetilde S\sim\Normal(0,2\sigma^2)$ gives
$\Prb[\widetilde S>x]\le\exp(-x^2/(4\sigma^2))$, so by symmetry
\begin{align}
  \label{eq:cramer-tail}
  \Prb\bigl[\abs{\widetilde Y_1}>x\bigr] &\le 4\exp\Bigl(-\frac{x^2}{4\sigma^2}\Bigr),
  \qquad x>0 .
\end{align}
Let $\theta$ be a median of $Y_1$, so that $\Prb[Y_1'\le\theta]\ge\tfrac12$
and $\Prb[Y_1'\ge\theta]\ge\tfrac12$.  Then
\begin{align}
  \Prb\bigl[\widetilde Y_1>x\bigr]
    \ge \Prb\bigl[Y_1>\theta+x,\ Y_1'\le\theta\bigr]
    \ge \tfrac12\,\Prb\bigl[Y_1>\theta+x\bigr],
\end{align}
and symmetrically for the left tail, so \cref{eq:cramer-tail} yields
$\Prb[\abs{Y_1-\theta}>x]\le16\exp(-x^2/(4\sigma^2))$.
Consequently $\Ex[\exp(a\abs{Y_1})]<\infty$ for every $a>0$, and the same
holds for $Y_2$.

\emph{Step 2: entire extensions.}  By \cref{thm:analytic-strip} both
$\cf{Y_1}$ and $\cf{Y_2}$ extend to entire functions, given by
$\cf{Y_j}(z) = \Ex[e^{izY_j}]$.  The entire functions
$z\mapsto\cf{Y_1}(z)\cf{Y_2}(z)$ and $z\mapsto\exp(-\tfrac12\sigma^2z^2)$
agree on $\R$, hence on $\C$ by the identity theorem:
\begin{align}
  \label{eq:cramer-product}
  \cf{Y_1}(z)\,\cf{Y_2}(z) &= \exp\Bigl(-\tfrac12\sigma^2z^2\Bigr),
  \qquad z\in\C .
\end{align}
In particular neither factor has a zero in $\C$.

\emph{Step 3: growth.}  Fix $j\in\{1,2\}$ and let $l$ be the other index.
By \cref{prop:ridge}, for $z=x+iy$,
$\abs{\cf{Y_j}(z)}\le\cf{Y_j}(iy) = \Ex[e^{-yY_j}]$.
By \cref{eq:cramer-product} at $z=iy$,
$\cf{Y_j}(iy)\,\cf{Y_l}(iy) = \exp(\tfrac12\sigma^2y^2)$, and Jensen's
inequality gives $\cf{Y_l}(iy) = \Ex[e^{-yY_l}]\ge\exp(-y\,\Ex[Y_l])$.
Hence, with $C := \abs{\Ex[Y_l]}$,
\begin{align}
  \label{eq:cramer-growth}
  \bigabs{\cf{Y_j}(z)}
    \le \frac{\exp\bigl(\tfrac12\sigma^2y^2\bigr)}{\cf{Y_l}(iy)}
    \le \exp\Bigl(\tfrac12\sigma^2y^2 + C\abs{y}\Bigr)
    \le \exp\Bigl(\tfrac12\sigma^2\abs{z}^2 + C\abs{z}\Bigr).
\end{align}

\emph{Step 4: conclusion.}  Since $\cf{Y_j}$ is entire and nowhere zero on the
simply connected domain $\C$, there is an entire $h_j$ with
$\cf{Y_j} = e^{h_j}$, and we may normalise $h_j(0)=0$ because
$\cf{Y_j}(0)=1$.  By \cref{eq:cramer-growth},
\begin{align}
  \Real h_j(z) = \log\bigabs{\cf{Y_j}(z)}
    \le \tfrac12\sigma^2\abs{z}^2 + C\abs{z},
\end{align}
so in the notation of \cref{lem:borel-caratheodory},
$M(r)\le\tfrac12\sigma^2r^2+Cr$.  For the Taylor coefficients $a_m$ of $h_j$
and every $m\ge3$, \cref{eq:bc-bound} gives
\begin{align}
  \abs{a_m} &\le \frac{4M(r)}{r^m}
    \le \frac{2\sigma^2r^2+4Cr}{r^m}
    \xrightarrow[r\to\infty]{} 0,
\end{align}
so $a_m=0$ for all $m\ge3$ and $h_j$ is a polynomial of degree at most $2$.
Thus $\cf{Y_j} = \exp(h_j)$ with a polynomial $h_j$, and Marcinkiewicz'
\cref{thm:marcinkiewicz} shows that $Y_j$ is Gaussian, possibly degenerate.
\end{proof}

\section{A Proof of the Kagan--Linnik--Rao Theorem}
\label{app:kagan-proof}

This appendix proves \cref{thm:kagan}, the one result that
\cref{sec:non-constant,sec:non-gaussian,sec:gaussian-free,sec:algorithm} take
as given.  Its first part -- the equality of images and ranks -- was already
proved in \cref{prop:affine-hull}; what remains is the column dichotomy
\cref{thm:kagan}\,(1)--(2).

The strategy is the classical one that goes back to \citet{marcinkiewicz1939},
\citet{darmois1953} and \citet{skitovich1954}, and that is developed
systematically in \citet[Chapters~3 and 10]{kagan1973characterization} and
\citet{linnik1977}.  It has three steps.
\begin{enumerate}[label=(\arabic*)]
  \item Take distinguished logarithms in the identity between the two
        characteristic functions.  Because each representation is an
        independent sum of \emph{one-dimensional} sources evaluated along the
        columns of the mixing matrix, the identity becomes a linear relation
        among \emph{ridge functions} $t\mapsto\psi(a\T t)$ on $\R^p$.
  \item Kill all but one of these ridge functions with finite-difference
        operators: a ridge function in direction $a$ is annihilated by the
        difference operator in any direction $h\perp a$, and the hypothesis
        that the columns are pairwise non-proportional is exactly what
        guarantees that each unwanted direction admits such an $h$ that does
        \emph{not} annihilate the term we want to keep.
  \item What survives is a one-dimensional function all of whose iterated
        differences vanish.  By Fr\'echet's functional equation such a
        function is a polynomial, and Marcinkiewicz'
        \cref{thm:marcinkiewicz} converts ``polynomial cumulant generating
        function'' into ``Gaussian''.
\end{enumerate}
Everything below uses only the elementary properties of characteristic
functions collected in \cref{prop:cf-elementary} -- in particular
\cref{eq:cf-affine,eq:cf-product} -- the distinguished logarithm of
\cref{lem:distinguished-log}, Marcinkiewicz' \cref{thm:marcinkiewicz},
\cref{prop:affine-hull} for the first part, and elementary calculus.

\subsection{Finite differences}
\label{ssec:finite-differences}

\begin{definition}[Difference operator]
\label{def:difference-operator}
For a function $F$ defined on a subset of $\R^p$ and a vector $h\in\R^p$ we
write
\begin{align}
  \bigl(\Delta_h F\bigr)(t) &:= F(t+h) - F(t),
\end{align}
whenever both $t$ and $t+h$ lie in the domain of $F$.
Difference operators in different directions commute, and iterating gives the
inclusion--exclusion formula
\begin{align}
  \label{eq:difference-expansion}
  \bigl(\Delta_{h_1}\cdots\Delta_{h_m}F\bigr)(t)
    &= \sum_{S\subseteq\{1,\dots,m\}}(-1)^{m-\abs{S}}\,
       F\Bigl(t+\sum_{r\in S}h_r\Bigr),
\end{align}
which requires $F$ to be defined at the $2^m$ points $t+\sum_{r\in S}h_r$.
\end{definition}

A function of the form $t\mapsto \psi(a\T t)$, constant along the hyperplane
$a^\perp$, is called a \emph{ridge function} in the direction $a$.  The next
two lemmas are the whole geometric input.

\begin{lemma}[Ridge functions are annihilated by orthogonal differences]
\label{lem:ridge-kill}
Let $a\in\R^p$, let $\psi$ be a function on a subset of $\R$ and put
$F(t) := \psi(a\T t)$.  If $h\in\R^p$ satisfies $a\T h = 0$, then
$\Delta_hF = 0$ on the domain of $F$.
\end{lemma}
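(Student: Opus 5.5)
This is a one-line computation, and I would carry it out directly from \cref{def:difference-operator}. Fix $t$ in the domain of $F$ such that $t+h$ also lies in the domain, since only for such $t$ is $(\Delta_hF)(t)$ defined. By definition of $F$, both $a\T t$ and $a\T(t+h)$ lie in the domain of $\psi$. Linearity of the inner product together with the hypothesis $a\T h=0$ gives
\begin{align}
  a\T(t+h) &= a\T t + a\T h = a\T t .
\end{align}
So $F(t+h)=\psi\bigl(a\T(t+h)\bigr)=\psi(a\T t)=F(t)$, and therefore $(\Delta_hF)(t)=0$.

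Since $t$ was an arbitrary point at which $\Delta_hF$ is defined, $\Delta_hF$ vanishes identically on its domain. That is the claim.

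I expect no step to be an obstacle. The only point needing care is the domain bookkeeping, which matters later when $F$ is a distinguished logarithm living only on a ball $B_\delta(0)$. The statement is deliberately phrased \enquote{on the domain of $F$}, and the argument uses only that both evaluation points are admissible, so no extension of $\psi$ is required.

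It is also worth noting two consequences for later use. Because difference operators commute, any iterated difference $\Delta_{h_1}\cdots\Delta_{h_m}F$ with at least one $h_r\perp a$ also vanishes wherever the $2^m$ points of \cref{eq:difference-expansion} lie in the domain. This follows by applying the lemma to the inner function $\Delta_{h_1}\cdots\widehat{\Delta_{h_r}}\cdots\Delta_{h_m}F$, which is again a ridge function in direction $a$, with $\psi$ replaced by the corresponding one-dimensional difference of $\psi$.
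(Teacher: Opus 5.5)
Your proof is correct and is exactly the paper's one-line argument: $F(t+h)=\psi(a\T t+a\T h)=\psi(a\T t)=F(t)$ at every point where both evaluations are defined. Your remark on iterated differences is also sound. It matches how the lemma is used in Step~3 of the proof of \cref{thm:kagan}: one factor $\Delta_{h_r}$ with $h_r\perp d_r$ kills the ridge term in direction $d_r$, and commutativity of the difference operators lets that factor act on the whole composition.
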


\begin{proof}
$F(t+h) = \psi\bigl(a\T t + a\T h\bigr) = \psi(a\T t) = F(t)$.
\end{proof}

\begin{lemma}[Separating two non-proportional directions]
\label{lem:separating-direction}
Let $b,d\in\R^p\setminus\{0\}$ be non-proportional.  Then there exists
$h\in\R^p$ with
\begin{align}
  d\T h &= 0, & b\T h &\neq 0 .
\end{align}
Moreover the set of such $h$ is open in the hyperplane $d^\perp$, so $h$ may
be chosen with $\norm{h}$ arbitrarily small, and $sh$ is admissible for every
$s\neq0$.
\end{lemma}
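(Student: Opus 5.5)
The plan is to work entirely inside the hyperplane $d^\perp$ and to reduce the claim to the statement that $b$ is not orthogonal to all of $d^\perp$. Consider the linear functional $\ell\colon d^\perp\to\R$, $\ell(h) := b\T h$. The admissible set is $\{h\in d^\perp : \ell(h)\neq0\}$. This is the complement in $d^\perp$ of the kernel of $\ell$. So it suffices to show that $\ell$ is not identically zero; everything else will follow from linearity and continuity.

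\emph{Step 1: $\ell\not\equiv0$.} Suppose, for contradiction, that $b\T h=0$ for every $h\in d^\perp$. Then $b\in(d^\perp)^\perp=\spn\{d\}$, so $b=\lambda d$ for some $\lambda\in\R$. Since $b\neq0$ we must have $\lambda\neq0$, so $b$ and $d$ are proportional, contradicting the hypothesis. Concretely, one can also exhibit $h$: take
\begin{align}
  h := b - \frac{d\T b}{\norm{d}^2}\,d ,
\end{align}
the orthogonal projection of $b$ onto $d^\perp$. Then $d\T h=0$ by construction, and
\begin{align}
  b\T h = \norm{b}^2 - \frac{(d\T b)^2}{\norm{d}^2}.
\end{align}
This is strictly positive by the strict Cauchy--Schwarz inequality, whose equality case is exactly linear dependence of $b$ and $d$. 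I would state the second, explicit version, since it gives a concrete witness.

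\emph{Step 2: openness, small norm, and scaling.} The kernel of the nonzero continuous functional $\ell$ is a closed hyperplane of $d^\perp$. Hence its complement, the admissible set, is open in $d^\perp$. For any admissible $h$ and $s\neq0$ we have $d\T(sh)=s\,d\T h=0$ and $b\T(sh)=s\,b\T h\neq0$, so $sh$ is admissible. Letting $s\to0$ then gives admissible vectors of arbitrarily small norm.

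There is no real obstacle here. The only point needing care is using non-proportionality correctly, through the equality case of Cauchy--Schwarz, or equivalently the identity $(d^\perp)^\perp=\spn\{d\}$, together with $b\neq0$ to exclude $\lambda=0$.
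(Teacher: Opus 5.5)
Your proposal is correct and follows essentially the paper's argument: the paper also argues by contradiction via orthogonal complements ($d^\perp\subseteq b^\perp$ forces $\spn\{b\}\subseteq\spn\{d\}$), obtains openness because the admissible set is the complement in $d^\perp$ of the closed subspace $b^\perp\cap d^\perp$, and uses invariance under $h\mapsto sh$ for small norm. Your explicit witness $h=b-\frac{d\T b}{\norm{d}^2}\,d$, with the strict Cauchy--Schwarz inequality, is a valid concrete supplement that the paper does not spell out, but it does not change the approach.
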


\begin{proof}
Suppose no such $h$ exists, i.e.\ $d^\perp\subseteq b^\perp$.  Taking
orthogonal complements reverses the inclusion, so
$\spn\{b\} = \bigl(b^\perp\bigr)^\perp \subseteq
 \bigl(d^\perp\bigr)^\perp = \spn\{d\}$.
As $b\neq0$ this makes $b$ a non-zero multiple of $d$, contradicting
non-proportionality.
The set $\{h\in d^\perp : b\T h\neq0\}$ is the complement in $d^\perp$ of the
closed subspace $b^\perp\cap d^\perp$, hence open in $d^\perp$; and it is
invariant under $h\mapsto sh$ for $s\neq0$, so it contains vectors of
arbitrarily small norm.
\end{proof}

The analytic input is the local form of Fr\'echet's functional equation.  We
prove it from scratch; the argument is a smoothing bootstrap followed by one
differentiation.

\begin{lemma}[Fr\'echet's functional equation, local form]
\label{lem:frechet}
Let $m\in\N$ and let $\rho>\sigma>0$ and $\varepsilon>0$ satisfy
$\sigma+m\varepsilon\le\rho$.
Let $F\colon(-\rho,\rho)\to\C$ be continuous and assume
\begin{align}
  \label{eq:frechet-hypothesis}
  \bigl(\Delta_{c_1}\cdots\Delta_{c_m}F\bigr)(u) &= 0
  \qquad\text{whenever } \abs{c_1},\dots,\abs{c_m}<\varepsilon
  \text{ and } \abs{u}<\sigma .
\end{align}
Then $F$ coincides on $\bigl(-\tfrac{\sigma}{4},\tfrac{\sigma}{4}\bigr)$ with
a polynomial of degree at most $m-1$.
\end{lemma}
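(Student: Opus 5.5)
The plan is a smoothing bootstrap: mollify $F$, show the mollified function is a polynomial of degree at most $m-1$, and let the mollification parameter tend to zero, using that polynomials of bounded degree form a finite-dimensional, hence closed, space. Concretely, fix a continuous mollifier $\phi_\eta\ge0$ supported in $[0,\eta]$ with $\int\phi_\eta=1$ for small $\eta<\varepsilon$, and put $F_\eta(u):=\int F(u+s)\phi_\eta(s)\,ds$. This is defined and $C^\infty$ on $(-\rho,\rho-\eta)$. Since difference operators commute with this convolution, and since shifting $u$ by $s\in[0,\eta]$ keeps $\abs{u+s}<\sigma$ for $\abs{u}<\sigma-\eta$, hypothesis \cref{eq:frechet-hypothesis} transfers to $F_\eta$ on the slightly smaller range $\abs{u}<\sigma-\eta$. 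The constraint $\sigma+m\varepsilon\le\rho$ is exactly what guarantees that all $2^m$ points in the expansion \cref{eq:difference-expansion} stay inside $(-\rho,\rho)$, so every expression above is well defined.

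Next I would show that a smooth $G$ with $\Delta_{c_1}\cdots\Delta_{c_m}G(u)=0$ for all small $c_r$ and all $\abs{u}<\sigma'$ satisfies $G^{(m)}\equiv0$ there. The natural route is to divide by $c_1\cdots c_m$ and let each $c_r\to0$ in turn. Since $\Delta_cG/c\to G'$ locally uniformly, iterating gives $\Delta_{c_1}\cdots\Delta_{c_m}G/(c_1\cdots c_m)\to G^{(m)}(u)$. Alternatively one can take all $c_r=c$ and use $\Delta_c^mG(u)=c^mG^{(m)}(\xi)$ by the mean value theorem for iterated differences (applied to real and imaginary parts), then let $c\to0$. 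Either way $F_\eta$ is a polynomial of degree at most $m-1$ on $(-\sigma+\eta,\sigma-\eta)$, which contains $(-\sigma/2,\sigma/2)$ once $\eta<\sigma/2$.

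Finally, as $\eta\downarrow0$ we have $F_\eta\to F$ uniformly on $[-\sigma/2,\sigma/2]$ by uniform continuity of $F$ on compacts. The polynomials of degree at most $m-1$, restricted to this interval, form a finite-dimensional subspace of $C([-\sigma/2,\sigma/2])$ and are therefore closed under uniform limits. Hence $F$ is such a polynomial on $(-\sigma/2,\sigma/2)$, and in particular on $(-\sigma/4,\sigma/4)$. The generous margin $\sigma/4$ in the statement leaves room for a two-stage variant: first mollify to obtain $C^1$ regularity, then apply one difference quotient, which is what the paper's phrase ``smoothing bootstrap followed by one differentiation'' suggests. The direct route above should suffice.

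The main obstacle is the bookkeeping of domains. One has to check that after mollification and after restricting the $c_r$ to $(0,\varepsilon-\eta)$ or $(-\varepsilon,\varepsilon)$, every evaluation point $u+s+\sum_{r\in S}c_r$ lies in $(-\rho,\rho)$, and that the $u$-range shrinks by at most $\eta$. Apart from this, the only genuinely analytic step is the limit of divided differences to $G^{(m)}$, which needs smoothness of $G$. Mollification is precisely what supplies that smoothness for a merely continuous $F$.
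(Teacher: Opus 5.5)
Your argument is correct, but it takes a different route from the paper's.

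\textbf{What the paper does.} The paper never regularises $F$. Instead it proves that $F$ \emph{itself} is $C^m$:
\begin{itemize}
  \item With the box average $M_\kappa G(u)=\frac{1}{2\kappa}\int_{-\kappa}^{\kappa}G(u+s)\,ds$, it averages the hypothesis over the step sizes to obtain $(\Id{}-M_\kappa)^mF=0$ on $(-\sigma,\sigma)$.
  \item It solves this for $F=\sum_{n=1}^{m}(-1)^{n+1}\binom{m}{n}M_\kappa^{\,n}F$. Since each application of $M_\kappa$ gains one derivative by the fundamental theorem of calculus, this bootstraps $F$ from $C^0$ up to $C^m$ on $(-\tfrac34\sigma,\tfrac34\sigma)$.
  \item Only then does it differentiate: it applies the mixed derivative $\partial^m/\partial c_1\cdots\partial c_m$ to the inclusion--exclusion identity and gets $F^{(m)}=0$.
\end{itemize}
The shrinkage to $(-\tfrac{\sigma}{4},\tfrac{\sigma}{4})$ is the domain consumed by the $m$ bootstrap steps and the final differentiation. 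It is not a ``$C^1$ and then one difference quotient'' scheme, as you guessed.

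\textbf{How your route differs.} You regularise with a kernel that commutes with differences and show that each $F_\eta$ is a polynomial of degree at most $m-1$. You then recover $F$ because that finite-dimensional space is closed under uniform limits. This is the standard mollify-and-pass-to-the-limit argument. It is shorter, and it gives more: $F$ is a polynomial on all of $(-\sigma,\sigma)$, not just on $(-\tfrac{\sigma}{4},\tfrac{\sigma}{4})$. The paper's route needs only a box average and the fundamental theorem of calculus, with no bump function and no closedness argument. It also exhibits the mechanism directly: a continuous solution of the difference equation is automatically $C^m$.

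\textbf{One slip to fix.} You take $\phi_\eta$ merely \emph{continuous} but then assert $F_\eta\in C^\infty$. Convolution with a merely continuous kernel does not in general produce a $C^m$ function, and your divided-difference step needs $F_\eta\in C^m$. Take $\phi_\eta\in C^\infty$ (or at least $C^m$) supported in $[0,\eta]$. With that change every step goes through, including your domain bookkeeping, which is correct.
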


\begin{proof}
Note first that the condition $\sigma+m\varepsilon\le\rho$ makes
\cref{eq:frechet-hypothesis} meaningful: by
\cref{eq:difference-expansion} the left hand side only involves the values of
$F$ at the points $u+\sum_{r\in S}c_r$, all of which lie in $(-\rho,\rho)$.

Put
\begin{align}
  \kappa &:= \tfrac12\min\Bigl(\varepsilon,\ \frac{\sigma}{2m^2}\Bigr) \;>\;0,
\end{align}
and define the symmetric averaging operator
\begin{align}
  \bigl(M_\kappa G\bigr)(u)
    &:= \frac{1}{2\kappa}\int_{-\kappa}^{\kappa}G(u+s)\,ds
     = \frac{1}{2\kappa}\int_{u-\kappa}^{u+\kappa}G(v)\,dv ,
\end{align}
defined for $\abs{u}<A-\kappa$ whenever $G$ is continuous on $(-A,A)$.

\emph{Step 1: $M_\kappa$ raises smoothness.}
If $G$ is continuous on $(-A,A)$ then, by the fundamental theorem of
calculus, $M_\kappa G$ is continuously differentiable on
$(-A+\kappa,\,A-\kappa)$ with
\begin{align}
  \bigl(M_\kappa G\bigr)'(u)
    &= \frac{1}{2\kappa}\bigl(G(u+\kappa)-G(u-\kappa)\bigr).
\end{align}
Consequently, if $G\in C^{r}$ on $(-A,A)$ then $M_\kappa G\in C^{r+1}$ on
$(-A+\kappa,A-\kappa)$.

\emph{Step 2: the hypothesis says $(\Id{}-M_\kappa)^mF=0$.}
We have
\begin{align}
  F(u) - \bigl(M_\kappa F\bigr)(u)
    &= -\frac{1}{2\kappa}\int_{-\kappa}^{\kappa}\bigl(\Delta_sF\bigr)(u)\,ds ,
\end{align}
and translations -- hence all the operators $\Delta_s$ and $M_\kappa$ --
commute with one another, so Fubini's theorem gives
\begin{align}
  \label{eq:averaged-differences}
  \bigl[(\Id{}-M_\kappa)^mF\bigr](u)
    &= \frac{(-1)^m}{(2\kappa)^m}\int_{-\kappa}^{\kappa}\!\!\cdots\!\!
       \int_{-\kappa}^{\kappa}
       \bigl(\Delta_{s_1}\cdots\Delta_{s_m}F\bigr)(u)\,ds_1\cdots ds_m
     = 0
\end{align}
for $\abs{u}<\sigma$: every $s_r$ occurring in the integral satisfies
$\abs{s_r}\le\kappa<\varepsilon$, so the integrand vanishes identically by
\cref{eq:frechet-hypothesis}.

\emph{Step 3: bootstrap.}
Expanding the binomial in \cref{eq:averaged-differences} and solving for the
$n=0$ term,
\begin{align}
  \label{eq:frechet-bootstrap}
  F &= \sum_{n=1}^{m}(-1)^{n+1}\binom{m}{n}\,M_\kappa^{\,n}F
  \qquad\text{on } (-\sigma,\sigma) .
\end{align}
Every term on the right applies $M_\kappa$ at least once.
Hence, by Step 1, if $F\in C^{r}$ on $(-A,A)$ for some $A\le\rho$, then the
right hand side of \cref{eq:frechet-bootstrap} is $C^{r+1}$ on
$(-A+m\kappa,A-m\kappa)$, and therefore $F\in C^{r+1}$ on
$\bigl(-A',A'\bigr)$ with $A' := \min(\sigma,\,A-m\kappa)$.
Starting from $F\in C^0$ on $(-\rho,\rho)$ and iterating $m$ times, and using
$\rho-m\kappa\ge\rho-m\varepsilon\ge\sigma$ at the first step, we obtain
$F\in C^{m}$ on $(-A_m,A_m)$ with
$A_m \ge \sigma-(m-1)m\kappa \ge \sigma-m^2\kappa \ge \tfrac34\sigma$,
where the last inequality uses $\kappa\le\sigma/(4m^2)$.

\emph{Step 4: differentiate.}
Fix $u$ with $\abs{u}<\sigma/4$ and let $c_1,\dots,c_m$ range over
$(-\delta,\delta)$, where
$\delta := \tfrac12\min\bigl(\varepsilon,\,\sigma/(2m)\bigr)$, so that
$\abs{u}+m\delta<\tfrac34\sigma\le A_m$ and $\delta<\varepsilon$.
By \cref{eq:difference-expansion} and \cref{eq:frechet-hypothesis},
\begin{align}
  \sum_{S\subseteq\{1,\dots,m\}}(-1)^{m-\abs{S}}\,
    F\Bigl(u+\sum_{r\in S}c_r\Bigr) &= 0 ,
\end{align}
identically in $(c_1,\dots,c_m)\in(-\delta,\delta)^m$.
All terms are $C^m$ functions of $(c_1,\dots,c_m)$ by Step 3, so we may apply
$\partial^m/\partial c_1\cdots\partial c_m$.
A summand indexed by $S\neq\{1,\dots,m\}$ does not depend on $c_{r_0}$ for any
$r_0\notin S$ and is therefore annihilated; the summand indexed by
$S=\{1,\dots,m\}$ contributes $F^{(m)}(u+c_1+\dots+c_m)$.
Hence $F^{(m)}(u+c_1+\dots+c_m)=0$, and setting $c_1=\dots=c_m=0$ gives
$F^{(m)}(u)=0$.
As $u$ was arbitrary in the interval
$\bigl(-\tfrac{\sigma}{4},\tfrac{\sigma}{4}\bigr)$, which is connected, $F$ is
there a polynomial of degree at most $m-1$.
\end{proof}

\begin{remark}[What the hypotheses of \cref{lem:frechet} do and do not say]
\label{rem:frechet-scope}
Two features of \cref{eq:frechet-hypothesis} are essential.
First, the steps $c_1,\dots,c_m$ are allowed to \emph{vary} over a
neighbourhood of the origin, which is what makes the averaging in
\cref{eq:averaged-differences} legitimate; for a single fixed step vector the
conclusion is false, as the $1$-periodic function $F(u)=\sin(2\pi u)$ with
$m=1$, $c_1=1$ shows.
Second, only continuity of $F$ is assumed; no differentiability is available a
priori, and Step~3 is precisely what manufactures it.
The global version of the statement -- a continuous $F$ on $\R$ with
$\Delta_{c_1}\cdots\Delta_{c_m}F\equiv0$ for all $c_1,\dots,c_m\in\R$ is a
polynomial of degree $<m$ -- is due to \citet{frechet1909}, and the equation
carries his name.
Restricted-domain versions, in which the increments and the base point are
confined to a subset, have been studied since; \citet{ger1994} shows that a
solution on a sufficiently regular domain always extends to a polynomial
function on the whole space.
We have given a direct proof of the one-dimensional continuous case, which is
all that is needed below and is short enough not to warrant a detour.
\end{remark}

\subsection{Proof of the column dichotomy}
\label{ssec:kagan-proof}

\begin{proof}[Proof of \cref{thm:kagan}\,(1)--(2)]
Write $k_i := k\up{i}$, let $a_1,\dots,a_{k_1}$ be the columns of $A\up{1}$
and $b_1,\dots,b_{k_2}$ the columns of $A\up{2}$, and put
\begin{align}
  \phi_j &:= \cf{Z\up{1}_j}, & \gamma_l &:= \cf{Z\up{2}_l},
  & c &:= \mu\up{2}-\mu\up{1} .
\end{align}
By \cref{eq:cf-affine,eq:cf-product}, the two representations
\cref{eq:two-rep-kagan} give, for every $t\in\R^p$,
\begin{align}
  \label{eq:kagan-cf-identity}
  \exp\bigl(i\,t\T\mu\up{1}\bigr)\prod_{j=1}^{k_1}\phi_j\bigl(a_j\T t\bigr)
    &= \cf{X}(t)
     = \exp\bigl(i\,t\T\mu\up{2}\bigr)
       \prod_{l=1}^{k_2}\gamma_l\bigl(b_l\T t\bigr).
\end{align}

\emph{Step 1: the logarithmic form.}
By \cref{prop:cf-elementary}\,(i)--(ii) each $\phi_j$ and each $\gamma_l$ is
continuous and equals $1$ at the origin, so there is a $\delta_0>0$ such that
all of them are non-vanishing on $(-\delta_0,\delta_0)$, and
\cref{lem:distinguished-log} supplies the cumulant generating functions
$\psi_j := \psi_{Z\up{1}_j}$ and $\chi_l := \psi_{Z\up{2}_l}$, continuous on
$(-\delta_0,\delta_0)$ with $\psi_j(0)=\chi_l(0)=0$ and
$\phi_j=\exp(\psi_j)$, $\gamma_l=\exp(\chi_l)$ there.
Choose $R>0$ so small that $\abs{a_j\T t}<\delta_0$ and $\abs{b_l\T t}<\delta_0$
for all $j,l$ whenever $\norm{t}<R$, and define on the ball $B_R(0)\subseteq\R^p$
\begin{align}
  \label{eq:kagan-Phi}
  \Phi(t) &:= \sum_{j=1}^{k_1}\psi_j\bigl(a_j\T t\bigr)
             - \sum_{l=1}^{k_2}\chi_l\bigl(b_l\T t\bigr)
             - i\,t\T c .
\end{align}
Then $\Phi$ is continuous, $\Phi(0)=0$, and by \cref{eq:kagan-cf-identity}
$\exp(\Phi)\equiv1$ on $B_R(0)$.
Hence $\Phi/(2\pi i)$ is a continuous, integer-valued function on the
connected set $B_R(0)$ vanishing at the origin, so it vanishes identically:
\begin{align}
  \label{eq:kagan-log-identity}
  \sum_{j=1}^{k_1}\psi_j\bigl(a_j\T t\bigr)
    - \sum_{l=1}^{k_2}\chi_l\bigl(b_l\T t\bigr) &= i\,t\T c ,
  \qquad t \in B_R(0) .
\end{align}
This is a linear relation among ridge functions, and it is the only
consequence of the hypotheses that we shall use.

\emph{Step 2: choosing the directions.}
We treat both parts at once.  Fix the index $l_0\in\{1,\dots,k_2\}$ of the
column under consideration and let
\begin{align}
  \label{eq:kagan-target}
  e &:= \begin{cases}
          b_{l_0}, & \text{in case (1)},\\
          a_{j_0}, & \text{in case (2), where } b_{l_0}=\lambda a_{j_0} ,
        \end{cases}
\end{align}
and let $\mathcal{D}$ be the multiset of the remaining column directions,
\begin{align}
  \mathcal{D} &:= \begin{cases}
     \{a_1,\dots,a_{k_1}\}\uplus\{b_l : l\neq l_0\}, & \text{in case (1)},\\
     \{a_j : j\neq j_0\}\uplus\{b_l : l\neq l_0\}, & \text{in case (2)},
  \end{cases}
\end{align}
where $\uplus$ is the disjoint union, so that
$\abs{\mathcal{D}} = k_1+k_2-1$ in case (1) and $k_1+k_2-2$ in case (2) even
if the same vector happens to be a column of both matrices.
We claim that no $d\in\mathcal{D}$ is proportional to $e$.
In case (1) this holds for $d=a_j$ by the hypothesis of part~(1) and for
$d=b_l$, $l\neq l_0$, because the columns of $A\up{2}$ are pairwise
non-proportional.
In case (2) it holds for $d=a_j$, $j\neq j_0$, because the columns of
$A\up{1}$ are pairwise non-proportional, and for $d=b_l$, $l\neq l_0$, because
$b_l\parallel a_{j_0}$ would give $b_l\parallel\lambda a_{j_0}=b_{l_0}$, again
contradicting pairwise non-proportionality.
All columns are non-zero by hypothesis (i), so $e\neq0$ and $d\neq0$.

Let $m_0 := \abs{\mathcal{D}}$ and set $m := \max(m_0,2)$.
For $r=1,\dots,m_0$ apply \cref{lem:separating-direction} to the pair
$(e,d_r)$: it yields $h_r\in\R^p$ with $d_r\T h_r=0$ and
$\alpha_r := e\T h_r\neq0$, and $h_r$ may be taken with
$\norm{h_r}<R/(2m)$.
For $r=m_0+1,\dots,m$ (there are at most two such $r$, and only when
$m_0<2$) simply put $h_r := \tau\, e/\norm{e}^2$ with $\tau>0$ small, so
that $\alpha_r = \tau \neq0$ and $\norm{h_r}<R/(2m)$ as well.
Note that each $h_r$ may be replaced by $s_rh_r$ for any $s_r\in(-1,1)
\setminus\{0\}$ without disturbing any of these properties, and that this
replaces $\alpha_r$ by $s_r\alpha_r$.

\emph{Step 3: annihilating all but one term.}
Apply the commuting operators $\Delta_{h_1},\dots,\Delta_{h_m}$ to
\cref{eq:kagan-log-identity}.  This is legitimate on $B_{R/2}(0)$, because
$\norm{t}<R/2$ and $\norm{h_r}<R/(2m)$ force
$\norm{t+\sum_{r\in S}h_r}<R$ for every $S\subseteq\{1,\dots,m\}$.
Each ridge function indexed by a direction $d_r\in\mathcal{D}$ is annihilated
by $\Delta_{h_r}$ (\cref{lem:ridge-kill}), hence by the whole composition; and
the affine term $i\,t\T c$ is annihilated by any two of the operators, of
which there are $m\ge2$.  What survives is
\begin{align}
  \label{eq:kagan-survivor}
  \Delta_{h_1}\cdots\Delta_{h_m}\bigl[\Xi\bigl(e\T t\bigr)\bigr] &= 0,
  \qquad \norm{t}<R/2, \qquad\text{where}\qquad
  \Xi := \begin{cases}
     -\chi_{l_0}, & \text{in case (1)},\\
     \psi_{j_0}-\chi_{l_0}(\lambda\,\cdot\,), & \text{in case (2)} .
  \end{cases}
\end{align}
In case (2) we used that $b_{l_0}\T t = \lambda\,a_{j_0}\T t = \lambda\,e\T t$,
so that the two surviving ridge functions share the direction $e$ and combine
into a single one.

\emph{Step 4: Fr\'echet.}
Set $\rho := \norm{e}R$.  Every $u$ with $\abs{u}<\rho$ is of the form
$u = e\T t$ for some $t\in B_R(0)$, and for such $t$ the choice of $R$ puts
every $a_j\T t$ and every $b_l\T t$ inside $(-\delta_0,\delta_0)$.
In case (1), $\Xi=-\chi_{l_0}$ is evaluated at $u = b_{l_0}\T t$; in case (2)
it is built from $\psi_{j_0}$ evaluated at $u = a_{j_0}\T t$ and from
$\chi_{l_0}$ evaluated at $\lambda u = b_{l_0}\T t$.
In either case all arguments lie in $(-\delta_0,\delta_0)$, so $\Xi$ is
defined and continuous on $(-\rho,\rho)$.
Because $e\neq0$, the linear functional $t\mapsto e\T t$ maps $B_{R/2}(0)$
\emph{onto} $(-\sigma,\sigma)$ with $\sigma := \norm{e}R/2 = \rho/2$, and by
\cref{eq:difference-expansion} the left hand side of
\cref{eq:kagan-survivor} equals
$\bigl(\Delta_{\alpha_1}\cdots\Delta_{\alpha_m}\Xi\bigr)(e\T t)$.
Hence
\begin{align}
  \label{eq:kagan-1d-differences}
  \bigl(\Delta_{\alpha_1}\cdots\Delta_{\alpha_m}\Xi\bigr)(u) &= 0,
  \qquad \abs{u}<\sigma .
\end{align}
Replacing each $h_r$ by $s_rh_r$ with $s_r\in(-1,1)$, as permitted in
Step~2, replaces $\alpha_r$ by $s_r\alpha_r$ without affecting anything else,
and as $s_r$ ranges over $(-1,1)$ the number $s_r\alpha_r$ ranges over the
whole interval $(-\abs{\alpha_r},\abs{\alpha_r})$.
So \cref{eq:kagan-1d-differences} holds with $\alpha_r$ replaced by any
$c_r$ with $0<\abs{c_r}<\varepsilon := \min_r\abs{\alpha_r}$; and for
$c_r=0$ it holds trivially, because $\Delta_0 = 0$.
This is exactly the hypothesis \cref{eq:frechet-hypothesis}.
Finally $\abs{\alpha_r} = \abs{e\T h_r} \le \norm{e}\norm{h_r} <
\norm{e}R/(2m) = \rho/(2m)$, so
$\sigma + m\varepsilon < \rho/2+\rho/2 = \rho$ and \cref{lem:frechet} applies.
It makes $\Xi$ a polynomial of degree at most $m-1$ on
$\bigl(-\tfrac{\sigma}{4},\tfrac{\sigma}{4}\bigr)$, a neighbourhood of the
origin in $\R$.

\emph{Step 5: conclusion.}
In case (1) we have obtained that $\chi_{l_0}=-\Xi$ is a polynomial near the
origin, so
\begin{align}
  \cf{Z\up{2}_{l_0}}(t) &= \exp\bigl(\chi_{l_0}(t)\bigr)
\end{align}
near the origin with a polynomial exponent.  By Marcinkiewicz'
\cref{thm:marcinkiewicz}, $Z\up{2}_{l_0}$ is a (possibly degenerate) Gaussian
random variable, which is assertion (1).

In case (2), $\Xi(t) = \psi_{j_0}(t)-\chi_{l_0}(\lambda t)$ is a polynomial
near the origin; writing $g := -\Xi$ and exponentiating,
\begin{align}
  \cf{Z\up{2}_{l_0}}(\lambda t)
    &= \exp\bigl(\chi_{l_0}(\lambda t)\bigr)
     = \exp\bigl(\psi_{j_0}(t)\bigr)\exp\bigl(g(t)\bigr)
     = \cf{Z\up{1}_{j_0}}(t)\cdot\exp\bigl(g(t)\bigr)
\end{align}
near the origin, which is \cref{eq:cf-kagan}.
For the final assertion of (2), note that this is exactly the situation of
\cref{rem:marcinkiewicz-use}\,(i) \emph{with the factor $\lambda$ present},
which is why that remark was stated with the factor: it gives directly that
$Z\up{2}_{l_0}$ is Gaussian if and only if $Z\up{1}_{j_0}$ is.
(The factor cannot be dropped: by \cref{eq:cf-affine} the left hand side is
$\cf{\lambda Z\up{2}_{l_0}}(t)$, and it is only because $\lambda\neq0$ that
$\lambda Z\up{2}_{l_0}$ being Gaussian is equivalent to $Z\up{2}_{l_0}$ being
Gaussian; cf.\ \cref{fn:missing-constant}.)
\end{proof}

\begin{remark}[Reading the proof]
\label{rem:kagan-proof-reading}
Three points are worth isolating.

\emph{Where each hypothesis enters.}
That the columns are non-zero is used twice: in Step~2, to make
\cref{lem:separating-direction} applicable to the pairs $(e,d_r)$, and in
Step~4, where it makes $e\T t$ surject onto an interval, so that a statement
about $t\in\R^p$ becomes a statement about $u\in\R$.
Pairwise non-proportionality is used only in Step~2, to make
\cref{lem:separating-direction} applicable, and only for those columns that
have to be separated from $e$: in case (2) for the columns of both matrices,
in case (1) for the columns of $A\up{2}$ only -- there the separation of
$b_{l_0}$ from the columns of $A\up{1}$ is the hypothesis of part~(1) itself,
and the columns of $A\up{1}$ may perfectly well be proportional to one
another.
Mutual independence of the components, hypothesis (iii)(a), is used once, to
factorise the characteristic functions in \cref{eq:kagan-cf-identity}.
Non-constancy of the sources, hypothesis (iii)(b), is not needed for
(1)--(2) at all; it enters only through \cref{prop:affine-hull}, i.e.\ in the
first part of \cref{thm:kagan}.

\emph{Why the degree bound is irrelevant.}
Step~4 produces a polynomial of degree at most $m-1$, where
$m=\max(k_1+k_2-1,2)$ in case (1) and $m=\max(k_1+k_2-2,2)$ in case (2);
this bound grows with the number of sources and is of no use by itself.
It is Marcinkiewicz' theorem that collapses it to $2$ -- which is precisely
why \cref{thm:marcinkiewicz} was called the single most important analytic
input of these notes in \cref{ssec:cf-analytic}.

\emph{Why the argument is local.}
Everything happens on a ball around the origin, because
\cref{lem:distinguished-log} produces the cumulant generating functions only
there.  This is not a defect: by \cref{rem:local-not-enough} a local identity
between characteristic functions carries no information about the laws in
general, and the reason it does here is that the local conclusion is
``$\exp$ of a polynomial'', a form rigid enough for \cref{thm:marcinkiewicz}
to upgrade it to a global statement about the distribution.
\end{remark}

\begin{remark}[Relation to the Darmois--Skitovich theorem]
\label{rem:darmois-skitovich}
The classical theorem of \citet{darmois1953} and \citet{skitovich1954} says
this: if $Y_1,\dots,Y_k$ are independent and the two linear forms
$L_1=\sum_j\alpha_jY_j$ and $L_2=\sum_j\beta_jY_j$ are independent of
\emph{each other}, then every $Y_j$ with $\alpha_j\beta_j\neq0$ is Gaussian.
It is the case $p=2$ of \cref{thm:kagan}.
Indeed, put $X := [L_1,L_2]\T\in\R^2$ and compare the two representations
\begin{align}
  \Id{2}\,\bmat{L_1\\L_2}
    \;=\; X \;=\;
  \bmat{\alpha_1 & \cdots & \alpha_k\\ \beta_1&\cdots&\beta_k}\,Y ,
\end{align}
so that $k\up{1}=2$ with $A\up{1}=\Id{2}$, and $k\up{2}=k$ with
$a\up{2}_j = [\alpha_j,\beta_j]\T$.
A \emph{non-zero} column $a\up{2}_j$ is proportional to a column of $\Id{2}$
precisely when $\alpha_j\beta_j=0$ (zero columns having been removed by
\cref{rem:normalisation}), so for every $j$ with $\alpha_j\beta_j\neq0$
part~(1) declares $Y_j$ Gaussian.
Two caveats: the columns $a\up{2}_j$ need not be pairwise non-proportional, so
one first passes to a normalised representation as in
\cref{rem:normalisation}, which \emph{merges} the sources belonging to
proportional columns; and one then recovers the individual $Y_j$ from the
merged sum by finitely many applications of Cram\'er's \cref{thm:cramer},
the case of an a.s.\ constant merged source being covered by the convention
that constants are degenerate Gaussians (\cref{not:degenerate}).

The two-representation formulation of \cref{thm:kagan} is what makes the
result directly usable for identifiability, since it compares two
\emph{models} rather than two forms, and it is proved by the same
finite-difference argument.
See \citet[Chapter~3]{kagan1973characterization} for the classical statement
and \citet{ghurye1962} for a multivariate version.
\end{remark}

\phantomsection


\addcontentsline{toc}{section}{References}

\begin{thebibliography}{Kagan et al.(1973)}
\setlength{\itemsep}{3pt}
\sloppy\emergencystretch=2em

\bibitem[Amari et al.(1996)]{amari1996newlearning}
S.~Amari, A.~Cichocki, and H.~H. Yang.
\newblock A new learning algorithm for blind signal separation.
\newblock In D.~S. Touretzky, M.~C. Mozer, and M.~E. Hasselmo, editors,
  \emph{Advances in Neural Information Processing Systems 8 (NIPS 1995)},
  pages 757--763. MIT Press, Cambridge, MA, 1996.
\newblock \biburl{https://papers.nips.cc/paper/1115-a-new-learning-algorithm-for-blind-signal-separation}.

\bibitem[Amari et al.(1997)]{amari1997stability}
S.~Amari, T.-P. Chen, and A.~Cichocki.
\newblock Stability analysis of learning algorithms for blind source
  separation.
\newblock \emph{Neural Networks}, 10(8):1345--1351, 1997.
\newblock \doi{10.1016/S0893-6080(97)00039-7}.

\bibitem[Amari and Cardoso(1997)]{amari1997semiparametric}
S.~Amari and J.-F. Cardoso.
\newblock Blind source separation---semiparametric statistical approach.
\newblock \emph{IEEE Transactions on Signal Processing}, 45(11):2692--2700,
  1997.
\newblock \doi{10.1109/78.650095}.

\bibitem[Amari(1998)]{amari1998natural}
S.~Amari.
\newblock Natural gradient works efficiently in learning.
\newblock \emph{Neural Computation}, 10(2):251--276, 1998.
\newblock \doi{10.1162/089976698300017746}.

\bibitem[Balanda and MacGillivray(1988)]{balanda1988kurtosis}
K.~P. Balanda and H.~L. MacGillivray.
\newblock Kurtosis: a critical review.
\newblock \emph{The American Statistician}, 42(2):111--119, 1988.
\newblock \doi{10.1080/00031305.1988.10475539}.

\bibitem[Bell and Sejnowski(1995)]{bell1995infomax}
A.~J. Bell and T.~J. Sejnowski.
\newblock An information-maximization approach to blind separation and blind
  deconvolution.
\newblock \emph{Neural Computation}, 7(6):1129--1159, 1995.
\newblock \doi{10.1162/neco.1995.7.6.1129}.

\bibitem[Cardoso and Laheld(1996)]{cardoso1996equivariant}
J.-F. Cardoso and B.~H. Laheld.
\newblock Equivariant adaptive source separation.
\newblock \emph{IEEE Transactions on Signal Processing}, 44(12):3017--3030,
  1996.
\newblock \doi{10.1109/78.553476}.

\bibitem[Cardoso(1997)]{cardoso1997infomax}
J.-F. Cardoso.
\newblock Infomax and maximum likelihood for blind source separation.
\newblock \emph{IEEE Signal Processing Letters}, 4(4):112--114, 1997.
\newblock \doi{10.1109/97.566704}.

\bibitem[Cardoso(1998a)]{cardoso1998multidim}
J.-F. Cardoso.
\newblock Multidimensional independent component analysis.
\newblock In \emph{Proceedings of the 1998 IEEE International Conference on
  Acoustics, Speech and Signal Processing (ICASSP~'98)}, volume~4, pages
  1941--1944, Seattle, WA, 1998a. IEEE.
\newblock \doi{10.1109/ICASSP.1998.681443}.

\bibitem[Cardoso(1998b)]{cardoso1998statistical}
J.-F. Cardoso.
\newblock Blind signal separation: statistical principles.
\newblock \emph{Proceedings of the IEEE}, 86(10):2009--2025, 1998b.
\newblock \doi{10.1109/5.720250}.

\bibitem[Comon(1994)]{comon1994}
P.~Comon.
\newblock Independent component analysis, a new concept?
\newblock \emph{Signal Processing}, 36(3):287--314, 1994.
\newblock \doi{10.1016/0165-1684(94)90029-9}.

\bibitem[Comon and Jutten(2010)]{comon2010handbook}
P.~Comon and C.~Jutten, editors.
\newblock \emph{Handbook of Blind Source Separation: Independent Component
  Analysis and Applications}.
\newblock Academic Press (Elsevier), Oxford, 1st edition, 2010.
\newblock ISBN 978-0-12-374726-6 (print), 978-0-08-088494-3 (e-book).

\bibitem[Cram\'er(1936)]{cramer1936}
H.~Cram\'er.
\newblock \"Uber eine Eigenschaft der normalen Verteilungsfunktion.
\newblock \emph{Mathematische Zeitschrift}, 41(1):405--414, 1936.
\newblock \doi{10.1007/BF01180430}.

\bibitem[Darlington(1970)]{darlington1970kurtosis}
R.~B. Darlington.
\newblock Is kurtosis really ``peakedness?''
\newblock \emph{The American Statistician}, 24(2):19--22, 1970.
\newblock \doi{10.1080/00031305.1970.10478885}.

\bibitem[Darmois(1953)]{darmois1953}
G.~Darmois.
\newblock Analyse g\'en\'erale des liaisons stochastiques: \'etude
  particuli\`ere de l'analyse factorielle lin\'eaire.
\newblock \emph{Revue de l'Institut International de Statistique / Review of
  the International Statistical Institute}, 21(1/2):2--8, 1953.
\newblock \doi{10.2307/1401511}.

\bibitem[Eriksson and Koivunen(2004)]{eriksson2004}
J.~Eriksson and V.~Koivunen.
\newblock Identifiability, separability, and uniqueness of linear ICA models.
\newblock \emph{IEEE Signal Processing Letters}, 11(7):601--604, 2004.
\newblock \doi{10.1109/LSP.2004.830118}.

\bibitem[Eriksson and Koivunen(2006)]{eriksson2006}
J.~Eriksson and V.~Koivunen.
\newblock Complex random vectors and ICA models: identifiability, uniqueness,
  and separability.
\newblock \emph{IEEE Transactions on Information Theory}, 52(3):1017--1029,
  2006.
\newblock \doi{10.1109/TIT.2005.864440}.
\newblock Preprint: \biburl{https://arxiv.org/abs/cs/0512063}.

\bibitem[Feller(1971)]{feller1971}
W.~Feller.
\newblock \emph{An Introduction to Probability Theory and Its Applications},
  volume~II.
\newblock Wiley Series in Probability and Mathematical Statistics. John Wiley
  \& Sons, New York, 2nd edition, 1971.
\newblock 669 pp. ISBN 978-0-471-25709-7.

\bibitem[Fr\'echet(1909)]{frechet1909}
M.~Fr\'echet.
\newblock Une d\'efinition fonctionnelle des polynomes.
\newblock \emph{Nouvelles annales de math\'ematiques}, 4e s\'erie,
  9:145--162, 1909.
\newblock \biburl{https://www.numdam.org/item/NAM_1909_4_9__145_0/}.

\bibitem[Ger(1994)]{ger1994}
R.~Ger.
\newblock On extensions of polynomial functions.
\newblock \emph{Results in Mathematics}, 26(3--4):281--289, 1994.
\newblock \doi{10.1007/BF03323050}.

\bibitem[Ghurye and Olkin(1962)]{ghurye1962}
S.~G. Ghurye and I.~Olkin.
\newblock A characterization of the multivariate normal distribution.
\newblock \emph{The Annals of Mathematical Statistics}, 33(2):533--541, 1962.
\newblock \doi{10.1214/aoms/1177704579}.

\bibitem[Gresele et al.(2021)]{gresele2021ima}
L.~Gresele, J.~von K\"ugelgen, V.~Stimper, B.~Sch\"olkopf, and M.~Besserve.
\newblock Independent mechanism analysis, a new concept?
\newblock In \emph{Advances in Neural Information Processing Systems 34
  (NeurIPS 2021)}, pages 28233--28248. Curran Associates, 2021.
\newblock \biburl{https://proceedings.neurips.cc/paper/2021/hash/edc27f139c3b4e4bb29d1cdbc45663f9-Abstract.html}.
\newblock Preprint: \biburl{https://arxiv.org/abs/2106.05200}.

\bibitem[Hoyer et al.(2008)]{hoyer2008anm}
P.~O. Hoyer, D.~Janzing, J.~M. Mooij, J.~Peters, and B.~Sch\"olkopf.
\newblock Nonlinear causal discovery with additive noise models.
\newblock In \emph{Advances in Neural Information Processing Systems 21
  (NIPS 2008)}, pages 689--696, 2008.
\newblock \biburl{https://proceedings.neurips.cc/paper/2008/hash/f7664060cc52bc6f3d620bcedc94a4b6-Abstract.html}.

\bibitem[Hyv\"arinen and Morioka(2016)]{hyvarinen2016tcl}
A.~Hyv\"arinen and H.~Morioka.
\newblock Unsupervised feature extraction by time-contrastive learning and
  nonlinear ICA.
\newblock In \emph{Advances in Neural Information Processing Systems 29
  (NIPS 2016)}, pages 3765--3773. Curran Associates, 2016.
\newblock \biburl{https://proceedings.neurips.cc/paper/2016/hash/d305281faf947ca7acade9ad5c8c818c-Abstract.html}.
\newblock Preprint: \biburl{https://arxiv.org/abs/1605.06336}.

\bibitem[Hyv\"arinen and Morioka(2017)]{hyvarinen2017pcl}
A.~Hyv\"arinen and H.~Morioka.
\newblock Nonlinear ICA of temporally dependent stationary sources.
\newblock In A.~Singh and J.~Zhu, editors, \emph{Proceedings of the 20th
  International Conference on Artificial Intelligence and Statistics
  (AISTATS)}, volume~54 of \emph{Proceedings of Machine Learning Research},
  pages 460--469. PMLR, 2017.
\newblock \biburl{https://proceedings.mlr.press/v54/hyvarinen17a.html}.

\bibitem[Hyv\"arinen and Oja(2000)]{hyvarinen2000}
A.~Hyv\"arinen and E.~Oja.
\newblock Independent component analysis: algorithms and applications.
\newblock \emph{Neural Networks}, 13(4--5):411--430, 2000.
\newblock \doi{10.1016/S0893-6080(00)00026-5}.

\bibitem[Hyv\"arinen and Pajunen(1999)]{hyvarinen1999nonlinear}
A.~Hyv\"arinen and P.~Pajunen.
\newblock Nonlinear independent component analysis: existence and uniqueness
  results.
\newblock \emph{Neural Networks}, 12(3):429--439, 1999.
\newblock \doi{10.1016/S0893-6080(98)00140-3}.

\bibitem[Hyv\"arinen et al.(2001)]{hyvarinen2001}
A.~Hyv\"arinen, J.~Karhunen, and E.~Oja.
\newblock \emph{Independent Component Analysis}.
\newblock Wiley Series on Adaptive and Learning Systems for Signal Processing,
  Communications, and Control. John Wiley \& Sons, New York, 2001.
\newblock 504 pp. ISBN 978-0-471-40540-5.
\newblock \doi{10.1002/0471221317}.

\bibitem[Kagan et al.(1973)]{kagan1973characterization}
A.~M. Kagan, Yu.~V. Linnik, and C.~R. Rao.
\newblock \emph{Characterization Problems in Mathematical Statistics}.
\newblock Wiley Series in Probability and Mathematical Statistics. John Wiley
  \& Sons, New York, 1973.
\newblock xii+499 pp. ISBN 978-0-471-45421-2.
\newblock Translated from the Russian by B.~Ramachandran; Russian original:
  \emph{Kharakterizatsionnye zadachi matematicheskoi statistiki}, Nauka,
  Moscow, 1972.

\bibitem[Kallenberg(2021)]{kallenberg2021}
O.~Kallenberg.
\newblock \emph{Foundations of Modern Probability}, volume~99 of
  \emph{Probability Theory and Stochastic Modelling}.
\newblock Springer, Cham, 3rd edition, 2021.
\newblock xii+946 pp. ISBN 978-3-030-61870-4.
\newblock \doi{10.1007/978-3-030-61871-1}.

\bibitem[Kaplansky(1945)]{kaplansky1945common}
I.~Kaplansky.
\newblock A common error concerning kurtosis.
\newblock \emph{Journal of the American Statistical Association}, 40(230):259,
  1945.
\newblock \doi{10.1080/01621459.1945.10501856}.

\bibitem[Khemakhem et al.(2020)]{khemakhem2020ivae}
I.~Khemakhem, D.~P. Kingma, R.~P. Monti, and A.~Hyv\"arinen.
\newblock Variational autoencoders and nonlinear ICA: a unifying framework.
\newblock In S.~Chiappa and R.~Calandra, editors, \emph{Proceedings of the
  23rd International Conference on Artificial Intelligence and Statistics
  (AISTATS)}, volume~108 of \emph{Proceedings of Machine Learning Research},
  pages 2207--2217. PMLR, 2020.
\newblock \biburl{https://proceedings.mlr.press/v108/khemakhem20a.html}.
\newblock Preprint: \biburl{https://arxiv.org/abs/1907.04809}.

\bibitem[Kim et al.(2006)]{kim2006iva}
T.~Kim, T.~Eltoft, and T.-W. Lee.
\newblock Independent vector analysis: an extension of ICA to multivariate
  components.
\newblock In \emph{Independent Component Analysis and Blind Signal Separation
  (ICA 2006)}, volume 3889 of \emph{Lecture Notes in Computer Science}, pages
  165--172. Springer, Berlin, Heidelberg, 2006.
\newblock ISBN 978-3-540-32630-4.
\newblock \doi{10.1007/11679363_21}.

\bibitem[Klenke(2020)]{klenke2020}
A.~Klenke.
\newblock \emph{Probability Theory: A Comprehensive Course}.
\newblock Universitext. Springer, Cham, 3rd edition, 2020.
\newblock ISBN 978-3-030-56401-8.
\newblock \doi{10.1007/978-3-030-56402-5}.
\newblock L\'evy's continuity theorem is Section~15.3.

\bibitem[Lee et al.(1999)]{lee1999extended}
T.-W. Lee, M.~Girolami, and T.~J. Sejnowski.
\newblock Independent component analysis using an extended infomax algorithm
  for mixed subgaussian and supergaussian sources.
\newblock \emph{Neural Computation}, 11(2):417--441, 1999.
\newblock \doi{10.1162/089976699300016719}.

\bibitem[Linnik and Ostrovskii(1977)]{linnik1977}
Yu.~V. Linnik and I.~V. Ostrovskii.
\newblock \emph{Decomposition of Random Variables and Vectors}, volume~48 of
  \emph{Translations of Mathematical Monographs}.
\newblock American Mathematical Society, Providence, RI, 1977.
\newblock ix+380 pp. ISBN 978-0-8218-1598-4.
\newblock Translated from the Russian; translation edited by J.~Rosenblatt.
\newblock \doi{10.1090/mmono/048}.

\bibitem[Lukacs(1970)]{lukacs1970}
E.~Lukacs.
\newblock \emph{Characteristic Functions}.
\newblock Charles Griffin \& Company Limited, London, 2nd, revised and
  enlarged edition, 1970.
\newblock x+350 pp. ISBN 0-85264-170-2. LCCN 70-513840.

\bibitem[MacKay(1996)]{mackay1996ica}
D.~J.~C. MacKay.
\newblock Maximum likelihood and covariant algorithms for independent
  component analysis.
\newblock Unpublished report, Cavendish Laboratory, University of Cambridge,
  1996.
\newblock \biburl{https://www.inference.org.uk/mackay/ica.pdf}.
\newblock Version 3.8, 8 January 1999, with minor corrections of 8 October
  2002.

\bibitem[MacKay(2003)]{mackay2003itila}
D.~J.~C. MacKay.
\newblock \emph{Information Theory, Inference, and Learning Algorithms}.
\newblock Cambridge University Press, Cambridge, 2003.
\newblock xii+628 pp. ISBN 978-0-521-64298-9.
\newblock Chapter~34, ``Independent Component Analysis and Latent Variable
  Modelling'', pp.~437--444.
\newblock \biburl{https://www.inference.org.uk/mackay/itila/}.

\bibitem[Marcinkiewicz(1939)]{marcinkiewicz1939}
J.~Marcinkiewicz.
\newblock Sur une propri\'et\'e de la loi de Gauss.
\newblock \emph{Mathematische Zeitschrift}, 44(1):612--618, 1939.
\newblock \doi{10.1007/BF01210677}.

\bibitem[Moors(1986)]{moors1986meaning}
J.~J.~A. Moors.
\newblock The meaning of kurtosis: Darlington reexamined.
\newblock \emph{The American Statistician}, 40(4):283--284, 1986.
\newblock \doi{10.1080/00031305.1986.10475415}.

\bibitem[Pandeva and Forr\'e(2023a)]{pandeva2023multiview}
T.~Pandeva and P.~Forr\'e.
\newblock Multi-view independent component analysis with shared and individual
  sources.
\newblock In R.~J. Evans and I.~Shpitser, editors, \emph{Proceedings of the
  39th Conference on Uncertainty in Artificial Intelligence (UAI)}, volume 216
  of \emph{Proceedings of Machine Learning Research}, pages 1639--1650. PMLR,
  2023a.
\newblock \biburl{https://proceedings.mlr.press/v216/pandeva23a.html}.
\newblock Preprint: \biburl{https://arxiv.org/abs/2210.02083}.

\bibitem[Pandeva and Forr\'e(2023b)]{pandeva2023omics}
T.~Pandeva and P.~Forr\'e.
\newblock Multi-view independent component analysis for omics data
  integration.
\newblock \emph{ICLR 2023 Workshop on Machine Learning and Global Health},
  2023b.
\newblock \biburl{https://openreview.net/forum?id=r5KL-AfXt75}.

\bibitem[Pandeva et al.(2025)]{pandeva2025robust}
T.~Pandeva, M.~J. Jonker, L.~Hamoen, J.~Mooij, and P.~Forr\'e.
\newblock Robust multi-view co-expression network inference.
\newblock In B.~Huang and M.~Drton, editors, \emph{Proceedings of the 4th
  Conference on Causal Learning and Reasoning (CLeaR)}, volume 275 of
  \emph{Proceedings of Machine Learning Research}, pages 490--513. PMLR, 2025.
\newblock \biburl{https://proceedings.mlr.press/v275/pandeva25a.html}.
\newblock Preprint: \biburl{https://arxiv.org/abs/2409.19991}.

\bibitem[P\'olya(1949)]{polya1949}
G.~P\'olya.
\newblock Remarks on characteristic functions.
\newblock In J.~Neyman, editor, \emph{Proceedings of the Berkeley Symposium on
  Mathematical Statistics and Probability}, pages 115--123. University of
  California Press, Berkeley and Los Angeles, 1949.
\newblock \biburl{https://digitalassets.lib.berkeley.edu/math/ucb/text/math_s1_article-08.pdf}.

\bibitem[Robbins and Monro(1951)]{robbins1951stochastic}
H.~Robbins and S.~Monro.
\newblock A stochastic approximation method.
\newblock \emph{The Annals of Mathematical Statistics}, 22(3):400--407, 1951.
\newblock \doi{10.1214/aoms/1177729586}.

\bibitem[Shimizu et al.(2006)]{shimizu2006}
S.~Shimizu, P.~O. Hoyer, A.~Hyv\"arinen, and A.~Kerminen.
\newblock A linear non-Gaussian acyclic model for causal discovery.
\newblock \emph{Journal of Machine Learning Research}, 7:2003--2030, 2006.
\newblock \biburl{https://jmlr.org/papers/v7/shimizu06a.html}.

\bibitem[Shimizu et al.(2011)]{shimizu2011directlingam}
S.~Shimizu, T.~Inazumi, Y.~Sogawa, A.~Hyv\"arinen, Y.~Kawahara, T.~Washio,
  P.~O. Hoyer, and K.~Bollen.
\newblock DirectLiNGAM: a direct method for learning a linear non-Gaussian
  structural equation model.
\newblock \emph{Journal of Machine Learning Research}, 12:1225--1248, 2011.
\newblock \biburl{https://jmlr.org/papers/v12/shimizu11a.html}.

\bibitem[Skitovich(1954)]{skitovich1954}
V.~P. Skitovich.
\newblock Linear forms of independent random variables and the normal
  distribution law.
\newblock \emph{Izvestiya Akademii Nauk SSSR, Seriya Matematicheskaya},
  18(2):185--200, 1954.
\newblock In Russian.
\newblock \biburl{https://www.mathnet.ru/eng/im3497}.
\newblock Announced in \emph{Doklady Akademii Nauk SSSR} (N.S.),
  89:217--219, 1953.

\bibitem[Westfall(2014)]{westfall2014}
P.~H. Westfall.
\newblock Kurtosis as peakedness, 1905--2014. R.I.P.
\newblock \emph{The American Statistician}, 68(3):191--195, 2014.
\newblock \doi{10.1080/00031305.2014.917055}.

\end{thebibliography}
\end{document}